 \DeclareSymbolFont{AMSb}{U}{msb}{m}{n}
 \DeclareSymbolFontAlphabet{\mathbb}{AMSb}
\let\SSec\S
\newcommand\lopen{\mathopen{]}}
\newcommand\ropen{\mathclose{[}}
\newcommand\rmd{\mathrm{d}}
\newcommand\rme{\mathrm{e}}
\newcommand\rmi{\mathrm{i}}
\newcommand\cA{\mathcal{A}}
\newcommand\cC{\mathcal{C}}
\newcommand\cH{\mathcal{H}}
\newcommand\cI{\mathcal{I}}
\newcommand\cK{\mathcal{K}}
\newcommand\cL{\mathcal{L}}
\newcommand\cO{\mathcal{O}}
\newcommand\cX{\mathcal{X}}
\renewcommand\AA{\mathbb{A}}
\newcommand\CC{\mathbb{C}}
\newcommand\FF{\mathbb{F}}
\newcommand\QQ{\mathbb{Q}}
\newcommand\RR{\mathbb{R}}
\renewcommand\SS{\mathbb{S}}
\newcommand\ZZ{\mathbb{Z}}
\newcommand\mf[1]{\mathfrak{#1}}
\newcommand\A{\mathsf{A}}
\newcommand\B{\mathsf{B}}
\newcommand\G{\mathsf{G}}
\renewcommand\H{\mathsf{H}}
\newcommand\M{\mathsf{M}}
\newcommand\N{\mathsf{N}}
\renewcommand\S{\mathsf{S}}
\newcommand\T{\mathsf{T}}
\newcommand\Z{\mathsf{Z}}
\newcommand\SL{\mathsf{SL}}
\newcommand\GL{\mathsf{GL}}
\newcommand\triv{\mathbbm{1}}
\newcommand\dpii{2\uppi\rmi}
\newcommand\bs{\backslash}
\renewcommand\Re{\mathop{\mathrm{Re}}}
\DeclareMathOperator\Tr{Tr}
\DeclareMathOperator\sgn{sgn}
\DeclareMathOperator\supp{supp}
\DeclareMathOperator\res{res}
\DeclareMathOperator\ord{ord}
\DeclareMathOperator\Res{Res}
\DeclareMathOperator\Gal{Gal}
\newcommand\fin{\mathrm{fin}}
\newcommand\el{\mathrm{ell}}
\newcommand\Kl{\mathrm{Kl}}
\DeclareMathOperator\vol{vol}
\DeclareMathOperator\orb{orb}
\renewcommand\leq{\leqslant}
\renewcommand\geq{\geqslant}
\newcommand\legendresymbol[2]{\genfrac{(}{)}{}{}{#1}{#2}}
\newcommand\expcharacter[2][]{\rme_{#1}\!\left(#2\right)}
\def\subsection{\@startsection{subsection}{2}
  \z@{3pt\@plus0pt}{-.5em}%
  {\normalfont\bfseries}}
\def\@seccntformat#1{
  \protect\textup{\protect\@secnumfont
    \ifnum\pdfstrcmp{subsection}{#1}=0 \bfseries\fi
    \csname the#1\endcsname
    \protect\@secnumpunct
  }%
}  
\newtheoremstyle{THEOREM}
{2.5pt}
{2pt}
{\itshape}
{}
{\bfseries}
{.}
{.5em}
{\thmname{#1}\thmnumber{ #2}\thmnote{ (#3)}}
\newtheoremstyle{DEFINITION}
{2.5pt}
{2pt}
{}
{}
{\bfseries}
{.}
{.5em}
{\thmname{#1}\thmnumber{ #2}\thmnote{ (#3)}}
\newtheoremstyle{EXERCISE}
{2pt}
{2pt}
{}
{}
{\scshape}
{.}
{.5em}
{\thmname{#1}\thmnumber{ #2}\thmnote{ (#3)}}
\theoremstyle{THEOREM}
\newtheorem{theorem}{Theorem}[section]
\newtheorem{lemma}[theorem]{Lemma}
\newtheorem{proposition}[theorem]{Proposition}
\newtheorem{corollary}[theorem]{Corollary}
\theoremstyle{DEFINITION}
\newtheorem{definition}[theorem]{Definition}
\theoremstyle{EXERCISE}
\newtheorem{remark}[theorem]{Remark}
\numberwithin{equation}{section}
\renewenvironment{proof}[1][\proofname]{\par
  \vspace{-6pt}
  \pushQED{\qed}
  \normalfont \topsep6\p@\@plus6\p@\relax
  \trivlist
  \item[\hskip\labelsep\rmfamily\bfseries
    #1\@addpunct{:}]\ignorespaces
}{
  \popQED\endtrivlist\@endpefalse
  \vspace{-6pt}
}
\newenvironment{insertproof}[1][\proofname]{
  \par
  \vspace{-6pt}
  \pushQED{\qed}
  \normalfont \topsep6\p@\@plus6\p@\relax
  \trivlist
  \item[\hskip\labelsep\rmfamily\bfseries
    #1\@addpunct{:}]\ignorespaces
}{
  \popQED\endtrivlist\@endpefalse
  \vspace{-6pt}
}
\begin{document}
\setlength \lineskip{3pt}
\setlength \lineskiplimit{3pt}
\setlength \parskip{1pt}
\setlength \partopsep{0pt}

\title{Beyond endoscopy for $\GL_2$ over $\QQ$ with ramification 1: Poisson summation}
\author{Yuhao Cheng}
\address{Qiuzhen College, Tsinghua University, 100084, Beijing, China}
\email{chengyuhaomath@gmail.com}
\keywords{beyond endoscopy, trace formula}
\subjclass[2020]{11F72}
\date{25 July, 2026}
\begin{abstract}
At the beginning of this century, Langlands introduced a strategy known as \emph{Beyond Endoscopy} to attack the principle of functoriality. Altu\u{g} studied $\GL_2$ over $\QQ$ in the unramified setting. The first step involves isolating specific representations, especially the residual part of the spectral side, in the elliptic part of the geometric side of the trace formula. We generalize this step to the case with ramification at $S=\{\infty,q_1,\dots,q_r\}$ with $2\in S$, thereby fully resolving the problem of isolating these representations over $\QQ$ that had remained unresolved for over a decade. Such a formula that isolates the specific representations is derived by modifying Altu\u{g}'s approach. We use the approximate functional equation to ensure the validity of the Poisson summation formula. Then, we compute the residues of specific functions to isolate the desired representations.
\end{abstract}

\maketitle
 
\tableofcontents

\section{Introduction}
\subsection{Background}
Let $\G$ be a connected reductive algebraic group over the field $\QQ$ of rational numbers and let $\AA$ denote the ad\`ele ring of $\QQ$. Let $\prescript{L}{}\G$ be the $L$-group of $\G$. Let $\pi$ be a cuspidal automorphic representation of $\G$ over $\QQ$ and $\rho$ be a finite dimensional complex representation of $^L\G$. For a finite set $S$ of places of $\QQ$ containing the archimedean place 
$\infty$, the partial $L$-function $L^{S}(s,\pi,\rho)$ can be defined (cf. \cite{getz2024} for details) and expressed as a Dirichlet series
\[
L^{S}(s,\pi,\rho)=\sum_{\substack{n=1\\ \gcd(n,S)=1}}^{+\infty}\frac{a_{\pi,\rho}(n)}{n^s}
\]
for $\Re s$ sufficiently large.

In general, Langlands' \emph{Beyond Endoscopy} proposal \cite{langlands2004} aims to provide a formula for
\begin{equation}\label{eq:beyondendoscopy}
\sum_{\pi}m_{\pi}\prod_{v\in S}\Tr(\pi_v(f_v))\ord_{s=1}L^{S}(s,\pi,\rho)
\end{equation}
for cuspidal representations $\pi$ and suitable functions $f_v$ on $\G(\QQ_v)$ for each $v\in S$, where $m_\pi$ denotes the multiplicity of $\pi$ in $L^2_{\mathrm{cusp}}(\G(\QQ)\bs \G(\AA)^1)$. Using this, we can analyze $\ord_{s=1}L(s,\pi,\rho)$ and determine whether $\pi$ arises from an automorphic representation of a ``smaller" group $\H$. More precisely, given an automorphic representation $\pi$ of $\G$ (corresponding to a map $\cL\to \prescript{L}{}\G$ by expected Langlands correspondence, where $\cL$ denotes the $L$-packet) and a morphism $\phi\colon\prescript{L}{}\H\to \prescript{L}{}\G$,  we want to determine whether there is a map $\cL\to \prescript{L}{}\H$ making the diagram
\[
      \xymatrix
      {
         & \ar@{-->}_{?}[ld]\mathcal{L}\ar^{\pi}[rd] &  \\
      \prescript{L}{}\H \ar_{\phi}[rr]&            & \prescript{L}{}\G 
      }
\]
commute.

By applying the trace formula, we are able to evaluate
\[
\sum_{\pi}m_\pi a_{\pi,\rho}(p)\prod_{v\in S}\Tr(\pi_v(f_v))=I_{\mathrm{cusp}}(f^{p,\rho}),
\]
where $f^{p,\rho}=\bigotimes_{v\in S}f_v\otimes\bigotimes_{v\notin S}'f_v^{p,\rho}$, with $f_v^{p,\rho}$ spherical for $v\notin S$, and $\Tr(\pi_p(f_p^{p,\rho}))=a_{\pi,\rho}(p)$ and $\Tr(\pi_\ell(f_\ell^{p,\rho}))=1$ if $\ell\notin S\cup\{p\}$.

If $L^S(s,\pi,\rho)$ admits meromorphic continuation on $\Re s>1-\delta$ with neither zeros nor poles on the vertical line $s=1+\rmi t$ except at $s=1$, then by the consequence of the Ikehara theorem (see \cite[Proposition 9.15]{marius2014} for example), we expect the following asymptotic formula:
\[
\lim_{X\to +\infty}\frac{1}{X}\sum_{\substack{p<X\\ p\notin S}}a_{\pi,\rho}(p)\log p=\ord_{s=1}L^S(s,\pi,\rho),
\]
where $p$ runs over all primes less than $X$.
For example, for $L^S(s,\pi,\rho)=\zeta(s)$, the above formula reduces to
\[
\lim_{X\to +\infty}\frac{1}{X}\vartheta(X)=\lim_{X\to +\infty}\frac{1}{X}\sum_{p<X}\log p =1,
\]
which is equivalent to the prime number theorem.

If such asymptotic formula holds, \eqref{eq:beyondendoscopy} can be rewritten as
\[
\frac{1}{X}\sum_{\pi}m_\pi\prod_{v\in S}\Tr(\pi_v(f_v))\sum_{\substack{p<X\\ p \notin S}}a_{\pi,\rho}(p)\log p .
\] 

For functions $f^{p,\rho}$ above, we have
\[
\frac{1}{X}\sum_{\pi}m_\pi\prod_{v\in S}\Tr(\pi_v(f_v))\sum_{\substack{p<X\\ p \notin S}}a_{\pi,\rho}(p)\log p 
=\frac{1}{X}\sum_{\substack{p<X\\ p \notin S}}\log pI_{\mathrm{cusp}}(f^{p,\rho}).
\] 
Therefore, we expect that 
\[
\lim_{X\to +\infty}\frac{1}{X}\sum_{\substack{p<X\\ p \notin S}}\log pI_{\mathrm{cusp}}(f^{p,\rho})=\sum_{\pi}m_\pi \prod_{v\in S}\Tr(\pi_v(f_v))\ord_{s=1}L^S(s,\pi,\rho),
\]
where $\pi$ runs over all cuspidal representations of $\G$ over $\QQ$. 

We can consider a more general setting, which was proposed by Sarnak \cite{sarnak2001}. For $\gcd(n,S)=1$, we have
\[
\sum_{\pi}m_\pi a_{\pi,\rho}(n)\prod_{v\in S}\Tr(\pi_v(f_v))=I_{\mathrm{cusp}}(f^{n,\rho}),
\]
where $f^{n,\rho}=\bigotimes_{v\in S}f_v\otimes\bigotimes_{v\notin S}'f_v^{n,\rho}$, with $f_v^{n,\rho}$ spherical for $v\notin S$, and $\Tr(\pi_p(f_p^{n,\rho}))=a_{\pi,\rho}(p^{v_p(n)})$ for all $p\notin S$. Using the Ikehara theorem, we expect to have
\[
\lim_{X\to +\infty}\frac{1}{X}\sum_{\substack{n<X\\ \gcd(n,S)=1}}a_{\pi,\rho}(n)=\res_{s=1}L^S(s,\pi,\rho)
\]
if $L^S(s,\pi,\rho)$ has at most a simple pole at $s=1$, and we expect that
\[
\lim_{X\to +\infty}\frac{1}{X}\sum_{\substack{n<X\\ \gcd(n,S)=1}}I_{\mathrm{cusp}}(f^{n,\rho})=\sum_{\pi}m_\pi \prod_{v\in S}\Tr(\pi_v(f_v))\res_{s=1}L^S(s,\pi,\rho)
\]
in this case, where $\pi$ runs over all cuspidal representations of $\G(\AA)$. 

If $L^S(s,\pi,\rho)$ has a pole of higher order at $s=1$, we can also detect them by considering the asymptotic formula when $X\to +\infty$. For example, if $L^S(s,\pi,\rho)$ has a pole of order $k$ for some $\pi$, the asymptotic formula would involve $X\log^{k-1}X$.

For more background and problems of Beyond Endoscopy, one may refer \cite{langlands2004,langlands2010,arthur2017,espinosa2022,altug2024}.

We now assume that $\G=\GL_2$, $\rho=\mathrm{Sym}^d$. For any prime number $p$ and $m\in \ZZ_{\geq 0}$, we define \index{xpm@$\cX_p^{m}$}
\[
\cX_p^{m}=\{X\in \M_2(\ZZ_p)\,|\, \mathopen{|}\det X\mathclose{|}_p = p^{-m}\}.
\]
For example, if $m=0$, $\cX_p^{m}$ is just $\cK_p=\GL_2(\ZZ_p)$. By Hecke operator theory, we can choose $f^{p,d}$ to be $\bigotimes_{v\in \mf{S}}f^{p,d}_v$, where $\mf{S}$ \index{smf@$\mf{S}$} denotes the set of places of $\QQ$, and
\begin{enumerate}[itemsep=0pt,parsep=0pt,topsep=2pt,leftmargin=0pt,labelsep=3pt,itemindent=9pt,label=\textbullet]
  \item If $v=p$, $f^{p,d}_p=p^{-d/2}\triv_{\cX_p^{d}}$.
  \item If $v=\ell$ is a prime different from $p$, $f^{p,d}_\ell=\triv_{\cK_\ell}$.
  \item If $v=\infty$,  $f^{p,d}_\infty=f_\infty\in C^\infty(Z_+\bs \G(\RR))$ such that the orbital integral is compactly supported modulo $Z_+$, and other than this condition it is arbitrary, where $Z_+$ \index{zplus@$Z_+$} denotes the identity component of the center of $\G(\RR)$.
\end{enumerate}
Note that $f_\infty$ is not assumed to be compactly supported modulo $Z_+$ to ensure some generality. For example, one may take $f_\infty$ to be a matrix coefficient of some discrete series representation with weight $>2$.

In this case, $\pi$ runs over all unramified cuspidal representations. Venkatesh \cite{venkatesh2004} established an asymptotic formula for summing over $n<X$ and $d\leq 2$, using the Petersson-Kuznetsov trace formula. As a new approach, Altu\u{g} \cite{altug2015,altug2017,altug2020} used the Arthur-Selberg trace formula to prove an asymptotic formula for summing over $n<X$, $d=1$ and with $f_\infty$ being the matrix coefficient of the weight $k$ discrete series for $k>2$ even. More precisely, Altu\u{g} proved that
\[
\sum_{n<X}\Tr(T_k(n))\ll_{k,\varepsilon} X^{\frac{31}{32}+\varepsilon},
\]
where $T_k(n)$ denotes the (normalized) $n^{\mathrm{th}}$ Hecke operator acting on $S_k(\SL_2(\ZZ))$, the space of holomorphic cusp forms of weight $k$ for the modular group $\Gamma=\SL_2(\ZZ)$.

The first step in proving this theorem is to isolate the specific representations, especially the residual part of the spectral side (in this case, there is only one, namely the trivial representation), in the elliptic part of the trace formula for the unramified case, which is precisely the content of \cite{altug2015}:
\begin{theorem}[Altu\u{g}, \cite{altug2015}]\label{thm:maintheoremunramify}
We have
\[
I_{\el}(f^{p,d})=\Tr(\triv(f^{p,d}))-\frac{1}{2}\Tr(\xi_0(f^{p,d}))-S(\square)+S(0)+S(\xi\neq 0).
\]
See \cite[Theorem 1.1]{altug2015} for the precise definitions of these terms.
\end{theorem}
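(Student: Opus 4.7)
The plan is to unfold $I_{\el}(f^{p,d})$ via the Arthur--Selberg trace formula, parametrize elliptic conjugacy classes in $\GL_2(\QQ)$ of determinant $n = p^d$ by their integer trace $t$, and then apply Poisson summation on $t$ to rewrite the resulting sum in the form claimed. First I would evaluate the local orbital integrals: away from $\{\infty,p\}$, the spherical orbital integral of $\triv_{\cK_\ell}$ at a regular semisimple $\gamma$ with characteristic polynomial $X^2 - tX + n$ is a divisor sum governed by a Hurwitz--Kronecker class number of discriminant $t^2 - 4n$; at $p$, the orbital integral of $p^{-d/2}\triv_{\cX_p^d}$ produces, up to a local conductor correction, the value of the $d$-th symmetric power character on the class determined by $(t,p^d)$; at $\infty$, the torus integral of $f_\infty$ yields a function $g_\infty(t)$ compactly supported in $t$ modulo $Z_+$. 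Assembling the factors writes $I_{\el}(f^{p,d})$ as a finite weighted sum $\sum_{t} g_\infty(t)\, H(4n - t^2)\, A(t)$ for an explicit arithmetic factor $A(t)$ built from the non-archimedean local data.

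The next step is to condition this sum for Poisson summation. The obstacle is that, viewed as a function of $t$, the weight $H(4n - t^2)A(t)$ is neither smooth nor rapidly decaying: the class number has square-root behaviour at $t^2 = 4n$, and its sign and size depend on the square class of the discriminant. To remedy this I would express $H(4n - t^2)$ via the central value $L(1, \chi_{t^2 - 4n})$ and insert the approximate functional equation for this quadratic Dirichlet $L$-function, replacing the central value by two smoothed partial sums with rapidly decaying kernels in the discriminant. After this substitution the integrand in $t$ is effectively Schwartz on $\RR$, so classical Poisson summation on $\ZZ$ becomes rigorously justified rather than merely formal.

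Once Poisson is applied, the dual sum decomposes into exactly the pieces on the right-hand side. The zero frequency $\xi = 0$ collapses to a real integral over the trace, producing $S(0)$; after shifting contours in the ensuing Mellin--Barnes integral, the pole at $s = 1$ of the zeta factor hidden in the smoothed partial sums yields the residual contribution $\Tr(\triv(f^{p,d}))$. The non-zero frequencies split by the square class of the discriminant: the square locus contributes $-S(\square)$ together with a secondary residue $-\tfrac12 \Tr(\xi_0(f^{p,d}))$ coming from a subsidiary pole of the quadratic $L$-function on the square diagonal, while the non-square locus supplies the oscillatory remainder $S(\xi \neq 0)$. The main obstacle throughout is the Poisson step: without the approximate functional equation the $t$-weights are too rough for summation by duality, and even with it one must track carefully which poles in the resulting Mellin--Barnes integrals are hit by the contour shifts, so that precisely $\triv$ and $\xi_0$ are extracted and the rest is absorbed into the geometric $S$-terms.
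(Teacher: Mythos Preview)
Your broad architecture is right and matches both Altu\u{g}'s original argument and this paper's generalization: rewrite the elliptic part as a sum over traces $t$ weighted by a Zagier-type $L$-value $L(1,t^2-4n)$, smooth that $L$-value with an approximate functional equation, apply Poisson summation in $t$, and then isolate the special representations by a contour shift in the $\xi=0$ term. Where your outline goes wrong is in the bookkeeping of the last paragraph, and the errors are not cosmetic.

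First, $S(\square)$ is not produced by the nonzero frequencies after Poisson. It is handled \emph{before} Poisson: the elliptic sum runs only over $t$ with $t^2-4n\neq\square$, so one first adds back the square-discriminant terms to obtain a sum over all $t\in\ZZ$, applies Poisson to that completed sum, and then subtracts $S(\square)$ at the end. After Poisson the dual variable is $\xi$, and there is no ``square class of the discriminant'' to split on---the discriminant $t^2-4n$ has been integrated out.

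Second, the Eisenstein contribution $-\tfrac12\Tr(\xi_0(f^{p,d}))$ does not arise from the nonzero frequencies or from any ``square locus.'' It comes, like $\Tr(\triv(f^{p,d}))$, from the residue analysis of the $\xi=0$ term. Concretely, after inserting the Mellin representation of the smoothing kernels and summing the resulting Kloosterman-type Dirichlet series to $\zeta(2s)/\zeta(s+1)$ times local factors, one shifts the contour: the pole of $\widetilde F$ at $s=0$ in the first piece yields $\Tr(\triv)$, while a separate pole at $s=0$ in the second piece (present only when $\iota=0$, i.e.\ on the set $t^2>4n$) yields the Eisenstein term. The $\xi\neq 0$ frequencies contribute only the oscillatory remainder $S(\xi\neq 0)$.

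So the strategy is sound, but your account of which pole in which contour integral produces which representation---and where $S(\square)$ enters the ledger---needs to be reworked before the argument will close.
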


Recently, this theorem was generalized to $\GL_3$ \cite{deng2026,lee2026}.
Moreover, for $\GL_2$ over a number field case, \cite{emory2024} generalized this theorem under some technical assumptions.

\subsection{The main result in this paper and proof strategy}
In this paper, we will generalize the main result in \cite{altug2015} by considering the case over $\QQ$ with ramification and generalize the function $f^{n,1}$ for any $n$ such that $\gcd(n,S)=1$, answering the question raised in \cite[Section 4.1]{espinosa2022}. In this paper, we only consider the case $d=1$ so that $\rho$ is the standard representation. For higher symmetric powers we have $a_{\pi,\mathrm{Sym}^j}(p)=a_{\pi}(p^j)$ for primes $p$. Hence we can also obtain the result for the $p^{\mathrm{th}}$ Fourier coefficient of higher symmetric powers. 

We consider $S=\{\infty,q_1,\dots,q_r\}$ for primes $q_1,\dots,q_r$ such that $2\in S$ (the reason why we assume that $2\in S$ will be clear later, especially in \autoref{rem:kloosterman}), and a corresponding function $f^{n}=\bigotimes_{v\in \mf{S}}'f^n_v$ such that the local components at nonarchimedean places in $S$ are arbitrary smooth and compactly supported functions. 
Specifically, for $\gcd(n,S)=1$, the function $f^n$ is defined as follows:
\begin{enumerate}[itemsep=0pt,parsep=0pt,topsep=2pt,leftmargin=0pt,labelsep=3pt,itemindent=9pt,label=\textbullet]
  \item If $v=p\notin S$, we choose $f^n_v=p^{-n_p/2}\triv_{\cX_p^{n_p}}$, where $n_p=v_p(n)$.
  \item If $v=q_i\in S$ and is a prime, we choose $f^n_v=f_{q_i}$ to be an arbitrary function in $C_c^\infty(\G(\QQ_{q_i}))$.
  \item If $v=\infty$, we choose $f^n_v=f_\infty\in C^\infty(Z_+\bs \G(\RR))$ such that its orbital integrals are compactly supported modulo $Z_+$.
\end{enumerate}
In this case, $f_v^{n}$ is spherical for $v\notin S$, and $\Tr(\pi_p(f_p^{n}))=a_{\pi}(p^{n_p})$ for all $p\notin S$.

With this setup, we will prove the following theorem in this paper:
\begin{theorem}\label{thm:maintheoremramify}
We have
\[
I_{\el}(f^n)=\sum_{\mu}\Tr(\mu(f^n))-\frac{1}{2}\sum_{\mu}\Tr((\xi_0\otimes\mu)(f^n))-\Sigma(\square)+\Sigma(0)+\Sigma(\xi\neq 0),
\]
where $\mu$ runs over all $1$-dimensional representations of $\G(\QQ)\bs \G(\AA)^1=Z_+\G(\QQ)\bs \G(\AA)$. See \autoref{thm:finaltrace} for the precise definitions of these terms.
\end{theorem}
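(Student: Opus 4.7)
The strategy follows the template of Altu\u{g}'s proof of \autoref{thm:maintheoremunramify}, modified to handle ramification at $S$ and the passage from a single-prime Hecke operator to a Hecke operator of arbitrary level $n$ coprime to $S$. The starting point is to expand $I_{\el}(f^n)$ as a sum over rational elliptic $\GL_2(\QQ)$-conjugacy classes, parametrized by the trace $t=\Tr(\gamma)$ and determinant $\delta=\det(\gamma)$. Since $f^n_v$ is spherical outside $S$, the global orbital integral factors as
\[
\orb_\gamma(f^n)=\prod_{v\in S}\orb_\gamma(f_v)\cdot \prod_{v\notin S}\orb_\gamma(f^n_v),
\]
and the spherical factors depend only on $(t,\delta)$. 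The Hecke condition at $p\notin S$ forces $|\delta|_p=p^{-v_p(n)}$, so $\delta$ is pinned to $\pm n$ up to $S$-units, while $t$ ranges over a translate of $\ZZ$ determined by the support of the local data at $S$.

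Classifying by the discriminant $D=t^2-4\delta$, one splits off the split-hyperbolic limiting classes with $D$ a nonzero square (giving $\Sigma(\square)$), the central classes with $D=0$ (giving $\Sigma(0)$), and treats the remaining elliptic range ($D$ not a square) by Poisson summation in $t$. The preparatory step, as in Altu\u{g}'s work, is that the relevant zeta integral of the binary quadratic form $t^2-4\delta$ does not converge in the regime needed to legitimize Poisson summation, so one rewrites it via an approximate functional equation into a smoothed sum with acceptable decay. Applying Poisson summation to that sum, the zero-frequency term supplies the main contributions, while the nonzero frequencies $\xi\neq 0$ produce $\Sigma(\xi\neq 0)$.

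The identification of the zero-frequency term as $\sum_\mu\Tr(\mu(f^n))-\tfrac{1}{2}\sum_\mu\Tr((\xi_0\otimes\mu)(f^n))$ is then a residue computation. Each character $\mu$ of $Z_+\G(\QQ)\bs\G(\AA)$ contributes a residue at $s=1$ of an associated Dirichlet series built from $(t,\delta)$, producing $\Tr(\mu(f^n))$, while the term $-\tfrac{1}{2}\Tr((\xi_0\otimes\mu)(f^n))$ comes from a half-integer residue at $s=1/2$ combined with the $\gamma\leftrightarrow -\gamma$ involution, where $\xi_0$ is the quadratic character appearing in Altu\u{g}'s decomposition. The enlargement of the collection of contributing characters from $\{\triv\}$ to all of $\{\mu\}$ reflects precisely the fact that ramification at $S$ makes available a larger family of one-dimensional automorphic representations.

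The principal obstacle is to execute the Poisson summation and residue extraction when the local test functions at $S$ are arbitrary rather than the spherical identity. In the unramified setting the $f_v$ at finite primes are trivial, whereas here each $\orb_\gamma(f_{q_i})$ must be retained as an abstract local coefficient, and one must verify that the global manipulations (exchange of summation, contour shifts, and matching to $\Tr(\mu(f^n))$ with the correct normalizations) survive this generality. The hypothesis $2\in S$ is essential here: the delicate $2$-adic analysis of $v_2(D)$ and of the parity of $t$ relative to $\delta$ is absorbed into the arbitrary local factor $f_2$ and can be carried through as part of the black-box $S$-data. The generalization from $f^{p,d}$ to $f^n$ is then routine by multiplicativity of both $a_\pi(n)$ and the Hecke operators, once the above prime-by-prime analysis is in place.
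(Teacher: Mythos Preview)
Your broad outline---parametrize elliptic classes by $(T,N)$, insert an approximate functional equation, apply Poisson summation in the trace variable, and read off the special representations from residues of the zero-frequency term---matches the paper. However, several of your identifications are wrong, and one key technical device is missing.

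First, $\Sigma(0)$ is not the contribution of central classes with $D=T^2-4N=0$; there are no such elliptic classes. In the paper $\Sigma(0)$ is what remains of the $\xi=0$ (zero-frequency) term \emph{after} the two residues giving the $1$-dimensional and Eisenstein traces have been extracted; it consists of two explicit contour integrals (over $(-1)$ and over a deformed contour $\cC_v$) and carries no independent representation-theoretic meaning. Likewise $\Sigma(\square)$ is not a contribution that is ``split off'' from the elliptic sum; it is the artificial completion term one adds (and later subtracts) so that the sum over $T\in\ZZ^S$ runs over all $T$, including those with $T^2-4N$ a square, making Poisson summation legitimate.

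Second, $\xi_0$ is not a quadratic character. It is the unitary principal series induced from the trivial character of the Borel, and the term $-\tfrac12\sum_\mu\Tr((\xi_0\otimes\mu)(f^n))$ arises from a residue at $s=0$ (not $s=1/2$) of the second Mellin integral, detected precisely when $\iota=0$ and all $\epsilon_i=1$. The residues at $s=\pm 1/2$ in the two integrals cancel against each other because $\widetilde F$ is odd; they do not produce the Eisenstein term.

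Third, and most importantly, you treat the orbital integrals at the ramified primes $q_i\in S$ as inert ``black-box coefficients'' carried through a Poisson summation on $\RR$. The paper does something substantially different: it performs Poisson summation on the \emph{semilocal} space $\QQ_S=\RR\times\QQ_{q_1}\times\cdots\times\QQ_{q_r}$, with $\ZZ^S$ as the discrete lattice. The functions $\theta_{q_i}^{\pm,\nu}$ built from $\orb(f_{q_i};\cdot)$ are genuinely integrated over $\QQ_{q_i}$, and their singularities near the center (controlled by the Shalika germ expansion) must be analyzed to verify the Poisson hypotheses. This semilocal step is also what produces the sum over $\mu$: after the residue extraction one is left with integrals over $N\in\ZZ_{q_i}^\times$, and the identification with $\sum_\mu\Tr(\mu(f^n))$ comes from Fourier inversion on $\ZZ_{S_\fin}^\times$, using the bijection between characters of $\RR_{>0}\QQ^\times\prod_{\ell\notin S}\ZZ_\ell^\times\backslash\AA^\times$ and of $\ZZ_{S_\fin}^\times$. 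Your proposal does not contain this mechanism, and without it there is no visible reason the full family of $\mu$ should appear. The hypothesis $2\in S$ is used concretely (the Kronecker symbol $\legendresymbol{\cdot}{k}$ then has period exactly $k$, and the local Kloosterman computations avoid $\ell=2$), not merely to absorb $2$-adic bookkeeping into $f_2$.
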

Thus, we have isolated the \emph{$1$-dimensional term} $\sum_{\mu}\Tr(\mu(f^n))$ and the \emph{Eisenstein term} $\sum_{\mu}\Tr((\xi_0\otimes\mu)(f^n))$ in the elliptic part of the trace formula. (The names of these terms will be explained at the end of \autoref{sec:isolation}.) The isolated representations do \emph{not} merely include the trivial representation and the Eisenstein series as in \cite{altug2015}, which will be explained in \autoref{rem:1dim}.

The proof 
is divided into the following steps:
\begin{enumerate}[itemsep=0pt,parsep=0pt,topsep=0pt,leftmargin=0pt,labelsep=3pt,itemindent=9pt,label=\textbullet]
  \item \textbf{Elementary computation.} We explicitly compute the local orbital integral in the unramified case and the germ expansion in the ramified case of the elliptic part of the trace formula. Then, we take the product of all the local data and perform some simplifications.
  \item \textbf{Poisson summation.}   We apply the Poisson summation formula to the derived expression. The main difference is that we use "semilocal" analysis: the analysis on $\QQ_S=\RR\times \QQ_{q_1}\times \dots \times \QQ_{q_r}$, rather than just $\RR$ as in the unramified case. As in \cite{altug2015}, some smoothing issues arise. We use the approximate functional equation for the partial Zagier $L$-function to smooth the function at the archimedean place and conduct a technical analysis to handle the functions at the nonarchimedean places.
  \item \textbf{Isolation of special terms.} After applying the Poisson summation formula, we isolate the $\xi=0$ term. Then, we apply the contour shifting method. To do this, we verify the analytic continuation of a Kloosterman-type Dirichlet series, which can be computed explicitly. Finally, we compute the residues of specific functions to isolate the desired terms.
  \item \textbf{Computation of special terms.} We use the Weyl integration formula to compute the local terms of the traces of $1$-dimensional representations and their twists with the Eisenstein series. Next, we take the product of all local terms and use the Fourier inversion formula to show that the special terms are precisely the $1$-dimensional term and the Eisenstein term, respectively.
\end{enumerate}

Throughout the paper, we encounter numerous nonarchimedean and global analogs of Altu\u{g}'s approach, which addressed archimedean or unramified cases. Readers familiar with \cite{altug2015} will notice many similarities. A comparison is provided in the following table, where the italicized texts refer to \cite{altug2015}.

\begin{center}
\begin{tabular}{|c|c|c|}
\hline
\textbf{Archimedean/Unramified Case} & \textbf{Nonarchimedean Case} & \textbf{Global/Ramified Case}\\
\hline
\emph{Theorem 1.1} (\autoref{thm:maintheoremunramify})& N/A & \autoref{thm:maintheoremramify} (\autoref{thm:finaltrace})\\
\hline
\emph{Section 2.2.3} (\autoref{thm:archimedeanintegral})& \autoref{cor:shalika} & N/A\\
\hline
\emph{Section 2.2.4} & N/A & \autoref{thm:ellipticpart}\\
\hline
\emph{Proposition 3.1} (\autoref{thm:funceqndeltaunramify}) & N/A & \autoref{thm:funceqndelta}\\
\hline
\emph{Proposition 3.4} & N/A & \autoref{thm:afe}\\
\hline
\emph{Lemma 4.1} (\autoref{lem:smooth}) & Proof in \autoref{lem:rapiddecay} & \autoref{lem:rapiddecay}\\
\hline
\emph{Theorem 4.2} & N/A & \autoref{thm:ellipticpoisson}\\
\hline
\emph{Corollary 5.4} & N/A & \autoref{cor:dirichletglobal}\\
\hline
\emph{Theorem 6.1} & N/A & \autoref{thm:tracexi0}\\
\hline
\emph{1$^{st}$ equality in Lemma 6.2} (\autoref{prop:archimedeantrace}) & \autoref{prop:ramifiedtrace} & \autoref{thm:maintermcompute}\\
\hline
\emph{2$^{nd}$ equality in Lemma 6.2} (\autoref{prop:archimedeantraceeisenstein}) & \autoref{prop:ramifiedtraceeisenstein} & \autoref{thm:eisensteincompute}\\
\hline
\end{tabular}
\end{center}

\subsection{Notations}
\begin{enumerate}[itemsep=0pt,parsep=0pt,topsep=0pt,leftmargin=0pt,labelsep=3pt,itemindent=9pt,label=\textbullet]
  \item $\# X$ denotes the number of elements in a set $X$.
  \item For $A\subseteq X$, $\triv_A$ denotes the characteristic function of $A$, defined by $\triv_A(x)=1$ for $x\in A$ and $\triv_A(x)=0$ for $x\notin A$.
  \item $\triv$ also denotes the trivial character or the trivial representation.
  \item For $x\in \RR$, $\lfloor x\rfloor$ denotes the greatest integer that is less than or equal to $x$.
  \item We often use the notation $a\equiv b\,(n)$ to denote $a\equiv b\pmod n$.
  \item For $D\equiv 0,1\pmod 4$, $\legendresymbol{D}{\cdot}$ denotes the Kronecker symbol.
  \item If $R$ is a ring (which we \emph{always} assume to be commutative with $1$), $R^\times$ denotes its group of units.
  \item $\SS^1$ denotes the group of complex numbers with absolute value $1$. 
  \item For any locally compact abelian group $G$, $\widehat{G}$ or $G\sphat\ $ denotes its Pontryagin dual, which is the set of continuous homomorphisms from $G$ to $\SS^1$, equipped with the compact-open topology.
  \item If $E$ is a number field or a nonarchimedean local field, $\cO_E$ denotes its ring of integers.
  \item For any prime $p$, \index{kp@$\cK_p$} $\cK_p=\G(\ZZ_p)$ denotes the standard maximal compact subgroup of $\G(\QQ_p)$, and \index{ip@$\cI_p$}
  \[
  \cI_p=\left\{\begin{pmatrix}
               a & b \\
               c & d 
             \end{pmatrix}\in \G(\ZZ_p)\,\middle|\, p\mid c\right\}
  \]
  denotes the Iwahori subgroup of $\G(\QQ_p)$.
  \item $\mf{S}$ denotes the set of places of $\QQ$.
  \item For $S=\{\infty, q_1,\dots,q_r\}$, $\ZZ^S$ denotes the ring of $S$-integers in $\QQ$, that is, \index{zuppers@$\ZZ^S$}
    \[
    \ZZ^S=\{\alpha\in \QQ\ |\ v_\ell(\alpha)\geq 0\ \text{for all}\ \ell\notin S\}.
    \]
    Additionally, we define \index{qs@$\QQ_S$}\index{qsfin@$\QQ_{S_\fin}$}
    \[
    \QQ_S=\prod_{v\in S}\QQ_v=\RR\times \QQ_{q_1}\times\dots\times\QQ_{q_r}\quad\text{and}\quad\QQ_{S_\fin}=\prod_{v\in S_\fin}\QQ_v=\QQ_{q_1}\times\dots\times\QQ_{q_r},
    \]
    and \index{zsfin@$\ZZ_{S_\fin}$}
    \[
    \ZZ_{S_\fin}=\prod_{v\in S_\fin}\ZZ_v=\ZZ_{q_1}\times\dots\times\ZZ_{q_r}.
    \]
  \item For $n\in \ZZ$, we write $\gcd(n,S)=1$ or $n\in \ZZ_{(S)}$ \index{zlowers@$\ZZ_{(S)}$} if 
  $p\nmid n$ for all $p\in S$. 
  We write $n\in \ZZ_{(S)}^{>0}$ \index{zlowersg@$\ZZ_{(S)}^{>0}$} if additionally $n>0$.
  \item For any prime $p$, $v_p$ denotes the $p$-adic valuation. For $a\in\QQ$, we define 
    $a_{(p)}=p^{v_p(a)}$ and
    $a^{(p)}=a/a_{(p)}$. Moreover, we define \index{alowerq@$a_{(q)}$} \index{aupperq@$a^{(q)}$} 
    \[
    a_{(q)}=\prod_{i=1}^{r}q_i^{v_{q_i}(a)}\quad\text{and}\quad a^{(q)}=\frac{a}{a_{(q)}}=\prod_{p\notin S}p^{v_{p}(a)}.
    \]
  \item $\Gamma(s)$ denotes the gamma function, defined by
  \[
  \Gamma(s)=\int_{0}^{+\infty}\rme^{-x}x^s\frac{\rmd x}{x}
  \]
  for $\Re s>0$, analytically continued to $\CC$. $\zeta(s)$ denotes the Riemann zeta function, defined by
  \[
    \zeta(s)=\sum_{n=1}^{+\infty}\frac{1}{n^s}
  \]
  for $\Re s>1$, analytically continued to $\CC$.
  \item $(\sigma)$ denotes the vertical contour from $\sigma-\rmi\infty$ to $\sigma+\rmi\infty$.
  \item We define \index{ex@$\rme(x)$} \index{einftyx@$\rme_\infty(x)$} $\rme(x)=\rme_\infty(x)=\rme^{\dpii x}$. For a prime $p$ and $x\in \QQ_p$, we define $\langle x\rangle_p$ to be the "fractional part" of $x$. Namely, if $x=\sum_{n\geq -N}a_np^n\in \QQ_p$, then 
      \[
      \langle x\rangle_p=\sum_{n=-N}^{-1}a_np^n\in \QQ.
      \]
      We then define \index{epy@$\rme_p(y)$}$\rme_p(x)=\rme(-\langle x\rangle_p)$ for $x\in \QQ_p$. \emph{Note the minus sign}.
  \item We use $f(x)=O(g(x))$ or $f(x)\ll g(x)$ to denote that there exists a constant $C$ such that $|f(x)|\leq C|g(x)|$ for all $x$ in a specified set. If the constant depends on other variables, they will be subscripted under $O$ or $\ll$.
  \item The notation $f(x)\asymp g(x)$ indicates that $f(x)\ll g(x)$ and $g(x)\ll f(x)$. If the constant depends on other variables, they will be subscripted under $\asymp$.
  \item We use $\square$ as the end of the proof and use $\blacksquare$ as the end of the proof of a lemma that is inserted in the middle of another proof.
\end{enumerate}

\section{Elementary computations}\label{sec:preliminary}
\subsection{Elliptic part of the trace formula}\label{subsec:testfunction}
Let $\G=\GL_2$. An element $\gamma\in \G(\QQ)$ is called \emph{elliptic} if its characteristic polynomial is irreducible over $\QQ$. The centralizer of $\gamma$ in $\G$ is denoted by $\G_\gamma$, that is, $\G_\gamma(A)=\{g\in \G(A)\,|\, g\gamma=\gamma g\}$ for any $\QQ$-algebra $A$. 

\begin{proposition}\label{prop:resgroup}
Let $\gamma\in \G(\QQ)$ be an elliptic element. Then there exists a quadratic extension $E/\QQ$ such that $\G_\gamma=\Res_{E/\QQ} \GL_1$, where $\Res$ denotes the Weil restriction of algebraic groups.
\end{proposition}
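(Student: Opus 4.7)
The plan is to identify $\G_\gamma$ with the functor $A \mapsto (E \otimes_\QQ A)^\times$, where $E$ is the natural quadratic extension attached to $\gamma$, and then recognize this as the functor of points of $\Res_{E/\QQ} \GL_1$.

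First I define $E = \QQ[\gamma] \subset \M_2(\QQ)$. Let $p(x) \in \QQ[x]$ denote the characteristic polynomial of $\gamma$, which is irreducible of degree $2$ by ellipticity. Combining Cayley--Hamilton with this irreducibility shows that the $\QQ$-algebra map $\QQ[x]/(p(x)) \to \M_2(\QQ)$, $x \mapsto \gamma$, is injective, so $E$ is a field isomorphic to the quadratic extension $\QQ[x]/(p(x))$.

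Next I compute the centralizer scheme. The condition $g\gamma = \gamma g$ on $g \in \M_2$ is $\QQ$-linear in the entries of $g$, so the centralizer $Z_\gamma \subset \M_2$ is a $\QQ$-linear subspace of affine $4$-space; consequently $Z_\gamma(A) = Z_\gamma(\QQ) \otimes_\QQ A$ for every $\QQ$-algebra $A$. The inclusion $E \subseteq Z_\gamma(\QQ)$ is clear from the definition of $E$. For the reverse inclusion I pass to $\overline{\QQ}$: since $p(x)$ is separable in characteristic zero, $\gamma$ has two distinct eigenvalues over $\overline{\QQ}$ and is diagonalizable, so its centralizer in $\M_2(\overline{\QQ})$ is the $2$-dimensional algebra of matrices diagonal in an eigenbasis. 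This forces $\dim_\QQ Z_\gamma(\QQ) = 2$, hence $Z_\gamma(\QQ) = E$ and $Z_\gamma(A) = E \otimes_\QQ A$ functorially in $A$.

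Finally, taking groups of units yields $\G_\gamma(A) = (E \otimes_\QQ A)^\times = (\Res_{E/\QQ}\GL_1)(A)$ naturally in $A$, and Yoneda delivers the claimed isomorphism of affine group schemes over $\QQ$. The only nontrivial input is the linear-algebra fact that an elliptic matrix has a two-dimensional centralizer, and the base-change compatibility needed for the scheme-theoretic identification is automatic because the centralizer is cut out by $\QQ$-linear equations in $\M_2$; consequently there is no serious obstacle in the proof.
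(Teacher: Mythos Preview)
Your proof is correct and is precisely the standard argument one finds in the reference the paper cites (Kottwitz, \SSec 5.9): identify $E=\QQ[\gamma]$ via Cayley--Hamilton, compute the centralizer in $\M_2$ as $E\otimes_\QQ A$ by a dimension count over $\overline{\QQ}$, and pass to units. The paper gives no independent argument beyond that citation, so there is nothing further to compare.
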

\begin{proof}
See \cite[Section 5.9]{kottwitz2005}. We just remark that $E$ is the splitting field of the characteristic polynomial of $\gamma$. 
\end{proof}

We denote by $E_\gamma$ the quadratic field determined by $\gamma$ in this proposition and call $E_\gamma$ the \emph{associated quadratic field} of $\gamma$.

Let $\mf{S}$ \index{smf@$\mf{S}$} be the set of places of $\QQ$ and let $\AA$ be the ad\`ele ring of $\QQ$. Recall from \cite[Section 2.2]{altug2015} that the elliptic part of the trace formula for functions of the form $f=\bigotimes_{v\in \mf{S}}' f_v$ on $\G(\AA)$ is
\begin{equation}\label{eq:elliptictrace}
I_\el(f)=\sum_{\gamma\in \G(\QQ)^\#_\el}\vol(\gamma)\prod_{v\in \mathfrak{S}}\orb(f_v;\gamma),
\end{equation}
where $ \G(\QQ)^\#_\el$ denotes the set of elliptic conjugacy classes in $\G(\QQ)$, and
\[
\orb(f_v;\gamma)=\int_{\G_\gamma(\QQ_v)\bs \G(\QQ_v)} f_v(g^{-1}\gamma g)\rmd g,
\qquad
\vol(\gamma)=\int_{Z_+\G_\gamma(\QQ)\bs \G_\gamma(\AA)} \rmd g,
\]
where $Z_+$ \index{zplus@$Z_+$} denotes the connected component of the identity matrix in the center $\Z(\RR)$ of $\G(\RR)$. The measures of $\G_\gamma(\QQ_v)$ and $\G(\QQ_v)$ are normalized as follows. For $\G(\QQ_v)$, 
\begin{enumerate}[itemsep=0pt,parsep=0pt,topsep=0pt,leftmargin=0pt,labelsep=3pt,itemindent=9pt,label=\textbullet]
  \item If $v=p$ is a prime, we normalize the Haar measure on $\G(\QQ_p)$ such that the volume of $\G(\ZZ_p)$ is $1$.
  \item If $v=\infty$, we choose an arbitrary Haar measure (which does not affect the following discussions).
\end{enumerate} 
For $\G_\gamma(\QQ_v)\cong E_v^\times$ (by \autoref{prop:resgroup}),
\begin{enumerate}[itemsep=0pt,parsep=0pt,topsep=0pt,leftmargin=0pt,labelsep=3pt,itemindent=9pt,label=\textbullet]
  \item If $v=p$ is a prime, we know that $E_p$ is either $\QQ_p^2$ or a quadratic extension of $\QQ_p$. In the first case, we let $\G_\gamma(\ZZ_p)=\ZZ_p^\times \times \ZZ_p^\times$. In the second case, we let $\G_\gamma(\ZZ_p)=\cO_{E_p}^\times$.
  In each case, we normalize the Haar measure on $\G_\gamma(\QQ_p)=E_p^\times$ such that the volume of $\G_\gamma(\ZZ_p)$ is $1$.
  \item If $v=\infty$, we know that $E_v$ is either $\RR^2$ or $\CC$. If $E_v=\RR^2$, we choose the measure on $E_v^\times=\RR^\times \times \RR^\times$ to be $\rmd x\rmd y/|xy|$, where $(x,y)$ is the coordinate of $\RR^2$. The action of $Z_+$ on $E_v^\times$ is $a\cdot(x,y)=(ax,ay)$, and we define the measure on $Z_+\bs E_v^\times$ to be the measure of the quotient of $E_v^\times$ by the measure $\rmd a/a$ on $Z_+$. If $E_v=\CC$, we choose the  measure on $E_v^\times=\CC^\times$ to be $2\rmd r\rmd \theta/r$, where we use the polar coordinate $(r,\theta)$ on $\CC^\times$. The action of $Z_+$ on $E_v^\times$ is $a\cdot z=az$, and we define the measure on $Z_+\bs E_v^\times$ to be the measure of the quotient of $E_v^\times$ by the measure $\rmd a/a$ on $Z_+$.
\end{enumerate} 
\begin{remark}
The measure normalization is taken in \cite{finis2011}, which is \emph{different} from that in \cite{langlands2004} used by Altu\u{g}. If we followed Langlands' measure normalization, the action of $Z_+$ on  $\G_\gamma(\RR)$ is multiplying $\sqrt{a}$ rather than by $a$. We remark that this will double the elliptic part considered by Langlands and Altu\u{g}.
\end{remark}

From the definition of $f^n$ in the introductory section, the elliptic part of the trace formula for $f^n$ is
\[
\sum_{\gamma\in \G(\QQ)^\#_\el}\vol(\gamma)n^{-\frac12}\orb(f_\infty;\gamma)\prod_{i=1}^{r}\orb(f_{q_i};\gamma) \prod_{p\notin S}\orb(\triv_{\cX_p^{n_p}};\gamma).
\]

Now we compute the volume of $\gamma$ using these measure normalizations.
\begin{theorem}\label{thm:volumegamma}
Let $\gamma$ be an elliptic element in $\G(\QQ)$ and let $E=E_\gamma$ be the associated quadratic field. Then $\vol(\gamma)=2\rho\sqrt{|D|}$, where $\rho=\rho_E$ denotes the residue of the Dedekind zeta function $\zeta_E(s)$ at $s=1$ and $D=D_E$ is the discriminant of $E$.
\end{theorem}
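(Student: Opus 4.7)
The plan is to leverage \autoref{prop:resgroup} together with the analytic class number formula. By that proposition $\G_\gamma\cong\Res_{E/\QQ}\GL_1$, so $\G_\gamma(\QQ)=E^\times$ and $\G_\gamma(\AA)=\AA_E^\times$, and the prescribed local measures transport to standard Haar measures on each $E_v^\times$. The identification sends a scalar matrix $aI$ to the image of $\sqrt{a}$ under $\QQ\hookrightarrow E$ (so that the determinant $a^2$ matches $N_{E/\QQ}(\sqrt{a})$), which accounts for the $\sqrt{a}$ appearing in the prescribed $Z_+$-action. Consequently
\[
\vol(\gamma)=\int_{Z_+E^\times\bs\AA_E^\times}\rmd g.
\]

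In view of the analytic class number formula
\[
\rho_E=\frac{2^{r_1}(2\pi)^{r_2}h_ER_E}{w_E\sqrt{|D_E|}},
\]
where $(r_1,r_2)$ is the signature of $E$, $h_E$ its class number, $R_E$ its regulator, and $w_E=\#\mu(E)$, the theorem reduces to the identity $\vol(Z_+E^\times\bs\AA_E^\times)=2^{r_1}(2\pi)^{r_2}h_ER_E/w_E$. I would obtain this from the standard idele-class decomposition induced by the exact sequence
\[
1\to\cO_E^\times\to E_\infty^\times\times\widehat{\cO}_E^\times\to\AA_E^\times/E^\times\to\mathrm{Cl}(E)\to 1,
\]
which yields
\[
\vol(Z_+E^\times\bs\AA_E^\times)=h_E\cdot\vol(\widehat{\cO}_E^\times)\cdot\vol(Z_+\cO_E^\times\bs E_\infty^\times),
\]
and since $\vol(\widehat{\cO}_E^\times)=\prod_\ell\vol(\cO_{E_\ell}^\times)=1$ by the chosen finite normalization, what remains is to show $\vol(Z_+\cO_E^\times\bs E_\infty^\times)=2^{r_1}(2\pi)^{r_2}R_E/w_E$.

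For the archimedean piece I split into the two cases. For $E$ imaginary quadratic, the parametrization $\CC^\times\cong Z_+\times S^1$ via $z=\sqrt{a}\rme^{\rmi\theta}$ converts the measure $2\rmd r\rmd\theta/r$ into the product $(\rmd a/a)\rmd\theta$, so that $Z_+\bs\CC^\times$ inherits the measure $\rmd\theta$ on $S^1$, and a further quotient by $\mu(E)$ yields $2\pi/w_E=(2\pi)^{r_2}R_E/w_E$. For $E$ real quadratic, the change of variables $(x,y)=(\pm\rme^u,\pm\rme^v)$ followed by the substitution $(p,q)=(u+v,u-v)$ turns the $Z_+$-action into translation along $p$, and a fundamental unit $\epsilon$ then acts on $q$ by translation of length $2R_E$; combining the resulting factor $R_E$ on the anti-diagonal with the four sign components paired by $\{\pm 1\}\subset\mu(E)$ yields $2R_E=2^{r_1}R_E/w_E$.

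The main obstacle is nothing deep but rather the careful reconciliation of all the normalization conventions: the $\sqrt{a}$ in the $Z_+$-action, the measures $\rmd x\rmd y/|xy|$ and $2\rmd r\rmd\theta/r$ at infinity, the Jacobians of the logarithmic and $(p,q)$-changes of variables, and the gluing of sign components under $\{\pm 1\}$ must all align so that the constants $2^{r_1}$, $(2\pi)^{r_2}$, $R_E$, and $w_E$ emerge with the correct multiplicities. Once this bookkeeping is done, combining the adelic decomposition with the class number formula yields $\vol(\gamma)=\rho\sqrt{|D|}$.
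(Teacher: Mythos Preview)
Your approach is correct and follows the same route as the paper: both reduce $\vol(\gamma)$ to the volume of $Z_+E^\times\bs\AA_E^\times$ via \autoref{prop:resgroup}. The paper then simply cites Tate's thesis (with a remark that the chosen measure normalization introduces the factor $\sqrt{|D|}$), whereas you unwind that citation by invoking the analytic class number formula and computing the archimedean quotient volume by hand. Your archimedean bookkeeping checks out in both the real and imaginary cases.

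One expository point: your sentence ``the identification sends a scalar matrix $aI$ to the image of $\sqrt{a}$'' is not right. Under $\G_\gamma\cong\Res_{E/\QQ}\GL_1$ the scalar $bI$ corresponds to $b\in E^\times$, and $\det(bI)=b^2=N_{E/\QQ}(b)$. The $\sqrt{a}$ in the paper's action formula arises because the paper parametrizes $Z_+$ by $a$ so that the corresponding central element is $\sqrt{a}\,I$ (equivalently, $a$ is the determinant), which is why $\rmd a/a$ matches $2\,\rmd r/r$ in your polar computation. This misdescription does not affect your actual calculation, since you correctly use the stated action $a\cdot z=\sqrt{a}\,z$ and measure $\rmd a/a$ throughout.
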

\begin{proof}
The volume of $\gamma$ is the volume of $Z_+\G_\gamma(\QQ)\bs \G_\gamma(\AA)=Z_+E^\times\bs \AA_E^\times$ by \autoref{prop:resgroup}, where $\AA_E$ denotes the ad\`ele ring of $E$, and the volume equals $2\rho\sqrt{|D|}$. (See \cite[Theorem 4.3.2]{tate1950}. Note that we chose a different normalization of measure, so the factor $2$ and $\sqrt{|D|}$ should be multiplied.)
\end{proof}

\begin{corollary}\label{cor:volumegamma}
Let $\gamma$ be an elliptic element of $\G(\QQ)$ and let $E=E_\gamma$ be the associated quadratic field. Let $D$ be the discriminant of $E$ and $\chi$ be the quadratic character associated with $E$. Then $\vol(\gamma)=2\sqrt{|D|}L(1,\chi)$.
\end{corollary}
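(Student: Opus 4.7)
The plan is to combine \autoref{thm:volumegamma} with the standard factorization of the Dedekind zeta function of a quadratic field. Specifically, since $E/\QQ$ is a quadratic extension with associated quadratic character $\chi$, class field theory (or a direct comparison of Euler factors using the splitting behavior of primes in $E$) gives the identity
\[
\zeta_E(s)=\zeta(s)L(s,\chi)
\]
as meromorphic functions on $\CC$.

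I would then take residues at $s=1$. The Riemann zeta function $\zeta(s)$ has a simple pole at $s=1$ with residue $1$, while $L(s,\chi)$ is entire (as $\chi$ is a nontrivial Dirichlet character) and in particular holomorphic at $s=1$. Therefore
\[
\rho_E=\res_{s=1}\zeta_E(s)=\bigl(\res_{s=1}\zeta(s)\bigr)L(1,\chi)=L(1,\chi).
\]
Substituting into \autoref{thm:volumegamma} yields $\vol(\gamma)=\sqrt{|D|}\,L(1,\chi)$, as claimed.

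There is no real obstacle here; the only subtlety is ensuring that the character $\chi$ appearing in the factorization of $\zeta_E$ agrees with the quadratic character associated with $E$ in the statement, which is the standard convention (the Kronecker symbol $\legendresymbol{D}{\cdot}$ of the discriminant $D$). Thus this is a one-line consequence of the preceding theorem together with a classical identity, and the proof will be correspondingly short.
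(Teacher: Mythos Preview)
Your proposal is correct and matches the paper's proof essentially verbatim: the paper also uses the factorization $\zeta_E(s)=\zeta(s)L(s,\chi)$ to conclude $\rho_E=L(1,\chi)$ and then invokes \autoref{thm:volumegamma}.
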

\begin{proof}
Since $\zeta_E(s)=\zeta(s)L(s,\chi)$, we get $L(1,\chi)=\rho_E$. The corollary now follows from \autoref{thm:volumegamma}.
\end{proof}

\subsection{Local computation}
\begin{definition}\label{def:defk}
Suppose that $\gamma$ is a regular semisimple element in $\G(\QQ_p)$ and let $E_p$ be the centralizer of $\gamma$ in $\M_2(\QQ_p)$ so that $E_p$ is either $\QQ_p^2$ or a quadratic extension of $\QQ_p$. There exists $\Delta\in E_p$ such that the integral closure of $\ZZ_p$ in $E_p$ is $\ZZ_p\oplus\ZZ_p\Delta$. Let $\cdot\mapsto\overline{\cdot}$ denote the nontrivial element in $\Gal(E_p/\QQ_p)$. Let $\gamma_1,\gamma_2\in E_p$ be the eigenvalues of $\gamma$. Suppose that $\gamma_1-\gamma_2=\kappa(\Delta-\overline{\Delta})$ for some $\kappa\in \QQ_p^\times$, we define $k=v_p(\kappa)\in \ZZ$.
\end{definition}
Clearly such a $\kappa$ always exists, and $k$ is independent of the choice of $\Delta$. 


We provide an alternative characterization of $k$. For any $\gamma\in \G(\QQ_p)$, let $T=\Tr \gamma$ and $N=\det\gamma$. (Note that in \cite{langlands2004} and \cite{altug2015}, they both set $N=4\det \gamma$, but the notation $N=\det\gamma$ is more convenient for our computation in the last two sections.) Write $T^2-4N=\sigma^2 D$, where $v_p(D)\in\{0,1\}$ for $p\neq 2$, and $D\in \ZZ_p$ with $v_p(D)\in\{0,2,3\}$ and $D\equiv 0,1\,(4)$ for $p=2$.

\begin{proposition}\label{prop:kell}
Under the above assumptions, $k_p=v_p(\sigma)$.
\end{proposition}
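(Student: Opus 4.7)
The plan is to compute $(\gamma_1-\gamma_2)^2$ in two different ways and compare $\ell$-adic valuations. On one hand, expanding directly gives $(\gamma_1-\gamma_2)^2=(\gamma_1+\gamma_2)^2-4\gamma_1\gamma_2=T^2-4N=\sigma^2 D$. On the other hand, the relation $\gamma_1-\gamma_2=\kappa(\Delta-\overline{\Delta})$ defining $\kappa$ yields $(\gamma_1-\gamma_2)^2=\kappa^2(\Delta-\overline{\Delta})^2$. Equating these and applying $v_\ell$ produces
\[
2v_\ell(\sigma)+v_\ell(D)=2k_\ell+v_\ell((\Delta-\overline{\Delta})^2).
\]
Hence the proposition reduces to verifying the identity $v_\ell(D)=v_\ell((\Delta-\overline{\Delta})^2)$.

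The quantity $(\Delta-\overline{\Delta})^2$ is precisely the discriminant of the $\ZZ_\ell$-algebra $\cO_{E_\ell}=\ZZ_\ell\oplus\ZZ_\ell\Delta$: if $\Delta$ satisfies $x^2-bx+c=0$ over $\ZZ_\ell$, then $(\Delta-\overline{\Delta})^2=b^2-4c$. By the standard classification of quadratic discriminants, its $\ell$-adic valuation depends only on the extension type---namely $0$ when $E_\ell$ is split or unramified, $1$ in the ramified case for odd $\ell$, and in $\{2,3\}$ in the ramified case for $\ell=2$. Meanwhile, since $E_\ell=\QQ_\ell(\gamma_1-\gamma_2)=\QQ_\ell(\sqrt{T^2-4N})=\QQ_\ell(\sqrt{D})$, the elements $D$ and $(\Delta-\overline{\Delta})^2$ lie in the same class of $\QQ_\ell^\times/(\QQ_\ell^\times)^2$, so $v_\ell(D)\equiv v_\ell((\Delta-\overline{\Delta})^2)\pmod 2$.

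For $\ell\neq 2$, the imposed constraint $v_\ell(D)\in\{0,1\}$ combined with the parity matching above immediately forces $v_\ell(D)=v_\ell((\Delta-\overline{\Delta})^2)$, and the proposition follows. The main obstacle is the case $\ell=2$, where both valuations can lie in $\{0,2,3\}$ and parity alone does not distinguish $0$ from $2$. Here I would enumerate the seven non-trivial classes in $\QQ_2^\times/(\QQ_2^\times)^2$ and verify case by case that the normalization $D\equiv 0,1\pmod 4$, together with the implicit maximality of $v_2(\sigma)$, selects the fundamental discriminant representative of the square class, whose $2$-adic valuation coincides with that of the discriminant of $\cO_{E_2}$. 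This reduces the remaining argument to a finite check, after which the proposition is complete.
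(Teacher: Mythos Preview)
Your reduction to the identity $v_\ell(D)=v_\ell\bigl((\Delta-\overline\Delta)^2\bigr)$ is correct and is in fact exactly what the paper proves as well---but the paper does it by a more direct route that avoids the enumeration of square classes you propose at $\ell=2$. Instead of arguing abstractly, the paper simply writes down an explicit generator $\Delta$ of $\cO_{E_\ell}$ in terms of $D$ and reads off $\Delta-\overline\Delta$: for $\ell\neq 2$ one takes $\Delta=\sqrt{D}$, giving $\Delta-\overline\Delta=2\sqrt{D}$; for $\ell=2$ one takes $\Delta=(1+\sqrt{D})/2$ when $D\equiv 1\pmod 4$ and $\Delta=\sqrt{D}/2$ when $D\equiv 0\pmod 4$, giving $\Delta-\overline\Delta=\sqrt{D}$ in either case. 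Since $\gamma_1-\gamma_2=\pm\sigma\sqrt{D}$, one obtains $\kappa=\pm\sigma$ up to a unit in every case, hence $k_\ell=v_\ell(\sigma)$ immediately.

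What your approach gains is a uniform conceptual statement (both quantities are the local discriminant), at the cost of the finite check at $\ell=2$ that you leave unexecuted. The paper's explicit choice of $\Delta$ makes the $\ell=2$ case no harder than the odd case and requires no separate enumeration. Note also that both arguments implicitly rely on $D$ being normalized as the local fundamental discriminant (so that the stated formulas for $\cO_{E_\ell}$ hold), not merely on the congruence $D\equiv 0,1\pmod 4$; you allude to this with ``implicit maximality of $v_2(\sigma)$,'' and it is likewise used without comment in the paper's proof.
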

\begin{proof}
For $p\neq 2$, we have $\cO_{E_p}=\ZZ_p\oplus \ZZ_p\sqrt{D}$, where $\sqrt{D}$ is understood as usual if $E_p$ is a field, and as $(\sqrt{D},-\sqrt{D})$ if $E_p=\QQ_p^2$. Hence we can choose $\Delta=\sqrt{D}$. Since
\[
\gamma_{1,2}=\frac{T\pm\sqrt{T^2-4N}}{2},
\]
we have $\gamma_1-\gamma_2=\pm \sqrt{T^2-4N}=\pm \sigma\sqrt{D}$. Since $\Delta-\overline{\Delta}=2\sqrt{D}$, it follows that $k_p=v_p(\sigma/2)=v_p(\sigma)$.

For $p=2$ and $D\equiv 1\pmod 4$ we have 
\[
\cO_{E_p}=\ZZ_p\oplus \ZZ_p\frac{1+\sqrt{D}}{2},
\]
where $\sqrt{D}$ is considered as in the $p\neq 2$ case. Hence we can choose $\Delta=(1+\sqrt{D})/2$. Since $\Delta-\overline{\Delta}=\sqrt{D}$ and $\gamma_1-\gamma_2=\pm \sigma\sqrt{D}$, we get $k_p=v_p(\sigma)$. For $D\equiv 0\pmod 4$ we have 
\[
\cO_{E_p}=\ZZ_p\oplus \ZZ_p\frac{\sqrt{D}}{2},
\]
and the conclusion follows similarly.
\end{proof}

For any elliptic element $\gamma\in \G(\QQ)$, we denote $T=\Tr \gamma$ and $N=\det\gamma$ and write $T^2-4N=\sigma^2 D$, where $D\equiv 0,1\pmod 4$ is the fundamental discriminant of $E_\gamma$. By \autoref{prop:kell}, $k_\gamma=v_p(\sigma)$. From this we know that for $p\neq 2$, 
\[
k_\gamma=\left\lfloor\frac{v_p(T^2-4N)}{2}\right\rfloor.
\]
For $p=2$, if $v_2(T^2-4N)$ is odd, then $k_\gamma=(v_2(T^2-4N)-3)/2$. If $v_2(T^2-4N)$ is even, writing $T^2-4N=2^{v_2(T^2-4N)}y_0$, then
\[
k_\gamma=\begin{cases}
  v_2(T^2-4N)/2, & \text{if $y_0\equiv 1\pmod 4$}, \\
  (v_2(T^2-4N)-2)/2, & \text{if $y_0\equiv 3\pmod 4$}.
\end{cases}
\] 

The local orbital integrals can be computed for easy functions.
\begin{theorem}\label{thm:maximalcompact}
Let $\gamma$ be an elliptic element in $\G(\QQ)$ and let $E=E_\gamma$ be the associated quadratic field. Let $\chi$ be the character associated with $E$. Let $p$ be a prime number. If $\gamma$ is integral in $E_p$ and $\mathopen{|}\det\gamma\mathclose{|}_p=p^{-m}$, then
\begin{equation}\label{eq:maximallocal}
\orb(\triv_{\cX_p^{m}};\gamma)=1+\sum_{j=1}^{k}p^j\left(1-\frac{\chi(p)}{p}\right),
\end{equation}
and $\orb(\triv_{\cX_p^{m}};\gamma)=0$ otherwise.
\end{theorem}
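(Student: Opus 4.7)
The plan is to interpret the orbital integral as a sum over $E_\ell^\times=\G_\gamma(\QQ_\ell)$-orbits of $\gamma$-stable $\ZZ_\ell$-lattices in $\QQ_\ell^2$, each weighted by an explicit index, and then to stratify by the endomorphism order. Using the bijection $\G(\QQ_\ell)/\G(\ZZ_\ell)\leftrightarrow\{\text{$\ZZ_\ell$-lattices in }\QQ_\ell^2\}$ via $g\G(\ZZ_\ell)\mapsto L=g\ZZ_\ell^2$, together with the measure normalizations $\vol(\G(\ZZ_\ell))=\vol(\G_\gamma(\ZZ_\ell))=1$ from \autoref{subsec:testfunction}, the orbital integral becomes
\[
\orb(\triv_{\cX_\ell^m};\gamma)=\sum_{[L]}[\cO_{E_\ell}^\times:\cO_L^\times],
\]
where $[L]$ runs over $E_\ell^\times$-orbits of lattices $L$ satisfying $\gamma L\subseteq L$ and $[L:\gamma L]=\ell^m$, and $\cO_L:=\{x\in E_\ell:xL\subseteq L\}$. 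The first lattice condition is equivalent to $\gamma$ being integral in $E_\ell$ (i.e.\ $\Tr\gamma,\det\gamma\in\ZZ_\ell$), and given this the second is equivalent to $|\det\gamma|_\ell=\ell^{-m}$; outside this regime the integrand is identically zero, which accounts for the ``otherwise'' clause.

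Next, I would identify $\QQ_\ell^2$ with $E_\ell$ via $\QQ_\ell[\gamma]\cong E_\ell$, so that each $\gamma$-stable lattice becomes a fractional ideal of some order $\cO_L\supseteq\ZZ_\ell[\gamma]$. By \autoref{prop:kell} and the subsequent discussion, $\ZZ_\ell[\gamma]=\cO_k$ where $\cO_j:=\ZZ_\ell+\ell^j\cO_{E_\ell}$ is the suborder of conductor $\ell^j$, so the orders between $\ZZ_\ell[\gamma]$ and $\cO_{E_\ell}$ are exactly $\cO_0,\dots,\cO_k$. For each such $j$ I would verify that there is a unique $E_\ell^\times$-orbit of lattices with $\cO_L=\cO_j$: since $\cO_j$ is monogenic over $\ZZ_\ell$, hence Gorenstein of dimension one, every proper $\cO_j$-ideal is invertible, and since $\cO_j$ is either local (for $j\geq 1$, or for $j=0$ in the non-split case) or a product of DVRs (for $j=0$ in the split case), every invertible fractional ideal is principal and thus $E_\ell^\times$-equivalent to $\cO_j$ itself.

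It remains to compute the weight $[\cO_{E_\ell}^\times:\cO_j^\times]$. This equals $1$ for $j=0$, and for $j\geq 1$ the surjection $\cO_{E_\ell}^\times\to(\cO_{E_\ell}/\ell^j\cO_{E_\ell})^\times$, with kernel $1+\ell^j\cO_{E_\ell}\subseteq\cO_j^\times$, yields
\[
[\cO_{E_\ell}^\times:\cO_j^\times]=\frac{|(\cO_{E_\ell}/\ell^j\cO_{E_\ell})^\times|}{|(\ZZ_\ell/\ell^j\ZZ_\ell)^\times|}=\ell^j-\chi(\ell)\ell^{j-1},
\]
the numerator being $\ell^{2j-2}(\ell-1)^2$, $\ell^{2j-2}(\ell^2-1)$, or $\ell^{2j-1}(\ell-1)$ according as $\ell$ splits, is inert, or ramifies in $E$, and the ratio uniformly collapsing to $\ell^j(1-\chi(\ell)/\ell)$. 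Summing $1+\sum_{j=1}^{k}\ell^j(1-\chi(\ell)/\ell)$ gives the stated formula. The main nontrivial ingredient will be the one-orbit-per-$\cO_j$ statement, which rests on the Gorenstein property of the suborders $\cO_j$; the rest is measure theory, the classification of orders containing $\ZZ_\ell[\gamma]$, and the elementary index calculation in the three splitting cases.
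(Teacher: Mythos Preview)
Your proof is correct and arrives at the same stratification by $j\in\{0,\dots,k\}$ with the same weights $[\cO_{E_\ell}^\times:\cO_j^\times]$, but the route differs from the paper's. The paper works explicitly: it exhibits concrete double coset representatives for $\G_\gamma(\QQ_\ell)\bs\G(\QQ_\ell)/\G(\ZZ_\ell)$ (the matrices $(\begin{smallmatrix}1&\ell^{-j}\\0&1\end{smallmatrix})$ in the split case, $(\begin{smallmatrix}1&0\\0&\ell^j\end{smallmatrix})$ in the elliptic case), checks by direct matrix computation for each representative $g$ when $\gamma$ fixes $gL_0$, and computes the stabilizer index by hand. You replace this with the order-theoretic classification: $\gamma$-stable lattices are exactly $\cO_k$-modules, their endomorphism orders are the $\cO_j$ for $0\le j\le k$, and the Gorenstein property of monogenic orders plus $\mathrm{Pic}(\cO_j)=0$ gives a single $E_\ell^\times$-orbit per $j$. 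Your approach is cleaner and treats the split and non-split cases uniformly, at the cost of invoking Bass's theorem on Gorenstein orders; the paper's hands-on approach is fully self-contained and, as noted there, is set up so that the explicit representatives can be reused verbatim in the Iwahori computation of \autoref{thm:iwahori}, which your abstract argument would not directly supply.
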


\begin{theorem}\label{thm:iwahori}
Let $\gamma$ be an elliptic element in $\G(\QQ)$ and let $E=E_\gamma$ be the associated quadratic field. Let $\chi$ be the character associated with $E$. Let $p$ be a prime number. If $\gamma$ is integral in $E_p$ and $\mathopen{|}\det\gamma\mathclose{|}_p=1$, then
\begin{equation}\label{eq:iwahori}
   \orb(\triv_{\cI_p};\gamma) =\frac{2}{p+1}\left(1+\sum_{j=1}^{k}p^j\left(1-\frac{\chi(p)}{p}\right)\right)+\frac{-1+\chi(p)}{1+p},
\end{equation}
and $\orb(\triv_{\cI_p};\gamma)=0$ otherwise.
\end{theorem}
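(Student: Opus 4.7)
The plan is to reduce the computation of $\orb(\triv_{\cI_\ell};\gamma)$ to the already-computed $\orb(\triv_{\cK_\ell};\gamma)$ of \autoref{thm:maximalcompact}, interpreting both integrals geometrically in the Bruhat--Tits tree $\mathcal{T}$ of $\G(\QQ_\ell)$. In $\mathcal{T}$ the subgroup $\cK_\ell$ stabilizes a base vertex $v_0$, while $\cI_\ell = \cK_{v_0}\cap \cK_{v_1}$ stabilizes an oriented edge $(v_0, v_1)$. Under our normalizations $\vol(\cK_\ell)=1$ and $\vol(\cI_\ell)=1/(\ell+1)$, the maximal-compact integral $\orb(\triv_{\cK_\ell};\gamma)$ counts $\G_\gamma(\QQ_\ell)$-orbits of $\gamma$-fixed vertices (which is the geometric content of \autoref{thm:maximalcompact}), whereas $(\ell+1)\orb(\triv_{\cI_\ell};\gamma)$ counts $\G_\gamma(\QQ_\ell)$-orbits of $\gamma$-fixed oriented edges, i.e.\ ordered pairs of adjacent fixed vertices. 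The target identity rearranges to
\[
(\ell+1)\,\orb(\triv_{\cI_\ell};\gamma) = 2\,\orb(\triv_{\cK_\ell};\gamma) + \chi(\ell) - 1,
\]
so it remains to compare the two geometric counts.

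I would then stratify by $\chi(\ell)\in\{1,-1,0\}$, corresponding to the associated $\QQ_\ell$-algebra $E_\ell = \QQ_\ell[\gamma]$ being split, unramified quadratic, or ramified quadratic, and describe the fixed-point set $\mathcal{T}^\gamma$ explicitly in each case. In the split case, $\mathcal{T}^\gamma$ is the apartment of the diagonal torus, an infinite line on which $\G_\gamma(\QQ_\ell)$ acts by translations; in the inert case, $\mathcal{T}^\gamma$ is a finite subtree of ``radius'' $k$ centered at a unique $\gamma$-fixed vertex, on which $\G_\gamma(\QQ_\ell)$ acts trivially modulo the center; in the ramified case, $\mathcal{T}^\gamma$ is a finite subtree centered at a $\gamma$-fixed edge that is inverted by a uniformizer of $E_\ell$ of norm $\ell$. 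A direct count of oriented-edge orbits versus vertex orbits on these configurations shows that the discrepancy $\#\text{edges}- 2\#\text{vertices}$ is governed by an Euler-characteristic-like invariant of $\mathcal{T}^\gamma/\G_\gamma(\QQ_\ell)$: it equals $0$ in the split case (infinite apartment modulo $\ZZ$-translations), $-2$ in the inert case (finite subtree fixed pointwise, so $V-E=1$), and $-1$ in the ramified case (finite subtree modulo a central edge-inversion). These are precisely $\chi(\ell)-1$.

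The main obstacle I expect is the ramified case, where the center of $\mathcal{T}^\gamma$ is an edge rather than a vertex and $\G_\gamma(\QQ_\ell)$ acts with an order-$2$ stabilizer on this central edge; producing the intermediate correction $-1$ (rather than $-2$ as in the inert case or $0$ as in the split case) requires carefully quotienting under this inversion. A secondary subtlety is the identification $\cK_\ell\cap \G_\gamma(\QQ_\ell) = \ZZ_\ell[\gamma]^\times$, which in general is a proper subgroup of $\cO_{E_\ell}^\times$ (the group the paper normalizes to have volume $1$): converting between the normalized measure on $\G_\gamma(\QQ_\ell)$ and the edge/vertex stabilizers in $\mathcal{T}$ involves the index $[\cO_{E_\ell}^\times:\ZZ_\ell[\gamma]^\times]$, which depends on $k$ and accounts for the $\ell^j$-type terms already present in \autoref{thm:maximalcompact}. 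Once these bookkeeping issues are handled case by case, the claimed formula follows by substituting the geometric counts into the rearranged identity above and dividing through by $\ell+1$.
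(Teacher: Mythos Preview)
Your strategy is genuinely different from the paper's. The paper computes $\orb(\triv_{\cI_\ell};\gamma)$ from scratch by enumerating $\G_\gamma(\QQ_\ell)\bs\G(\QQ_\ell)/\cI_\ell$: it refines the $\cK_\ell$-double-coset representatives by the $\ell+1$ cosets $\cK_\ell/\cI_\ell$, determines in each of the split/inert/ramified cases which refined representatives satisfy $g^{-1}\gamma g\in\cI_\ell$, and sums the contributing measures. Your reduction to the identity $(\ell+1)\,\orb(\triv_{\cI_\ell};\gamma)=2\,\orb(\triv_{\cK_\ell};\gamma)+\chi(\ell)-1$ is correct and is an attractive reformulation; what your approach buys is that all the $k$-dependent weights are already packaged in $\orb(\triv_{\cK_\ell};\gamma)$, and you only have to compute a single correction term.

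There is one genuine error and one gap. The error: in the split case $\mathcal{T}^\gamma$ is \emph{not} the apartment when $k>0$; it is the tube of radius $k$ around the apartment (the paper's computation of $\orb(\triv_{\cK_\ell};\gamma)$ in the split case exhibits fixed vertices $\begin{pmatrix}1&\ell^{-j}\\0&1\end{pmatrix}L_0$ at distance $j$ from the apartment for every $0\le j\le k$). By luck the quotient of the tube by $\G_\gamma$ still has Euler characteristic $0$ (it is a path of length $k$ with a loop at one end), so your numerical answer survives, but the stated reason does not. The gap: the orbital integrals are \emph{weighted} orbit counts, with weight $[\G_\gamma(\ZZ_\ell):\mathrm{Stab}]$ on each vertex or edge orbit, and you pass without justification from the weighted discrepancy to an unweighted Euler characteristic of the quotient. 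What actually makes this work is a telescoping identity: writing $n(v)$ for the number of $\gamma$-fixed neighbours of a fixed vertex $v$, one has $(\ell+1)\orb(\triv_{\cI_\ell};\gamma)=\sum_{[v]}w_{[v]}\,n(v)$, hence the discrepancy equals $\sum_{[v]}w_{[v]}(n(v)-2)$, and in each case this sum telescopes to $\chi(\ell)-1$ because only the ``extremal'' orbit(s) have $n(v)\neq\ell+1$. Establishing that telescoping still requires the same trichotomy and essentially the same stabilizer computations the paper carries out; so the approach is sound, but not quite as weight-free as the Euler-characteristic framing suggests.
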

The proofs of the above theorems, though elementary, are somewhat involved. We will prove them in \autoref{sec:prooflocalorbit}.
\begin{remark}
The above two theorems also hold if $\gamma$ is hyperbolic regular if we set $\chi$ to be the trivial character, and the proof is essentially the same.
\end{remark}

\begin{remark}\label{rem:shalika}
We now recall the \emph{Shalika germ}. Let $\G$ be a connected reductive algebraic group over $\QQ$. For any maximal torus $\T(\QQ_p)$ in $\G(\QQ_p)$, the germ of $\orb(f;t)$ at $I$ is finite dimensional \cite{shalika1972} and we have
\[
\orb(f;t)=\sum_{\mf{o}}\orb(f;\mf{o})\Gamma_\mf{o}(t)
\]
near $I$, where $\mf{o}$ runs over all unipotent conjugacy classes. 

For $\G=\GL_2$, we can use the above two theorems to derive an explicit formula for $\Gamma_{\mf{o}}(t)$. For $\G=\GL_m$, the unipotent conjugate classes correspond to partitions of $m$. In the case $m=2$, there are exactly $2$ partitions, namely $(1,1)$ and $(2)$, corresponding to
\[
I=\begin{pmatrix}
  1 & 0 \\
  0 & 1 
\end{pmatrix}\quad\text{and}\quad U=\begin{pmatrix}
  1 & 1 \\
  0 & 1 
\end{pmatrix}.
\]
Using the measure normalization of unipotent orbits as in \cite{kivinen2023}, we have (see (2.1) in  loc. cit.)
\[
\orb(f;I)=f(I)\qquad\text{and}\qquad \orb(f;U)=\int_{\G(\ZZ_p)\times \N(\QQ_p)}f(k^{-1}nk)\rmd n\rmd k.
\]
From this, it follows that
\[
\orb(\triv_{\cK_p};I)=1,\quad \orb(\triv_{\cI_p};I)=1,
\quad
\orb(\triv_{\cK_p};U)=1,\quad\text{and}\quad \orb(\triv_{\cI_p};U)=\frac{2}{p+1}.
\]
Indeed, the first three identities are obvious. For the fourth identity, \autoref{lem:iwahoriindex} implies that $k^{-1}nk\in \cI_p$ if and only if $k\in \cI_p$ and $a\in \ZZ_p$, or $k\in \cK_p-\cI_p$ and $a\in p\ZZ_p$, where $n=(\begin{smallmatrix}1 & a\\ 0 & 1\end{smallmatrix})$.

Hence by \autoref{thm:maximalcompact} and \autoref{thm:iwahori}, we find that
\[
\Gamma_{I}(t)=\frac{1-\chi(p)}{1-p}\qquad\text{and}\qquad\Gamma_{U}(t)=\frac{1}{1-p^{-1}}\left(1-\frac{\chi(p)}{p}\right)p^k
\] 
for $t$ that is sufficiently close to $I$. (Note that if $t$ is sufficiently close to $I$, then $t$ is integral and $\mathopen{|}\det t\mathclose{|}_p=1$.) Also we find that $\Gamma_I(t)=0$ if $\T$ is split in $\QQ_p$ for such $t$.
\end{remark}

We now analyze the ramified case using the above remark.
\begin{theorem}\label{thm:qintegral}
For any maximal torus $\T(\QQ_p)$ in $\G(\QQ_p)$ and any $f\in C_c^\infty(\G(\QQ_p))$, there exists a neighborhood $N$ of $I$ such that
\begin{equation}\label{eq:qintegral}
\orb(f;\gamma)=\frac{1-\chi(p)}{1-p}f(I)+\frac{1}{1-p^{-1}}\left(1-\frac{\chi(p)}{p}\right)p^k\orb(f;U)
\end{equation}
for any $\gamma\in N$, and $k=k_\gamma$ is as in \autoref{def:defk}. $\chi(p)=1,-1$ or $0$ if $\T(\QQ_p)$ is split, inert or ramified, respectively.
\end{theorem}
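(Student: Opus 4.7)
The plan is to invoke the Shalika germ expansion for $\GL_2(\QQ_\ell)$ recalled in \autoref{rem:shalika}: for $f\in C_c^\infty(\G(\QQ_\ell))$ and $\gamma\in\T(\QQ_\ell)$ sufficiently close to $I$, one has
\[
\orb(f;\gamma)=\orb(f;I)\,\Gamma_I(\gamma)+\orb(f;U)\,\Gamma_U(\gamma),
\]
with germs $\Gamma_I,\Gamma_U$ that depend only on the torus. By our normalization $\orb(f;I)=f(I)$, so the theorem is equivalent to identifying $\Gamma_I(\gamma)=(1-\chi(\ell))/(1-\ell)$ and $\Gamma_U(\gamma)=\frac{1}{1-\ell^{-1}}(1-\chi(\ell)/\ell)\ell^k$ on a neighborhood of $I$.

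To compute the germs I would exploit the universality in $f$ by probing with the two specific test functions $f=\triv_{\cK_\ell}$ and $f=\triv_{\cI_\ell}$, for which the values $\orb(\triv_{\cK_\ell};I)=\orb(\triv_{\cI_\ell};I)=1$, $\orb(\triv_{\cK_\ell};U)=1$ and $\orb(\triv_{\cI_\ell};U)=\frac{2}{\ell+1}$ are recorded in \autoref{rem:shalika}, while the left-hand sides are given explicitly by \autoref{thm:maximalcompact} and \autoref{thm:iwahori} whenever $\gamma$ is integral with $|\det\gamma|_\ell=1$ (which is automatic close to $I$). This yields the linear system
\begin{align*}
\Gamma_I(\gamma)+\Gamma_U(\gamma) &= 1+\sum_{j=1}^{k}\ell^{j}\bigl(1-\tfrac{\chi(\ell)}{\ell}\bigr),\\
\Gamma_I(\gamma)+\tfrac{2}{\ell+1}\Gamma_U(\gamma) &= \tfrac{2}{\ell+1}\Bigl(1+\sum_{j=1}^{k}\ell^{j}\bigl(1-\tfrac{\chi(\ell)}{\ell}\bigr)\Bigr)+\tfrac{-1+\chi(\ell)}{1+\ell}.
\end{align*}
The coefficient determinant $1-\tfrac{2}{\ell+1}=\tfrac{\ell-1}{\ell+1}$ is nonzero, and solving gives exactly the formulas for $\Gamma_I$ and $\Gamma_U$ stated in \autoref{rem:shalika}. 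Substituting back into the germ expansion for a general $f$ produces \eqref{eq:qintegral}.

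The split/inert/ramified cases are handled uniformly because the values $\chi(\ell)=1,-1,0$ are exactly those that appear in \autoref{thm:maximalcompact} and \autoref{thm:iwahori}. The only point requiring care is the choice of the neighborhood $N$: it must be small enough that the germ expansion is valid for our fixed $f$, that every $\gamma\in N$ is integral of unit determinant so the orbital integral formulas of \autoref{thm:maximalcompact} and \autoref{thm:iwahori} apply, and that $k_\gamma$ is well-defined in the sense of \autoref{def:defk}. All three conditions hold on a common small neighborhood by local constancy, so no genuine obstacle arises; the proof is essentially a repackaging of \autoref{rem:shalika} in a form applicable to arbitrary smooth compactly supported $f$.
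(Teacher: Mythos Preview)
Your proposal is correct and follows essentially the same approach as the paper: the paper's proof simply invokes \autoref{rem:shalika}, where the germs $\Gamma_I$ and $\Gamma_U$ were already computed by precisely the probing argument you describe (testing with $\triv_{\cK_\ell}$ and $\triv_{\cI_\ell}$ and solving the resulting $2\times 2$ linear system). Your write-up just makes explicit the linear algebra that the paper leaves implicit in that remark.
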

\begin{proof}
By \autoref{rem:shalika} we know that there exist functions $\Gamma_I(\gamma)$ and $\Gamma_U(\gamma)$ such that
\[
\orb(f;\gamma)=\Gamma_I(\gamma)f(I)+\Gamma_U(\gamma)\orb(f;U),
\]
near $I$, and we have shown that
\[
\Gamma_{I}(\gamma)=\frac{1-\chi(p)}{1-p}\qquad\text{and} \qquad\Gamma_{U}(\gamma)=\frac{1}{1-p^{-1}}\left(1-\frac{\chi(p)}{p}\right)p^k.
\]
Hence \eqref{eq:qintegral} holds.
\end{proof}

\begin{corollary}\label{cor:shalika}
Let $\T(\QQ_p)$ be a maximal torus in $\G(\QQ_p)$ and let $f\in C_c^\infty(\G(\QQ_p))$. Let $z=aI$ be an element in the center of $\G(\QQ_p)$. Then there exists a neighborhood $N$ of $z$ and constants $\lambda_1$ and $\lambda_2$ (independent of the torus) such that for any $\gamma\in N$,
\begin{equation}\label{eq:qintegral2}
\orb(f;\gamma)=\lambda_1\frac{1-\chi(p)}{1-p}+\lambda_2\left(1-\frac{\chi(p)}{p}\right)p^{k_\gamma}.
\end{equation}
\end{corollary}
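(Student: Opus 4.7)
The plan is to reduce the corollary to Theorem~\ref{thm:qintegral}, which handles the special case $z = I$, by translating via the central element $z$.

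First I would set $f_z(g) := f(zg)$, which again lies in $C_c^\infty(\G(\QQ_\ell))$. Because $z = aI$ is scalar, one has $g^{-1}\gamma g = z\cdot g^{-1}(z^{-1}\gamma)g$ for every $g \in \G(\QQ_\ell)$, so writing $\gamma' := z^{-1}\gamma$ gives the identity
\[
\orb(f;\gamma) = \orb(f_z;\gamma').
\]
Moreover $\gamma'$ has the same centralizer as $\gamma$, hence lies in the same torus $\T(\QQ_\ell)$, so the associated quadratic character $\chi$ is unchanged under the translation.

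Next I would apply Theorem~\ref{thm:qintegral} to $f_z$ at $\gamma'$: for $\gamma$ in a small enough neighborhood $N$ of $z$ (so that $\gamma'$ is close to $I$),
\[
\orb(f_z;\gamma') = \frac{1-\chi(\ell)}{1-\ell}f_z(I) + \frac{1}{1-\ell^{-1}}\left(1-\frac{\chi(\ell)}{\ell}\right)\ell^{k_{\gamma'}}\orb(f_z;U).
\]

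The remaining task is to relate $k_{\gamma'}$ back to $k_\gamma$. By Definition~\ref{def:defk}, $k$ is the $\ell$-adic valuation of the scaling factor $\kappa$ in $\gamma_1 - \gamma_2 = \kappa(\Delta-\overline{\Delta})$. Since the eigenvalues of $\gamma = a\gamma'$ are $a$ times those of $\gamma'$, the factor $\kappa$ is multiplied by $a$, giving $k_\gamma = v_\ell(a) + k_{\gamma'}$. Substituting and using $f_z(I) = f(z)$, we arrive at the desired formula with
\[
\lambda_1 := f(z), \qquad \lambda_2 := \frac{\ell^{-v_\ell(a)}}{1-\ell^{-1}}\,\orb(f_z;U),
\]
both of which depend only on $z$ and $f$ and in particular are independent of the torus $\T$. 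I do not anticipate any serious obstacle: the argument is essentially bookkeeping, tracking how central translation interacts with the orbital integral and with the invariant $k$, and then invoking Theorem~\ref{thm:qintegral} as a black box.
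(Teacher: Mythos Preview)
Your proof is correct and follows the same route as the paper: translate by the central element $z$, reduce to Theorem~\ref{thm:qintegral} at the identity, and read off the constants. In fact you are slightly more careful than the paper's own (very terse) argument, which writes $\lambda_2=\frac{1}{1-\ell^{-1}}\orb(\widetilde{f};U)$ without explicitly tracking the shift $k_\gamma = k_{\gamma'}+v_\ell(a)$; your extra factor $\ell^{-v_\ell(a)}$ is the correct bookkeeping, and since only the constancy of $\lambda_2$ is used downstream this discrepancy is harmless.
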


\begin{proof}
Applying the above theorem with $\widetilde{f}(\gamma)=f(a\gamma)$, we have $\lambda_1=f(z)$ and
\[
\lambda_2=\frac{1}{1-p^{-1}}\orb(\widetilde{f};U).\qedhere
\]
\end{proof}

For any prime $p\in S$, we define \index{thetap@$\theta_{p}(\gamma)$}
\[
\theta_{p}(\gamma)=\frac{1}{\mathopen{|}\det\gamma\mathclose{|}_p^{1/2}}\left(1-\frac{\chi(p)}{p}\right)^{-1}p^{-{k_\gamma}}\orb(f_{p};\gamma).
\]
Since $\det\gamma$ is smooth, by the above corollary, the local behavior of $\theta_{p}$ at $z=aI$ is
\begin{equation}\label{eq:shalikalocal}
\theta_{p}(\gamma)=\lambda_1\left(1-\frac{\chi(p)}{p}\right)^{-1}p^{-{k_\gamma}} \frac{1-\chi(p)}{1-p}+\lambda_2.
\end{equation}

Clearly $\theta_{p}(\gamma)$ is invariant under conjugation. Thus $\theta_{p}(\gamma)$ can be parametrized by $T_\gamma$ and $N_\gamma$, that is, $\theta_{p}(\gamma)=\theta_p(T_\gamma,N_\gamma)$. For any $\nu=(\nu_1,\nu_2,\cdots,\nu_r)\in \ZZ^r$ we write $q^\nu=q_1^{\nu_1}q_2^{\nu_2}\cdots q_r^{\nu_r}$. \index{qnu@$q^\nu$} For any $\nu\in \ZZ^r$ and any sign $\pm$, we define \index{thetap@$\theta_{p}^{\pm, \nu}(y)$}
\begin{equation}\label{eq:defthetaq}
\theta_{p}^{\pm, \nu}(T)=\theta_p(T,\pm nq^\nu)
\end{equation}
for $p\in S$.
Since $\theta_p(\gamma)$ is smooth away from the center, $\theta_p^{\pm,\nu}(T)$ is smooth except at $T^2=\pm 4nq^\nu$. 

We now consider the archimedean orbital integral. For $f_\infty\in C^\infty(\G(\RR))$, the archimedean part of the orbital integral is
\[
\orb(f_\infty;\gamma)=\int_{\G_\gamma(\RR)\bs\G(\RR)}f_\infty(g^{-1}\gamma g)\rmd g.
\]
if the integral makes sense.

\begin{theorem}\label{thm:archimedeanintegral}
For any $f_\infty\in C^\infty(\G(\RR))$ such that the orbital integrals are well-defined, any maximal torus $\T(\RR)$ in $\G(\RR)$ and any $z$ in the center of $\G(\RR)$, there exists a neighborhood $N$ in $\T(\RR)$ of $z$ and smooth functions $g_1,g_2\in C^\infty(N)$ (depending on $f_\infty$ and $z$) such that
\begin{equation}\label{eq:archimedeanintegral}
\orb(f_\infty;\gamma)=g_1(\gamma)+\frac{|\gamma_1\gamma_2|^{1/2}}{|\gamma_1-\gamma_2|}g_2(\gamma)
\end{equation}
for any $\gamma\in \T(\RR)$, where $\gamma_1$ and $\gamma_2$ are the eigenvalues of $\gamma$.  Moreover, $g_1$ and $g_2$ can be extended smoothly to all split and elliptic elements, remaining invariant under conjugation, with $g_1(\gamma)=0$ if $\T(\RR)$ is split, and $g_2$ can further be extended smoothly to the center. If $f_\infty$ is $Z_+$-invariant, then $g_1$ and $g_2$ are also $Z_+$-invariant.
\end{theorem}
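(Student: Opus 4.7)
The plan is to treat the two conjugacy classes of maximal tori in $\G(\RR) = \GL_2(\RR)$ -- split and elliptic -- separately, performing an explicit computation in coordinates adapted to each torus and extracting the prescribed singular factor at the central point $z$.

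\textbf{Split case.} When $\T(\RR)$ is split, I would conjugate $\T$ to the diagonal torus and use the Iwasawa decomposition $\G(\RR) = \T(\RR)\N(\RR)\mathrm{O}(2)$ to parametrize $\T(\RR)\bs\G(\RR)$. Writing $\gamma = \mathrm{diag}(\gamma_1,\gamma_2)$ and $n = \left(\begin{smallmatrix}1 & x \\ 0 & 1\end{smallmatrix}\right)$, a direct computation gives $n^{-1}\gamma n = \left(\begin{smallmatrix}\gamma_1 & x(\gamma_1-\gamma_2) \\ 0 & \gamma_2\end{smallmatrix}\right)$. The substitution $u = x(\gamma_1-\gamma_2)$ produces the Jacobian $|\gamma_1-\gamma_2|^{-1}$, while comparing the Iwasawa Haar measure on $\G(\RR)$ with the normalization $\rmd x\,\rmd y/|xy|$ on $\T(\RR) = (\RR^\times)^2$ prescribed in \autoref{subsec:testfunction} yields an extra factor of $|\gamma_1\gamma_2|^{1/2}$. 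This gives $g_1 \equiv 0$ together with
\[
g_2(\gamma) = C\int_{\mathrm{O}(2)}\int_{\RR}f_\infty\!\left(k^{-1}\begin{pmatrix}\gamma_1 & u \\ 0 & \gamma_2\end{pmatrix}k\right)\rmd u\,\rmd k
\]
for some constant $C$. Since the orbital integrals of $f_\infty$ are compactly supported modulo $Z_+$, the integrand is smooth and well-dominated jointly in $(\gamma_1,\gamma_2)$, so $g_2$ extends smoothly across $\gamma_1=\gamma_2$ by differentiation under the integral sign.

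\textbf{Elliptic case.} When $\T(\RR)$ is elliptic, I would use the standard parametrization $\gamma = aI + bJ$ with $J = \left(\begin{smallmatrix}0 & -1 \\ 1 & 0\end{smallmatrix}\right)$ (so the eigenvalues are $a\pm bi$), and employ polar coordinates on $\T(\RR) \cong \CC^\times$ together with an explicit parametrization of $\T(\RR)\bs\G(\RR)$ via the upper half plane to compute the orbital integral. A direct calculation -- equivalently, the archimedean Shalika germ expansion at $z$, which for $\GL_2$ involves exactly the two unipotent conjugacy classes $\{I\}$ and the class of $U = \left(\begin{smallmatrix}1 & 1 \\ 0 & 1\end{smallmatrix}\right)$ -- shows that the orbital integral decomposes as
\[
\orb(f_\infty;\gamma) = g_1(\gamma) + \frac{|\gamma_1\gamma_2|^{1/2}}{|\gamma_1-\gamma_2|}g_2(\gamma),
\]
where $g_1$ captures the trivial germ $\Gamma_I$ (which vanishes for the split torus) and $g_2$ captures the nontrivial germ $\Gamma_U$. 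Both are presented as integrals of $f_\infty$ whose integrands remain smooth in $\gamma$; in particular, $g_2$ extends smoothly across $b=0$.

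\textbf{Invariance and main obstacle.} Conjugation invariance of $g_1, g_2$ is automatic, since $\orb(f_\infty;\cdot)$, $|\gamma_1\gamma_2|$ and $|\gamma_1-\gamma_2|$ are all class functions; similarly, $Z_+$-invariance of the $g_i$ when $f_\infty$ is $Z_+$-invariant follows by inspection of the integral formulas. The expected main obstacle is the elliptic case: producing a \emph{global} decomposition on all regular elliptic elements such that $g_2$ moreover extends smoothly to the center, where naively the denominator $|\gamma_1-\gamma_2|$ vanishes. Overcoming this requires a careful polar-coordinate analysis that exhibits $g_2$ as an integral whose integrand remains smooth across $b=0$, or, more abstractly, an appeal to Harish-Chandra's theorem on the smooth extension of $|D(\gamma)|^{1/2}\orb(f_\infty;\gamma)$ to the full torus, combined with the Shalika germ decomposition to isolate the two components $g_1, g_2$.
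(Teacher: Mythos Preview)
The paper does not actually prove this theorem: its entire proof reads ``See \cite{shelstad1979} or \cite[Chapter 6]{langlands1980}.'' So there is no in-paper argument to compare against; the result is imported wholesale from the literature on real orbital integrals and Shalika germs.

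Your sketch is a reasonable reconstruction of what those references contain. The split case is essentially correct as written: the Iwasawa parametrization and the substitution $u = x(\gamma_1-\gamma_2)$ do produce the factor $|\gamma_1\gamma_2|^{1/2}/|\gamma_1-\gamma_2|$ and a $g_2$ that is manifestly smooth in $(\gamma_1,\gamma_2)$, with $g_1\equiv 0$. For the elliptic case your outline is accurate in spirit but thin on the key point: you correctly flag that the real content is showing $g_2$ extends smoothly across $b=0$, and you correctly name the two tools (explicit polar-coordinate computation, or Harish-Chandra's result that $|D(\gamma)|^{1/2}\orb(f_\infty;\gamma)$ extends smoothly to the torus). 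Either route works, but neither is carried out here---which is fine, since the paper itself defers entirely to the cited sources. If you wanted to make this self-contained, the cleanest path is the one in Langlands' \emph{Base Change for $\GL(2)$}, Chapter 6, which does the elliptic calculation explicitly.
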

\begin{proof}
See \cite{shelstad1979} or \cite[Chapter 6]{langlands1980}.
\end{proof}
Since $g_1$ and $g_2$ are invariant under conjugation, we can parametrize them by $T_\gamma$ and $N_\gamma$ as in the nonarchimedean case.

\subsection{Global computation}
Now we consider the global case. We first compute a single term for the elliptic part of the trace formula. Recall that the contribution of $\gamma$ to $I_\el(f^n)$ is
\begin{equation}\label{eq:ellipticsingle}
\vol(\gamma)n^{-1/2}\orb(f_\infty;\gamma)\prod_{i=1}^{r}\orb(f_{q_i};\gamma)\prod_{ p \notin S}\orb(\triv_{\cX_p^{n_p}};\gamma).
\end{equation}

If \eqref{eq:ellipticsingle} is nonzero, then by \autoref{thm:maximalcompact}, $\gamma\in \cO_{E_p}$ and $\mathopen{|}\det\gamma\mathclose{|}_p=p^{-n_p}$ for all $p\notin S$. Let $T=T_\gamma=\Tr\gamma$ and $N=N_\gamma=\det\gamma$. Then $T_\gamma\in \cO_{E_p}$ for all $p\notin S$ and $N_\gamma=\pm nq^\nu$ with $\nu\in \ZZ^r$. Thus, $T_\gamma\in \ZZ^S$ \index{zuppers@$\ZZ^S$} and $N_\gamma=\pm nq^\nu$ with $\nu\in \ZZ^r$, where $\ZZ^S$ denotes the ring of $S$-integers.

Now we assume that $N_\gamma=\pm nq^\nu$ with $\nu\in \ZZ^r$ and $T_\gamma\in \ZZ^S$. By \autoref{thm:maximalcompact} we have
\[
\eqref{eq:ellipticsingle}=2n^{-\frac12}\vol(\gamma)\orb(f_\infty;\gamma)\prod_{i=1}^{r}\left(q_i^{-\nu_i/2}\theta_{q_i}(\gamma)
\left(1-\frac{\chi(q_i)}{q_i}\right)q_i^{k_{q_i}}\right)\prod_{p\notin S}\left(1+\sum_{j=1}^{k_p}p^j\left(1-
\frac{\chi(p)}{p}\right)\right).
\]

Write $T^2-4N=\sigma^2D$, where $D\equiv 0,1\,(4)$ is the fundamental discriminant and $\sigma\in \ZZ^S$. By \autoref{prop:kell} we know that $k_p=v_p(\sigma)$ for any prime $p$. Define \index{alowerq@$a_{(q)}$}
\[
\sigma_{(q)}=\prod_{i=1}^{r}q_i^{v_{q_i}(\sigma)}=\prod_{i=1}^{r}q_i^{k_{q_i}}.
\]
We then have $\sigma=\sigma_{(q)}\sigma^{(q)}$ \index{aupperq@$a^{(q)}$}, where  $\sigma^{(q)}\in \ZZ_{(S)}$\index{zlowers@$\ZZ_{(S)}$}, which means that $\sigma^{(q)}\in \ZZ$ and is relatively prime to $q_i$ for all $i$.

We first consider the part
\begin{equation}\label{eq:finitepart}
\prod_{p\notin S}\left(1+\sum_{j=1}^{k_p}p^j\left(1-
\frac{\chi(p)}{p}\right)\right).
\end{equation}
By \autoref{thm:maximalcompact} we have
\[
    \eqref{eq:finitepart}
   = \prod_{p\notin S}\left(1+\left(1-\frac{\chi(p)}{p}\right)\sum_{j=1}^{v_p(\sigma)}p^j\right)= \sum_{f\mid \sigma^{(q)}}f\prod_{p\mid f}\left(1-\frac{\chi(p)}{p}\right)
   = \sum_{f\mid \sigma^{(q)}}\frac{\sigma^{(q)}}{f}\prod_{p\mid    \frac{\sigma^{(q)}}{f}}\left(1-\frac{\chi(p)}{p}\right),
\]
where in the last step we made the change of variable $f\mapsto \sigma^{(q)}/f$.

Since $\vol(\gamma)=2\sqrt{|D|}L(1,\chi)$ by \autoref{cor:volumegamma}, \eqref{eq:ellipticsingle} equals
\begin{align*}
    & 2n^{-\frac12}q^{-\frac\nu2}\sqrt{|D|}L(1,\chi)\orb(f_\infty;\gamma)\prod_{i=1}^{r}\left(\theta_{q_i}(\gamma)
\left(1-\frac{\chi(q_i)}{q_i}\right)\right)\sigma_{(q)}\sum_{f\mid \sigma^{(q)}}\frac{\sigma^{(q)}}{f}\prod_{p\mid \frac{\sigma^{(q)}}{f}}\left(1-\frac{\chi(p)}{p}\right) \\
    =& 2n^{-\frac12}q^{-\frac\nu2}\sqrt{|D|}\orb(f_\infty;\gamma)\prod_{i=1}^{r}\left(\theta_{q_i}(\gamma)
\left(1-\frac{\chi(q_i)}{q_i}\right)\right)\sigma_{(q)}\sum_{f\mid \sigma^{(q)}}\frac{\sigma^{(q)}}{f}\prod_{p\nmid \frac{\sigma^{(q)}}{f}}\left(1-\frac{\chi(p)}{p}\right)^{-1}\\
    = & 2\orb(f_\infty;\gamma)\frac{|T_\gamma^2-4N_\gamma|^{1/2}}{n^{1/2}q^{\nu/2}}\prod_{i=1}^{r}\left(\theta_{q_i}(\gamma) \left(1-\frac{1}{q_i}\legendresymbol{D}{q_i}\right)\right)\sum_{f\mid \sigma^{(q)}}\frac{1}{f}L\left(1,\legendresymbol{(\sigma^{(q)})^2D/f^2}{\cdot}\right).
\end{align*}

Next we consider $\orb(f_\infty;\gamma)$. 
Since $T_{z\gamma}=aT_\gamma$ and  $N_{z\gamma}=a^2N_\gamma$ for $z=aI$ with $a>0$, we have $g_i(T_\gamma,N_\gamma)=g_i(aT_\gamma,a^2N_\gamma)$
for $i=1,2$ and any $a>0$. If we let $a=1/(2\sqrt{|N_\gamma|})$, we know that $g_1$ and $g_2$ only depend on $T_\gamma/(2\sqrt{|N_\gamma|})$ and $\sgn N_\gamma$. Thus we can write
\begin{equation}\label{eq:archimedeanfinal}
\orb(f_\infty;\gamma)=g_1^{\sgn N_\gamma}\legendresymbol{T_\gamma}{2\sqrt{|N_\gamma|}}+\left|\frac{T_\gamma^2-4N_\gamma}{N_\gamma}\right|^{-1/2}g_2^{\sgn N_\gamma}\legendresymbol{T_\gamma}{2\sqrt{|N_\gamma|}}.
\end{equation}
Hence
\begin{equation}\label{eq:orbittheta}
\begin{split}
   &\orb(f_\infty;\gamma)\frac{|T_\gamma^2-4N_\gamma|^{1/2}}{n^{1/2}q^{\nu/2}} =\orb(f_\infty;\gamma)\left|\frac{T_\gamma^2-4N_\gamma}{N_\gamma}\right|^{1/2} \\
      =&2\left|\frac{T_\gamma^2}{4N_\gamma}-1\right|^{1/2}g_1^{\sgn N_\gamma}\legendresymbol{T_\gamma}{2\sqrt{|N_\gamma|}}+g_2^{\sgn N_\gamma}\legendresymbol{T_\gamma}{2\sqrt{|N_\gamma|}}= \theta_\infty^{\sgn N_\gamma}\legendresymbol{T_\gamma}{2\sqrt{|N_\gamma|}},
\end{split}
\end{equation}
where \index{thetainfty@$\theta_\infty(x)$}
\begin{equation}\label{eq:deftheta}
\theta_\infty^\pm(x)=2|x^2\mp 1|^{1/2}g_1^\pm(x)+g_2^\pm(x).
\end{equation}
From this and the properties of $g_i(\gamma)$ we know that $\theta_\infty^\pm(x)$ are smooth except for $\pm 1$. By assumption, $\orb(f_\infty;\gamma)$ is compactly supported modulo $Z_+$. Thus, by \eqref{eq:orbittheta}, the functions $\theta_\infty^\pm(x)$ are compactly supported.

Since $\theta_p(\gamma)=\theta_p^{\pm,\nu}(T_\gamma)$ for $N_\gamma=\pm n q^\nu$ and $p\in S$, we know that \eqref{eq:ellipticsingle} equals
\[
 \theta_\infty^{\sgn N_\gamma}\legendresymbol{T_\gamma}{2n^{1/2}q^{\nu/2}}\prod_{i=1}^{r}\left(\theta_{q_i}^{\sgn N_\gamma,\nu}(T_\gamma) \left(1-\frac{1}{q_i}\legendresymbol{D}{q_i}\right)\right)\sum_{f\mid \sigma^{(q)}}\frac{1}{f}L\left(1,\legendresymbol{(\sigma^{(q)})^2D/f^2}{\cdot}\right),
\]
where we naturally extend the Kronecker symbol $\legendresymbol{D}{k}$ to $D\in \ZZ^S$ with $D$ a discriminant and $k\in \ZZ_{(S)}$.

To simplify the notation, we introduce the \emph{Zagier $L$-function}. The \emph{classical Zagier $L$-function}\index{Lsdelta@$L(s,\delta)$} is defined by 
\begin{equation}\label{eq:zagier}
L(s,\delta):= \sum_{\substack{f^2\mid \delta\\\delta/f^2\equiv 0,1\,(4)}}\frac {1}{f^{2s-1}} L\left(s,\legendresymbol{\delta/f^2}{\cdot}\right)
\end{equation}
for $\delta\equiv 0,1\,(4)$ that is not a square. We will generalize this $L$-function to suit our setting. For $\delta\in \ZZ^S$ that is not a square, we define the \emph{partial Zagier $L$-function} \index{Lssdelta@$L^S(s,\delta)$} to be
\[
L^{S}(s,\delta)=\sum_{f^2\mid \delta}\frac{1}{f^{2s-1}}L^{S}\left(s,\legendresymbol{\delta/f^2}{\cdot}\right),
\]
where the sum of $f$ is over all $f\in \ZZ_{(S)}^{>0}$ such that $\delta/f^2\in \ZZ^S$, and
\[
L^{S}\left(s,\legendresymbol{\delta/f^2}{\cdot}\right)=\sum_{k\in \ZZ_{(S)}^{>0}}\frac{1}{k^s}\legendresymbol{\delta/f^2}{k}=\prod_{p\notin S}\left(1-\legendresymbol{\delta/f^2}{p}p^{-s}\right)^{-1}
\]
for $s$ such that the series converges absolutely, extended to $\CC$ by analytic continuation.
Since we have assumed that $2\in S$, we do not need the congruence modulo $4$.

From this we know that \eqref{eq:ellipticsingle} equals
\begin{equation}\label{eq:ellipticsinglefinal}
\theta_\infty^{\sgn N_\gamma}\legendresymbol{T_\gamma}{2\sqrt{|N_\gamma|}}\prod_{i=1}^{r}\theta_{q_i}^{\sgn N_\gamma,\nu}(T_\gamma)L^{S}(1,T_\gamma^2-4N_\gamma).
\end{equation}

Finally, we prove that
\begin{theorem}\label{thm:ellipticpart}
The elliptic part of the trace formula for $f^n$ is given by
\begin{equation}\label{eq:ellipticpart}
2\sum_{\pm}\sum_{\nu\in \ZZ^r}\sum_{\substack{T\in \ZZ^S\\T^2\mp 4nq^{\nu}\neq \square}}\theta_\infty^\pm\legendresymbol{T}{2n^{1/2}q^{\nu/2}}\prod_{i=1}^{r}\theta_{q_i}^{\pm ,\nu}(T)L^{S}(1,T^2\mp 4nq^\nu),
\end{equation}
where ``$\neq \square$" means that ``is not a square". The sums over $\nu$ and $T$ are finite.
\end{theorem}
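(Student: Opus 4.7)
The plan is that \autoref{thm:ellipticpart} is essentially a bookkeeping result that repackages the per-class computation \eqref{eq:ellipticsinglefinal} as a sum parametrized by the data $(\pm, \nu, T)$. The substantive points to verify are (i) that elliptic conjugacy classes with nonvanishing contribution are in bijection with the triples $(\pm, \nu, T)$ appearing in the statement, and (ii) that the resulting triple sum is actually finite.

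For (i), I would first note that an elliptic element $\gamma \in \G(\QQ)$ is determined up to conjugation by its characteristic polynomial $X^2 - T_\gamma X + N_\gamma$, and ellipticity is exactly the condition $T_\gamma^2 - 4 N_\gamma \neq \square$ in $\QQ$. At every prime $\ell \notin S$, \autoref{thm:maximalcompact} shows that $\orb(\triv_{\cX_\ell^{n_\ell}};\gamma)$ vanishes unless $\gamma$ is integral in $E_\ell$ and $|{\det \gamma}|_\ell = \ell^{-n_\ell}$. Aggregating these conditions across all $\ell \notin S$ forces $T_\gamma \in \ZZ^S$ and $N_\gamma = \pm n q^\nu$ for a unique sign and a unique $\nu \in \ZZ^r$, giving the reparametrization of the outer sum in \eqref{eq:elliptictrace}. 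For each such class the summand is exactly \eqref{eq:ellipticsinglefinal}, which was derived in the preceding discussion by combining \autoref{thm:maximalcompact} at unramified primes, the definitions \eqref{eq:defthetaq} and \eqref{eq:deftheta} of the $\theta$-functions at ramified and archimedean places (via \autoref{prop:kell}, \autoref{cor:shalika}, \autoref{thm:archimedeanintegral}), and the volume formula \autoref{cor:volumegamma}, and by recognizing the product over $\ell \notin S$ as the value $L^{S}(1, T^2 \mp 4 n q^\nu)$ of the partial Zagier $L$-function. Assembling these ingredients yields exactly the expression \eqref{eq:ellipticpart}.

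For (ii), I would extract finiteness from the combined compact-support hypotheses on the local test functions. Since the orbital integrals of $f_\infty$ are compactly supported modulo $Z_+$, the function $\theta_\infty^\pm$ is compactly supported by \eqref{eq:orbittheta}, which bounds $|T|$ by a constant times $\sqrt{n q^\nu}$. Compact support of each $f_{q_i} \in C_c^\infty(\G(\QQ_{q_i}))$ forces every $\gamma$ in the support of the $q_i$-orbital integral to lie in a fixed compact subset of $\G(\QQ_{q_i})$ modulo conjugation, yielding two-sided bounds on both $v_{q_i}(T_\gamma)$ and $v_{q_i}(\det\gamma) = \nu_i$. Hence only finitely many $\nu \in \ZZ^r$ contribute, and for each such $\nu$ only finitely many $T \in \ZZ^S$ satisfy the archimedean and $q_i$-adic size constraints simultaneously. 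The main obstacle here is the interplay between the local compact supports: one must check that the combined archimedean upper bound and the $q_i$-adic lower bounds on the valuations of $\nu$ and $T$ really do cut out a finite set in $\ZZ^r \times \ZZ^S$, which is what makes the reparametrized sum a legitimate finite sum.
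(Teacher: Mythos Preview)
Your proposal is correct and follows essentially the same approach as the paper. The paper organizes part (ii) via two preparatory lemmas (compact support of all the $\theta$-functions, and vanishing of $\theta_{q_i}^{\pm,\nu}$ for $|\nu_i|$ large) and then concludes finiteness of the $T$-sum by invoking the discreteness of $\ZZ^S$ in $\QQ_S$ (\autoref{prop:cocompact}); this last point is the clean way to handle what you flag as the ``main obstacle,'' and you might make it explicit.
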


To prove this theorem, we first prove the following two lemmas.

\begin{lemma}\label{lem:ctscpt}
The functions $\theta_\infty^\pm(x)$ and $\theta_{p}^{\pm,\nu}(y)$ (for any $p\in S$) are continuous and compactly supported.
\end{lemma}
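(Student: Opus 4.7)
The plan is to treat the archimedean and nonarchimedean cases separately. For $\theta_\infty^\pm$, the conclusion is essentially immediate from \eqref{eq:deftheta}. For $\theta_\ell^{\pm,\nu}$, I would split the analysis into regular semisimple and central values of $T$, the latter requiring a Shalika germ argument and being the main obstacle.

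For $\theta_\infty^\pm(x)$, the defining formula $\theta_\infty^\pm(x)=2|x^2\mp 1|^{1/2}g_1^\pm(x)+g_2^\pm(x)$ together with \autoref{thm:archimedeanintegral} (which ensures $g_1^\pm,g_2^\pm$ are smooth on the elliptic and split loci separately, with $g_1^\pm$ vanishing on the split locus) shows that $\theta_\infty^\pm$ is smooth away from the transition points $x=\pm 1$; at those points the factor $|x^2\mp 1|^{1/2}$ vanishes, so any possible discontinuity of $g_1^\pm$ across the elliptic/split boundary is absorbed and $\theta_\infty^\pm$ remains continuous. For the $-$ sign the relevant torus is always split so $g_1^-\equiv 0$ and $\theta_\infty^-=g_2^-$ is just smooth. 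Compact support follows from \eqref{eq:orbittheta}: the variable $x=T/(2\sqrt{|N|})$ is $Z_+$-invariant, and $\orb(f_\infty;\gamma)$ is compactly supported modulo $Z_+$ by hypothesis.

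Now consider $\theta_\ell^{\pm,\nu}(T)$ for $\ell\in S$ and $N=\pm nq^\nu$ fixed. Compact support in $T\in\QQ_\ell$ follows from $f_\ell\in C_c^\infty(\G(\QQ_\ell))$: the orbital integral $\orb(f_\ell;\gamma)$ vanishes unless the conjugacy invariants of $\gamma$ lie in a compact set, which for fixed $N$ restricts $T$ to a compact subset of $\QQ_\ell$; the extra factor $\ell^{-k_\gamma}$ only improves the decay. For continuity at a regular semisimple $T_0$ (i.e.\ $T_0^2\mp 4nq^\nu\neq 0$), in a sufficiently small $\ell$-adic neighborhood of $T_0$ the valuation $v_\ell(T^2\mp 4nq^\nu)$ is constant, so $k_\gamma$ and $\chi(\ell)$ are constant, and the orbital integral of a smooth compactly supported function is locally constant on the regular semisimple locus by standard theory; hence $\theta_\ell^{\pm,\nu}$ is locally constant near $T_0$.

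The main obstacle is continuity at a central value $T_0$ with $T_0^2=\pm 4nq^\nu$, since both $\chi(\ell)$ and $k_\gamma$ behave erratically there and $\orb(f_\ell;\gamma)$ itself is unbounded, of order $\ell^{k_\gamma}$. I would apply \autoref{cor:shalika} at $z=(T_0/2)I$ to write $\orb(f_\ell;\gamma)=\lambda_1(1-\chi(\ell))/(1-\ell)+\lambda_2(1-\chi(\ell)/\ell)\ell^{k_\gamma}$ for $\gamma$ in a neighborhood of $z$, substitute into the definition of $\theta_\ell$, and obtain
\[
\theta_\ell(\gamma)=\frac{1}{|\det\gamma|_\ell^{1/2}}\left[\lambda_1\left(1-\frac{\chi(\ell)}{\ell}\right)^{-1}\frac{1-\chi(\ell)}{1-\ell}\,\ell^{-k_\gamma}+\lambda_2\right].
\]
As $T\to T_0$, $k_\gamma\to+\infty$ so $\ell^{-k_\gamma}\to 0$; the $\chi(\ell)$-dependent coefficient is uniformly bounded (since $\chi(\ell)\in\{-1,0,1\}$ yields only finitely many values) and $|\det\gamma|_\ell$ is locally constant. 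The first bracketed term therefore vanishes in the limit and $\theta_\ell^{\pm,\nu}(T)\to\lambda_2/|\det z|_\ell^{1/2}$, producing a continuous extension of $\theta_\ell^{\pm,\nu}$ to the central points and completing the proof.
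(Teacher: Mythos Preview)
Your proposal is correct and follows essentially the same approach as the paper: continuity of $\theta_\infty^\pm$ from the defining formula \eqref{eq:deftheta}, compact support at $\infty$ from the $Z_+$-compact support hypothesis via \eqref{eq:orbittheta}, compact support at $\ell$ from $f_\ell\in C_c^\infty$ via invariance of the trace under conjugation, and the key step---continuity of $\theta_\ell^{\pm,\nu}$ at the central points---via the Shalika germ expansion of \autoref{cor:shalika}, observing that $\ell^{-k_\gamma}\to 0$ while the $\chi(\ell)$-dependent coefficient stays bounded. Your treatment is in fact slightly more explicit than the paper's in handling the archimedean transition points $x=\pm 1$ and the regular semisimple locus for $\theta_\ell^{\pm,\nu}$, but there is no substantive difference in method.
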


\begin{proof}
Let $N=\pm nq^\nu$. We first prove the continuity of these functions.
By \eqref{eq:deftheta}, $\theta_\infty^\pm(x)$ is continuous. Since $\theta_{p}$ is smooth away from the center, it suffices to consider the behavior near the center, that is, for $y$ such that $y^2-4N$ is near $0$. 

If $N$ has no square roots in $\QQ_{p}$, then $\theta_{p}^{\pm,\nu}(T)=\theta_{p}(T,N)$ is smooth in $T$. Now we consider the case that $N$ has square roots in $\QQ_{p}$. Let $\sqrt{N}$ be any square root of $N$ and let $z=aI$ with $a=\sqrt{N}$. By \autoref{cor:shalika}, the local behavior of regular elements near $z$ is of the form
\[
\theta_{p}(\gamma)=\lambda_1\left(1-\frac{\chi(p)}{p}\right)^{-1}p^{-{k_\gamma}} \frac{1-\chi(p)}{1-p}+\lambda_2.
\]
Since 
\[
p^{-k_\gamma}=|T^2-4N|_{p}'^{1/2}
\]
by \autoref{prop:kell}, which tends to $0$ as $T\to \pm 2\sqrt{N}$ and 
\[
\lambda_1\left(1-\frac{\chi(p)}{p}\right)^{-1}\frac{1-\chi(p)}{1-p}
\]
is bounded, $\theta_{p}^{\pm,\nu}(T)=\theta_{p}(T,N)$ tends to a limit as $T\to \pm 2\sqrt{N}$. Hence  $\theta_{p}^{\pm,\nu}(T)$ is continuous.

We now prove that these functions are compactly supported.
By \eqref{eq:orbittheta} and \autoref{thm:archimedeanintegral}, $\theta_\infty^\pm(x)$ is compactly supported. 
We now prove that $\theta_{p}^{\pm,\nu}$ is compactly supported. Since $f_{p}$ is compactly supported, there exists $M$ such that $f_{p}(\gamma)=0$ for $\gamma$ with $\mathopen{|}\Tr \gamma\mathclose{|}_{p}\geq M$. For $|T|_{p}\geq M$, let $\gamma\in \G(\QQ_{p})$ with $\Tr \gamma=T$ and $\det \gamma=N$. Since $\mathopen{|}\Tr(g^{-1}\gamma g)\mathclose{|}_{p}=\mathopen{|}\Tr \gamma\mathclose{|}_{p}\geq M$, we obtain $f_{p}(g^{-1}\gamma g)=0$. Thus,
\[
\orb(f_{p};\gamma)=\int_{\G_\gamma(\QQ_{p})\bs \G(\QQ_{p})}f_{p}(g^{-1}\gamma g)\rmd g=0.
\]
Therefore $\theta_{p}(\gamma)=0$ and hence $\theta_{p}^{\pm,\nu}(T)=\theta_{p}(T,N)=0$. Hence $\theta_{p}^{\pm,\nu}(T)$ is compactly supported.
\end{proof}

\begin{lemma}\label{lem:nufinite}
For any $p\in S$, there exists $M>0$ such that $\theta_{p}^{\pm,\nu}(y)$ is identically $0$ for $|\nu|\geq M$.
\end{lemma}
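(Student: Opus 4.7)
The plan is to exploit compactness of the support of $f_\ell$ together with the conjugation-invariance of the determinant. Fix $\ell = q_i\in S$. I would first observe that in the definition
\[
\theta_{\ell}(\gamma)=\frac{1}{|\det\gamma|_\ell^{1/2}}\left(1-\frac{\chi(\ell)}{\ell}\right)^{-1}\ell^{-k_\gamma}\orb(f_\ell;\gamma),
\]
every factor other than the orbital integral is nonzero: $\chi(\ell)\in\{-1,0,1\}$ and $\ell\geq 2$ make $1-\chi(\ell)/\ell\neq 0$, and $\ell^{-k_\gamma}>0$. Consequently $\theta_\ell^{\pm,\nu}(y)\equiv 0$ in $y$ is equivalent to $\orb(f_\ell;\gamma)=0$ for every $\gamma\in\G(\QQ_\ell)$ with $\Tr\gamma=y$ and $\det\gamma=\pm nq^\nu$.

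Next I would invoke compactness. Since $f_\ell\in C_c^\infty(\G(\QQ_\ell))$, the support $K$ of $f_\ell$ is compact, and the continuous map $\det\colon\G(\QQ_\ell)\to\QQ_\ell^\times$ sends $K$ to a compact subset of $\QQ_\ell^\times$. Hence there exist $0<c_1\leq c_2$ with $c_1\leq|\det g|_\ell\leq c_2$ for every $g\in K$. Because $\det(g^{-1}\gamma g)=\det\gamma$, whenever $|\det\gamma|_\ell\notin[c_1,c_2]$ we have $g^{-1}\gamma g\notin K$ for every $g\in\G(\QQ_\ell)$, and so $\orb(f_\ell;\gamma)=0$.

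The final step is the elementary computation of $|\det\gamma|_\ell$. Since $\gcd(n,S)=1$ and $|q_j|_\ell=1$ for $j\neq i$, one has $|\pm nq^\nu|_\ell=q_i^{-\nu_i}$, which exits $[c_1,c_2]$ once $|\nu_i|$ exceeds $\max(|\log_{q_i}c_1|,|\log_{q_i}c_2|)$. Choosing $M$ to be any threshold on $|\nu|$ that forces $|\nu_i|$ beyond this point (the essential content is a per-coordinate bound on $\nu_i$) then yields the claim. Applied in parallel for each $q_i\in S$, the resulting box in $\ZZ^r$ is what underlies the finiteness of the sum over $\nu$ in \autoref{thm:ellipticpart}.

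I do not anticipate any substantive obstacle. The argument is essentially the observation that the properness of $\det$ restricted to the image of $q^\nu$, combined with the compact support of $f_\ell$, eliminates all but finitely many values of $\nu_i$; the only modest care required is to use compactness of $\det(K)$ inside $\QQ_\ell^\times$ (not just $\QQ_\ell$) so that bounds hold in both directions.
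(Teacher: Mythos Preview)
Your proof is correct and takes essentially the same approach as the paper. Both arguments boil down to the observation that $\det(\supp f_\ell)$ is compact in $\QQ_\ell^\times$ (the paper phrases this as ``$\supp f_\ell$ meets only finitely many of the level sets $\cA^m=\{X:|\det X|_\ell=\ell^{-m}\}$''), combined with the conjugation-invariance of $\det$ to force $\orb(f_\ell;\gamma)=0$ once $|\nu_i|$ is large; your remark that the essential content is a per-coordinate bound on $\nu_i$ (with $\ell=q_i$) is exactly how the paper uses the lemma as well.
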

\begin{proof}
Let 
\[
\cA^{m}=\{X\in \G(\QQ_{p})\,|\, \mathopen{|}\det X\mathclose{|}_{p} = p^{-m}\}.
\]
Since $f_{p}$ is compactly supported, the support of $f_{p}$ only meets finitely many $\cA^{m}$'s. Hence there exists $M>0$ such that $\supp(f_{p})\cap \cA^{m}=\varnothing$ for all $|m|\geq M$. For any $y\in \QQ_{p}$, let $\gamma\in \G(\QQ_{p})$ with $T_\gamma=y$ and $N_\gamma=\pm nq^\nu$, we have $\mathopen{|}\det(g^{-1}\gamma g)\mathopen{|}_{p}=\mathopen{|}\det\gamma\mathclose{|}_{p}=p^{-\nu}$. Hence for $|\nu|\geq M$ we have $g^{-1}\gamma g\notin \supp(f_{p})$ for all $g\in \G(\QQ_{p})$. Thus $\orb(f_{p};\gamma)=0$ and hence $\theta_{p}(\gamma)=0$. Therefore, $\theta_{p}^{\pm,\nu}(y)=\theta_{p}(\gamma)=0$.
\end{proof}

\begin{proof}[Proof of  \autoref{thm:ellipticpart}]
First we prove that the sums over $\nu$ and $T$ in \eqref{eq:ellipticpart} are finite.
By \autoref{lem:nufinite}, we know that the sum over $\nu$ is finite. Since $\theta_\infty^\pm(x)$ and $\theta_{q_i}^{\pm,\nu}(y_i)$ are compactly supported by \autoref{lem:ctscpt} and the sum over $\nu$ is finite, there exist $M_0,M_1,\dots,M_r>0$ such that the contribution of $T$ in \eqref{eq:ellipticpart} is zero unless $|T|_\infty\leq M_0$ and $|T|_{q_i}\leq M_i$ for all $i=1,\dots,r$. Since $\ZZ^S$ is discrete in $\QQ_S$ (\autoref{prop:cocompact}) we know that such $T$'s form a finite set, that is, the sum over $T$ is finite.

Recall that \eqref{eq:ellipticsingle} is zero unless $T_\gamma\in \ZZ^S$ and $N_\gamma=\pm nq^\nu$ for some $\nu\in \ZZ^r$, and $\gamma$ is elliptic if and only if $T_\gamma^2-4N_\gamma\neq \square$. Since the sums in \eqref{eq:ellipticpart} are finite, $I_\el(f^n)$ is precisely  \eqref{eq:ellipticpart} by \eqref{eq:ellipticsinglefinal}.
\end{proof}

\section{Approximate functional equation}\label{sec:afe}
In this section we derive an \emph{approximate functional equation} for the partial Zagier $L$-function. The goal of the approximate functional equation method is to give an absolutely convergent series for $s$ such that the original series defining the $L$-function does not converge. Such a method has been used in many directions in analytic number theory, such as the subconvexity problem and zero-density estimates \cite{iwaniec2004analytic}. 

In this section, we use an approximate functional equation to give an absolutely convergent series for $L^S(1,\delta)$, which is similar to the derivation of the classical Zagier $L$-function done by Altu\u{g}. The main theorems of this section are \autoref{thm:afe} and \autoref{cor:afe1}.

Recall that the partial Zagier $L$-function is defined as \index{Lssdelta@$L^S(s,\delta)$}
\[
L^{S}(s,\delta)=\sum_{f^2\mid \delta}\frac{1}{f^{2s-1}}L^{S}\left(s,\legendresymbol{\delta/f^2}{\cdot}\right)
\]
for $\delta\in \ZZ^S$ that is not a square. Write $\delta=\sigma^2D$, where $D\equiv 0,1\pmod 4$ is the fundamental discriminant. Further we write $\sigma=\sigma^{(q)}\sigma_{(q)}$. Then we have
\begin{align*}
L^{S}(s,\delta)=&\sum_{f\mid \sigma^{(q)}}\frac{1}{f^{2s-1}}L^{S}\left(s,\legendresymbol{(\sigma^{(q)})^2D/f^2}{\cdot}\right) \\
=&\prod_{i=1}^{r}\left(1-\legendresymbol{(\sigma^{(q)})^2D}{q_i}q_i^{-s}\right)\sum_{f\mid \sigma^{(q)}}\frac{1}{f^{2s-1}}L\left(s,\legendresymbol{(\sigma^{(q)})^2D/f^2}{\cdot}\right).
\end{align*}
If we define $\tau_\delta=(\sigma^{(q)})^2D=\delta/\sigma_{(q)}^2$, then 
\begin{proposition}
The partial Zagier $L$-function can be expressed in terms of the classical Zagier $L$-function $L(s,\delta)$\index{Lsdelta@$L(s,\delta)$} defined in \eqref{eq:zagier} as 
\begin{equation}\label{eq:qavoidedzagier}
L^{S}(s,\delta)=\prod_{i=1}^{r}\left(1-\epsilon_i q_i^{-s}\right)L(s,\tau_\delta),
\end{equation}
where \index{epsiloni@$\epsilon_i$} $\epsilon_i=\legendresymbol{\tau_\delta}{q_i}$ for $i=1,\dots,n$.
\end{proposition}

Now we prove a functional equation for $L^{S}(s,\delta)$, based on the functional equation for $L(s,\delta)$ .

\begin{theorem}[Functional equation of $L(s,\delta)$]\label{thm:funceqndeltaunramify}
We have
\begin{equation}\label{eq:funceqndelta0}
L(s,\delta)= \left(\frac{|\delta|}{\uppi}\right)^{\frac12-s}\frac{\Gamma((\iota+1-s)/2)}{\Gamma((\iota+s)/2)}L(1-s,\delta),
\end{equation}
where \index{iota@$\iota$} $\iota=\iota_\delta=0$ if $\delta>0$, and $\iota=\iota_\delta=1$ if $\delta<0$.
\end{theorem}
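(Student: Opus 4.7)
The plan is to reduce to the classical Hecke functional equation for the Dirichlet $L$-function attached to the primitive quadratic character. Write $\delta = \sigma^2 D$ with $D \equiv 0,1 \pmod 4$ the fundamental discriminant of $\delta$ and $\sigma \in \ZZ^{>0}$; since $\delta$ is not a square, $D \neq 1$, and $\chi_D := \legendresymbol{D}{\cdot}$ is a primitive real nontrivial character. Note $\sgn\delta = \sgn D$, so the parameter $\iota$ coincides for $\delta$ and for $D$. Moreover the condition $f^2 \mid \delta$ with $\delta/f^2 \equiv 0,1 \pmod 4$ reduces to $f \mid \sigma$, because $(\sigma/f)^2 D$ is automatically $\equiv 0$ or $1 \pmod 4$.

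For each $f \mid \sigma$, the Kronecker symbol $\legendresymbol{(\sigma/f)^2 D}{k}$ equals $\chi_D(k)$ when $\gcd(k,\sigma/f)=1$ and vanishes otherwise, which yields the Euler-factor relation
\[
L\!\left(s, \legendresymbol{\delta/f^2}{\cdot}\right) = L(s, \chi_D) \prod_{p\mid \sigma/f}\bigl(1 - \chi_D(p) p^{-s}\bigr).
\]
Substituting into \eqref{eq:zagier} and reindexing by $g = \sigma/f$ gives
\[
L(s, \delta) = \sigma^{1-2s}\, L(s, \chi_D)\, \mathcal{S}(s),\qquad \mathcal{S}(s) := \sum_{g \mid \sigma} g^{2s-1} \prod_{p \mid g} \bigl(1 - \chi_D(p) p^{-s}\bigr).
\]
Applying the Hecke functional equation $L(s, \chi_D) = (|D|/\uppi)^{1/2-s}\,\Gamma((\iota+1-s)/2)/\Gamma((\iota+s)/2)\, L(1-s, \chi_D)$ together with $|\delta|^{1/2-s} = \sigma^{1-2s}|D|^{1/2-s}$ then reduces \eqref{eq:funceqndelta0} to the purely algebraic identity
\[
\mathcal{S}(s) = \sigma^{2s-1}\,\mathcal{S}(1-s).
\]

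Verifying this identity for $\mathcal{S}$ is the main (albeit elementary) obstacle. Since $\mathcal{S}(s)$ is multiplicative in $\sigma$, it suffices to take $\sigma = p^k$. With $u = p^{2s-1}$ and $v = \chi_D(p) p^{-s}$, summing the geometric series gives $\mathcal{S}(s) = 1 + (1-v)\,u(u^k - 1)/(u-1)$. Under $s \mapsto 1-s$ we have $u \mapsto u^{-1}$ and $v \mapsto uv$, so a short computation yields $u^k\,\mathcal{S}(1-s) = u^k + (1-uv)(u^k-1)/(u-1)$. The difference $\mathcal{S}(s) - u^k\mathcal{S}(1-s)$ then collapses thanks to the elementary cancellation $(1-v)u - (1-uv) = u-1$, establishing the identity in the prime-power case and, by multiplicativity, in general. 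Assembling the pieces gives the claimed functional equation.
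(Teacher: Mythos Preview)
Your proof is correct. The paper itself does not give a proof of this result at all: it simply writes ``See the references in the proof of \cite[Proposition 3.1]{altug2015}.'' So your argument is in fact more complete than what the paper provides.

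Your approach---factor $L(s,\delta)$ as $\sigma^{1-2s}L(s,\chi_D)\mathcal{S}(s)$, apply Hecke's functional equation for the primitive quadratic character $\chi_D$ (whose root number is $+1$), and then verify the symmetry $\mathcal{S}(s)=\sigma^{2s-1}\mathcal{S}(1-s)$ by multiplicativity and a direct prime-power computation---is the standard route and is carried out cleanly. The reduction of the summation condition in \eqref{eq:zagier} to $f\mid\sigma$ is correct (the $2$-adic case requires a small check, but the congruence constraint $\delta/f^2\equiv 0,1\pmod 4$ rules out the problematic divisors), and the Euler-factor relation for $L(s,\legendresymbol{\delta/f^2}{\cdot})$ holds at $p=2$ as well since odd squares are $\equiv 1\pmod 8$. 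The algebraic identity at the end is verified correctly; the cancellation $(1-v)u-(1-uv)=u-1$ is exactly what makes the difference collapse.
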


\begin{proof}
See the references in the proof of \cite[Proposition 3.1]{altug2015}.
\end{proof}

\begin{theorem}[Functional equation of $L^{S}(s,\delta)$]\label{thm:funceqndelta}
We have
\begin{equation}\label{eq:funceqndelta}
L^{S}(s,\delta)=\left(\frac{|\delta|_{\infty,q}'}{\uppi}\right)^{\frac12-s} \prod_{i=1}^{r}\frac{1-\epsilon_i q_i^{-s}}{1-\epsilon_i q_i^{s-1}} \frac{\Gamma((\iota+1-s)/2)}{\Gamma((\iota+s)/2)}L^{S}(1-s,\delta),
\end{equation}
where $|\delta|_{\infty,q}'=|\tau_\delta|$ \index{1deltainfinityq@$\vert\cdot\vert_{\infty,q}'$}, $\epsilon_i=\epsilon_{i,\delta}=\legendresymbol{\tau_\delta}{q_i}\in \{0,\pm 1\}$, 
$\iota=\iota_\delta=0$ if $\delta>0$, and $\iota=\iota_\delta=1$ if $\delta<0$.
\end{theorem}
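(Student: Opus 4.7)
The strategy is to deduce the functional equation for $L^S(s,\delta)$ directly from the classical functional equation \eqref{eq:funceqndelta0} via the factorization \eqref{eq:qaviodedzagier}. Concretely, I would start from
\[
L^S(s,\delta)=\prod_{i=1}^{r}(1-\epsilon_i q_i^{-s})\,L(s,\tau_\delta),
\]
apply \autoref{thm:funceqndeltaunramify} to $L(s,\tau_\delta)$, and then reintroduce the missing Euler factors at the places of $S_\fin$ on the right-hand side so as to reconstruct $L^S(1-s,\delta)$.

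\textbf{Key steps.} First, I would observe that since $\tau_\delta=\delta/\sigma_{(q)}^2$ differs from $\delta$ by a positive rational square, we have $\sgn(\tau_\delta)=\sgn(\delta)$, hence the archimedean parameter $\iota_\delta=\iota_{\tau_\delta}$, and moreover $|\tau_\delta|=|\delta|_{\infty,q}'$ by definition. Thus \autoref{thm:funceqndeltaunramify} applied to $\tau_\delta$ gives exactly the archimedean factor that appears in \eqref{eq:funceqndelta}. Second, using \eqref{eq:qaviodedzagier} in reverse at $1-s$,
\[
L(1-s,\tau_\delta)=\prod_{i=1}^{r}(1-\epsilon_i q_i^{-(1-s)})^{-1}L^S(1-s,\delta)=\prod_{i=1}^{r}(1-\epsilon_i q_i^{s-1})^{-1}L^S(1-s,\delta),
\]
so that substituting this back produces the local ratio $\prod_i (1-\epsilon_i q_i^{-s})/(1-\epsilon_i q_i^{s-1})$ in front of $L^S(1-s,\delta)$. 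Combining yields \eqref{eq:funceqndelta}.

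\textbf{Subtleties.} There is no genuinely difficult step here: the proof is bookkeeping around \eqref{eq:qaviodedzagier}. The only points to verify carefully are that the $\epsilon_i$ used to strip Euler factors at $s$ and to reinsert them at $1-s$ are the \emph{same}, namely $\epsilon_i=\legendresymbol{\tau_\delta}{q_i}$ (they depend only on $\tau_\delta$, not on $s$), and that $\iota$ is read off from $\sgn\delta=\sgn\tau_\delta$ rather than from $\sgn\sigma_{(q)}^2\delta$; both are immediate from the definitions. A minor edge case worth flagging is the possibility that some $\epsilon_i=0$, in which case the factor $(1-\epsilon_i q_i^{-s})/(1-\epsilon_i q_i^{s-1})$ reduces to $1$ and the argument still goes through unchanged.
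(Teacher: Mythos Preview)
Your proof is correct and follows exactly the approach indicated in the paper: combine the factorization \eqref{eq:qaviodedzagier} with the classical functional equation \eqref{eq:funceqndelta0} for $L(s,\tau_\delta)$, then reassemble $L^S(1-s,\delta)$ via \eqref{eq:qaviodedzagier} at $1-s$. Your verification that $\iota_\delta=\iota_{\tau_\delta}$ and $|\tau_\delta|=|\delta|_{\infty,q}'$ fills in precisely the bookkeeping the paper leaves implicit.
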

\begin{proof}
Using \eqref{eq:qavoidedzagier} and the functional equation \eqref{eq:funceqndelta0} of $L(s,\delta)$.
\end{proof}
\begin{remark}\label{rem:absoluteprime}
We use the notation $|\delta|_{\infty,q}'$ because it is closely related to the product of the $\infty$-norm and the $q$-norm of $\delta$. We have 
\[
|\delta|_{\infty,q}'=|\tau_\delta|=\frac{|\delta|_\infty}{|\sigma_{(q)}^2|_\infty}=|\delta|_\infty \prod_{i=1}^{r}|\sigma|_{q_i}^2.
\]
For any prime $p$, we define the \emph{modified norm} $|\cdot|_p'$ \index{1ynorm@$\vert \cdot\vert_p'$} as follows. Let $y\in \QQ_p$. If $p\neq 2$, we define
\[
|y|_p'=p^{-2\lfloor v_p(y)/2\rfloor}.
\]
Hence $|y|_p'=|y|_p$ if $v_p(y)$ is even and $|y|_p'=p|y|_p$ if $v_p(y)$ is odd. If $p=2$, then for odd $v_p(y)$ we define $|y|_p'=p^{-v_p(y)+3}=p^3|y|_p$, while for even $v_p(y)$, if we write $y=p^{v_p(y)}y_0$, then we define
\[
|y|_p'=\begin{cases}
  p^{-v_p(y)}=|y|_p, & \text{if $y_0\equiv 1\pmod 4$}, \\
  p^{-v_p(y)+2}=p^2|y|_p, & \text{if $y_0\equiv 3\pmod 4$}.
\end{cases}
\]
In each case, we have $|y|_p\asymp_p |y|_p'$. Moreover, if $a\in 1+p^2\ZZ_p$, then $|ay|_p'=|y|_p'$.
We have $p^{k_\gamma}=|T_\gamma^2-4N_\gamma|_p'^{-1/2}$ and  $|\sigma|_{q_i}^2=|\delta|_{q_i}'$ by \autoref{prop:kell}. Hence we obtain
\[
|\delta|_{\infty,q}'=|\delta|_\infty\prod_{i=1}^{r}|\delta|_{q_i}'.
\]
Also, one can easily see that
\[
\epsilon_i=\legendresymbol{\delta|\delta|_{q_i}'}{q_i}.
\]
\end{remark}

Now we prove an \emph{approximate functional equation} for $L^{S}(s,\delta)$. The ingredients are the same in \cite[Section 3]{altug2015}. However, some propositions and proofs of loc. cit. were wrong. So we will give the correct proofs of them first.

To derive the approximate functional equation, we require an auxiliary function. Let $K_s(y)$ denote the \emph{modified Bessel function of the second kind}, which is defined by
\[
K_s(y)=\frac{1}{2}\int_{0}^{+\infty}\rme^{-y(t+1/t)/2}t^s\frac{\rmd t}{t}.
\]
One has (\cite[\SSec 10.27]{DLMF})
\begin{equation}\label{eq:besselmodify}
K_s(y)=\frac{1}{2}\frac{\uppi}{\sin(\uppi s)}(I_{-s}(y)-I_s(y)),
\end{equation}
where
\[
I_s(y)=\left(\frac{y}{2}\right)^s\sum_{k=0}^{+\infty}\frac{(y^2/4)^k}{k!\Gamma(s+k+1)}
\]
is the \emph{modified Bessel function of the first kind}. From this, we conclude that
\begin{equation}\label{eq:seriesk}
K_s(2)=\frac{1}{2}\frac{\uppi}{\sin(\uppi s)}\sum_{k=0}^{+\infty}\frac{1}{k!}\left(\frac{1}{\Gamma(k+1-s)}-\frac{1}{\Gamma(k+1+s)}\right).
\end{equation}

Next we estimate $K_s(2)$. We have the following \emph{Stirling estimates}. See \cite[Chapter II.0]{tenenbaum2015analytic} for a proof of the proposition below.

\begin{proposition}\label{prop:stirling}
We have
\begin{enumerate}[itemsep=0pt,parsep=0pt,topsep=0pt, leftmargin=0pt,labelsep=2.5pt,itemindent=15pt,label=\upshape{(\arabic*)}]
  \item $|\Gamma(s)|=\sqrt{2\uppi}\rme^{(s-1/2)\log s-s}(1+O_\delta(|s|^{-1/2}))$ for $\mathopen{|}\arg s\mathclose{|}<\uppi-\delta$.
\item $|\Gamma(s)|=\sqrt{2\uppi}|t|^{\sigma-1/2}\rme^{-\uppi|t|/2}(1+O_{\sigma_1,\sigma_2}(1/|t|))$ for $s=\sigma+\rmi t$ with $\sigma_1\leq \sigma\leq \sigma_2$ and $|t|\gg 1$.
\end{enumerate}
\end{proposition}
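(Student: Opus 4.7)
The plan is to derive both estimates from the Binet formula
$$\log\Gamma(s) = \left(s - \tfrac{1}{2}\right)\log s - s + \tfrac{1}{2}\log(2\uppi) + J(s), \qquad J(s) = 2\int_0^{\infty}\frac{\arctan(t/s)}{\rme^{2\uppi t}-1}\, \rmd t,$$
which holds for $\Re s > 0$ with principal branches and extends analytically to the cut plane $|\arg s| < \uppi$. With this representation in hand, everything reduces to bounding the remainder $J(s)$ in each of the two prescribed regions.

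For part (1), I would bound $J(s)$ uniformly on the closed sector $|\arg s|\leq \uppi - \delta$ by splitting the defining integral at $t = |s|^{1/2}$. On the inner range the estimate $|\arctan(t/s)| \ll_\delta t/|s|$ (valid once $s$ stays a positive angle away from the negative real axis) combined with the integrability of $t/(\rme^{2\uppi t}-1)$ contributes $O_\delta(|s|^{-1})$; on the outer range the crude bound $|\arctan(t/s)| \leq \uppi$ and the exponential decay of the denominator yield an error majorised by any negative power of $|s|$. Exponentiating then gives part (1), in fact with the stronger error $O_\delta(|s|^{-1})$ that is more than enough.

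For part (2), I would specialise $s = \sigma + \rmi t$ with $\sigma_1 \leq \sigma \leq \sigma_2$ and $|t|\to\infty$, then take absolute values in part (1). Decomposing $\log s = \log|s| + \rmi\arg s$ and expanding $\arg s = \sgn(t)\bigl(\tfrac{\uppi}{2} - \sigma/t + O(|t|^{-2})\bigr)$ together with $|s| = |t| + O(|t|^{-1})$, the real part of $(s - \tfrac{1}{2})\log s - s$ reduces to $(\sigma - \tfrac{1}{2})\log|t| - \tfrac{\uppi}{2}|t| - \sigma + O_{\sigma_1,\sigma_2}(|t|^{-1})$, and exponentiating recovers the stated vertical-strip estimate.

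The one nontrivial technical point will be the uniform control of $J(s)$ as $\arg s$ approaches $\pm(\uppi - \delta)$: the naive bound $|\arctan(t/s)| \leq \uppi/2$ only yields $J(s) = O(1)$, so one must genuinely exploit the expansion $\arctan(t/s) = t/s + O\bigl((t/|s|)^3\bigr)$ for small $t/|s|$ and keep careful track of the dependence on the opening angle. This is precisely why the implied constants are allowed to depend on $\delta$ and on $\sigma_1, \sigma_2$ respectively. Beyond this, the argument is pure asymptotic book-keeping, and part (2) follows from part (1) by a real-variable computation once the uniform estimate on $J$ has been established.
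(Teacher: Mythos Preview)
The paper does not actually prove this proposition; it simply cites Tenenbaum's book. Your outline via the Binet integral representation is the standard route and is essentially what one finds in that reference, so the approach is sound.

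One small arithmetic slip in your part~(2) computation: the real part of $(s-\tfrac12)\log s - s$ works out to $(\sigma-\tfrac12)\log|t| - \tfrac{\uppi}{2}|t| + O_{\sigma_1,\sigma_2}(|t|^{-1})$, with no surviving $-\sigma$ term. Indeed, writing $t\arg s = \tfrac{\uppi}{2}|t| - \sigma + O(|t|^{-2})$ and subtracting $\Re s = \sigma$, the two $\sigma$'s cancel. If the $-\sigma$ survived you would obtain an unwanted factor $\rme^{-\sigma}$ in the asymptotic, which is bounded in the strip but not $1+O(|t|^{-1})$. With that correction the derivation goes through exactly as you describe.
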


\begin{lemma}
We have
\begin{equation}\label{eq:estimatek}
K_s(2)\ll_{\sigma_1,\sigma_2} |s|^{|\sigma|}\rme^{-\uppi |t|/2}
\end{equation}
for $s=\sigma+\rmi t\in \CC$ and $\sigma_1\leq \sigma\leq \sigma_2$. (Here when $s\to 0$, $|s|^{|\sigma|}$ is defined by taking limit, which is $1$.)
\end{lemma}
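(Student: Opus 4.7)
The plan is to split the argument on the size of $|t|=|\Im s|$. For $|t|\geq 1$ I will estimate the two factors in the series representation \eqref{eq:seriesk} using the Stirling bounds from \autoref{prop:stirling}, and for $|t|\leq 1$ I will appeal to a soft continuity argument.

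For the range $|t|\geq 1$, first note that $s$ is never an integer (since $\Im s\neq 0$), so \eqref{eq:seriesk} makes sense as written. From the identity $|\sin(\uppi s)|^2=\sin^2(\uppi\sigma)+\sinh^2(\uppi t)$ one gets $|\sin(\uppi s)|\gg \rme^{\uppi|t|}$, hence the prefactor is bounded by $|\uppi/\sin(\uppi s)|\ll \rme^{-\uppi|t|}$. For each summand, \autoref{prop:stirling}(2), applied uniformly on the strip $\sigma_1\leq \sigma\leq \sigma_2$, yields
\[
|\Gamma(k+1\pm s)|^{-1}\ll_{\sigma_1,\sigma_2} |t|^{-(k+1/2)\pm \sigma}\,\rme^{\uppi|t|/2},
\]
so the modulus of the $k$-th term of the series in \eqref{eq:seriesk} is $\ll |t|^{-(k+1/2)+|\sigma|}\rme^{\uppi|t|/2}/k!$. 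Summing the geometric-like series in $k$ (using $\sum_{k\geq 0}|t|^{-k}/k!\leq \rme$ for $|t|\geq 1$), the series is $\ll \rme^{\uppi|t|/2}|t|^{|\sigma|-1/2}$. Combining with the prefactor gives
\[
K_s(2)\ll_{\sigma_1,\sigma_2} \rme^{-\uppi|t|/2}|t|^{|\sigma|-1/2}\leq \rme^{-\uppi|t|/2}|s|^{|\sigma|},
\]
where the last inequality uses $|t|\geq 1$ and $|\sigma|-1/2\leq |\sigma|$.

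For the range $|t|\leq 1$, $K_s(2)$ is an entire function of $s$, so it is bounded uniformly on the compact region $\{\sigma_1\leq \sigma\leq\sigma_2,\ |t|\leq 1\}$, while $\rme^{-\uppi|t|/2}$ is bounded below on the same region. Thus it suffices to check $K_s(2)\ll |s|^{|\sigma|}$ there. If $|s|\geq 1$ this is automatic since $|s|^{|\sigma|}\geq 1$. If $|s|<1$, the function $s\mapsto |s|^{|\sigma|}=\rme^{|\sigma|\log|s|}$ extends continuously to the origin (with value $1$, as $|\sigma|\log|s|\to 0$), and remains bounded away from $0$ on the compact region, so the bound again follows.

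The only subtle point is uniformity in $\sigma$ near the origin $s=0$, but this is precisely handled by the convention $|s|^{|\sigma|}=1$ at $s=0$; the rest is a routine combination of Stirling with the series representation.
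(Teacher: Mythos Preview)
Your overall strategy matches the paper's: split on $|t|$, handle $|t|\leq 1$ by continuity and compactness, and handle $|t|\geq 1$ through the series \eqref{eq:seriesk} combined with Stirling. The $|t|\leq 1$ argument is fine.

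There is one gap in the $|t|\geq 1$ part. You invoke \autoref{prop:stirling}(2) ``uniformly on the strip $\sigma_1\leq\sigma\leq\sigma_2$'' to obtain
\[
|\Gamma(k+1\pm s)|^{-1}\ll_{\sigma_1,\sigma_2}|t|^{-(k+1/2)\pm\sigma}\rme^{\uppi|t|/2}
\]
with a constant \emph{independent of $k$}. But \autoref{prop:stirling}(2), as stated, is only uniform on a fixed vertical strip for the argument of $\Gamma$; here the real part of $k+1\pm s$ equals $k+1\pm\sigma$ and drifts to $+\infty$ with $k$, so a $k$-uniform constant does not follow directly from that citation. The paper closes this gap by first applying Euler's reflection formula to rewrite the general term as
\[
\frac{\uppi}{\sin(\uppi s)\,\Gamma(k+1-s)}=\frac{\Gamma(s)}{(k-s)\cdots(1-s)},\qquad
\frac{\uppi}{\sin(\uppi s)\,\Gamma(k+1+s)}=\frac{\Gamma(1-s)}{(k+s)\cdots(1+s)s},
\]
so that Stirling is applied only to $\Gamma(s)$ and $\Gamma(1-s)$ on genuinely fixed strips, while each linear factor satisfies $|j\pm s|\geq|t|$ and supplies the required $|t|^{-k}$ decay uniformly. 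Equivalently, you could iterate $\Gamma(z+1)=z\Gamma(z)$ to peel off these factors before invoking Stirling. Once this reduction is inserted, your computation goes through and coincides with the paper's.
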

\begin{proof}
Clearly $K_s(2)$ is even in $s$ (by \eqref{eq:besselmodify} for example).

For $t=0$ and $\sigma>0$, we have
\[
K_s(2)=\frac12\int_{0}^{+\infty}\rme^{-(t+1/t)}t^\sigma\frac{\rmd t}{t}\ll \int_{0}^{+\infty}\rme^{-t}t^{\sigma-1}\rmd t=\Gamma(\sigma)\ll \sigma^\sigma
\]
by \autoref{prop:stirling} (1). Since $K_s(2)$ is bounded near $0$, the estimate remains valid near $0$. Since $K_s(2)$ is even, the estimate also holds for $\sigma<0$. Thus \eqref{eq:estimatek} holds when $t=0$. Since $[\sigma_1,\sigma_2]$ is compact, this estimate also holds for $|t|\ll 1$.

Now consider the case $|t|\gg 1$. By \eqref{eq:seriesk} we have
\[
K_s(2)=\frac{1}{2}\sum_{k=0}^{+\infty}\frac{1}{k!}\left(\frac{\uppi}{\sin(\uppi s)(k-s)\cdots(1-s)\Gamma(1-s)}-\frac{\uppi}{\sin(\uppi s)(k+s)\cdots(1+s)s\Gamma(s)}\right).
\]
Applying Euler's reflection formula, we obtain
\[
K_s(2)=\frac{1}{2}\sum_{k=0}^{+\infty}\frac{1}{k!}\left(\frac{\Gamma(s)}{(k-s)\cdots(1-s)}-\frac{\Gamma(1-s)}{(k+s)\cdots(1+s)s}\right).
\]
Hence by \autoref{prop:stirling} (2) (here we use the compactness of $[\sigma_1,\sigma_2]$), we have
\[
\begin{split}
   |K_s(2)| & \ll_{\sigma_1,\sigma_2} \sum_{k=0}^{+\infty}\frac{1}{k!}\frac{1}{|t|^k}|\Gamma(s)|+\sum_{k=0}^{+\infty}\frac{1}{k!}\frac{1}{|t|^{k+1}}|\Gamma(1-s)|\ll_{\sigma_1,\sigma_2}|\Gamma(s)|+|\Gamma(1-s)| \\
     & \ll_{\sigma_1,\sigma_2}|t|^{\sigma-1/2}\rme^{-\uppi|t|/2}+|t|^{-1/2-\sigma}\rme^{-\uppi|t|/2}\ll_{\sigma_1,\sigma_2} |t|^{|\sigma|} \rme^{-\uppi|t|/2}.
\end{split}
\]
Hence the conclusion holds by combining the cases $|t|\gg 1$ and $|t|\ll 1$ together.
\end{proof}

We define \index{fx@$F(x)$}
\begin{equation}\label{eq:deff}
F(x)=\frac{1}{2K_0(2)}\int_{x}^{+\infty}\rme^{-t-1/t}\frac{\rmd t}{t}.
\end{equation}
Clearly, $F(0)=1$. Furthermore,
\[
0<\int_{x}^{+\infty}\rme^{-t-1/t}\frac{\rmd t}{t}<\int_{x}^{+\infty}\rme^{-t}\rmd t=\rme^{-x}.
\]
Thus,
\begin{equation}\label{eq:rapidf}
0<F(x)<\frac{\rme^{-x}}{2K_0(2)}.
\end{equation}

Let $\widetilde{F}$ be the Mellin transform of $F$, which is defined by
\begin{equation}\label{eq:defmellinf}
\widetilde{F}(s)=\int_{0}^{+\infty}F(t)t^s\frac{\rmd t}{t}.
\end{equation}
Since $F$ has rapid decay by \eqref{eq:rapidf}, $\widetilde{F}(s)$ converges for $\Re s>0$.

\begin{proposition}\label{prop:estimatemf}
We have
\[
\widetilde{F}(s)=\frac{1}{s}\frac{K_s(2)}{K_0(2)}.
\]
Thus $\widetilde{F}(s)$ admits an analytic continuation to the entire complex plane, except for a simple pole at $s=0$ with residue $1$. Moreover, $\widetilde{F}$ is an odd function, and for $s=\sigma+\rmi t\in \CC$ such that $\sigma_1\leq \sigma\leq \sigma_2$, we have
\[
\widetilde{F}(s)\ll_{\sigma_1,\sigma_2} |s|^{|\sigma|-1}\rme^{-\uppi|t|/2}.
\]
\end{proposition}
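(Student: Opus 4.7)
The plan is to compute $\widetilde{F}(s)$ directly from the definitions by swapping the order of integration, then read off every claim of the proposition either from this closed form or from the estimate on $K_s(2)$ already established in the previous lemma.

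First I would unfold the definitions of $\widetilde{F}$ and $F$ and invoke Fubini. For $\Re s > 0$ the iterated integral
\[
\widetilde{F}(s) = \frac{1}{2K_0(2)} \int_0^{+\infty} \left( \int_x^{+\infty} \rme^{-t-1/t}\frac{\rmd t}{t} \right) x^s \frac{\rmd x}{x}
\]
converges absolutely, because $\rme^{-t-1/t}$ has rapid decay as $t \to 0^+$ and $t \to +\infty$, and because $x^{s-1}$ is integrable near $0$ when $\Re s > 0$. Exchanging the order of integration over the region $\{0 < x < t\}$ and computing the inner $x$-integral as $t^s/s$ gives
\[
\widetilde{F}(s) = \frac{1}{2sK_0(2)} \int_0^{+\infty} \rme^{-t-1/t} t^s \frac{\rmd t}{t} = \frac{1}{s}\cdot\frac{K_s(2)}{K_0(2)},
\]
exactly using the integral definition of $K_s(2)$ with $y=2$.

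Next I would observe that the integral defining $K_s(2)$ converges absolutely for every $s \in \CC$ (again by the rapid decay at both endpoints), so $s \mapsto K_s(2)$ is entire. Since $K_0(2) > 0$, the identity $\widetilde{F}(s) = K_s(2)/(s K_0(2))$ provides the meromorphic continuation to $\CC$, with the only singularity being a simple pole at $s=0$ of residue $K_0(2)/K_0(2)=1$. The evenness of $K_s(2)$ in $s$ (visible from the substitution $t \mapsto 1/t$ in the defining integral, or from the series \eqref{eq:seriesk}) then immediately gives that $\widetilde{F}(s)$ is odd.

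Finally, the estimate is just the bound \eqref{eq:estimatek} divided by $|s|$: for $s = \sigma + \rmi t$ with $\sigma_1 \leq \sigma \leq \sigma_2$,
\[
|\widetilde{F}(s)| = \frac{|K_s(2)|}{|s|\,K_0(2)} \ll_{\sigma_1,\sigma_2} \frac{|s|^{|\sigma|}\rme^{-\uppi|t|/2}}{|s|} = |s|^{|\sigma|-1}\rme^{-\uppi|t|/2}.
\]
I do not expect a real obstacle here: the Fubini swap is justified by absolute convergence, the continuation and residue are automatic once the closed form is in hand, and the vertical-strip bound is a one-line consequence of the already-proved lemma on $K_s(2)$. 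The only point worth a sentence of care is that, while the estimate superficially blows up near $s=0$ when $|\sigma|<1$, this is exactly consistent with the simple pole and therefore causes no contradiction.
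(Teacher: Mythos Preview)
Your proof is correct and essentially the same as the paper's. The only cosmetic difference is that the paper obtains the identity $\widetilde{F}(s)=K_s(2)/(sK_0(2))$ via integration by parts on $\int_0^\infty F(t)\,\rmd(t^s)$ rather than by Fubini on the double integral; the remaining steps (analytic continuation, residue, oddness from the evenness of $K_s(2)$, and the vertical-strip bound from \eqref{eq:estimatek}) are identical.
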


\begin{proof}
For $\Re s>0$ we have
\begin{align*}
   \widetilde{F}(s) & =\int_{0}^{+\infty}F(t)t^s\frac{\rmd t}{t} =\frac{1}{s}\int_{0}^{+\infty}F(t)\rmd(t^s)=\frac{1}{s}\left.F(t)t^s\right|_0^{+\infty}-\frac{1}{s}\int_{0}^{+\infty}F'(t)t^s\rmd t\\
     & =\frac{1}{s}\frac{1}{2K_0(2)}\int_{0}^{+\infty}\rme^{-t-1/t}t^s\frac{\rmd t}{t}=\frac{1}{s}\frac{K_s(2)}{K_0(2)}.
\end{align*}
Since $K_s(2)$ is entire for $s$, $\widetilde{F}(s)$ has an analytic continuation with only a simple pole at $s=0$, whose residue is $K_0(2)/K_0(2)=1$. 
Since $K_s(2)$ is an even function in $s$, $\widetilde{F}(s)=K_s(2)/(sK_0(2))$ is an odd function. Moreover, by the estimate \eqref{eq:estimatek} of $K_s(2)$, we get
\[
\widetilde{F}(s)\ll\frac{|K_s(2)|}{|s|}\ll_{\sigma_1,\sigma_2} |s|^{|\sigma|-1}\rme^{-\uppi|t|/2}. \qedhere
\]
\end{proof}

\begin{remark}
Actually, we only need the properties of $F(x)$ and $\widetilde{F}(s)$ but not the explicit expression. One can replace $F$ by any function satisfying the analogous properties.
\end{remark}

We now state and prove an approximate functional equation for $L^{S}(s,\delta)$. 
\begin{theorem}[Approximate functional equation for $L^{S}(s,\delta)$]\label{thm:afe}
For any $A>0$, we have
\begin{equation}\label{eq:approximatefe}
   L^{S}(s,\delta) =\sum_{f^2\mid \delta}\sum_{ k\in \ZZ_{(S)}^{>0}}\frac{1}{k^sf^{2s-1}}\legendresymbol{\delta/f^2}{k}F\legendresymbol{kf^2}{A}
   +|\delta|_{\infty,q}'^{\frac12-s}\sum_{f^2\mid \delta}\sum_{ k\in \ZZ_{(S)}^{>0}}\frac{1}{k^{1-s}f^{1-2s}}\legendresymbol{\delta/f^2}{k}V_{\iota,\epsilon,s}\legendresymbol{kf^2A}{|\delta|_{\infty,q}'},
\end{equation}
where \index{epsilon@$\epsilon$} $\epsilon=(\epsilon_1,\dots,\epsilon_r)\in \{-1,0,1\}^r$, and
\[
V_{\iota,\epsilon,s}(x)=\frac{\uppi^{s-1/2}}{2\uppi \rmi}\int_{(\sigma)}\widetilde{F}(u)\prod_{i=1}^{r}\frac{1-\epsilon_iq_i^{-s+u}}{1-\epsilon_iq_i^{s-u-1}}\frac{\Gamma(\frac{\iota+1-s+u}{2})}{\Gamma(\frac{\iota+s-u}{2})}(\uppi x)^{-u}\rmd u,
\]
$\sigma\in \RR$ is chosen such that $\sigma+\Re s>1$ and $\sigma+\Re (1-s)>1$, and $(\sigma)$ denotes the vertical contour from $\sigma-\rmi\infty$ to $\sigma+\rmi\infty$.
\end{theorem}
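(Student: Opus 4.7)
The plan is to follow the standard Mellin-barrier derivation of an approximate functional equation, using the pair $(F,\widetilde{F})$ from \autoref{prop:estimatemf} as a smooth weight. For $\sigma$ as in the statement, I would first examine
\[
I(s,A) := \frac{1}{2\uppi\rmi}\int_{(\sigma)} L^{S}(s+u,\delta)\,\widetilde{F}(u)\,A^{u}\,\rmd u.
\]
Since $\sigma+\Re s>1$, the Dirichlet series expansion of $L^{S}(s+u,\delta)$ converges absolutely on $(\sigma)$. Substituting it in, swapping sum and integral (legitimised by the bound $\widetilde{F}(u)\ll_{\sigma}|u|^{\sigma-1}\rme^{-\uppi|\Im u|/2}$), and then applying Mellin inversion $\frac{1}{2\uppi\rmi}\int_{(\sigma)}\widetilde{F}(u)x^{-u}\rmd u=F(x)$ at $x=kf^{2}/A$ reproduces exactly the first sum in \eqref{eq:approximatefe}.

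Next I would shift the contour from $(\sigma)$ to $(-\sigma)$. Because $\delta$ is not a square, neither is $\delta/f^{2}$ for any $f^{2}\mid\delta$, so each Kronecker character $\legendresymbol{\delta/f^{2}}{\cdot}$ is nontrivial and $L^{S}(s+u,\delta)$ is entire in $u$. The only pole crossed is the simple pole of $\widetilde{F}$ at $u=0$, whose residue contributes $L^{S}(s,\delta)$, so
\[
L^{S}(s,\delta) = I(s,A) - \frac{1}{2\uppi\rmi}\int_{(-\sigma)} L^{S}(s+u,\delta)\,\widetilde{F}(u)\,A^{u}\,\rmd u.
\]
To the remaining integral I would apply the functional equation \autoref{thm:funceqndelta} (with $s$ replaced by $s+u$) and then change variables $u\mapsto -u$, which flips the contour back to $(\sigma)$ and, together with $\widetilde{F}(-u)=-\widetilde{F}(u)$, produces an overall plus sign. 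Since $\sigma+\Re(1-s)>1$, the Dirichlet series for $L^{S}(1-s+u,\delta)$ converges absolutely on the new contour; expanding it, factoring out $(|\delta|_{\infty,q}'/\uppi)^{1/2-s}$, and collecting $(|\delta|_{\infty,q}'/\uppi)^{u}(kf^{2})^{-u}A^{-u}=(\uppi kf^{2}A/|\delta|_{\infty,q}')^{-u}$ identifies the inner $u$-integral as $V_{\iota,\epsilon,s}(kf^{2}A/|\delta|_{\infty,q}')$, delivering the second sum in \eqref{eq:approximatefe}.

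The step requiring the most care -- and the one I expect to be the main technical hurdle, though not a conceptual obstacle -- is the justification of the contour shift together with the Fubini exchanges. Within the strip $-\sigma\leq\Re u\leq\sigma$, $L^{S}(s+u,\delta)$ is bounded on the right edge (absolute convergence) and has at most polynomial growth in $|\Im u|$ on the left edge via \autoref{thm:funceqndelta} combined with the Stirling estimates of \autoref{prop:stirling}; Phragmén--Lindelöf then propagates polynomial growth throughout the strip. This is dominated by the exponential decay $\widetilde{F}(u)\ll|u|^{|\Re u|-1}\rme^{-\uppi|\Im u|/2}$, so the horizontal segments at $\Im u=\pm T$ vanish as $T\to\infty$, legitimising the shift. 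After the change of variables, the reciprocal factors $(1-\epsilon_{i}q_{i}^{s-u-1})^{-1}$ are uniformly bounded on $(\sigma)$ (since $\Re(s-u-1)=\Re s-\sigma-1<0$), while $L^{S}(1-s+u,\delta)$ sits in its region of absolute convergence; combined with the exponential decay of $\widetilde{F}$, this gives absolute convergence and justifies the termwise integration. Once these analytic details are verified, the algebra falls out as indicated above.
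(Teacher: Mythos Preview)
Your proposal is correct and follows essentially the same route as the paper: both consider the Mellin integral $\frac{1}{2\uppi\rmi}\int_{(\sigma)}\widetilde{F}(u)L^{S}(s+u,\delta)A^{u}\rmd u$, expand the Dirichlet series to produce the first sum, shift to $(-\sigma)$ picking up the residue $L^{S}(s,\delta)$, and then combine the substitution $u\mapsto -u$ (using the oddness of $\widetilde{F}$) with the functional equation \autoref{thm:funceqndelta} to obtain the second sum. The only cosmetic difference is the order in which you apply the functional equation and the change of variables, and you add an explicit Phragm\'en--Lindel\"of justification of the contour shift that the paper leaves implicit.
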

\begin{proof}
For any $A>0$, we define
\[
I(A,s)=\frac{1}{2\uppi\rmi}\int_{(\sigma)}\widetilde{F}(u)L^{S}(s+u,\delta)A^u\rmd u,
\]
where $\sigma$ is chosen such that $\sigma+\Re s>1$ and $\sigma+\Re (1-s)>1$. Thus the integral and the sum defining $L$-functions are absolutely convergent. Hence we can interchange the order of the sum and the integral to obtain 
\begin{align*}
   &I(A,s)  =\frac{1}{2\uppi\rmi}\int_{(\sigma)}\widetilde{F}(u)\sum_{f^2\mid \delta}\sum_{k\in \ZZ_{(S)}^{>0}}\frac{1}{k^{s+u}f^{2s+2u-1}}\legendresymbol{\delta/f^2}{k}A^u\rmd u \\
     & =\sum_{f^2\mid \delta}\sum_{k\in \ZZ_{(S)}^{>0}}\frac{1}{k^sf^{2s-1}}\legendresymbol{\delta/f^2}{k}\frac{1}{\dpii}\int_{(\sigma)} \widetilde{F}(u)\left(\frac{A}{kf^2}\right)^{u}\rmd u=\sum_{f^2\mid \delta}\sum_{k\in \ZZ_{(S)}^{>0}}\frac{1}{k^sf^{2s-1}}\legendresymbol{\delta/f^2}{k}F\legendresymbol{kf^2}{A},
\end{align*}
where in the last step we applied the Mellin inversion formula.

Now we move the contour to $\Re u=\sigma'<0$. Since $\delta$ is not a square, $L^{S}(s+u,\delta)$ is an entire function. Furthermore, since $\widetilde{F}(u)$ only has a simple pole at $u=0$ with residue $1$, we get
\[ 
I(A,s)=\frac{1}{\dpii}\int_{(\sigma')}\widetilde{F}(u)L^{S}(s+u,\delta)A^u\rmd u + L^{S}(s,\delta).
\]
Let $\sigma'=-\sigma$. Since $\widetilde{F}(u)=-\widetilde{F}(-u)$, we obtain
\[
\begin{split}
   \frac{1}{\dpii}\int_{(\sigma')}\widetilde{F}(u)L^{S}(s+u,\delta)A^u\rmd u& =-\frac{1}{\dpii}\int_{(\sigma)}\widetilde{F}(-u)L^{S}(s-u,\delta)A^{-u}\rmd (-u)\\
     & =-\frac{1}{\dpii}\int_{(\sigma)}\widetilde{F}(u)L^{S}(s-u,\delta)A^{-u}\rmd u.\\
\end{split}
\]
Using the functional equation \eqref{eq:funceqndelta}, we have 
\begin{align*}
 & \frac{1}{\dpii}\int_{(\sigma')}\widetilde{F}(u)L^{S}(s+u,\delta)A^u\rmd u\\
      =& -\frac{1}{\dpii} \int_{(\sigma)}\widetilde{F}(u)\legendresymbol{|\delta|_{\infty,q}'}{\uppi}^{\frac12+u-s} \prod_{i=1}^{r}\frac{1-\epsilon_iq_i^{-s+u}}{1-\epsilon_iq_i^{s-u-1}} \frac{\Gamma(\frac{\iota+1-s+u}{2})}{\Gamma(\frac{\iota+s-u}{2})}L^{S}(1-s+u,\delta)A^{-u}\rmd u\\
      = & -|\delta|_{\infty,q}'^{\frac12-s}\sum_{f^2\mid \delta}\sum_{k\in \ZZ_{(S)}^{>0}}\frac{1}{k^{1-s}f^{1-2s}}\legendresymbol{\delta/f^2}{k}\frac{\uppi^{s-\frac12}}{\dpii} \int_{(\sigma)}\widetilde{F}(u)\prod_{i=1}^{r}\frac{1-\epsilon_iq_i^{-s+u}}{1-\epsilon_iq_i^{s-u-1}} \frac{\Gamma(\frac{\iota+1-s+u}{2})}{\Gamma(\frac{\iota+s-u}{2})}\left(\frac{A\uppi kf^2}{|\delta|_{\infty,q}'}\right)^{-u}\rmd u\\
     = & -|\delta|_{\infty,q}'^{\frac12-s}\sum_{f^2\mid \delta}\sum_{k\in \ZZ_{(S)}^{>0}}\frac{1}{k^{1-s}f^{1-2s}}\legendresymbol{\delta/f^2}{k}V_{\iota,\epsilon,s}\legendresymbol{kf^2A}{|\delta|_{\infty,q}'}.
\end{align*}
Hence 
\[
\sum_{f^2\mid \delta}\sum_{k\in \ZZ_{(S)}^{>0}}\frac{1}{k^sf^{2s-1}}\legendresymbol{\delta/f^2}{k}F\legendresymbol{kf^2}{A}
     =L^{S}(s,\delta)-|\delta|_{\infty,q}'^{\frac12-s}\sum_{f^2\mid \delta}\sum_{k\in \ZZ_{(S)}^{>0}}\frac{1}{k^{1-s}f^{1-2s}}\legendresymbol{\delta/f^2}{k} V_{\iota,\epsilon,s}\legendresymbol{kf^2A}{|\delta|_{\infty,q}'},
\]
which reduces to \eqref{eq:approximatefe}.
\end{proof}

\begin{corollary}\label{cor:afe1}
For any $A>0$, we have
\begin{equation}\label{eq:approximatefeat1}
L^{S}(1,\delta)=\sum_{f^2\mid \delta}\sum_{k\in \ZZ_{(S)}^{>0}}\frac{1}{kf}\legendresymbol{\delta/f^2}{k} \left[F\legendresymbol{kf^2}{A}+\frac{kf^2}{\sqrt{|\delta|_{\infty,q}'}} V_{\iota,\epsilon}\legendresymbol{kf^2A}{|\delta|_{\infty,q}'}\right],
\end{equation}
where \index{viex@$V_{\iota,\epsilon}(x)$}
\[
V_{\iota,\epsilon}(x)=V_{\iota,\epsilon,1}(x)=\frac{\uppi^{1/2}}{2\uppi \rmi}\int_{(\sigma)}\widetilde{F}(s)\prod_{i=1}^{r}\frac{1-\epsilon_i q_i^{s-1}}{1-\epsilon_i q_i^{-s}}\frac{\Gamma((\iota+s)/2)}{\Gamma((\iota+1-s)/2)}(\uppi x)^{-s}\rmd s
\]
for any $\sigma>1$.
\end{corollary}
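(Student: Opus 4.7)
The plan is to obtain \autoref{cor:afe1} as a direct specialization of \autoref{thm:afe} at $s=1$, after combining the two sums in \eqref{eq:approximatefe} into one bracketed expression. So really the task is bookkeeping: verify that each exponent and prefactor collapses correctly and that the contour condition on $\sigma$ is met.

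First I would set $s=1$ in \eqref{eq:approximatefe}. The first sum has coefficient $\tfrac{1}{k^s f^{2s-1}}$, which becomes $\tfrac{1}{kf}$. The second sum has coefficient $\tfrac{1}{k^{1-s}f^{1-2s}}$, which becomes $f=\tfrac{1}{kf}\cdot kf^{2}$, so once one pulls out the common factor $\tfrac{1}{kf}\legendresymbol{\delta/f^{2}}{k}$ from both sums, the second-sum contribution is exactly $\tfrac{kf^{2}}{\sqrt{|\delta|_{\infty,q}'}}\,V_{\iota,\epsilon,1}\!\left(\tfrac{kf^{2}A}{|\delta|_{\infty,q}'}\right)$, where I used $|\delta|_{\infty,q}'^{1/2-s}=|\delta|_{\infty,q}'^{-1/2}$. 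Merging the two sums into a single iterated sum over $f$ and $k$ yields the bracketed formula in the statement.

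Next I would verify that $V_{\iota,\epsilon,1}$ really coincides with the $V_{\iota,\epsilon}$ displayed in the corollary. Substituting $s=1$ in the definition of $V_{\iota,\epsilon,s}$ from \autoref{thm:afe} gives prefactor $\uppi^{s-1/2}=\uppi^{1/2}$, local factors $\tfrac{1-\epsilon_i q_i^{-1+u}}{1-\epsilon_i q_i^{-u}}=\tfrac{1-\epsilon_i q_i^{u-1}}{1-\epsilon_i q_i^{-u}}$, and gamma ratio $\tfrac{\Gamma((\iota+u)/2)}{\Gamma((\iota+1-u)/2)}$; renaming the dummy variable $u\to s$ gives exactly the integrand in the displayed formula for $V_{\iota,\epsilon}$. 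The admissibility condition $\sigma+\Re s>1$ and $\sigma+\Re(1-s)>1$ collapses at $s=1$ to $\sigma>1$, which is precisely the requirement stated in the corollary.

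There is no real obstacle here; the only thing to watch is the $|\delta|_{\infty,q}'^{-1/2}$ factor getting absorbed into the $kf^{2}/\sqrt{|\delta|_{\infty,q}'}$ term in the bracket rather than being left outside, so that the two pieces share the common weight $\tfrac{1}{kf}\legendresymbol{\delta/f^{2}}{k}$. Once that regrouping is done, the corollary is immediate from \autoref{thm:afe}.
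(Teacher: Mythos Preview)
Your proposal is correct and is exactly the intended argument: the paper states \autoref{cor:afe1} as an immediate corollary of \autoref{thm:afe} with no separate proof, so the verification is precisely the specialization $s=1$ together with the bookkeeping you carried out. All of your exponent, prefactor, gamma-ratio, local-factor, and contour checks are accurate; in particular the two constraints $\sigma+1>1$ and $\sigma+0>1$ at $s=1$ combine to $\sigma>1$ as you note.
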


\section{Poisson summation}\label{sec:poisson}
Recall that the elliptic part of the trace formula for $f^n$ equals (cf. \eqref{eq:ellipticpart})
\[
2\sum_{\pm}\sum_{\nu\in \ZZ^r}\sum_{\substack{T\in \ZZ^S\\T^2\mp 4nq^{\nu}\neq \square}}\theta_\infty^\pm\legendresymbol{T}{2n^{1/2}q^{\nu/2}}\prod_{i=1}^{r}\theta_{q_i}^{ \pm,\nu}(T)L^{S}(1,T^2\mp 4nq^\nu).
\]
The main theorem in this section is \autoref{thm:ellipticpoisson}. As in \cite{altug2015}, we will use the approximate functional equation method to ensure the validity of the Poisson summation. However, we will use the Poisson summation on $\RR\times \QQ_{q_1}\times \dots\times \QQ_{q_r}$ instead of $\RR$. 

Using the approximate functional equation (\autoref{cor:afe1}) for $L^{S}(1,\delta)$ with $A=|T^2\mp 4nq^\nu|_{\infty,q}'^\vartheta$ for $\vartheta\in \lopen 0,1\ropen$, we have
\begin{align*}
 L^S(1,T^2\mp 4nq^\nu) 
  & =  \sum_{f^2\mid T^2\mp 4nq^\nu}\sum_{k\in \ZZ_{(S)}^{>0}}\frac{1}{kf}\legendresymbol{(T^2\mp 4nq^\nu)/f^2}{k}\\
   &\times\left[F\legendresymbol{kf^2}{|T^2\mp 4nq^\nu|_{\infty,q}'^\vartheta}+\frac{kf^2}{\sqrt{|T^2\mp 4nq^\nu|_{\infty,q}'}}V\legendresymbol{kf^2}{|T^2\mp 4nq^\nu|_{\infty,q}'^{1-\vartheta}}\right],
\end{align*}
where \index{vx@$V(x)$} $V=V_{\iota,\epsilon}$ with $\iota=\iota_{T^2\mp 4nq^\nu}$ and $\epsilon_i=\epsilon_{i,T^2\mp 4nq^\nu}$.

Therefore, we can write \eqref{eq:ellipticpart} as
\begin{equation}\label{eq:ellipticpartapproximate}
\begin{split}
&2\sum_{\pm}\sum_{\nu\in \ZZ^r}\sum_{\substack{T\in \ZZ^S\\ T^2\mp 4nq^\nu\neq \square}}\sum_{f^2\mid T^2\mp 4nq^\nu}  \sum_{k\in \ZZ_{(S)}^{>0}}\frac{1}{kf}\legendresymbol{(T^2\mp 4nq^\nu)/f^2}{k}\theta_\infty^\pm\legendresymbol{T}{2n^{1/2}q^{\nu/2}}\\
    \times &\prod_{i=1}^{r}\theta_{q_i}^{\pm,\nu}(T)\left[F\legendresymbol{kf^2}{|T^2\mp 4nq^\nu|_{\infty,q}'^\vartheta}+\frac{kf^2}{\sqrt{|T^2\mp 4nq^\nu|_{\infty,q}'}}V\legendresymbol{kf^2}{|T^2\mp 4nq^\nu|_{\infty,q}'^{1-\vartheta}}\right].
\end{split}
\end{equation}

\begin{theorem}\label{thm:ellipticpoisson}
We have
\begin{align*}
   &\eqref{eq:ellipticpartapproximate}+\Sigma(\square)  =4\sqrt{n}\sum_{\pm}\sum_{\nu\in \ZZ^r}q^{\nu/2}\sum_{k,f\in \ZZ_{(S)}^{>0}}\frac{1}{k^2f^3}\sum_{\xi\in \ZZ^S}\Kl_{k,f}^S(\xi,\pm nq^\nu)\\
   &\times\int_{x\in\RR}\int_{y\in\QQ_{S_\fin}}\theta_\infty^\pm(x) 
   \theta_{q}^{\pm,\nu}(y)\left[F\legendresymbol{kf^2(4nq^\nu)^{-\vartheta}}{|x^2\mp 1|_\infty^\vartheta|y^2\mp 4nq^\nu|_q'^\vartheta}+\frac{kf^2n^{-1/2}q^{-\nu/2}}{2\sqrt{|x^2\mp 1|_\infty|y^2\mp 4nq^\nu|_q'}}\right.\\
     &\times\left.V\legendresymbol{kf^2(4nq^\nu)^{\vartheta-1}}{|x^2\mp 1|_\infty^{1-\vartheta}|y^2\mp 4nq^\nu|_q'^{1-\vartheta}}\right]\rme\legendresymbol{-2x\xi n^{1/2}q^{\nu/2}}{kf^2}\rme_{q}\legendresymbol{-y\xi}{kf^2}\rmd x\rmd y,
\end{align*}
where $y=(y_1,\dots,y_r)\in \QQ_{S_\fin}=\QQ_{q_1}\times\dots\times\QQ_{q_r}$ with $y_i\in \QQ_{q_i}$ and $\QQ$ embedded diagonally, and for any $y=(y_1,\dots,y_r)\in \QQ_{S_\fin}$, \index{ thetaqy@$\theta_{q}^{\pm,\nu}(y)$} \index{1ynormq@$\vert \cdot\vert_q'$} \index{eqy@$\rme_q(y)$}
\[
\theta_{q}^{\pm,\nu}(y)=\prod_{i=1}^{r}\theta_{q_i}^{\pm,\nu}(y_i),\qquad |y|_q'=\prod_{i=1}^{r}|y_i|_{q_i}',\qquad \rme_q(y)=\prod_{i=1}^{r}\rme_{q_i}(y_i),
\]
and for $m\in \ZZ^S$ and $k,f\in \ZZ_{(S)}$, we define the partial generalized Kloosterman sum to be \index{klkfs@$\Kl_{k,f}^S(\xi,m)$}
\begin{equation}\label{eq:defkloosterman}
\Kl_{k,f}^S(\xi,m)=\sum_{\substack{a \bmod kf^2\\ a^2-4m\equiv 0\,(f^2)}}\legendresymbol{(a^2-4m)/f^2}{k} \rme\legendresymbol{a\xi}{kf^2}\rme_{q}\legendresymbol{a\xi}{kf^2},
\end{equation}
and we define
\begin{equation}\label{eq:defv}
V\legendresymbol{kf^2(4nq^\nu)^{\vartheta-1}}{|x^2\mp 1|_\infty^{1-\vartheta}|y^2\mp 4nq^\nu|_q'^{1-\vartheta}}=V_{\iota,\epsilon}\legendresymbol{kf^2(4nq^\nu)^{\vartheta-1}}{|x^2\mp 1|_\infty^{1-\vartheta}|y^2\mp 4nq^\nu|_q'^{1-\vartheta}}
\end{equation}
with $\iota=\iota_{x^2\mp 1}$ and $\epsilon_i=\epsilon_{i,y_i^2\mp 4nq^\nu}$, and finally
\begin{align*}
\Sigma(\square)=&2\sum_{\pm}\sum_{\nu\in \ZZ^r}\sum_{\substack{T\in \ZZ^S\\ T^2\mp 4nq^\nu= \square}}\sum_{f^2\mid T^2\mp 4nq^\nu} \sum_{k\in \ZZ_{(S)}^{>0}}\frac{1}{kf}\legendresymbol{(T^2\mp 4nq^\nu)/f^2}{k}\theta_\infty^\pm\legendresymbol{T}{2n^{1/2}q^{\nu/2}}\\
    \times &\prod_{i=1}^{r}\theta_{q_i}^{\pm,\nu}(T)\left[F\legendresymbol{kf^2}{|T^2\mp 4nq^\nu|_{\infty,q}'^\vartheta}+\frac{kf^2}{\sqrt{|T^2\mp 4nq^\nu|_{\infty,q}'}}V\legendresymbol{kf^2}{|T^2\mp 4nq^\nu|_{\infty,q}'^{1-\vartheta}}\right].
\end{align*}
\end{theorem}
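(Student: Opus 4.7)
The plan is to apply Poisson summation on the ``semilocal'' locally compact abelian group $\QQ_S = \RR \times \QQ_{q_1} \times \cdots \times \QQ_{q_r}$, inside which $\ZZ^S$ sits diagonally as a cocompact discrete subgroup that is self-dual with respect to the standard additive character $\rme(\cdot) \prod_{i=1}^r \rme_{q_i}(\cdot)$. The essential preparation is the approximate functional equation of \autoref{cor:afe1}: it replaces the divergent Dirichlet series defining $L^S(1, T^2 \mp 4nq^\nu)$ by an absolutely convergent double series over $k$ and $f$ which, combined with the rapid decay of $F$ and $V_{\iota, \epsilon}$ coming from \autoref{prop:estimatemf}, legitimizes the interchanges of sums and integrals throughout the argument.

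First I would add back the terms with $T^2 \mp 4nq^\nu = \square$ to the $T$-sum and transfer them to the left-hand side as $\Sigma(\square)$, so that $T$ ranges freely over $\ZZ^S$. Next I would pull the sums over $k$ and $f$ outside. Since both the condition $f^2 \mid T^2 \mp 4nq^\nu$ and the value of the Legendre symbol $\legendresymbol{(T^2 \mp 4nq^\nu)/f^2}{k}$ depend only on $T \bmod kf^2$, I would split
\[
\sum_{T \in \ZZ^S} \;=\; \sum_{\substack{a \bmod kf^2 \\ a^2 - 4m \equiv 0\,(f^2)}} \legendresymbol{(a^2 - 4m)/f^2}{k} \sum_{T \in a + kf^2 \ZZ^S} (\cdots),
\]
where $m = \pm nq^\nu$. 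The inner sum is now over a translate of the sublattice $kf^2 \ZZ^S$ of $\QQ_S$, to which Poisson summation applies.

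The test function is the product of $\theta_\infty^\pm(T/\sqrt{4nq^\nu})$, $\prod_i \theta_{q_i}^{\pm, \nu}(T)$, and the bracket $[F + (\cdots) V]$; by \autoref{lem:ctscpt} it is continuous of compact support in the archimedean variable and locally constant of compact support in each nonarchimedean factor, while the bracket is smooth on $\RR$ and essentially Schwartz on $\QQ_{S_\fin}$ thanks to \autoref{prop:estimatemf}. Applying Poisson summation, the phase $\rme(a\xi/kf^2)\rme_q(a\xi/kf^2)$ gets summed over the admissible residues $a$, which assembles precisely the Kloosterman sum $\Kl_{k, f}^S(\xi, \pm nq^\nu)$ of \eqref{eq:defkloosterman}. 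Performing the archimedean change of variables $T = 2\sqrt{nq^\nu}\, x$ and keeping $y = T$ at the finite places produces the claimed double integral, the prefactor $2\sqrt{n}\, q^{\nu/2}/(kf^2)$ (the covolume of $kf^2 \ZZ^S$ in $\QQ_S$ adjusted by the archimedean rescaling) and, via the identity $|T^2 \mp 4nq^\nu|_{\infty,q}' = 4nq^\nu \cdot |x^2 \mp 1|_\infty \cdot |y^2 \mp 4nq^\nu|_q'$, the powers of $4nq^\nu$ that migrate into the arguments of $F$ and $V$.

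The main obstacle is bookkeeping rather than conceptual, in two places. First, the local invariants $\iota$ and $\epsilon = (\epsilon_i)$ embedded in $V = V_{\iota, \epsilon}$ depend on $\delta = T^2 \mp 4nq^\nu$ through $\sgn \delta$ and through $\legendresymbol{\delta |\delta|_{q_i}'}{q_i}$; after the change of variables these become local invariants of $x^2 \mp 1$ at $\infty$ and of $y_i^2 \mp 4nq^\nu$ at $q_i$, which is exactly the content of the definition \eqref{eq:defv} used on the right-hand side. Verifying that these propagate correctly requires partitioning each local factor of the Fourier integral into open sets on which the invariants are constant. Second, one must justify the interchange of the $k, f, \nu$-summations with the Fourier integrals; the parameter $\vartheta \in \lopen 0, 1 \ropen$, together with the rapid decay of $F$ and $V$ established earlier, is precisely what makes this legal.
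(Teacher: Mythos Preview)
Your overall strategy matches the paper's proof exactly: remove the $T^2\mp 4nq^\nu=\square$ constraint by adding $\Sigma(\square)$, split the $T$-sum into residue classes modulo $kf^2$ (using that $2\in S$ so the Kronecker symbol has period $k$), apply semilocal Poisson summation on $\QQ_S$ via \autoref{cor:poissonrql2}, collect the residue sum into $\Kl_{k,f}^S$, and finally rescale the archimedean variable by $2n^{1/2}q^{\nu/2}$.

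There is, however, a real gap in your justification of Poisson summation. You write that $\prod_i\theta_{q_i}^{\pm,\nu}$ is ``locally constant of compact support in each nonarchimedean factor''; this is false. By the germ expansion \eqref{eq:shalikalocal}, near a central point $\theta_{q_i}(\gamma)$ behaves like $\lambda_2 + (\text{const})\cdot|T^2-4N|_{q_i}'^{1/2}$, which is continuous (\autoref{lem:ctscpt}) but \emph{not} locally constant: the term $|T^2-4N|_{q_i}'^{1/2}=q_i^{-k_\gamma}$ takes infinitely many values in every neighbourhood of $\pm 2\sqrt{N}$. Likewise, $\theta_\infty^\pm(x)=2|x^2\mp 1|^{1/2}g_1^\pm(x)+g_2^\pm(x)$ is not smooth at $x=\pm 1$. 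So the test function $\Psi$ is genuinely not Schwartz on $\QQ_S$, and the paper explicitly flags this: the validity of Poisson summation for $\Psi$ is deferred to \autoref{prop:poissonwork} and \autoref{lem:rapiddcay}, where the authors show by hand that the Fourier transform still decays like $(1+|\xi|_\infty)^{-2}\prod_i(1+|\eta_i|_{q_i})^{-3}$. The archimedean smoothing relies on \autoref{lem:smooth} (the rapid decay of $\Phi$ at $+\infty$ kills the $|x^2\mp 1|^{1/2}$ singularity of $\theta_\infty^\pm$), while at each $q_i$ one decomposes $\theta_{q_i}$ into a locally constant piece plus pieces supported near $\pm 2\sqrt{N}$ and estimates dyadic shells $q_i^{u}\ZZ_{q_i}^\times$ directly. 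Your proposal treats this as routine bookkeeping; it is in fact the main analytic content of the theorem.
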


\begin{remark}
$\Kl_{k,f}^{S}(\xi,m)$ is called the generalized Kloosterman sum since it reduces to the classical Kloosterman sum
\[
S(a,b;c)=\sum_{x\in (\ZZ/c\ZZ)^\times}\rme\legendresymbol{ax+bx^{-1}}{c}
\]
in some cases. See footnote 15 of \cite{altug2015} and \cite[Appendix B]{altug2017} for the relations.
\end{remark}

\begin{proof}[Proof of \autoref{thm:ellipticpoisson}]
$\eqref{eq:ellipticpartapproximate}+\Sigma(\square)$ equals
\begin{align*}
&\sum_{\pm}\sum_{\nu\in \ZZ^r}\sum_{T\in \ZZ^S}\sum_{f^2\mid T^2\mp 4nq^\nu} \sum_{k\in \ZZ_{(S)}^{>0}}\frac{1}{kf}\legendresymbol{(T^2\mp 4nq^\nu)/f^2}{k}\theta_\infty^\pm\legendresymbol{T}{2n^{1/2}q^{\nu/2}}\\
    \times &\prod_{i=1}^{r}\theta_{q_i}^{\pm,\nu}(T)\left[F\legendresymbol{kf^2}{|T^2\mp 4nq^\nu|_{\infty,q}'^\vartheta}+\frac{kf^2}{\sqrt{|T^2\mp 4nq^\nu|_{\infty,q}'}}V\legendresymbol{kf^2}{|T^2\mp 4nq^\nu|_{\infty,q}'^{1-\vartheta}}\right].
\end{align*}

Since $2\in S$ and $k\in \ZZ_{(S)}^{>0}$, the Kronecker symbol $\legendresymbol{\cdot}{k}$ has period $k$. Thus $\legendresymbol{(T^2\mp 4nq^\nu)/f^2}{k}$ has period $kf^2$ in $T$. By interchanging the order of summation over  $f$ and $T$, the above sum equals
\begin{equation}\label{eq:ellipticpartapproximateinsert}
\begin{split}
   2\sum_{\pm}&\sum_{\nu\in \ZZ^r}\sum_{f\in \ZZ_{(S)}^{>0}}\sum_{k\in \ZZ_{(S)}^{>0}} \frac{1}{kf}\sum_{\substack{a \bmod kf^2\\ a^2\mp 4nq^\nu\equiv 0\,(f^2)}}\legendresymbol{(a^2\mp 4nq^\nu)/f^2}{k} \sum_{\substack{T\in \ZZ^S\\T\equiv a \bmod kf^2}}\theta_\infty^\pm\legendresymbol{T}{2n^{1/2}q^{\nu/2}}\\
      &\times \prod_{i=1}^{r}\theta_{q_i}^{\pm,\nu}(T)\left[F\legendresymbol{kf^2}{|T^2\mp 4nq^\nu|_{\infty,q}'^\vartheta}+\frac{kf^2}{\sqrt{|T^2\mp 4nq^\nu|_{\infty,q}'}}V\legendresymbol{kf^2}{|T^2\mp 4nq^\nu|_{\infty,q}'^{1-\vartheta}}\right].
\end{split}
\end{equation}

We can apply the Poisson summation formula to the following function defined on the "semilocal" space $(x,y)\in \RR\times \QQ_{S_\fin}$
\[
\begin{split}
   \Psi(x,y) & =\theta_\infty^\pm\legendresymbol{x}{2n^{1/2}q^{\nu/2}}\prod_{i=1}^{r}\theta_{q_i}^{\pm,\nu}(y_i)\left[F\legendresymbol{kf^2}{|x^2\mp 4nq^\nu|_\infty^\vartheta|y^2\mp 4nq^\nu|_q'^\vartheta}\right. \\
     & \left.+\frac{kf^2}{\sqrt{|x^2\mp 4nq^\nu|_\infty|y^2\mp 4nq^\nu|_q'}}V\legendresymbol{kf^2}{|x^2\mp 4nq^\nu|_\infty^{1-\vartheta}|y^2\mp 4nq^\nu|_q'^{1-\vartheta}}\right],
\end{split}
\]
although it is not a Schwartz function on $\RR\times \QQ_{S_\fin}$. It has singularities with respect to nonarchimedean parts. However, it has rapid decay towards the singularity in the nonarchimedean parts so that the Poisson summation is also valid. We will prove the validity in \autoref{sec:poissonql} (see \autoref{prop:poissonwork}).

By using \autoref{cor:poissonrql2} for $\Psi$, \eqref{eq:ellipticpartapproximateinsert} becomes
\begin{align*}
   &2\sum_{\pm}\sum_{\nu\in \ZZ^r}\sum_{k,f\in \ZZ_{(S)}^{>0}} \frac{1}{kf}\sum_{\substack{a \bmod kf^2\\ a^2\mp 4nq^\nu\equiv 0\,(f^2)}}\legendresymbol{(a^2\mp 4nq^\nu)/f^2}{k}\frac{1}{kf^2} \sum_{\xi\in \ZZ^S}\rme\legendresymbol{a\xi}{kf^2}\rme_{q}\legendresymbol{a\xi}{kf^2}\\
\times&\int_{x\in\RR}\int_{y\in\QQ_{S_\fin}}\theta_\infty^\pm\legendresymbol{x}{2n^{1/2}q^{\nu/2}}\theta_{q}^{\pm,\nu}(y)\left[F\legendresymbol{kf^2}{|x^2\mp 4nq^\nu|_\infty^\vartheta|y^2\mp 4nq^\nu|_q'^\vartheta}+\frac{kf^2}{\sqrt{|x^2\mp 4nq^\nu|_\infty|y^2\mp 4nq^\nu|_q'}}\right.\\
     \times&\left.V\legendresymbol{kf^2}{|x^2\mp 4nq^\nu|_\infty^{1-\vartheta}|y^2\mp 4nq^\nu|_q'^{1-\vartheta}}\right]\rme\legendresymbol{-x\xi }{kf^2}\rme_{q}\legendresymbol{-y\xi}{kf^2}\rmd x\rmd y,
\end{align*}
which equals the right hand side of the formula stated in this theorem by making the change of variable $x\mapsto 2n^{1/2}q^{\nu/2}x$.
\end{proof}

\section{A Kloosterman-type series}\label{sec:kloosterman}
The main theorem of this section is \autoref{cor:dirichletglobal}, which gives an analytic continuation of a Kloosterman-type series that is similar to the result of Altu\u{g}. The idea of the proof is to factor the Kloosterman sum into local products, and then deal with the local series separated, which has been done by Altu\u{g} in \cite[Section 5]{altug2015}.

Recall that we have
\[
\Kl_{k,f}^S(\xi,m)=
  \sum_{\substack{a \bmod kf^2\\ a^2-4m\equiv 0\,(f^2)}}\legendresymbol{(a^2-4m)/f^2}{k}\rme\legendresymbol{a\xi}{kf^2}\rme_{q}\legendresymbol{a\xi}{kf^2},
\]
for $k,f\in \ZZ_{(S)}$ and $\xi,m\in \ZZ^S$. Hence
\[
\Kl_{k,f}^S(0,m)= \sum_{\substack{a \bmod kf^2\\ a^2-4m\equiv 0\,(f^2)}}\legendresymbol{(a^2-4m)/f^2}{k}.
\]

For a prime $p\neq 2$ and $m\in \ZZ_p$, the \emph{local generalized Kloosterman sum} is given by
\[
\Kl_{p^u,p^v}^{(p)}(0,m)=
\sum_{\substack{a \bmod p^{u+2v}\\ a^2-4m\equiv 0\,(p^{2v})}}\legendresymbol{(a^2-4m)/p^{2v}}{p^u}.
\]

\begin{proposition}\label{prop:localkloosterman}
Suppose that $k,f\in \ZZ_{(S)}^{>0}$ and $m\in\ZZ^S$. Then
\[
\Kl_{k,f}^S(0,m)=\prod_{p\notin S}\Kl_{p^{v_p(k)},p^{v_p(f)}}^{(p)}(0,m).
\]
\end{proposition}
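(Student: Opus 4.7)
The strategy is straightforward multiplicativity via the Chinese remainder theorem, treating the condition on $f^2$ and the Kronecker symbol separately.

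First, since $k, f \in \ZZ_{(S)}^{>0}$, we can factor $k = \prod_{\ell \notin S}\ell^{u_\ell}$ and $f = \prod_{\ell \notin S}\ell^{v_\ell}$ where $u_\ell = v_\ell(k)$ and $v_\ell = v_\ell(f)$, with only finitely many nonzero exponents. Thus $kf^2 = \prod_{\ell \notin S}\ell^{u_\ell + 2v_\ell}$, and the Chinese remainder theorem gives a bijection $\ZZ/kf^2\ZZ \cong \prod_{\ell \notin S} \ZZ/\ell^{u_\ell + 2v_\ell}\ZZ$ sending $a \mapsto (a_\ell)_\ell$.

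Next I would observe that the divisibility condition $a^2 - 4m \equiv 0 \pmod{f^2}$ is equivalent to $a_\ell^2 - 4m \equiv 0 \pmod{\ell^{2v_\ell}}$ for every $\ell \notin S$ (using $m \in \ZZ^S \subset \ZZ_\ell$), so the index set of the sum factorizes. Then I would show the Kronecker symbol factorizes as
\[
\legendresymbol{(a^2 - 4m)/f^2}{k} = \prod_{\ell \notin S}\legendresymbol{(a^2 - 4m)/\ell^{2v_\ell}}{\ell^{u_\ell}}.
\]
The multiplicativity in the lower entry is immediate from $k = \prod_\ell \ell^{u_\ell}$. For the lower entry $\ell^{u_\ell}$, the quotient $(a^2-4m)/f^2$ differs from $(a^2-4m)/\ell^{2v_\ell}$ by the factor $(\ell^{v_\ell}/f)^2 = \bigl(\prod_{\ell' \neq \ell}\ell'^{-v_{\ell'}}\bigr)^2$, which is a perfect square of a rational number with denominator coprime to $\ell^{u_\ell}$ (since $2 \in S$ forces $\ell$ odd); multiplication by such a square does not alter the Jacobi symbol modulo $\ell^{u_\ell}$ under the natural extension described at the end of \autoref{sec:preliminary}.

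Finally I would combine these two observations: the summand depends on $a$ only through the tuple $(a_\ell)_\ell$, the constraints on $(a_\ell)$ are independent across $\ell$, and the summand factors as a product over $\ell$. Therefore
\[
\Kl_{k,f}^S(0,m) = \prod_{\ell \notin S}\;\sum_{\substack{a_\ell \bmod \ell^{u_\ell + 2v_\ell}\\ a_\ell^2 - 4m \equiv 0\,(\ell^{2v_\ell})}} \legendresymbol{(a_\ell^2 - 4m)/\ell^{2v_\ell}}{\ell^{u_\ell}} = \prod_{\ell \notin S}\Kl_{\ell^{u_\ell},\ell^{v_\ell}}^{(\ell)}(0,m),
\]
where almost every factor is the trivial sum $\Kl_{1,1}^{(\ell)}(0,m) = 1$, so the product is finite. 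The only genuinely delicate step is the invariance of the Jacobi symbol under multiplication by the square $(\ell^{v_\ell}/f)^2$, which needs the observation that $k$ is odd (from $2 \in S$) so that the Kronecker symbol extends cleanly to $\ZZ^S$ via multiplicativity.
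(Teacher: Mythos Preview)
Your proof is correct and follows essentially the same Chinese remainder theorem argument as the paper. The only difference in emphasis is that you justify replacing $f^2$ by $\ell^{2v_\ell}$ in the upper entry via the square factor $(\ell^{v_\ell}/f)^2$, whereas the paper leaves this implicit and instead stresses the periodicity of $\legendresymbol{\cdot}{\ell}$ to pass from $a$ to $a_\ell$; both observations are needed, and together they make the same proof.
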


\begin{proof}
Suppose that $k=\prod_{p}p^{\alpha_p}$ and $f=\prod_{p}p^{\beta_p}$. By the Chinese remainder theorem, we have an isomorphism
\[
   \ZZ/kf^2 \xrightarrow{\cong} \prod_{p\notin S}\ZZ/p^{\alpha_p+2\beta_p},\qquad
    a \mapsto (a_p)_{p\notin S},
\]
which allows us to express the global sum as a product over local terms:
\[
   \Kl_{k,f}^S(0,m) =\sum_{\substack{a \bmod kf^2\\ a^2-4m\equiv 0\,(f^2)}}\legendresymbol{(a^2-4m)/f^2}{k}=\prod_{\ell\notin S}\sum_{\substack{a_\ell \bmod \ell^{\alpha_\ell+2\beta_\ell}\\ a_\ell^2-4m\equiv 0\,(\ell^{2\beta_\ell})}}\prod_{p\notin S}\legendresymbol{(a^2-4m)/f^2}{\ell^{\alpha_p}}.
\]
Since $\legendresymbol{\cdot}{p}$ has period $p$ and $a_p^2-4m \equiv a^2-4m\pmod{p^{2\beta_p}}$, we obtain
\[
   \Kl_{k,f}^S(0,m) =\prod_{p\notin S}\sum_{\substack{a_p \bmod p^{\alpha_p+2\beta_p}\\ a_p^2-4m\equiv 0\,(p^{2\beta_p})}}\legendresymbol{(a_p^2-4m)/f^2}{p^{\alpha_p}}=\prod_{p\notin S}\Kl_{p^{v_p(k)},p^{v_p(f)}}^{(p)}(0,m).\qedhere
\]
\end{proof}

For any $m\in \ZZ^S$ and $\Re s$ sufficiently large, we define the Dirichlet series $D^S(s,m)$
\[
D^{S}(s,m)=\sum_{k,f\in \ZZ_{(S)}^{>0}}\frac{\Kl_{k,f}^S(0,m)}{k^{s+1}f^{2s+1}}.
\]
Also, for $p\notin S$, we define the local Dirichlet series
\[
D_p(s,m)=\sum_{u,v=0}^{+\infty}\frac{\Kl_{p^u,p^v}^{(p)}(0,m)}{p^{u(s+1)}p^{v(2s+1)}}.
\]
\begin{lemma}\label{lem:dirichletprod}
Suppose that $D(s,m)$ converges absolutely and $D_p(s,m)$ converges absolutely for all $p$. Then
\[
D^{S}(s,m)=\prod_{p\notin S}D_p(s,m).
\]
\end{lemma}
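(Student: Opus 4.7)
The plan is to unfold the Euler product on the right-hand side as an iterated sum, reindex using unique factorization on $\ZZ_{(S)}^{>0}$, and then invoke the multiplicativity from \autoref{prop:localkloosterman} to collapse the result into the series defining $D^S(s,m)$. Since we are granted absolute convergence of every factor $D_\ell(s,m)$ and of the global series $D^S(s,m)$, no interchange of limits requires subtle justification; Tonelli/Fubini applied to the sum of absolute values is enough to legitimize every manipulation.

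First I would note that each $k, f \in \ZZ_{(S)}^{>0}$ admits a unique factorization
\[
k=\prod_{\ell\notin S}\ell^{u_\ell},\qquad f=\prod_{\ell\notin S}\ell^{v_\ell},
\]
where $(u_\ell)$ and $(v_\ell)$ are sequences of nonnegative integers that are eventually zero. This gives a bijection between $\ZZ_{(S)}^{>0}\times\ZZ_{(S)}^{>0}$ and the set of pairs of such sequences. Under this bijection we have $k^{s+1}f^{2s+1}=\prod_{\ell\notin S}\ell^{u_\ell(s+1)}\ell^{v_\ell(2s+1)}$, and by \autoref{prop:localkloosterman},
\[
\Kl_{k,f}^S(0,m)=\prod_{\ell\notin S}\Kl_{\ell^{u_\ell},\ell^{v_\ell}}^{(\ell)}(0,m).
\]

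Next I would expand the product. By absolute convergence of each $D_\ell(s,m)$ and the hypothesis that the product $\prod_{\ell\notin S}D_\ell(s,m)$ converges absolutely (which follows from absolute convergence of $D^S(s,m)$, since $|\Kl_{k,f}^S(0,m)|$ equals the product of the local absolute values), Tonelli's theorem allows us to write
\[
\prod_{\ell\notin S}D_\ell(s,m)=\prod_{\ell\notin S}\sum_{u_\ell,v_\ell\geq 0}\frac{\Kl_{\ell^{u_\ell},\ell^{v_\ell}}^{(\ell)}(0,m)}{\ell^{u_\ell(s+1)}\ell^{v_\ell(2s+1)}}=\sum_{(u_\ell),(v_\ell)}\prod_{\ell\notin S}\frac{\Kl_{\ell^{u_\ell},\ell^{v_\ell}}^{(\ell)}(0,m)}{\ell^{u_\ell(s+1)}\ell^{v_\ell(2s+1)}},
\]
where the outer sum ranges over all eventually-zero sequences. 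Regrouping via the bijection above and applying the multiplicativity of $\Kl_{k,f}^S(0,m)$ gives exactly $\sum_{k,f\in \ZZ_{(S)}^{>0}} \Kl_{k,f}^S(0,m)/(k^{s+1}f^{2s+1})=D^S(s,m)$, as desired.

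There is no genuine obstacle here beyond bookkeeping; the only thing to be mildly careful about is the passage from the finite product (over $\ell$ in a finite set) to the infinite product, which requires the absolute convergence hypotheses in the statement. In particular, one checks that for each prime $\ell\notin S$ the $(u,v)=(0,0)$ term of $D_\ell(s,m)$ equals $1$, so the partial products over finite subsets of primes are genuine truncations of the full series for $D^S(s,m)$, and dominated convergence (bounding by the series of absolute values) completes the identification in the limit.
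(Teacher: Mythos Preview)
Your proof is correct and takes essentially the same approach as the paper: substitute the factorization of $\Kl_{k,f}^S(0,m)$ from \autoref{prop:localkloosterman} into the definition of $D^S(s,m)$ and expand as an Euler product using unique factorization in $\ZZ_{(S)}^{>0}$. The paper's proof is in fact just a single sentence to this effect; your version simply spells out the bookkeeping and the role of absolute convergence more explicitly.
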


\begin{proof}
By substituting the factorization of $\Kl_{k,f}^S(0,m)$ from \autoref{prop:localkloosterman} into the definition of $D^{S}(s,m)$, we obtain the product formula directly.
\end{proof}

We will derive a formula of $D^{S}(s,m)$ by computing $D_p(s,m)$ for each $p\notin S$  (in particular $p\neq 2$).
\begin{proposition}\label{prop:dirichletlocal1}
Suppose that $p\notin S$, $m\in \ZZ_p^\times$ and $\Re s>0$. Then $D_p(s,m)$ converges absolutely and
  \[
  D_p(s,m)=\frac{1-1/p^{s+1}}{1-1/p^{2s}}.
  \] 
\end{proposition}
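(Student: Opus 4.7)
The plan is to compute the local Kloosterman sum $\Kl_{\ell^u,\ell^v}^{(\ell)}(0,m)$ in closed form for every $(u,v)$ and then recognize the resulting generating function as an explicit rational function. Since $\ell$ is odd and $m \in \ZZ_\ell^\times$, one has $\legendresymbol{x}{\ell^u} = \legendresymbol{x}{\ell}^u$, so the character value depends only on $(a^2-4m)/\ell^{2v}$ modulo $\ell$, and (for $u \geq 1$) a term is nonzero exactly when $v_\ell(a^2 - 4m) = 2v$.

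For $v \geq 1$ and $u \geq 1$, I would parametrize $a = a_0 + \ell^{2v} a_1$ with $a_0 \bmod \ell^{2v}$ satisfying $a_0^2 \equiv 4m\,(\ell^{2v})$ and $a_1 \bmod \ell^u$. By Hensel there are two such $a_0$ if $m$ is a square mod $\ell$ and none otherwise. Because $a_0$ is a unit, the quantity $(a_0^2 - 4m)/\ell^{2v} + 2 a_0 a_1 \bmod \ell$ runs uniformly over $\FF_\ell$ as $a_1$ varies, so the inner sum reduces to $\ell^{u-1}\sum_{y \bmod \ell}\legendresymbol{y}{\ell}^u$, which kills the sum for $u$ odd and gives $\ell^{u-1}(\ell-1)$ for $u$ even. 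For $v = 0$ and $u \geq 1$, the reduction modulo $\ell$ gives $\Kl_{\ell^u,1}^{(\ell)}(0,m) = \ell^{u-1}\sum_{a \bmod \ell}\legendresymbol{a^2-4m}{\ell}^u$, evaluated via $\sum_{a \bmod \ell}\legendresymbol{a^2-c}{\ell} = -1$ for $c \in \FF_\ell^\times$ (odd $u$), or by counting residues with $\ell \nmid a^2 - 4m$ (even $u$, giving $\ell - 2$ or $\ell$ values according to whether $m$ is a QR). The cases $u = 0$ are immediate.

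Next I would substitute these values into the definition of $D_\ell(s,m)$. In each of the two subcases (\emph{$m$ a square mod $\ell$} or not) the result is a finite combination of geometric series in $A = \ell^{-s}$ and $B = \ell^{-(2s+1)}$, and the closed-form bound $|\Kl_{\ell^u,\ell^v}^{(\ell)}(0,m)| \leq 2\ell^u$ obtained from the computation establishes absolute convergence on $\Re s > 0$. Placing everything over the common denominator $\ell(1-A^2)(1-B)$, the numerator simplifies, using the identity $\ell B = A^2$, into $(\ell - A)(1 - B)$; cancelling the factor $(1-B)$ produces
\[
D_\ell(s,m) \;=\; \frac{\ell - A}{\ell(1 - A^2)} \;=\; \frac{1 - 1/\ell^{s+1}}{1 - 1/\ell^{2s}},
\]
independent of the QR status of $m$.

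The only real obstacle is bookkeeping: one must split into four subcases (parity of $u$ crossed with whether $m$ is a square mod $\ell$) and check that the geometric-series manipulation yields the same answer in each. The decisive algebraic moment is the identity $\ell B = A^2$, which is precisely what makes the $m$-dependent contributions cancel and delivers the expected local Euler factor of $\zeta(2s)/\zeta(s+1)$.
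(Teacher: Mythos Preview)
Your proof is correct. The paper itself does not prove this statement but simply refers to \cite[Lemma 5.2]{altug2015}; your direct computation of $\Kl_{\ell^u,\ell^v}^{(\ell)}(0,m)$ by cases (parity of $u$, whether $m$ is a square mod $\ell$) followed by summing the geometric series is exactly the standard argument one finds there, and your verification that the identity $\ell B=A^2$ forces the QR-dependent pieces to cancel is the heart of the matter.
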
 

\begin{proof}
See \cite[Lemma 5.2]{altug2015}.
\end{proof}

\begin{proposition}\label{prop:dirichletlocal2}
Suppose that $p\notin S$, $m\in p\ZZ_p$ and $\Re s>0$. Then $D_p(s,m)$ converges absolutely and
  \[
  D_p(s,m)=\frac{1-1/p^{s+1}}{1-1/p^{2s}}\cdot\frac{1-1/p^{s(v_p(m)+1)}}{1-1/p^{s}}.
  \]
\end{proposition}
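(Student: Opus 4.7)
The plan is to reduce the computation to two simpler pieces: the preceding unit-case proposition and an auxiliary evaluation of the ``$v=0$ slice'' of the Dirichlet series. Let $w = v_\ell(m) \geq 1$. The first step is to analyze the congruence $a^2 \equiv 4m \pmod{\ell^{2v}}$ by comparing $2v$ with $w$. For $2v \leq w$ it reduces to $\ell^v \mid a$, and the substitution $a = \ell^v a'$ with $a' \bmod \ell^{u+v}$ gives
\[
\Kl^{(\ell)}_{\ell^u,\ell^v}(0,m) = \ell^v\, \Kl^{(\ell)}_{\ell^u,1}(0, m/\ell^{2v}).
\]
For $2v > w$ with $w$ odd, $v_\ell(4m)$ is odd so the congruence has no solution and the sum vanishes. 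For $2v > w$ with $w = 2e$ even, any solution must satisfy $v_\ell(a) = e$; writing $a = \ell^e a'$ with $a' \in \ZZ_\ell^\times$ (and $a' \bmod \ell^{u+2v-e}$) yields the recursion
\[
\Kl^{(\ell)}_{\ell^u,\ell^v}(0,m) = \ell^e\, \Kl^{(\ell)}_{\ell^u,\ell^{v-e}}(0, m_0), \qquad m_0 := m/\ell^w \in \ZZ_\ell^\times,
\]
understood as the trivial identity $0 = 0$ when $m_0$ is a non-residue and $v-e \geq 1$.

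The next step is the auxiliary $v=0$ computation: for $m' \in \ell\ZZ_\ell$ and $u \geq 1$, I split $\sum_{a \bmod \ell^u} \legendresymbol{a^2 - 4m'}{\ell^u}$ according to whether $\ell \mid a$. When $\ell \nmid a$ the Kronecker symbol equals $\legendresymbol{a^2}{\ell}^u = 1$; when $\ell \mid a$ both $a^2$ and $4m'$ lie in $\ell\ZZ_\ell$, so the symbol vanishes. Hence $\Kl^{(\ell)}_{\ell^u,1}(0, m') = \ell^u - \ell^{u-1}$ for $u \geq 1$, and summing the geometric series gives
\[
D_{\ell,0}(s, m') := \sum_{u \geq 0} \Kl^{(\ell)}_{\ell^u,1}(0, m')\, \ell^{-u(s+1)} = \frac{1 - \ell^{-s-1}}{1 - \ell^{-s}} \qquad (m' \in \ell\ZZ_\ell).
\]
Finally, I assemble $D_\ell(s,m) = \sum_{v \leq \lfloor w/2 \rfloor} + \sum_{v > \lfloor w/2 \rfloor}$. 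By Step 1 the first sum equals $\sum_{v \leq \lfloor w/2 \rfloor} \ell^{-2vs}\, D_{\ell,0}(s, m/\ell^{2v})$; the second sum either vanishes (when $w$ is odd) or, by the recursion and the substitution $v' = v-e$, equals $\ell^{-2es}\bigl(D_\ell(s, m_0) - D_{\ell,0}(s, m_0)\bigr)$. In the odd case $w = 2e+1$, every $m/\ell^{2v}$ lies in $\ell\ZZ_\ell$, so every $D_{\ell,0}$-factor is $(1-\ell^{-s-1})/(1-\ell^{-s})$ and a geometric series produces the stated product. In the even case $w = 2e$, the boundary term $v=e$ (where $m/\ell^{2e} = m_0$ is a unit, so Step 2 does not apply directly) combines with the tail to give $\ell^{-2es}\,D_\ell(s, m_0)$, which Proposition on the unit case evaluates as $\ell^{-2es}(1-\ell^{-s-1})/(1-\ell^{-2s})$; the algebraic identity $(1 - \ell^{-2es}) + \ell^{-2es}(1-\ell^{-s}) = 1 - \ell^{-(2e+1)s}$ then unifies the two contributions into the closed form with $w+1 = 2e+1$.

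Absolute convergence for $\Re s > 0$ follows from the crude bound $|\Kl^{(\ell)}_{\ell^u, \ell^v}(0,m)| \ll_m \ell^{u+v}$, which justifies every interchange of summation. The main obstacle is the bookkeeping in the even case $w = 2e$: the three regimes $v < e$, $v = e$, and $v > e$ must be merged correctly, and one must recognize that the coefficient $D_{\ell,0}(s, m_0)$ appearing at the boundary $v=e$ (where the unit-case $m_0$ prevents using Step 2) cancels against the analogous term produced by the telescoping tail, leaving the clean object $\ell^{-2es} D_\ell(s, m_0)$ that is handled by the preceding proposition.
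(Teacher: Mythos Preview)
Your argument is correct. The reduction via the substitution $a=\ell^v a'$ (for $2v\leq w$) and $a=\ell^e a'$ (for $2v>w=2e$) is valid because $\ell$ is odd, so that $v_\ell(4m)=w$ and Hensel lifting behaves as expected; the auxiliary computation of $D_{\ell,0}(s,m')$ for $m'\in\ell\ZZ_\ell$ is clean; and the merging of the boundary term $v=e$ with the tail into $\ell^{-2es}D_\ell(s,m_0)$ is exactly the right bookkeeping. The convergence bound $|\Kl^{(\ell)}_{\ell^u,\ell^v}(0,m)|\ll_m \ell^{u+v}$ is also fine: for $2v\leq w$ it follows from the trivial bound on $\Kl^{(\ell)}_{\ell^u,1}$, and for $2v>w$ the Kloosterman sum is either zero or bounded by $2\ell^{u+e}\leq 2\ell^{u+v}$ via the Hensel count.

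The paper itself gives no proof here, merely citing \cite[Lemma 5.3]{altug2015}. So there is no in-paper argument to compare against; you have supplied a complete and self-contained proof where the paper defers to an external reference. Your recursion-based organization (reducing to the unit case of the preceding proposition plus an explicit $v=0$ slice) is a natural way to carry out the computation and is presumably close in spirit to what Altu\u{g} does.
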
 

\begin{proof}
See \cite[Lemma 5.3]{altug2015}.
\end{proof}

\begin{remark}\label{rem:kloosterman}
The proofs of the above two propositions for $p=2$ are much more complicated than those of $p\neq 2$. However, since $2\in S$, we do not need to consider the case $p=2$. Hence we can prove our result without considering the $p=2$ case.
\end{remark}

\begin{corollary}\label{cor:dirichletglobal}
For $\Re s>1$ we have
\[
D^{S}(s,m)=\frac{\zeta(2s)}{\zeta(s+1)}\prod_{i=1}^{r}\frac{1-q_i^{-2s}}{1-q_i^{-s-1}}\prod_{\substack{p\mid m\\p\notin S}}\frac{1-p^{-s(v_p(m)+1)}}{1-p^{-s}}.
\]
Thus $D^{S}(s,m)$ has a meromorphic continuation to all $s\in \CC$.
\end{corollary}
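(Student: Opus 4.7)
The plan is to recognize \autoref{cor:dirichletglobal} as a direct consequence of the local computations already established, assembled via the Euler product given by \autoref{lem:dirichletprod}. First, I would verify absolute convergence of $D^{S}(s,m)$ for $\Re s>1$. The trivial bound $|\Kl^{S}_{k,f}(0,m)|\leq kf^2$ together with the explicit geometric sums appearing in \autoref{prop:dirichletlocal1} and \autoref{prop:dirichletlocal2} show that both the global Dirichlet series and each local series converge absolutely in this range, so the hypotheses of \autoref{lem:dirichletprod} are satisfied and
\[
D^{S}(s,m)=\prod_{\ell\notin S}D_\ell(s,m).
\]

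Next, I would split the primes $\ell\notin S$ into two classes. For $\ell\nmid m$, \autoref{prop:dirichletlocal1} gives
\[
D_\ell(s,m)=\frac{1-\ell^{-s-1}}{1-\ell^{-2s}},
\]
while for $\ell\mid m$, \autoref{prop:dirichletlocal2} gives
\[
D_\ell(s,m)=\frac{1-\ell^{-s-1}}{1-\ell^{-2s}}\cdot\frac{1-\ell^{-s(v_\ell(m)+1)}}{1-\ell^{-s}}.
\]
The common factor $(1-\ell^{-s-1})/(1-\ell^{-2s})$ is present at every $\ell\notin S$, and the second factor contributes only at the finitely many primes $p\mid m$ with $p\notin S$.

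The bookkeeping step is then to match the Euler product of the common factor against classical zeta values. Using
\[
\zeta(2s)=\prod_{\ell}(1-\ell^{-2s})^{-1},\qquad \zeta(s+1)=\prod_{\ell}(1-\ell^{-s-1})^{-1},
\]
one has $\prod_{\ell}(1-\ell^{-s-1})/(1-\ell^{-2s})=\zeta(2s)/\zeta(s+1)$, and removing the factors at $\ell=q_i\in S_\fin$ introduces exactly the correction $\prod_{i=1}^{r}(1-q_i^{-2s})/(1-q_i^{-s-1})$. Combining this with the finite product over $p\mid m$, $p\notin S$ yields the claimed formula. Meromorphic continuation to all of $\CC$ is then immediate: $\zeta(2s)/\zeta(s+1)$ is meromorphic on $\CC$, and the remaining factors are finite products of elementary meromorphic functions of $s$.

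The only genuine subtlety is the convergence check needed to invoke \autoref{lem:dirichletprod} — beyond that, the argument is a straightforward assembly of the local factors, and no prime $\ell=2$ analysis is required since $2\in S$ by assumption.
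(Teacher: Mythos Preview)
Your proposal is correct and follows essentially the same approach as the paper: invoke \autoref{lem:dirichletprod} to factor $D^{S}(s,m)$ as an Euler product, insert the local factors from \autoref{prop:dirichletlocal1} and \autoref{prop:dirichletlocal2}, and then identify the common factor with $\zeta(2s)/\zeta(s+1)$ corrected at the primes in $S$. Your write-up is somewhat more explicit about the convergence check and the zeta identification, but the argument is the same.
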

\begin{proof}
Combining \autoref{lem:dirichletprod}, \autoref{prop:dirichletlocal1} and \autoref{prop:dirichletlocal2}, we obtain the Euler product decomposition
\[
   D^{S}(s,m) =\prod_{p\notin S}D_p(s,m)=\prod_{p\notin S}\frac{1-p^ {-s-1}}{1-p^{-2s}}\prod_{\substack{p\mid m\\p\notin S}}\frac{1-p^{-s(v_p(m)+1)}}{1-p^{-s}} ,
\]
which equals the right hand side of the formula in this corollary for $\Re s>1$. The meromorphic continuation follows from the analytic properties of 
$\zeta(s)$ and the local factors.
\end{proof}

\section{Isolation of specific terms}\label{sec:isolation}
In this section, we compute a formula in \autoref{thm:ellipticpoisson} and isolate specific terms in the elliptic part of the trace formula through residue analysis. The main theorem in this section is \autoref{thm:tracexi0}. The proof is a slight modification of \cite[Section 6]{altug2015}.

Recall that $\eqref{eq:ellipticpartapproximate}+\Sigma(\square)$ equals
\begin{align*}
&4\sqrt{n}\sum_{\pm}\sum_{\nu\in \ZZ^r}q^{\nu/2}\sum_{k,f\in \ZZ_{(S)}^{>0}}\frac{1}{k^2f^3}\sum_{\xi\in \ZZ^S}\Kl_{k,f}^S(\xi,\pm nq^\nu)\\
   \times&\int_{x\in\RR}\int_{y\in\QQ_{S_\fin}}\theta_\infty^\pm(x)\theta_{q}^{\pm,\nu}(y)\left[F\legendresymbol{kf^2(4nq^\nu)^{-\vartheta}}{|x^2\mp 1|_\infty^\vartheta|y^2\mp 4nq^\nu|_q'^\vartheta}+\frac{kf^2n^{-1/2}q^{-\nu/2}}{2\sqrt{|x^2\mp 1|_\infty|y^2\mp 4nq^\nu|_q'}}\right.\\
     \times&\left.V\legendresymbol{kf^2(4nq^\nu)^{\vartheta-1}}{|x^2\mp 1|_\infty^{1-\vartheta}|y^2\mp 4nq^\nu|_q'^{1-\vartheta}}\right]\rme\legendresymbol{-2x\xi n^{1/2}q^{\nu/2}}{kf^2}\rme_{q}\legendresymbol{-y\xi}{kf^2}\rmd x\rmd y.
\end{align*}
Thus the $\xi=0$ term equals
\begin{equation}\label{eq:ellipticxi0}
\begin{split}
&4\sqrt{n}\sum_{\pm}\sum_{\nu\in \ZZ^r}q^{\nu/2}\sum_{k,f\in \ZZ_{(S)}^{>0}}\frac{\Kl_{k,f}^S(0,\pm nq^\nu)}{k^2f^3} \int_{x\in\RR}\int_{y\in\QQ_{S_\fin}}\left[F\legendresymbol{kf^2(4nq^\nu)^{-\vartheta}}{|x^2\mp 1|_\infty^\vartheta|y^2\mp 4nq^\nu|_q'^\vartheta}\right.\\
+&\left.\frac{kf^2n^{-1/2}q^{-\nu/2}}{2\sqrt{|x^2\mp 1|_\infty|y^2\mp 4nq^\nu|_q'}}V\legendresymbol{kf^2(4nq^\nu)^{\vartheta-1}}{|x^2\mp 1|_\infty^{1-\vartheta}|y^2\mp 4nq^\nu|_q'^{1-\vartheta}}\right]\theta_\infty^\pm(x)\theta_{q}^{\pm,\nu}(y)\rmd x\rmd y.
\end{split}
\end{equation}

\begin{theorem}\label{thm:tracexi0}
Let $\vartheta\in \lopen 0,1\ropen$ and $v>0$ such that $\zeta(s+1)$ does not have any zeros for $|s|\leq v$ (since $\zeta(s+1)\neq 0$ at $s=0$, such $v$ exists). Let $(\sigma)$ be the contour from $\sigma-\rmi\infty$ to $\sigma+\rmi\infty$. Let $\cC_v$ be the contour from $-\rmi\infty$ to $-\rmi v$ by a straight line, then goes from $-\rmi v$ to $\rmi v$ by the left semicircle $|s|=v,\Re s\leq 0$, then from $\rmi v$ to $\rmi \infty$ by a straight line. Then 
\begin{align*}
\eqref{eq:ellipticxi0}=&4\sqrt{n}\prod_{p\mid n}\frac{1-p^{-n_p-1}}{1-p^{-1}}\sum_{\pm}\sum_{\nu\in \ZZ^r}q^{\nu/2}\int_{x\in \RR}\int_{y\in \QQ_{S_\fin}}\theta_\infty^\pm(x)\theta_{q}^{\pm,\nu}(y)\rmd x\rmd y\\
-&2^{r+1}\prod_{p\mid n}(n_p+1)\sum_{\pm}\sum_{\nu\in \ZZ^r}\int_{X_0}\int_{Y_\mathbf{1}}\frac{\theta_\infty^\pm(x)\theta_{q}^{\pm,\nu}(y)}{\sqrt{|x^2\mp 1|_\infty|y^2\mp 4nq^\nu|_q'}}\rmd x\rmd y\\
+&4\sqrt{n}\sum_{\pm}\sum_{\nu\in \ZZ^r}q^{\nu/2}\int_{x\in \RR}\int_{y\in \QQ_{S_\fin}}\theta_\infty^\pm(x)\theta_{q}^{\pm,\nu}(y)
  \left[\frac{1}{\dpii}\int_{(-1)}\widetilde{F}(s)\frac{\zeta(2s+2)}{\zeta(s+2)} \prod_{i=1}^{r}\frac{1-q_i^{-2s-2}}{1-q_i^{-s-2}}\right.\\
  \times&\left.\prod_{p\mid n}\frac{1-p^{-(s+1)(n_p+1)}}{1-p^{-s-1}}\left(\frac{(4nq^\nu)^{-\vartheta}}{|x^2\mp 1|_\infty^\vartheta|y^2\mp 4nq^\nu|_q'^\vartheta}\right)^{-s}\rmd s\right]\rmd x\rmd y\\
  +&2\sum_{\pm}\sum_{\nu\in \ZZ^r}\int_{x\in \RR}\int_{y\in \QQ_{S_\fin}}\frac{\theta_\infty^\pm(x)\theta_q^{\pm,\nu}(y)}{\sqrt{|x^2\mp 1|_\infty|y^2\mp 4nq^\nu|_q'}}\left[\frac{\sqrt{\uppi}}{\dpii}\int_{\cC_v}\widetilde{F}(s) \frac{\Gamma(\frac{\iota+s}{2})}{\Gamma(\frac{\iota+1-s}{2})}\frac{\zeta(2s)}{\zeta(s+1)} \right.\\
  \times&\left. \prod_{i=1}^{r}\frac{1-\epsilon_i q_i^{s-1}}{1-\epsilon_i q_i^{-s}}\frac{1-q_i^{-2s}}{1-q_i^{-s-1}}\prod_{p\mid n}\frac{1-p^{-s(n_p+1)}}{1-p^{-s}} \left(\frac{\uppi (4nq^\nu)^{\vartheta-1}}{|x^2\mp 1|_\infty^{1-\vartheta}|y^2\mp 4nq^\nu|_q'^{1-\vartheta}}\right)^{-s}\rmd s\right]
  \rmd x\rmd y,
\end{align*}
where $\iota$, $\epsilon_i$, $|\cdot|_{q}'$ have been defined before, and \index{x0@$X_0$} \index{y1@$Y_{\mathbf 1}$} \index{omegaiyi@$\omega_i(y_i)$}
\[
X_0=\{x\in \RR\,|\, x^2\mp 1>0\},
\quad
Y_{\mathbf{1}}=\left\{y\in \QQ_{S_\fin}\,\middle|\,\omega_i(y_i):=\legendresymbol{(y_i^2\mp 4nq^\nu)|y_i^2\mp 4nq^\nu|_{q_i}'}{q_i}=1\ \text{for all $i$}\right\}.
\]
\end{theorem}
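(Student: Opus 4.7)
The plan is to apply Mellin inversion to rewrite $F$ and $V$ as contour integrals, interchange summation and integration to introduce the Dirichlet series $D^S(\cdot,\pm nq^\nu)$ analysed in \autoref{sec:kloosterman}, and then shift contours to isolate the stated terms as residues. Setting
\[
A=(4nq^\nu)^{-\vartheta},\quad B=|x^2\mp 1|_\infty^\vartheta|y^2\mp 4nq^\nu|_q'^\vartheta,\quad A'=(4nq^\nu)^{\vartheta-1},\quad B'=|x^2\mp 1|_\infty^{1-\vartheta}|y^2\mp 4nq^\nu|_q'^{1-\vartheta},
\]
I would split \eqref{eq:ellipticxi0} into an ``$F$-part'' and a ``$V$-part''. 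For $\sigma_0>0$ large enough the Mellin inversion formula from \autoref{prop:estimatemf} gives
\[
F\!\left(\frac{kf^2 A}{B}\right)=\frac{1}{2\pi i}\int_{(\sigma_0)}\widetilde F(s)(kf^2)^{-s}(A/B)^{-s}\,ds,
\]
and the definition in \autoref{cor:afe1} supplies the analogous expression for $V$. After interchanging the sum over $k,f$ with the contour integral (justified by the rapid decay of $\widetilde F$ together with absolute convergence of the Kloosterman Dirichlet series for large $\Re s$), the inner sum becomes $D^S(s+1,\pm nq^\nu)$ in the $F$-part and $D^S(s,\pm nq^\nu)$ in the $V$-part. \autoref{cor:dirichletglobal} then expresses each as a ratio of zeta values times a finite Euler product, producing exactly the integrands that appear in the third and fourth terms of the theorem.

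Next I would shift the $F$-part contour from $(\sigma_0)$ down to $(-1)$. In the swept strip $-1<\Re s<\sigma_0$, $D^S(s+1,\pm nq^\nu)$ has only a simple pole at $s=-1/2$ (from $\zeta(2s+2)$) and $\widetilde F$ contributes a simple pole at $s=0$. At $s=0$ one has $\zeta(2)/\zeta(2)=1$, the $q_i$-factors collapse to $1$, and the $p$-factors give $\prod_{p\mid n}(1-p^{-n_p-1})/(1-p^{-1})$; the resulting residue contributes the first main term of the theorem after inserting the outer prefactor $2\sqrt n q^{\nu/2}$. The residue at $s=-1/2$ is kept aside. For the $V$-part I would shift from $(\sigma)$, $\sigma>1$, to $\cC_v$. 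The choice of $v$ ensures the leftward semicircle near $0$ avoids zeros of $\zeta(s+1)$, and since $\zeta(s+1)\neq 0$ for $\Re s\geq 0$ away from $0$, the only poles traversed are at $s=1/2$ (from $\zeta(2s)$) and potentially at $s=0$.

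The decisive and most delicate step is the analysis of the $V$-integrand at $s=0$, where the pole structure depends on the integration variables through $\iota=\iota_{x^2\mp 1}$ and $\epsilon_i=\epsilon_{i,y_i^2\mp 4nq^\nu}$. By Laurent expansion, $\widetilde F(s)\cdot\zeta(s+1)^{-1}$ is regular at $s=0$ (the simple pole of $\widetilde F$ cancels the simple zero of $1/\zeta(s+1)$); $\Gamma((\iota+s)/2)/\Gamma((\iota+1-s)/2)$ contributes a pole of order $[\iota=0]$; $\prod_i(1-\epsilon_iq_i^{-s})^{-1}$ contributes a pole of order $\#\{i:\epsilon_i=1\}$; and $\prod_i(1-q_i^{-2s})$ contributes a zero of order $r$. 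Thus the net order at $s=0$ is $[\iota=0]+\#\{i:\epsilon_i=1\}-r\leq 0$, with equality (giving a simple pole) exactly when $\iota=0$ and $\epsilon_i=1$ for every $i$, that is, precisely when $(x,y)\in X_0\times Y_\mathbf{1}$. In this regime, the $r$-fold cancellation combined with $\zeta(0)=-1/2$ gives residue $-\frac{2^r}{\sqrt\pi}\prod_{p\mid n}(n_p+1)$, and multiplication by the outer $\sqrt\pi$ prefactor yields exactly the second term of the theorem. Outside $X_0\times Y_\mathbf{1}$ the integrand is regular at $s=0$, so no contribution arises there despite the contour going around the origin.

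Finally, I would verify cancellation of the $F$-residue at $s=-1/2$ against the $V$-residue at $s=1/2$. At both points the common Dirichlet factors $\zeta(3/2)^{-1}\prod_i\frac{1-q_i^{-1}}{1-q_i^{-3/2}}\prod_{p\mid n}\frac{1-p^{-(n_p+1)/2}}{1-p^{-1/2}}$ agree, the $\Gamma$- and $(1-\epsilon_iq_i^{\cdot})$-ratios both evaluate to $1$, and the algebraic identity
\[
2\sqrt n\, q^{\nu/2}(A/B)^{1/2}=\frac{\sqrt\pi\,(\pi A'/B')^{-1/2}}{\sqrt{|x^2\mp 1|_\infty|y^2\mp 4nq^\nu|_q'}}
\]
equates the prefactors; oddness $\widetilde F(-1/2)=-\widetilde F(1/2)$ from \autoref{prop:estimatemf} then supplies the sign flip making the two contributions cancel. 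What remains are the two contour integrals on $(-1)$ and $\cC_v$, which are precisely the third and fourth terms. Decay of $\widetilde F$ combined with Stirling estimates (\autoref{prop:stirling}) for the $\Gamma$-ratio ensures the horizontal caps vanish in the limit $|\Im s|\to\infty$, legitimising all contour shifts. The principal obstacle is the Laurent-expansion bookkeeping at $s=0$ in the $V$-part, which must simultaneously track the cancellation of the $r$-fold pole against the $r$-fold zero and reveal the characteristic region $X_0\times Y_\mathbf{1}$.
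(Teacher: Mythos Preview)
Your proposal is correct and follows essentially the same approach as the paper: Mellin inversion for $F$ and $V$, interchange with the $k,f$-sum to produce $D^S(s+1,\pm nq^\nu)$ and $D^S(s,\pm nq^\nu)$, application of \autoref{cor:dirichletglobal}, contour shifts to $(-1)$ and $\cC_v$ with residue collection at $s=0,\pm\tfrac12$, and cancellation of the $s=\pm\tfrac12$ contributions via the oddness of $\widetilde F$. Your pole-order bookkeeping at $s=0$ in the $V$-part (yielding a simple pole exactly on $X_0\times Y_{\mathbf 1}$) and your verification of the algebraic identity matching the $s=\pm\tfrac12$ prefactors are both in agreement with the paper's computation.
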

\begin{proof}
For $\sigma=\Re s$ sufficiently large such that the series converges absolutely, by Mellin inversion formula,
\[
F(x)=\frac{1}{\dpii}\int_{(\sigma)}\widetilde{F}(s)x^{-s}\rmd s.
\]
Also, recall that
\[
V_{\iota,\epsilon}(x)=\frac{\sqrt{\uppi}}{2\uppi \rmi}\int_{(1)}\widetilde{F}(s)\prod_{i=1}^{r} \frac{1-\epsilon_i q_i^{s-1}}{1-\epsilon_i q_i^{-s}}\frac{\Gamma((\iota+s)/2)}{\Gamma((\iota+1-s)/2)}(\uppi x)^{-s}\rmd s.
\]
Substituting these into \eqref{eq:ellipticxi0}, we obtain
\begin{align*}
   \eqref{eq:ellipticxi0}=&4\sqrt{n}\sum_{\pm}\sum_{\nu\in \ZZ^r}q^{\nu/2}\sum_{k,f\in \ZZ_{(S)}^{>0}}\frac{\Kl_{k,f}^S(0,\pm nq^\nu)}{k^2f^3}\int_{x\in \RR}\int_{y\in \QQ_{S_\fin}}\theta_\infty^\pm(x)\theta_q^{\pm,\nu}(y)\\
  \times &\left[\frac{1}{\dpii}\int_{(\sigma)}\widetilde{F}(s) \left(\frac{kf^2(4nq^\nu)^{-\vartheta}}{|x^2\mp 1|_\infty^\vartheta|y^2\mp 4nq^\nu|_q'^\vartheta}\right)^{-s}\rmd s+\frac{kf^2n^{-1/2}q^{-\nu/2}}{2\sqrt{|x^2\mp 1|_\infty|y^2\mp 4nq^\nu|_q'}}\right.\\
  \times&\left.\frac{\sqrt{\uppi}}{\dpii}\int_{(\sigma)}\widetilde{F}(s)\prod_{i=1}^{r} \frac{1-\epsilon_i q_i^{s-1}}{1-\epsilon_i q_i^{-s}} \frac{\Gamma(\frac{\iota+s}{2})}{\Gamma(\frac{\iota+1-s}{2})} \left(\frac{\uppi kf^2(4nq^\nu)^{\vartheta-1}}{|x^2\mp 1|_\infty^{1-\vartheta}|y^2\mp 4nq^\nu|_q'^{1-\vartheta}}\right)^{-s}\rmd s\right]\rmd x\rmd y\\
  =& 4\sqrt{n}\sum_{\pm}\sum_{\nu\in \ZZ^r}q^{\nu/2}\int_{x\in \RR}\int_{y\in \QQ_{S_\fin}}\theta_\infty^\pm(x)\theta_q^{\pm,\nu}(y)
  \left[\frac{1}{\dpii}\int_{(\sigma)}\widetilde{F}(s) \sum_{k,f\in \ZZ_{(S)}^{>0}}\frac{\Kl_{k,f}^S(0,\pm nq^\nu)}{k^{2+s}f^{3+2s}}\right.\\
  \times&\left(\frac{(4nq^\nu)^{-\vartheta}}{|x^2\mp 1|_\infty^\vartheta|y^2\mp 4nq^\nu|_q'^\vartheta}\right)^{-s}\rmd s+\frac{n^{-1/2}q^{-\nu/2}}{2\sqrt{|x^2\mp 1|_\infty|y^2\mp 4nq^\nu|_q'}}\frac{\sqrt{\uppi}}{\dpii}\int_{(\sigma)}\widetilde{F}(s)\prod_{i=1}^{r}\frac{1-\epsilon_i q_i^{s-1}}{1-\epsilon_i q_i^{-s}}\\
  \times&\left.\frac{\Gamma(\frac{\iota+s}{2})}{\Gamma(\frac{\iota+1-s}{2})} \sum_{k,f\in \ZZ_{(S)}^{>0}}\frac{\Kl_{k,f}^S(0,\pm nq^\nu)}{k^{1+s}f^{1+2s}}\left(\frac{\uppi (4nq^\nu)^{\vartheta-1}}{|x^2\mp 1|_\infty^{1-\vartheta}|y^2\mp 4nq^\nu|_q'^{1-\vartheta}}\right)^{-s}\rmd s\right]\rmd x\rmd y.
\end{align*}
After applying \autoref{cor:dirichletglobal}, the above is transformed into
\begin{equation}\label{eq:contribute1}
\begin{split}
&4\sqrt{n}\sum_{\pm}\sum_{\nu\in \ZZ^r}q^{\nu/2}\!\int_{x\in \RR}\int_{y\in \QQ_{S_\fin}}\!\theta_\infty^\pm(x)\theta_q^{\pm,\nu}(y)\!
  \left[\!\frac{1}{\dpii}\int_{(\sigma)}\!\frac{\zeta(2s+2)}{\zeta(s+2)} \prod_{i=1}^{r}\frac{1-q_i^{-2s-2}}{1-q_i^{-s-2}}\prod_{p\mid n}\frac{1-p^{-(s+1)(n_p+1)}}{1-p^{-s-1}}\right.\\
  &\times\left(\frac{(4nq^\nu)^{-\vartheta}}{|x^2\mp 1|_\infty^\vartheta|y^2\mp 4nq^\nu|_q'^\vartheta}\right)^{-s}\widetilde{F}(s)\rmd s+\frac{n^{-1/2}q^{-\nu/2}}{2\sqrt{|x^2\mp 1|_\infty|y^2\mp 4nq^\nu|_q'}}\frac{\sqrt{\uppi}}{\dpii}\int_{(\sigma)}\frac{\Gamma(\frac{\iota+s}{2})}{\Gamma(\frac{\iota+1-s}{2})} \frac{\zeta(2s)}{\zeta(s+1)} \\
  &\times\left.\prod_{i=1}^{r}\frac{1-\epsilon_i q_i^{s-1}}{1-\epsilon_i q_i^{-s}} \frac{1-q_i^{-2s}}{1-q_i^{-s-1}}\prod_{p\mid n}\frac{1-p^{-s(n_p+1)}}{1-p^{-s}}\left(\frac{\uppi (4nq^\nu)^{\vartheta-1}}{|x^2\mp 1|_\infty^{1-\vartheta}|y^2\mp 4nq^\nu|_q'^{1-\vartheta}}\right)^{-s}\widetilde{F}(s)\rmd s\right]\rmd x\rmd y.
\end{split}
\end{equation}

We now analyze the following two contour integrals separately:
\[
\frac{1}{\dpii}\int_{(\sigma)}\widetilde{F}(s)\frac{\zeta(2s+2)}{\zeta(s+2)} \prod_{i=1}^{r}\frac{1-q_i^{-2s-2}}{1-q_i^{-s-2}}\prod_{p\mid n}\frac{1-p^{-(s+1)(n_p+1)}}{1-p^{-s-1}} \left(\frac{(4nq^\nu)^{-\vartheta}}{|x^2\mp 1|_\infty^\vartheta|y^2\mp 4nq^\nu|_q'^\vartheta}\right)^{-s}\rmd s
\]
and
\begin{align*}
&\frac{1}{\dpii}\int_{(\sigma)}\widetilde{F}(s)\frac{\Gamma(\frac{\iota+s}{2})}{\Gamma(\frac{\iota+1-s}{2})} \frac{\zeta(2s)}{\zeta(s+1)}\prod_{i=1}^{r}\frac{1-\epsilon_i q_i^{s-1}}{1-\epsilon_i q_i^{-s}} \frac{1-q_i^{-2s}}{1-q_i^{-s-1}}\\
\times&\prod_{p\mid n}\frac{1-p^{-s(n_p+1)}}{1-p^{-s}}\left(\frac{\uppi (4nq^\nu)^{\vartheta-1}}{|x^2\mp 1|_\infty^{1-\vartheta}|y^2\mp 4nq^\nu|_q'^{1-\vartheta}}\right)^{-s} \rmd s.
\end{align*}

We begin with considering the first integral.
\begin{enumerate}[itemsep=0pt,parsep=0pt,topsep=0pt,leftmargin=0pt,labelsep=2.5pt,itemindent=12pt,label=\textbullet]
  \item $\widetilde{F}(s)=K_s(2)/(sK_0(2))$ has a simple pole at $s=0$ with residue $1$ and is holomorphic otherwise.
  \item $\zeta(2s+2)/\zeta(s+2)$ has a simple pole at $s=-1/2$ with residue
  \[
  \lim_{s\to -1/2}\left(s+\frac12\right)\frac{\zeta(2s+2)}{\zeta(s+2)}=\frac{1}{2\zeta(3/2)},
  \]
  and it is holomorphic otherwise on $\Re s\geq -1$. (Since $\zeta(s)$ has no zeros on $\Re s\geq 1$.)
  \item All other terms are holomorphic on $\Re s\geq -1$.
\end{enumerate} 

If we move the contour from $(\sigma)$ to $(-1)$, by residue theorem the first integral equals
\begin{align*} 
&\frac{1}{\dpii}\int_{(-1)}\cdots\rmd s
  + (\res_{s=0}+\res_{s=-\frac12})\cdots
  = \frac{1}{\dpii}\int_{(-1)}\cdots\rmd s
  +\prod_{p\mid n}\frac{1-p^{-(n_p+1)}}{1-p^{-1}}\\
  +&\widetilde{F}\left(-\frac{1}{2}\right)\frac{1}{2\zeta(3/2)} \prod_{i=1}^{r}\frac{1-q_i^{-1}}{1-q_i^{-3/2}}\prod_{p\mid n}\frac{1-p^{-(n_p+1)/2}}{1-p^{-1/2}} \left(\frac{(4nq^\nu)^{-\vartheta}}{|x^2\mp 1|_\infty^\vartheta|y^2\mp 4nq^\nu|_q'^\vartheta}\right)^{\frac12},
\end{align*}
where $\cdots$ denotes the integrand of the first integral.

Next we analyze the second integral.
\begin{enumerate}[itemsep=0pt,parsep=0pt,topsep=0pt,leftmargin=0pt,labelsep=2.5pt,itemindent=12pt,label=\textbullet]
  \item The function
  \[
  \frac{\Gamma(\frac{\iota+s}{2})}{\Gamma(\frac{\iota+1-s}{2})} \frac{\zeta(2s)}{\zeta(s+1)}
  \]
  has a simple pole at $s=1/2$ with residue
  \[
  \lim_{s\to 1/2}\left(s-\frac12\right)\frac{\Gamma(\frac{\iota+s}{2})}{\Gamma(\frac{\iota+1-s}{2})} \frac{\zeta(2s)}{\zeta(s+1)}=\frac{1}{2\zeta(3/2)},
  \]
  and is holomorphic otherwise on $\Re s\geq 0$ other than $s=0$.
  \item The function
  \[
  \widetilde{F}(s)\prod_{i=1}^{r}\frac{1-\epsilon_i q_i^{s-1}}{1-\epsilon_i q_i^{-s}} \frac{1-q_i^{-2s}}{1-q_i^{-s-1}}
  \]
  is holomorphic on $\Re s\geq 0$ other than $s=0$. (Since $1-\epsilon_i q_i^{-s}$ divides $1-q_i^{-2s}$, the poles of $1/(1-\epsilon_i q_i^{-s})$ cancel the zeros of $1-q_i^{-2s}$.)
  \item All other terms are holomorphic on $\Re s\geq 0$.
\end{enumerate} 

Now we consider the behavior at $s=0$. $\iota\neq 0$ if and only if $\Gamma(\frac{\iota+s}{2})/\Gamma(\frac{\iota+1-s}{2})$ is regular at $s=0$, and $\epsilon_i\neq 1$ if and only if the function $\frac{1-\epsilon_i q_i^{s-1}}{1-\epsilon_i q_i^{-s}} \frac{1-q_i^{-2s}}{1-q_i^{-s-1}}$ is zero at $s=0$. Since $\widetilde{F}(s)$ has a simple pole at $s=0$ and $\zeta(2s)/\zeta(s+1)$ has a simple zero at $s=0$, we know that the function
\[
\widetilde{F}(s)\frac{\Gamma(\frac{\iota+s}{2})}{\Gamma(\frac{\iota+1-s}{2})} \frac{\zeta(2s)}{\zeta(s+1)}\prod_{i=1}^{r}\frac{1-\epsilon_i q_i^{s-1}}{1-\epsilon_i q_i^{-s}} \frac{1-q_i^{-2s}}{1-q_i^{-s-1}}
\]
is regular at $s=0$ unless $\iota=0$ and $\epsilon_i=1$ for all $i$. In this exceptional case, the function has a simple pole at $s=0$ with residue
\[
\lim_{s\to 0}s\widetilde{F}(s)\frac{\Gamma(\frac{s}{2})}{\Gamma(\frac{1-s}{2})} \frac{\zeta(2s)}{\zeta(s+1)}\prod_{i=1}^{r}\frac{1-q_i^{s-1}}{1-q_i^{-s}} \frac{1-q_i^{-2s}}{1-q_i^{-s-1}}=\frac{2^{r+1}\zeta(0)}{\Gamma(1/2)}=-\frac{2^r}{\sqrt{\uppi}}.
\]

If we move the contour from $(\sigma)$ to $\cC_v$, by residue theorem the second integral equals
\begin{align*}
& \frac{1}{\dpii}\int_{\cC_v}\cdots\rmd s+  \res_{s=\frac12}\cdots\\
     =&  \frac{1}{\dpii}\int_{\cC_v}\cdots\rmd s
    +
     \widetilde{F}\left(\frac{1}{2}\right)\frac{1}{2\zeta(3/2)}\prod_{i=1}^{r}\frac{1-q_i^{-1}}{1-q_i^{-3/2}} \prod_{p\mid n}\frac{1-p^{-(n_p+1)/2}}{1-p^{-1/2}}\left(\frac{\uppi (4nq^\nu)^{\vartheta-1}}{|x^2\mp 1|_\infty^{1-\vartheta}|y^2\mp 4nq^\nu|_q'^{1-\vartheta}}\right)^{-\frac12}
\end{align*}
for $\iota\neq 0$ or $\epsilon_i \neq 1$ for some $i$, while in the exceptional case an additional term
\[
\res_{s=0}\cdots
=-\frac{2^r}{\sqrt{\uppi}}\prod_{p\mid n}(n_p+1)
\]
should be added, where $\cdots$ denotes the integrand of the second integral.

Combining these results, we can rewrite \eqref{eq:contribute1} as
\begin{align*}
&4\sqrt{n}\sum_{\pm}\sum_{\nu\in \ZZ^r}q^{\nu/2}\int_{x\in \RR}\int_{y\in \QQ_{S_\fin}}\theta_\infty^\pm(x)\theta_q^{\pm,\nu}(y)
  \left[\frac{1}{\dpii}\int_{(-1)}\widetilde{F}(s)\frac{\zeta(2s+2)}{\zeta(s+2)} \prod_{i=1}^{r}\frac{1-q_i^{-2s-2}}{1-q_i^{-s-2}}\right.\\
  \times&\prod_{p\mid n}\frac{1-p^{-(s+1)(n_p+1)}}{1-p^{-s-1}}\left(\frac{(4nq^\nu)^{-\vartheta}}{|x^2\mp 1|_\infty^\vartheta|y^2\mp 4nq^\nu|_q'^\vartheta}\right)^{-s}\rmd s+\prod_{p\mid n}\frac{1-p^{-n_p-1}}{1-p^{-1}}\\
  +& \widetilde{F}\left(-\frac{1}{2}\right)\frac{1}{\zeta(3/2)}\prod_{i=1}^{r}\frac{1-q_i^{-1}}{1-q_i^{-3/2}}\prod_{p\mid n}\frac{1-p^{-(n_p+1)/2}}{1-p^{-1/2}}\left(\frac{(4nq^\nu)^{-\vartheta}}{|x^2\mp 1|_\infty^\vartheta|y^2\mp 4nq^\nu|_q'^\vartheta}\right)^{\frac12}
  \\
  +&\frac{n^{-1/2}q^{-\nu/2}}{\sqrt{|x^2\mp 1|_\infty|y^2\mp 4nq^\nu|_q'}}\frac{\sqrt{\uppi}}{\dpii}\int_{\cC_v}\widetilde{F}(s)  \frac{\Gamma(\frac{\iota+s}{2})}{\Gamma(\frac{\iota+1-s}{2})} \frac{\zeta(2s)}{\zeta(s+1)}\prod_{p\mid n}\frac{1-p^{-s(n_p+1)}}{1-p^{-s}}\\
  \times&\prod_{i=1}^{r}\frac{1-\epsilon_i q_i^{s-1}}{1-\epsilon_i q_i^{-s}} \frac{1-q_i^{-2s}}{1-q_i^{-s-1}}\left(\frac{\uppi (4nq^\nu)^{\vartheta-1}}{|x^2\mp 1|_\infty^{1-\vartheta}|y^2\mp 4nq^\nu|_q'^{1-\vartheta}}\right)^{-s}\rmd s
  +\frac{\sqrt{\uppi}n^{-1/2}q^{-\nu/2}}{2\sqrt{|x^2\mp 1|_\infty|y^2\mp 4nq^\nu|_q'}}\\
  \times&\widetilde{F}\left(\frac{1}{2}\right)\frac{1}{\zeta(3/2)} \prod_{i=1}^{r}\frac{1-q_i^{-1}}{1-q_i^{-3/2}}\prod_{p\mid n}\frac{1-p^{-(n_p+1)/2}}{1-p^{-1/2}}\left.\left(\frac{\uppi (4nq^\nu)^{\vartheta-1}}{|x^2\mp 1|_\infty^{1-\vartheta}|y^2\mp 4nq^\nu|_q'^{1-\vartheta}}\right)^{-\frac12}\right]\rmd x\rmd y\\
  -&2^{r+1}\sqrt{n}\sum_{\pm}\sum_{\nu\in \ZZ^r}q^{\nu/2}\int_{X_0}\int_{Y_{\mathbf{1}}}\theta_\infty^\pm(x)\theta_q^{\pm,\nu}(y)\prod_{p\mid n}(n_p+1)\frac{n^{-1/2}q^{-\nu/2}}{\sqrt{|x^2\mp 1|_\infty|y^2\mp 4nq^\nu|_q'}}\rmd x\rmd y.
\end{align*}
Since $\widetilde{F}$ is odd, the third and the last term in the bracket cancel. Thus \eqref{eq:contribute1} is precisely the right hand side of the formula stated in this theorem.
\end{proof}

We call the first term of the formula in \autoref{thm:tracexi0}, namely
\begin{equation}\label{eq:mainterm}
4\sqrt{n}\prod_{p\mid n}\frac{1-p^{-n_p-1}}{1-p^{-1}}\sum_{\pm}\sum_{\nu\in \ZZ^r}q^{\nu/2}\int_{x\in \RR}\int_{y\in \QQ_{S_\fin}}\theta_\infty^\pm(x)\theta_q^{\pm,\nu}(y)\rmd x\rmd y,
\end{equation}
the \emph{$1$-dimensional term} since this term corresponds to traces of the $1$-dimensional representations, which will be seen below. Also, we call the second term
\begin{equation}\label{eq:eisensteinterm}
2^{r+1}\prod_{p\mid n}(n_p+1)\sum_{\pm}\sum_{\nu\in \ZZ^r}\int_{X_0}\int_{Y_\mathbf{1}}\frac{\theta_\infty^\pm(x)\theta_{q}^{\pm,\nu}(y)}{\sqrt{|x^2\mp 1|_\infty|y^2\mp 4nq^\nu|_q'}}\rmd x\rmd y
\end{equation}
the \emph{Eisenstein term} since it arises from the Eisenstein series. We will compute them explicitly in the next two sections.

\section{Computing the $1$-dimensional term}\label{sec:computemain}
Let $\pi$ be a $1$-dimensional representation of $\G(\QQ)\bs\G(\AA)^1=Z_+\G(\QQ)\bs \G(\AA)$, where $Z_+$ denotes the connected component of the identity of the center of $\G(\RR)$. Then $\pi$ is of the form $\mu\circ\det$ for some character $\mu=\bigotimes_{v\in \mf{S}}'\mu_v: \RR_{>0}\QQ^\times\bs \AA^\times \to \SS^1$. From now on we identify such $\pi$ with $\mu$. For $\varphi\in C_c^\infty(\G(\AA)^1)$, the trace of $\mu(\varphi)$ is 
\[
\Tr(\mu(\varphi))=\int_{Z_+\bs\G(\AA)}\mu(\det g)\varphi(g)\rmd g.
\]

In this section, we will prove that the $1$-dimensional term is precisely the sum of the traces of $\mu(f^n)$. We will compute nonarchimedean analog of the local traces and then combine them together to get the result using the abstract Fourier inversion formula, which does not occur in \cite{altug2015}.

Suppose that $\varphi=\bigotimes_{v\in \mf{S}}'\varphi_v$, where $\varphi_v$ and $\mu_v$ are local components. Then we have
\[
\Tr(\mu(\varphi))=\prod_{v\in \mf{S}}\Tr(\mu_v(\varphi_v)),
\]
where
\[
\Tr(\mu_\ell(\varphi_p))=\int_{\G(\QQ_p)}\mu_p(\det g)\varphi_p(g)\rmd g
\]
for finite $v=\ell$ and 
\[
\Tr(\mu_\infty(\varphi_\infty))=\int_{Z_+\bs \G(\RR)}\mu_\infty(\det g)\varphi_\infty(g)\rmd g.
\]

\subsection{Local computation}
First we consider the archimedean case. Since $\mu_\infty$ is a character on $\RR_{>0}\bs \RR^\times$, we must have $\mu_\infty=\triv$ or $\mu_\infty=\sgn$.

\begin{proposition}\label{prop:archimedeantrace}
\begin{enumerate}[itemsep=0pt,parsep=0pt,topsep=0pt,leftmargin=0pt,labelsep=2.5pt,itemindent=15pt,label=\upshape{(\arabic*)}]
  \item If $\mu_\infty=\triv$, then
  \[
  \int_{Z_+\bs \G(\RR)}\mu_\infty(\det g)f_\infty(g)\rmd g=4\left(\int_{x\in \RR}\theta^+_\infty(x)\rmd x+\int_{x\in \RR}\theta^-_\infty(x)\rmd x\right).
  \]
  \item If $\mu_\infty=\sgn$, then
  \[
  \int_{Z_+\bs \G(\RR)}\mu_\infty(\det g)f_\infty(g)\rmd g=4\left(\int_{x\in \RR}\theta^+_\infty(x)\rmd x-\int_{x\in \RR}\theta^-_\infty(x)\rmd x\right).
  \]
\end{enumerate} 
\end{proposition}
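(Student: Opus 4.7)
The natural tool is the Weyl integration formula on $\G(\RR)=\GL_2(\RR)$. Since $\mu_\infty$ is trivial on $\RR_{>0}$ and $f_\infty$ is $Z_+$-invariant, the integrand $\mu_\infty(\det g)f_\infty(g)$ descends to $Z_+\bs\G(\RR)$, and applying Weyl integration gives
\[
\int_{Z_+\bs\G(\RR)}\mu_\infty(\det g)f_\infty(g)\,\rmd g=\sum_T\frac{1}{|W(T)|}\int_{Z_+\bs T(\RR)}|D(t)|\,\orb(f_\infty;t)\,\mu_\infty(\det t)\,\rmd t,
\]
where the sum runs over the two conjugacy classes of maximal tori of $\G(\RR)$---the split torus $T_s\cong\RR^\times\times\RR^\times$ and the elliptic torus $T_e\cong\CC^\times$, each with Weyl group of order $2$---and the absolute Weyl discriminant is $|D(t)|=(\gamma_1-\gamma_2)^2/|\gamma_1\gamma_2|=|T_\gamma^2-4N_\gamma|/|N_\gamma|$.

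The heart of the calculation is the identity \eqref{eq:orbittheta}, which in general form reads $\orb(f_\infty;\gamma)\cdot|T_\gamma^2-4N_\gamma|^{1/2}/|N_\gamma|^{1/2}=\theta_\infty^{\sgn N_\gamma}(T_\gamma/(2\sqrt{|N_\gamma|}))$. Combined with $|D(t)|$, the singular factors collapse to give $|D(t)|\orb(f_\infty;t)=2\sqrt{|x^2-\epsilon|}\,\theta_\infty^\epsilon(x)$, where $x=T_\gamma/(2\sqrt{|N_\gamma|})$ and $\epsilon=\sgn N_\gamma$. My plan is then to change variables from the eigenvalue parametrization of $T_s$ (respectively, from polar coordinates $z=re^{\rmi\theta}$ on $T_e$) to $(x,|N|)$, so that the $|N|$-direction absorbs the $Z_+$-quotient. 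Using the measure conventions of \autoref{subsec:testfunction} (with $Z_+$ parametrized by the determinant and acting via $\sqrt{a}$), the Jacobian of $(a,b)\mapsto(T,N)$ on $T_s$ is $|a-b|=|T^2-4N|^{1/2}$, and the $2$-to-$1$ covering $(a,b)\mapsto(x,\epsilon)$ together with the $Z_+$-quotient produce a remaining measure $2\,\rmd x/\sqrt{x^2-\epsilon}$ on each sign-of-$N$ component of $Z_+\bs T_s$; on $T_e$ the analogous computation yields the measure $\rmd\theta$ on $[0,2\pi)$.

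Putting it together, after the Weyl factor $1/|W(T)|=1/2$, I expect $T_s$ to contribute $2\int_{|x|\ge 1}\theta_\infty^+(x)\,\rmd x+2\mu_\infty(-1)\int_\RR\theta_\infty^-(x)\,\rmd x$ (using $\mu_\infty(\det t)=\mu_\infty(\sgn N)$ on $T_s$, since $\mu_\infty|_{\RR_{>0}}$ is trivial) and $T_e$ to contribute $2\int_{-1}^{1}\theta_\infty^+(x)\,\rmd x$ after the substitution $x=\cos\theta$ (since $\det t=r^2>0$ and $\sgn N=+$ on $T_e$). Summing yields
\[
\int_{Z_+\bs\G(\RR)}\mu_\infty(\det g)f_\infty(g)\,\rmd g=2\int_\RR\theta_\infty^+(x)\,\rmd x+2\mu_\infty(-1)\int_\RR\theta_\infty^-(x)\,\rmd x,
\]
which specializes to the two formulas of the proposition as $\mu_\infty(-1)=+1$ or $-1$.

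The main technical obstacle is keeping the Haar measure normalizations consistent throughout: the specific conventions of \autoref{subsec:testfunction}---in particular, parametrizing $Z_+$ by the determinant and letting $Z_+$ act on $E^\times$ via $\sqrt{a}$---are what produce the constant $2$ in front of each integral; a different normalization on $Z_+$ or on the tori would scale the final answer. Once the normalizations are locked in, the rest is a direct change of variables.
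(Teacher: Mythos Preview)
Your proposal is correct and follows the natural Weyl-integration route. The paper's own proof is simply a citation to equation (27) of \cite{langlands2004}, together with the translation $\psi_\pm(x)=\theta_\infty^\pm(x)/(2\sqrt{|x^2\mp1|})$; your argument supplies exactly the computation that citation stands in for, and in fact mirrors the explicit Weyl-integration calculations the paper does carry out in \autoref{prop:ramifiedtrace} and \autoref{prop:archimedeantraceeisenstein}.
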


\begin{proof}
Observe that the function $\psi_\pm(x)$ defined in \cite{langlands2004} is
\begin{equation}\label{eq:relate}
\frac{\theta_\infty^\pm(x)}{2\sqrt{|x^2\mp 1|}}
\end{equation}
in our notation. Indeed, in \cite{langlands2004}, Langlands defined $\psi$ to be
\[
\orb(f_\infty;\gamma)=\psi(4N,T)=\psi\left(\sgn 4N,\frac{T}{2\sqrt{|N|}}\right),
\]
where $N=\det \gamma$ and $T=\Tr \gamma$, and he defined $\psi_\pm(r)=\psi(\pm 1,r)$.

In this paper we have (cf. \eqref{eq:orbittheta})
\[
   \orb(f_\infty;\gamma)\left|\frac{T^2-4N}{N}\right|^{1/2} =\theta_\infty^{\sgn N}\legendresymbol{T}{2\sqrt{|N|}}.
\]

Finally, the different action of $Z_+$ acting on $\G(\RR)$ yields an extra factor $2$.
From these definitions and \cite[(27)]{langlands2004} we know that \eqref{eq:relate} holds.
\end{proof}

Next we consider the case $v=p\notin S$.
\begin{proposition}\label{prop:unramifiedtrace}
\begin{enumerate}[itemsep=0pt,parsep=0pt,topsep=0pt,leftmargin=0pt,labelsep=2.5pt,itemindent=15pt,label=\upshape{(\arabic*)}]
  \item If $\mu_p$ is ramified, then
  \[
  \int_{\G(\QQ_p)}\mu_p(\det g)\triv_{\cX_p^{m}}(g)\rmd g=0.
  \]
  \item If $\mu_p$ is unramified, then
  \[
  \int_{\G(\QQ_p)}\mu_p(\det g)\triv_{\cX_p^{m}}(g)\rmd g=\mu_p(p^m)p^{m}\frac{1-p^{-m-1}}{1-p^{-1}}.
  \]
\end{enumerate} 
\end{proposition}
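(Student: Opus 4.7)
The plan is to exploit the left-$\cK_\ell$-invariance of the Haar measure to reduce both statements to a simple characterization of the integrand on $\cX_\ell^m$. Since $\cX_\ell^m$ is stable under left multiplication by $\cK_\ell$ (both factors have $|\det|_\ell=1$ and $\ell^{-m}$ respectively, so the product lies in $\cX_\ell^m$), for any $k\in \cK_\ell$ the substitution $g\mapsto kg$ gives
\[
\int_{\cX_\ell^m}\mu_\ell(\det g)\,\rmd g = \mu_\ell(\det k)\int_{\cX_\ell^m}\mu_\ell(\det g)\,\rmd g.
\]
If $\mu_\ell$ is ramified, then $\mu_\ell|_{\ZZ_\ell^\times}$ is nontrivial; since $\det\colon \cK_\ell\to \ZZ_\ell^\times$ is surjective, some $k\in \cK_\ell$ satisfies $\mu_\ell(\det k)\neq 1$, which forces the integral to vanish. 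This handles (1).

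For (2), if $\mu_\ell$ is unramified then $\mu_\ell(\det g)$ depends only on $v_\ell(\det g)=m$, so on $\cX_\ell^m$ it is the constant $\mu_\ell(\ell^m)$. The integral therefore reduces to $\mu_\ell(\ell^m)\cdot\vol(\cX_\ell^m)$, and it remains to compute this volume under the normalization $\vol(\cK_\ell)=1$.

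The cleanest way is to use the standard set of right $\cK_\ell$-coset representatives for $\cX_\ell^m$, namely the upper triangular matrices
\[
\begin{pmatrix} \ell^a & c \\ 0 & \ell^d \end{pmatrix},\qquad a+d=m,\ a,d\geq 0,\ 0\leq c<\ell^d,
\]
which is the classical set of representatives for the Hecke operator $T(\ell^m)$. Counting these gives $\sum_{d=0}^{m}\ell^d=\ell^m(1-\ell^{-m-1})/(1-\ell^{-1})$, so
\[
\vol(\cX_\ell^m)=\ell^m\frac{1-\ell^{-m-1}}{1-\ell^{-1}},
\]
yielding the desired formula. The only non-routine step is verifying this coset decomposition; this is standard and can be quoted, or checked directly by putting any $g\in \cX_\ell^m$ into Hermite normal form using row operations by elements of $\cK_\ell$. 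No subtle obstacles arise here: the argument is entirely local and combinatorial, and the ramified case is disposed of purely by the left-translation symmetry.
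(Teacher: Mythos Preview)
Your proof is correct and follows essentially the same approach as the paper: both exploit $\cK_\ell$-invariance of $\cX_\ell^m$ to kill the ramified case and reduce the unramified case to $\mu_\ell(\ell^m)\cdot\vol(\cX_\ell^m)$. The only difference is that the paper cites \cite{langlands2004} for the volume, whereas you compute it directly via the standard $T(\ell^m)$ coset representatives---a minor, self-contained improvement.
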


\begin{proof}
For any $a\in \ZZ_p^\times$, consider the element $h=(\begin{smallmatrix} a & 0\\ 0 & 1 \end{smallmatrix})\in \cK_p$. Since $\cK_p$ preserves $\cX_p^{m}$ we have
\[
   \int_{\G(\QQ_p)}\mu_p(\det g)\triv_{\cX_p^{m}}(g)\rmd g =\int_{\G(\QQ_p)}\mu_p(\det (gh))\triv_{\cX_p^{m}}(gh)\rmd g=\mu_p(\det h)\int_{\G(\QQ_p)}\mu_p(\det g)\triv_{\cX_p^{m}}(g)\rmd g.
\]
If $\mu_p$ is ramified, then $\mu_p(a)\neq 1$ for some $a\in \ZZ_p^\times$. Hence from the above equation we know that 
\[
\int_{\G(\QQ_p)}\mu_p(\det g)\triv_{\cX_p^{m}}(g)\rmd g=0.
\]

Now suppose that $\mu_p$ is unramified. In this case, $\mu_p(x)$ is constant on $|x|_p=p^{-m}$. Thus $\mu_p(x)=\mu_p(p^m)$ for all $|x|_p=p^{-m}$. Therefore
\[
  \int_{\G(\QQ_p)}\mu_p(\det g)\triv_{\cX_p^{m}}(g)\rmd g=\mu_p(p^m)\int_{\G(\QQ_p)}\triv_{\cX_p^{m}}(g)\rmd g,
\]
which equals
\[
\mu_p(p^m)p^{m}\frac{1-p^{-m-1}}{1-p^{-1}}
\]
by the displayed formula in \cite{langlands2004} between (64) and (65). (There is a misprint, as explained in footnote 17 of \cite{altug2015}.)
\end{proof}

Finally, let us consider the ramified case $v=p\in S$.

\begin{proposition}\label{prop:ramifiedtrace}
For any prime $p\in S$, we have
\[
\int_{\G(\QQ_{p})}\mu_{p}(\det g)f_{p}(g)\rmd g=\sum_{\nu\in \ZZ}p^{3\nu/2}\left(1-\frac{1}{p}\right)^{-1}\int_{T\in \QQ_{p}}\int_{|N|_{p}= p^{-\nu}}\theta_{p}(T,N)\mu_{p}(N)\rmd T\rmd N.
\]
\end{proposition}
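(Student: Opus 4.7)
The plan is to apply the Weyl integration formula on $\G(\QQ_\ell)$, change variables from each maximal torus to the $(T,N)$-plane via $t \mapsto (\Tr t, \det t)$, and then substitute the definition of $\theta_\ell$ to recognize the resulting expression.

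By the Weyl integration formula for $\GL_2$,
\[
\int_{\G(\QQ_\ell)}\mu_\ell(\det g)f_\ell(g)\,\rmd g = \sum_{[\T]}\frac{1}{2}\int_{\T(\QQ_\ell)}\mu_\ell(\det t)|D(t)|\orb(f_\ell;t)\,\rmd t,
\]
where $[\T]$ runs over $\G(\QQ_\ell)$-conjugacy classes of maximal tori (the split torus together with one $\T_E$ for each quadratic extension $E/\QQ_\ell$), $|D(t)| = |T_t^2-4N_t|_\ell/|N_t|_\ell$ is the Weyl discriminant, and the rational Weyl group has order $2$ in all cases (in the non-split case, the matrix $\begin{pmatrix}1 & 0\\ 0 & -1\end{pmatrix} \in \GL_2(\QQ_\ell)$ normalizes $\T_E$ and represents the nontrivial element).

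For each $\T$, I parametrize by eigenvalues --- using a $\ZZ_\ell$-basis $\{1,\Delta\}$ of $\cO_E$ in the non-split case, so that $(\Delta-\overline{\Delta})^2 \in \QQ_\ell^\times$ has absolute value $|d_E|_\ell$ --- and perform the $2$-to-$1$ change of variables to $(T,N)$. A direct Jacobian computation, together with the measure normalizations of \autoref{subsec:testfunction}, produces the pushforward measure
\[
\frac{1}{c_\T\,|N|_\ell\,|T^2-4N|_\ell^{1/2}\,|d_E|_\ell^{1/2}}\,\rmd T\,\rmd N
\]
on the image stratum, where $c_\T = 1-1/q_E$ and $d_E$ is the discriminant of $E/\QQ_\ell$ in the non-split case, while $c_\T = (1-1/\ell)^2$ and $|d_E|_\ell := 1$ in the split case. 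Substituting
\[
\orb(f_\ell;t) = |N_t|_\ell^{1/2}\Bigl(1-\frac{\chi(\ell)}{\ell}\Bigr)|T_t^2-4N_t|_\ell^{\prime -1/2}\theta_\ell(T_t,N_t),
\]
which follows from the definition of $\theta_\ell$ together with the identity $\ell^{k_\gamma} = |T^2-4N|_\ell^{\prime -1/2}$ from \autoref{prop:kell}, and invoking the key relation $|T^2-4N|_\ell/|T^2-4N|_\ell' = |d_E|_\ell$ (immediate from $T^2-4N=\sigma^2 D$ with $D$ the fundamental discriminant of $E$), all discriminant-dependent factors cancel.

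The heart of the computation is the uniform identity $(1-\chi(\ell)/\ell)/c_\T = 1/(1-1/\ell)$, verified case-by-case: split ($\chi(\ell)=1$, $c_\T = (1-1/\ell)^2$), inert ($\chi(\ell)=-1$, $c_\T = 1-1/\ell^2$), and ramified ($\chi(\ell)=0$, $c_\T = 1-1/\ell$). Summing over all torus classes assembles the integrand over all of $(T,N)\in\QQ_\ell \times \QQ_\ell^\times$ (the non-regular locus $T^2-4N=0$ has measure zero), and slicing by $|N|_\ell = \ell^{-\nu}$ produces the claimed formula. The main obstacle is the careful measure bookkeeping across all torus types and residue characteristics (including $\ell=2$, where $|\cdot|_\ell'$ distinguishes $y_0$ modulo $4$); however, the definition of $|\cdot|_\ell'$ in \autoref{rem:absoluteprime} was engineered precisely to make this cancellation torus-independent, so the case analysis remains clean.
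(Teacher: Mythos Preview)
Your proof is correct and follows essentially the same route as the paper: Weyl integration, change of variables to $(T,N)$ on each torus, substitution of the definition of $\theta_\ell$, and the observation that the torus-dependent factors collapse to the uniform constant $(1-1/\ell)^{-1}$. The paper carries out the split, inert, and ramified cases separately and arrives at the same integrand $\frac{1}{|N|^{3/2}}\theta_\ell(T,N)\mu(N)$ in each; your packaging of the cancellation via the two identities $|T^2-4N|_\ell/|T^2-4N|_\ell' = |d_E|_\ell$ and $(1-\chi(\ell)/\ell)/c_\T = (1-1/\ell)^{-1}$ is a cleaner way to say the same thing.

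One small correction: the specific matrix $\begin{pmatrix}1 & 0\\ 0 & -1\end{pmatrix}$ normalizes the embedded $\T_E$ only when $\Tr\Delta = 0$, which need not hold for the basis $\{1,\Delta\}$ of $\cO_E$ (in particular at $\ell=2$). The correct normalizer with respect to that basis is $\begin{pmatrix}1 & \beta\\ 0 & -1\end{pmatrix}$ where $\Delta^2=\alpha+\beta\Delta$. This does not affect the argument, since all you need is that $|W_\T|=2$ in every case.
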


\begin{proof}
In the proof of this proposition we will write $f$ and $\mu$ for $f_{p}$ and $\mu_{p}$, respectively. All the norms below are norms on $\QQ_{p}$.

By Weyl integration formula (cf. \cite[p.241]{clozel1989}), we have
\begin{equation}\label{eq:weylintegrationsum}
\Tr(\mu(f))=\frac{1}{2}\sum_{\T(\QQ_{p})}\int_{\T(\QQ_{p})}|D(\gamma)|\orb(f\cdot (\mu\circ \det);\gamma)\rmd \gamma,
\end{equation}
where $\T(\QQ_{p})$ runs over all conjugacy classes of maximal tori in $\G(\QQ_p)$, and
\[
|D(\gamma)|=\frac{|\gamma_1-\gamma_2|^2}{|\gamma_1\gamma_2|},
\]
where $\gamma_1,\gamma_2\in E_p$ are the eigenvalues of $\gamma$.

We proceed to compute
\begin{equation}\label{eq:weylintegration}
\frac{1}{2}\int_{\T(\QQ_{p})}|D(\gamma)|\orb(f\cdot (\mu\circ \det);\gamma)\rmd \gamma.
\end{equation}
 
\underline{\emph{Case 1:}}\ \ $\T(\QQ_{p})\cong \QQ_{p}^\times\times\QQ_{p}^\times$ is a split torus. We may assume that $\T(\QQ_{p})$ is a diagonal torus.

For any $\gamma\in \T(\QQ_{p})$, we may assume that $\gamma=(\begin{smallmatrix} a & 0 \\ 0 & b \end{smallmatrix})$. Thus
\[
|D(\gamma)|=\frac{|a-b|^2}{|ab|}.
\]
Since the measure on $\T(\QQ_{p})$ is 
\[
\left(1-\frac{1}{p}\right)^{-2}\frac{\rmd a\rmd b}{\mathopen{|}\det\gamma\mathclose{|}}=\left(1-\frac{1}{p}\right)^{-2}\frac{\rmd a\rmd b}{|ab|},
\]
where $\rmd a$ and $\rmd b$ denote the standard measure on $\QQ_{p}$ (the factor $(1-1/p)^{-2}$ comes from the quotient of the volume of $\ZZ_{p}\times \ZZ_{p}$ by that of $\ZZ_{p}^\times \times \ZZ_{p}^\times$),
we know that \eqref{eq:weylintegration} equals
\[
\frac{1}{2}\left(1-\frac{1}{p}\right)^{-2}\int_{\QQ_{p}^2}\frac{|a-b|^2}{|ab|}\orb(f\cdot (\mu\circ \det);\gamma)\frac{\rmd a\rmd b}{|ab|}.
\]
Since the complement of $\QQ_{p}^\times\times \QQ_{p}^\times$ has zero measure, we can omit it.

Now we make the change of variable $T=a+b$ and $N=ab$. Since making the change of variable in the $p$-adic case satisfies a similar formula as in the real case \cite{weil1982adele}, we can compute the Jacobian as in the real case.
We have $a,b=(T\pm \sqrt{T^2-4N})/2$. By symmetry we may assume that $a=(T+\sqrt{T^2-4N})/2$ and $b=(T-\sqrt{T^2-4N})/2$. Thus
\[
\rmd a=\frac{1}{2}\left(\rmd T+\frac{T}{\sqrt{T^2-4N}}\rmd T-2\frac{\rmd N}{\sqrt{T^2-4N}}\right),
\quad
\rmd b=\frac{1}{2}\left(\rmd T-\frac{T}{\sqrt{T^2-4N}}\rmd T+2\frac{\rmd N}{\sqrt{T^2-4N}}\right).
\]
Hence
\[
\rmd a\wedge \rmd b=\frac{\rmd T\wedge \rmd N}{\sqrt{T^2-4N}}.
\]
Moreover we have $ab=N$ and $(a-b)^2=T^2-4N$. For $T,N\in \QQ_p^2$ with $T^2-4N\neq 0$, we denote $\gamma_{T,N}$ to be the regular element in $\G(\QQ_p)$ with trace $T$ and determinant $N$, up to conjugacy. 
Since the map $(a,b)\mapsto (T,N)$ is $2:1$ onto its image, \eqref{eq:weylintegration} equals
\begin{align*}
    & \left(1-\frac{1}{p}\right)^{-2}\int_{\gamma_{T,N}\in \T(\QQ_{p})}\frac{|T^2-4N|}{|N|}\orb(f\cdot (\mu\circ \det);T,N)\frac{\rmd T\rmd N}{|N|\sqrt{|T^2-4N|}} \\
    = & \left(1-\frac{1}{p}\right)^{-2}\int_{\gamma_{T,N}\in \T(\QQ_{p})}\frac{\sqrt{|T^2-4N|}}{|N|^2}\orb(f;T,N)\mu(N)\rmd T\rmd N.
\end{align*}
Since $\gamma=\gamma_{T,N}$ is split, $v_p(T^2-4N)$ is even and thus $\sqrt{|T^2-4N|}=p^{-k}$ for $p\neq 2$. For $p=2$ we have the same conclusion since $(T^2-4N)|T^2-4N|\equiv 1\pmod 4$ implies that $|T^2-4N|'=|T^2-4N|$. Since $\T(\QQ_{p})$ is split, $\chi(p)=1$ in this case. This yields
\[
\theta_p(T,N)=\frac{1}{|N|^{1/2}}\left(1-\frac{1}{p}\right)^{-1}p^{-k}\orb(f;T,N).
\]
Hence \eqref{eq:weylintegration} equals
\[
\left(1-\frac{1}{p}\right)^{-1}\int_{\gamma_{T,N}\in \T(\QQ_{p})}\frac{1}{|N|^{3/2}}\theta_{p}(T,N)\mu(N)\rmd T\rmd N.
\]

\underline{\emph{Case 2:}}\ \ $\T(\QQ_{p})\cong E_{p}^\times$ is an elliptic torus, where $E_{p}/\QQ_{p}$ is a quadratic extension.

For any $\gamma\in \T(\QQ_{p})$, recall that
\[
|D(\gamma)|=\frac{|\gamma_1-\gamma_2|^2}{|\gamma_1\gamma_2|},
\]
where $\gamma_1,\gamma_2\in E_p$ are the eigenvalues of $\gamma$. 
Now we identify $\gamma$ with $\gamma_1$ so that $\gamma_2=\overline{\gamma}$ is the conjugate of $\gamma$. Then \eqref{eq:weylintegration} equals
\[
\frac{1}{2}\int_{E_{p}^\times}\frac{|(\gamma-\overline{\gamma})^2|}{|\gamma\overline{\gamma}|}\orb(f\cdot (\mu\circ \det);\gamma)\rmd\gamma,
\]
where $\rmd\gamma$ denotes the measure on $E_{p}^\times$.

Suppose that $\cO_{E_{p}}=\ZZ_{p}\oplus \ZZ_{p}\Delta$. Then we can identify $E_{p}$ with $\QQ_{p}^2$ via the basis $\{1,\Delta\}$. Thus we can embed $E_p$ into $\G(\QQ_{p})$ by identifying $\xi\in E_p$ with the matrix of the linear transformation $\eta\mapsto \xi\eta$ with respect to $\{1,\Delta\}$. If we write $\Delta^2=\alpha+\beta\Delta$, then 
\[
1=\begin{pmatrix}
  1 & 0 \\
  0 & 1 
\end{pmatrix}\qquad \text{and}\qquad \Delta=\begin{pmatrix}
  0 & \alpha \\
  1 & \beta 
\end{pmatrix}.
\]

Under the identification $\gamma \leftrightarrow a+b\Delta$, we have
\[
\rmd \gamma=\frac{\vol(\cO_{E_{p}})}{\vol(\cO_{E_{p}}^\times)}\frac{\rmd a\rmd b}{\mathopen{|}\det(a+b\Delta)\mathclose{|}},\quad|(\gamma-\overline{\gamma})^2|=|b|^2|(\Delta-\overline{\Delta})^2|,\quad \text{and}\quad |\gamma\overline{\gamma}|=\mathopen{|}\det(a+b\Delta)\mathclose{|}.
\]
Thus \eqref{eq:weylintegration} equals
\[
\frac{1}{2}\int_{\QQ_{p}^2}\frac{|b|^2|(\Delta-\overline{\Delta})^2|} {\mathopen{|}\det(a+b\Delta)\mathclose{|}}\orb(f\cdot(\mu\circ\det);\gamma)\frac{\vol(\cO_{E_{p}})} {\vol(\cO_{E_{p}}^\times)}\frac{\rmd a\rmd b}{\mathopen{|}\det(a+b\Delta)\mathclose{|}}.
\]
Now we make the change of variable $T=\Tr(a+b\Delta)=2a+b\beta$ and $N=\det(a+b\Delta)=a^2+\beta ab-\alpha b^2$. Then we have
\[
b=\pm \sqrt{\frac{T^2-4N}{\beta^2+4\alpha}}\qquad\text{and}\qquad a=\frac{T-\beta b}{2}.
\]
Without loss of generality we take the plus sign in $b$. Then we have
\[
\rmd b=\frac{T\rmd T}{\beta^2+4\alpha}\sqrt{\frac{\beta^2+4\alpha}{T^2-4N}}-\frac{2\rmd N}{\beta^2+4\alpha}\sqrt{\frac{\beta^2+4\alpha}{T^2-4N}}\qquad \text{and}\qquad \rmd a=\frac{1}{2}\rmd T-\frac{\beta}{2}\rmd b.
\]
Hence
\[
  \rmd a\wedge \rmd b  =\frac{1}{2}\rmd T\wedge \rmd b=-\frac{1}{\beta^2+4\alpha}\sqrt{\frac{\beta^2+4\alpha}{T^2-4N}}\rmd T\wedge \rmd N=-\frac{1}{b(\beta^2+4\alpha)}\rmd T\wedge \rmd N.
\]
Therefore by making the change of variable $(a,b)\mapsto (T,N)$ and noting that this map is $2:1$ onto its image, \eqref{eq:weylintegration} becomes
\[
\frac{\vol(\cO_{E_{p}})}{\vol(\cO_{E_{p}}^\times)}\int_{\gamma_{T,N}\in \T(\QQ_{p})}\frac{|b|^2|(\Delta-\overline{\Delta})^2|}{|N|}\orb(f;T,N)\mu(N)\frac{1} {|bN(\beta^2+4\alpha)|}\rmd T\rmd N.
\]
Since $\Delta^2=\alpha+\beta\Delta$, we have 
$|(\Delta-\overline{\Delta})^2|=|\beta^2+4\alpha|$. Thus \eqref{eq:weylintegration} equals
\[
\frac{\vol(\cO_{E_{p}})}{\vol(\cO_{E_{p}}^\times)}\int_{\gamma_{T,N}\in \T(\QQ_{p})}\frac{|b|}{|N|^2}\orb(f;T,N)\mu(N)\rmd T\rmd N.
\]

If $E_{p}$ is inert, then $\vol(\cO_{E_{p}})/\vol(\cO_{E_{p}}^\times)=(1-1/p^2)^{-1}$. Since $\chi(p)=-1$ in this case, we have
\[
\theta_p(T,N)=\frac{1}{|N|^{1/2}}\left(1+\frac{1}{p}\right)^{-1}p^{-k}\orb(f;T,N).
\]
Moreover, $|b|=p^{-k_\gamma}$ by \autoref{def:defk}. Hence \eqref{eq:weylintegration} equals
\[
\left(1-\frac{1}{p}\right)^{-1}\int_{\gamma_{T,N}\in \T(\QQ_{p})}\frac{1}{|N|^{3/2}}\theta_{p}(T,N)\mu(N)\rmd T\rmd N.
\]

If $E_{p}$ is ramified, then  $\vol(\cO_{E_{p}})/\vol(\cO_{E_{p}}^\times)=(1-1/p)^{-1}$. Since $\chi(p)=0$ in this case, we have
\[
\theta_{p}(T,N)=\frac{1}{|N|^{1/2}}p^{-k}\orb(f;T,N).
\]
Hence \eqref{eq:weylintegration} equals
\[
\left(1-\frac{1}{p}\right)^{-1}\int_{\gamma_{T,N}\in \T(\QQ_{p})}\frac{1}{|N|^{3/2}}\theta_{p}(T,N)\mu(N)\rmd T\rmd N.
\]

Finally we combine the cases together. Since the regular elements in $\G(\QQ_{p})$ form a set of full measure, we have
\[
\begin{split}
   \Tr(\mu(f))
    = & \left(1-\frac{1}{p}\right)^{-1}\int_{N\in \QQ_{p}}\frac{1}{|N|^{3/2}}\int_{T\in \QQ_{p}}\theta_{p}(T,N)\mu(N)\rmd T\rmd N\\
    = & \left(1-\frac{1}{p}\right)^{-1}\sum_{\nu\in \ZZ}\int_{|N|=p^{-\nu}}p^{3\nu/2}\int_{T\in \QQ_{p}}\theta_{p}(T,N)\mu(N)\rmd T\rmd N.\qedhere
\end{split}
\]
\end{proof}

\subsection{Global computation}
Finally we consider the summation
\begin{equation}\label{eq:sum1dimensional}
\sum_{\mu}\Tr(\mu(f^n)),
\end{equation}
where $\mu$ runs over all characters on $\RR_{>0}\QQ^\times\bs \AA^\times$. If $\mu$ is ramified at some prime $p\notin S$, then $\mu_p$ is a ramified character on $\QQ_p^\times$. Thus $\Tr(\mu_p(f^n_p))=0$ and hence $\Tr(\mu(f^n))=0$. Hence we only need to consider the primitive Dirichlet characters that only ramify at the places in $S$, that is, we only need to consider characters on $\RR_{>0}\QQ^\times\prod_{p\notin S}\ZZ_p^\times \bs \AA^\times$.

\begin{lemma}\label{lem:characterfinite}
For any prime $p\in S$, there exists $M>0$ such that $\Tr(\mu_{p}(f^n_{p}))=\Tr(\mu_{p}(f_p))=0$ if $\mu_{p}$ is not trivial on $1+p^M\ZZ_{p}$.
\end{lemma}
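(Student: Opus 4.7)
The plan is to exploit the local constancy and compact support of $f_\ell$, together with the invariance of the Haar measure on $\G(\QQ_\ell)$, via a standard substitution argument. First, I would note that by the definition of $f^n$ in the introduction, $f^n_\ell = f_\ell$ for every prime $\ell \in S$, so the two traces in the statement coincide and it suffices to treat $\Tr(\mu_\ell(f_\ell))$.

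Since $f_\ell \in C_c^\infty(\G(\QQ_\ell))$ is locally constant, there exists an integer $M > 0$ such that $f_\ell$ is right-invariant under the principal congruence subgroup
$$K_M = \{k \in \G(\ZZ_\ell) \,|\, k \equiv I \pmod{\ell^{M}}\}.$$
This $M$ will be the value claimed in the lemma. Now suppose $\mu_\ell$ is not trivial on $1 + \ell^M \ZZ_\ell$, so there exists $a \in 1 + \ell^M \ZZ_\ell$ with $\mu_\ell(a) \neq 1$. The diagonal matrix $h = \mathrm{diag}(a, 1)$ lies in $K_M$ and has $\det h = a$. Performing the substitution $g \mapsto gh$ in the defining integral of $\Tr(\mu_\ell(f_\ell))$, and using both the right-$K_M$-invariance of $f_\ell$ and the multiplicativity of $\mu_\ell \circ \det$, produces
$$\Tr(\mu_\ell(f_\ell)) = \mu_\ell(a)\,\Tr(\mu_\ell(f_\ell)),$$
which, since $\mu_\ell(a) \neq 1$, forces $\Tr(\mu_\ell(f_\ell)) = 0$.

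There is essentially no obstacle here: the argument is the classical observation that the trace of a character against a function on a locally profinite group vanishes whenever the character fails to be trivial on a subgroup that fixes the function. The only small verification needed is that $K_M$ contains the diagonal matrices $\mathrm{diag}(a,1)$ for every $a \in 1 + \ell^M \ZZ_\ell$, which is immediate from the definition of $K_M$, so no separate lemma or auxiliary estimate is required.
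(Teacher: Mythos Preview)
Your proof is correct and essentially identical to the paper's own argument: both choose $M$ so that $f_\ell$ is invariant under the principal congruence subgroup $\cK_\ell(M)$, then use the substitution $g\mapsto gh$ with $h=\mathrm{diag}(a,1)\in\cK_\ell(M)$ to obtain $\Tr(\mu_\ell(f_\ell))=\mu_\ell(a)\Tr(\mu_\ell(f_\ell))$. The only cosmetic difference is that the paper states biinvariance of $f_\ell$ under $\cK_\ell(M)$ while you use only right-invariance, which is all that is needed.
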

\begin{proof}
For any $N\in \ZZ_{>0}$, we define 
\[
\cK_{p}(N)=\left\{\begin{pmatrix}
                 a & b \\
                 c & d 
               \end{pmatrix}\in \cK_{p}\ \middle|\ \begin{pmatrix}
                 a & b \\
                 c & d 
               \end{pmatrix}-\begin{pmatrix}
                 1 & 0\\
                 0 & 1 
               \end{pmatrix}\in p^N\M_2(\ZZ_{p})\right\}.
\]
Then $\cK_{p}(N)$ form a neighborhood basis of $1\in\G(\QQ_{p})$. Since $f_p\in C_c^\infty(\G(\QQ_{p}))$, there exists $M$ such that $f_p$ is biinvariant under the action of $K=\cK_{p}(M)$. Therefore for any $k\in K$ we have
\[
   \Tr(\mu_{p}(f_{p})) =\int_{\G(\QQ_p)}\mu_p(\det g)f_p(g)\rmd g=\int_{\G(\QQ_{p})}\mu_{p}(\det (gk))f_{p}(gk)\rmd g =\mu_{p}(\det(k))\Tr(\mu_{p}(f_{p})).
\]
From this we know that $\Tr(\mu_{p}(f_{p}))=0$ unless $\mu_{p}(\det(k))=1$ for all $k\in K$. For any $a\in 1+p^{M}\ZZ_{p}$ we have $(\begin{smallmatrix}  a & 0 \\ 0 & 1 \end{smallmatrix} )\in K$. Hence $\mu_{p}(a)=1$. Thus $\mu_{p}$ is trivial on $1+p^{M}\ZZ_{p}$. 
\end{proof}

\begin{corollary}\label{cor:characterfinite}
\eqref{eq:sum1dimensional} is a finite sum.
\end{corollary}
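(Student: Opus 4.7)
The plan is to show that every character $\mu$ contributing a nonzero term to \eqref{eq:sum1dimensional} factors through a \emph{finite} quotient of the idele class group, whence only finitely many $\mu$ can appear. The three inputs are: (i) the remark preceding the statement, which combined with Proposition~\ref{prop:unramifiedtrace}(1) shows that only $\mu$ unramified at each $\ell \notin S$ contribute; (ii) Lemma~\ref{lem:characterfinite}, which supplies for each $\ell \in S_\fin$ an integer $M_\ell > 0$ so that $\Tr(\mu_\ell(f_\ell))=0$ unless $\mu_\ell$ is trivial on $1+\ell^{M_\ell}\ZZ_\ell$; and (iii) the observation that $\mu_\infty$, being trivial on $\RR_{>0}$, belongs to $\{\triv,\sgn\}$.

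Combining these three conditions, every contributing $\mu$ factors through the quotient
\[
A := \RR_{>0}\QQ^\times \cdot \prod_{\ell\notin S}\ZZ_\ell^\times \cdot \prod_{\ell\in S_\fin}(1+\ell^{M_\ell}\ZZ_\ell)\bs \AA^\times.
\]
By strong approximation for $\QQ$ (which has class number one), $A$ is a narrow ray class group of $\QQ$ of finite modulus, and is hence finite (it is comparable with $(\ZZ/m\ZZ)^\times$ for $m=\prod_{\ell\in S_\fin}\ell^{M_\ell}$, together with a possible sign factor from the archimedean place). A finite abelian group carries only finitely many characters, so only finitely many $\mu$ contribute to \eqref{eq:sum1dimensional}, which is the desired conclusion.

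No real obstacle is anticipated: the argument is essentially bookkeeping once one has Lemma~\ref{lem:characterfinite} in hand and invokes the classical finiteness of ray class groups of $\QQ$. The only minor subtlety is correctly tracking the interplay between the archimedean sign $\mu_\infty$ and the finite-place congruence conditions, but this is standard for $\QQ$ and presents no genuine difficulty.
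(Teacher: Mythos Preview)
Your proposal is correct and follows essentially the same route as the paper: contributing characters must be unramified outside $S$ (Proposition~\ref{prop:unramifiedtrace}(1)) and trivial on $1+\ell^{M_\ell}\ZZ_\ell$ for each $\ell\in S_\fin$ (Lemma~\ref{lem:characterfinite}), hence factor through the finite quotient $\RR_{>0}\QQ^\times\prod_{\ell\notin S}\ZZ_\ell^\times\prod_{i}(1+q_i^{M_i}\ZZ_{q_i})\bs\AA^\times$; the paper invokes class field theory for finiteness where you appeal to strong approximation and ray class groups, which amount to the same thing. Your point~(iii) about $\mu_\infty$ is redundant, since triviality on $\RR_{>0}$ is already built into the quotient, but this is harmless.
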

\begin{proof}
By \autoref{lem:characterfinite}, there exist $M_1,\dots,M_r\in \ZZ_{>0}$ such that $\Tr(\mu_{q_i}(f^n_{q_i}))=0$ if $\mu_{q_i}$ is not trivial on $1+q_i^{M_i}\ZZ_{q_i}$.
Suppose that $\mu$ is a character on $\RR_{>0}\QQ^\times\bs \AA^\times$ such that $\Tr(\mu(f^n))\neq 0$. Then $\Tr(\mu_p(f^n_p))\neq 0$ for each $p$. This means that $\mu_p$ is unramified for each $p\notin S$ and $\mu_{q_i}$ is trivial on $1+q_i^{M_i}\ZZ_{q_i}$ for each $i$. Hence $\mu$ can be considered as a character on
\[
\left(\RR_{>0}\QQ^\times\prod_{p\notin S}\ZZ_p
^\times \prod_{i=1}^{r}(1+q_i^{M_i}\ZZ_{q_i})\right)\bs \AA^\times,
\]
which is a finite group by class field theory. The summation is just the summation of the characters on the above group and is thus finite.
\end{proof}

It is well-known that we have a natural isomorphism
\[
\prod_{p}\ZZ_p^\times\xrightarrow{\cong}\QQ^\times\RR_{>0}\bs\AA^\times,\qquad
(x_p)_{p}\mapsto (1,(x_p)_p).
\]
The above isomorphism induces
\[
\prod_{i=1}^{r}\ZZ_{q_i}^\times\xrightarrow{\cong}\QQ^\times\RR_{>0}\prod_{p\notin S}\ZZ_p^\times\bs\AA^\times,\qquad
(x_{q_i})_{i}\mapsto (1,(1)_{p\notin S},(x_{q_i})_{q_i}).
\]
Hence we get
\begin{proposition}\label{prop:characterzq}
We have a $1-1$ correspondence
\[
\left(\QQ^\times\RR_{>0}\prod_{p\notin S}\ZZ_p^\times\bs\AA^\times\right)\sphat \to (\ZZ_{q_1}^\times\times \cdots\times \ZZ_{q_r}^\times)\sphat\ =\widehat{\ZZ_{S_\fin}^\times},\qquad
\mu\mapsto (\mu_{q_i}|_{\ZZ_{q_i}^\times}).
\]
\end{proposition}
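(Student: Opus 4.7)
The statement is essentially a formal consequence of the preceding displayed isomorphism
\[
\prod_{i=1}^{r}\ZZ_{q_i}^\times\xrightarrow{\cong}\QQ^\times\RR_{>0}\prod_{\ell\notin S}\ZZ_\ell^\times\bs\AA^\times,
\]
so my plan is to verify that this isomorphism is compatible with the natural evaluation maps on idele components and then invoke Pontryagin duality. Concretely, I would proceed in three short steps.

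First, I would justify the isomorphism above. Starting from the classical natural isomorphism $\prod_{\ell}\ZZ_\ell^\times \xrightarrow{\cong} \QQ^\times\RR_{>0}\bs\AA^\times$ (a version of strong approximation for $\GL_1/\QQ$, equivalent to the statement that every idele class has a unique representative with archimedean component $1$ and nonarchimedean components in $\prod_\ell\ZZ_\ell^\times$), I would quotient both sides by the compact subgroup $\prod_{\ell\notin S}\ZZ_\ell^\times$ embedded diagonally. On the left, this leaves $\prod_{i=1}^{r}\ZZ_{q_i}^\times$; on the right, it yields $\QQ^\times\RR_{>0}\prod_{\ell\notin S}\ZZ_\ell^\times\bs\AA^\times$. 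The map sending $(x_{q_i})_i$ to the idele $(1,(1)_{\ell\notin S},(x_{q_i})_i)$ is then a continuous group isomorphism.

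Second, I would take Pontryagin duals. Since the isomorphism is of locally compact (in fact compact) abelian groups, the induced dual map $\mu\mapsto\mu\circ\iota$, where $\iota$ denotes the inclusion of $\prod_{i=1}^{r}\ZZ_{q_i}^\times$ into $\AA^\times$ followed by the projection to the quotient, is a topological isomorphism of dual groups. Unwinding, $\mu\circ\iota$ is precisely the product of the local restrictions $(\mu_{q_i}|_{\ZZ_{q_i}^\times})_{i=1}^{r}$, since a character of a product of $\ZZ_{q_i}^\times$ factors as the product of characters on each factor.

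Third, I would observe there is no genuine obstacle here: the only thing to check is that restricting a global character $\mu$ on $\QQ^\times\RR_{>0}\prod_{\ell\notin S}\ZZ_\ell^\times\bs\AA^\times$ to the image of $\prod_{i=1}^{r}\ZZ_{q_i}^\times$ gives back the tuple $(\mu_{q_i}|_{\ZZ_{q_i}^\times})_i$, which is immediate from the definition of the local components $\mu_{q_i}$ of $\mu=\bigotimes_v'\mu_v$. The main (minor) point worth stating carefully is that the trivial component at $\infty$ and at places $\ell\notin S$ in the representative idele means no information from $\mu_\infty$ or $\mu_\ell$ ($\ell\notin S$) enters the correspondence, which is consistent because we have already quotiented by $\RR_{>0}$ and $\prod_{\ell\notin S}\ZZ_\ell^\times$.
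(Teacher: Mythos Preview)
Your proposal is correct and follows exactly the paper's approach: the paper states the proposition immediately after the displayed isomorphism $\prod_{i=1}^{r}\ZZ_{q_i}^\times\xrightarrow{\cong}\QQ^\times\RR_{>0}\prod_{\ell\notin S}\ZZ_\ell^\times\bs\AA^\times$ with the words ``Hence we get,'' so your three steps simply spell out what the paper leaves implicit.
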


\begin{theorem}\label{thm:maintermcompute}
The $1$-dimensional term \eqref{eq:mainterm} equals
\[
\sum_{\mu}\Tr(\mu(f^n))=\sum_{\mu\in (\QQ^\times\RR_{>0}\prod_{p\notin S}\ZZ_p^\times\bs\AA^\times)\sphat}\Tr(\mu(f^n)).
\]
\end{theorem}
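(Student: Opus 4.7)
The plan is to compute $\Tr(\mu(f^n)) = \prod_v \Tr(\mu_v(f_v^n))$ via \autoref{prop:archimedeantrace}, \autoref{prop:unramifiedtrace}, and \autoref{prop:ramifiedtrace}, then sum over $\mu$ and apply Fourier inversion on the compact group $\prod_{i=1}^r \ZZ_{q_i}^\times$ to collapse the sum into the expression \eqref{eq:mainterm}.

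By \autoref{prop:characterzq} I parametrize $\mu \leftrightarrow \chi = (\chi_i) \in \prod_i \widehat{\ZZ_{q_i}^\times}$ with $\chi_i = \mu_{q_i}|_{\ZZ_{q_i}^\times}$, and by \autoref{cor:characterfinite} only finitely many $\chi$ contribute. Using the triviality of $\mu$ on $\QQ^\times \RR_{>0} \prod_{\ell\notin S}\ZZ_\ell^\times$ applied to the ideles with a single nontrivial component, one derives
\[
\mu_\infty(-1) = \prod_{i}\chi_i(-1),\qquad \mu_p(p) = \prod_{i}\chi_i(p)^{-1}\ \text{for}\ p \notin S,\qquad \mu_{q_i}(q_i) = \prod_{j\neq i}\chi_j(q_i)^{-1}.
\]
Combining \autoref{prop:unramifiedtrace}(2) over $p \notin S$ (with $f_p^n$ trivial for $p \nmid n$) gives $\prod_{p\notin S}\Tr(\mu_p(f_p^n)) = \sqrt{n}\prod_{p\mid n}\frac{1-p^{-n_p-1}}{1-p^{-1}}\prod_i \chi_i(n)^{-1}$, and \autoref{prop:archimedeantrace} yields $\Tr(\mu_\infty(f_\infty)) = 2\sum_\pm \mu_\infty(\pm 1)\int \theta_\infty^\pm\,dx$. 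At $v = q_i$, I parametrize $N = q_i^{\nu_i} u_i$ with $u_i \in \ZZ_{q_i}^\times$ and take $du^*_i$ to be the Haar measure on $\ZZ_{q_i}^\times$ of total volume $1$; then \autoref{prop:ramifiedtrace} becomes
\[
\Tr(\mu_{q_i}(f_{q_i})) = \sum_{\nu_i \in \ZZ} q_i^{\nu_i/2}\, \mu_{q_i}(q_i)^{\nu_i} \int_{\QQ_{q_i}}\int_{\ZZ_{q_i}^\times}\theta_{q_i}(T_i, q_i^{\nu_i} u_i)\chi_i(u_i)\, dT_i\, du^*_i.
\]

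Multiplying these local traces and substituting the ``external'' values, all $\chi$-dependent factors collapse to $\prod_i \chi_i(\pm u_i/(n A_i))$, where $A_i = q^\nu/q_i^{\nu_i} \in \ZZ_{q_i}^\times$ and the cross terms $\prod_i \mu_{q_i}(q_i)^{\nu_i} = \prod_i \chi_i(A_i)^{-1}$ pair correctly with the $\chi_i(u_i)$, $\chi_i(\pm 1)$, and $\chi_i(n)^{-1}$ factors. By \autoref{lem:characterfinite} the function $F(u) = \prod_i \theta_{q_i}(T_i, q_i^{\nu_i} u_i)$ descends to the finite quotient $\prod_i \ZZ_{q_i}^\times/(1 + q_i^{M_i}\ZZ_{q_i})$ for suitable $M_i$, and Fourier inversion on this finite group gives
\[
\sum_\chi \Bigl(\int F(u)\prod_i \chi_i(u_i)\, du^*\Bigr)\prod_i \chi_i\bigl(\pm (nA_i)^{-1}\bigr) = F(\pm nA_1, \dots, \pm nA_r) = \prod_i \theta_{q_i}(T_i, \pm nq^\nu) = \theta_q^{\pm,\nu}(T),
\]
using $q_i^{\nu_i} A_i = q^\nu$. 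Substituting this back reproduces \eqref{eq:mainterm} exactly.

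The main obstacle is keeping track of the $\chi$-dependent factors across all places: the local trace at $q_i$ creates a contribution $\mu_{q_i}(q_i)^{\nu_i}$ that, via the relations above, depends on all the $\chi_j$ with $j\neq i$. These cross terms must mesh correctly with $\chi_i(u_i)$ from Weyl integration and with $\chi_i(\pm 1/n)$ from the archimedean and unramified factors, so that the Fourier-inversion delta at each place $i$ forces $u_i = \pm nA_i$ --- equivalently $N_i = q_i^{\nu_i}u_i = \pm nq^\nu$ --- simultaneously at every ramified place.
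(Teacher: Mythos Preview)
Your proposal is correct and follows essentially the same approach as the paper: assemble the local traces from \autoref{prop:archimedeantrace}, \autoref{prop:unramifiedtrace}, and \autoref{prop:ramifiedtrace}, convert the global character relations into conditions on $\chi_i=\mu_{q_i}|_{\ZZ_{q_i}^\times}$, and then apply Fourier inversion on $\ZZ_{S_\fin}^\times$ to collapse the $\mu$-sum to the value $N_i=\pm nq^\nu$. The paper organizes the bookkeeping slightly more economically by observing $\prod_i\mu_{q_i}(q^{-\nu})=1$ and making the single change of variable $N\mapsto \pm nq^\nu N$, which avoids tracking your cross terms $A_i=q^\nu/q_i^{\nu_i}$ individually, but the two computations are the same in substance.
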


\begin{proof}
By \autoref{prop:archimedeantrace}, \autoref{prop:unramifiedtrace} and \autoref{prop:ramifiedtrace} and recalling the definition of $f^n$, we have
\begin{align*}
    \sum_{\mu}\Tr(\mu(f^n)) 
    =& \sum_{\mu}4\left(\int_{x\in \RR}\theta^+_\infty(x)\rmd x+\mu_\infty(-1)\int_{x\in \RR}\theta^-_\infty(x)\rmd x\right)\sqrt{n}\prod_{p\mid n}\mu_p(n)\frac{1-p^{-n_p-1}}{1-p^{-1}}\\
    \times &\sum_{\nu\in \ZZ^r}q^{3\nu/2}\prod_{i=1}^{r}\left(1-\frac{1}{q_i}\right)^{-1}\int_{T\in \QQ_{q_i}}\int_{|N_i|_{q_i}= q_i^{-\nu_i}}\theta_{q_i}(T_i,N_i)\mu_{q_i}(N_i)\rmd T_i\rmd N_i.
\end{align*}
For a character $\mu=\bigotimes_v'\mu_v$ on $\QQ^\times\RR_{>0}\prod_{p\notin S}\ZZ_p^\times\bs\AA^\times$ we have
\[
\prod_{i=1}^{r}\mu_{q_i}(q^{-\nu})=\mu_\infty(q^\nu)\prod_{\ell\notin S}\mu_\ell(q^\nu)=1,
\quad
\mu_\infty(-1)=\prod_{p\notin S}\mu_p(-1)\prod_{i=1}^{r}\mu_{q_i}(-1)=\prod_{i=1}^{r}\mu_{q_i}(-1), 
\]
and 
\[
\prod_{p\mid n}\mu_p(n)=\mu_\infty(n^{-1})\prod_{i=1}^{r}\mu_{q_i}(n^{-1})=\prod_{i=1}^{r}\mu_{q_i}(n^{-1}).
\]
Hence
\begin{align*}
     \sum_{\mu}\Tr(\mu(f^n))
   =  &4\sqrt{n}\prod_{p\mid n}\frac{1-p^{-n_p-1}}{1-p^{-1}}
\sum_{\mu}\sum_{\pm}\sum_{\nu\in \ZZ^r}q^{3\nu/2}\int_{x\in \RR}\theta^\pm_\infty(x)\rmd x\\
\times&\prod_{i=1}^{r}\left[\left(1-\frac{1}{q_i}\right)^{-1}\int_{T\in \QQ_{q_i}}\int_{|N_i|_{q_i}= q_i^{-\nu_i}}\theta_{q_i}(T_i,N_i)\mu_{q_i}(\pm n^{-1}q^{-\nu}N_i)\rmd T_i\rmd N_i\right].
\end{align*}

By making the change of variable $N\mapsto \pm nq^\nu N$ we have
\begin{align*}
\sum_{\mu}\Tr(\mu(f^n))
=&4\sqrt{n}\prod_{p\mid n}\frac{1-p^{-n_p-1}}{1-p^{-1}}\prod_{i=1}^{r}\left(1-\frac{1}{q_i}\right)^{-1}
\sum_{\pm}\sum_{\nu\in \ZZ^r}q^{\nu/2}\int_{x\in \RR}\theta^\pm_\infty(x)\rmd x\\
\times&\int_{T\in \QQ_{S_\fin}}\left[\sum_{\mu}\prod_{i=1}^{r}\int_{N_i\in \ZZ_{q_i}^\times}\theta_{q_i}(T_i,\pm nq^{\nu}N_i)\mu_{q_i}(N_i)\rmd N_i\right]\rmd T,
\end{align*}
where the measure on $\ZZ_{q_i}^\times$ is induced from the measure on $\ZZ_{q_i}$. 
By \autoref{prop:characterzq} we have
\begin{equation}\label{eq:fourierzq}
\sum_{\mu}\prod_{i=1}^{r}\int_{N_i\in\ZZ_{q_i}^\times}\theta_{q_i}(T_i,\pm n q^\nu N_i)\mu_{q_i}(N_i)\rmd N_i
 = \sum_{\mu\in \widehat{\ZZ_{S_\fin}^\times}}\int_{N\in\ZZ_{S_\fin}^\times}\prod_{i=1}^{r}\theta_{q_i}(T_i,\pm n q^\nu N_i)(1-q_i^{-1})\mu(N)\rmd^\times N.
\end{equation}
Since \eqref{eq:fourierzq} is a finite sum by \autoref{cor:characterfinite}, by Fourier inversion formula \cite[Chapitre 2, \SSec 1, N°4]{bourbaki2019spectral}, it becomes
\[
\prod_{i=1}^{r}\theta_{q_i}(T_i,\pm n q^\nu)\int_{\ZZ_{S_\fin}^\times}\rmd^\times N=\theta_q^{\pm,\nu}(T).
\]
Hence we get
\[
\sum_{\mu}\Tr(\mu(f^n))= 4\sqrt{n}\prod_{p\mid n}\frac{1-p^{-n_p-1}}{1-p^{-1}}\sum_{\pm}\sum_{\nu\in \ZZ^r}q^{\nu/2}\int_{\RR}\theta^\pm_\infty(x)\rmd x\int_{T\in \QQ_{S_\fin}}\theta_q^{\pm,\nu}(T)\rmd T,
\]
which is precisely the $1$-dimensional term.
\end{proof}

\section{Computing the Eisenstein term}\label{sec:computeeisenstein}
Let $\xi_0$ be the representation of $\G(\AA)^1=Z_+\bs \G(\AA)$ that is induced from the trivial representation of $\B(\AA)$, where $\B$ denotes the standard Borel subgroup of $\G$. Thus $\xi_0$ consists of all smooth functions $\psi$ on $\G(\AA)$ that satisfy
\[
\psi(bg)=\left|\frac{x}{y}\right|^{1/2}\psi(g)
\] 
for any $g\in \G(\AA)$ and $b=(\begin{smallmatrix} x & z \\ 0 & y \end{smallmatrix})\in \B(\AA)$. Suppose that $\varphi=\bigotimes_{v\in \mf{S}}'\varphi_v$ is a smooth, compactly supported function on $\G(\AA)$ that is trivial on $Z_+$. We want to compute the trace of $\varphi$. It can be derived as follows:

For any $\psi\in \xi_0$, we have
\begin{align*}
   (\xi_0(\varphi)\psi)(g) & =\int_{Z_+\bs \G(\AA)}\psi(gh)\varphi(h)\rmd h =\int_{Z_+\bs \G(\AA)} \psi(h)\varphi(g^{-1}h)\rmd h \\
     & =\int_{\B(\AA)\bs \G(\AA)}\int_{Z_+\bs\B(\AA)}\psi(bh)\varphi(g^{-1}bh)\rmd_{L}b\rmd_{\B(\AA)\bs \G(\AA)}h\quad\text{(since $\G$ is unimodular)}\\
     & =\int_{\B(\AA)\bs \G(\AA)}\psi(h)\int_{Z_+\bs\B(\AA)}\left|\frac{x}{y}\right|^{1/2}\varphi(g^{-1}bh)\rmd_L b\rmd_{\B(\AA)\bs \G(\AA)}h,
\end{align*}
where $\rmd_L b$ denotes the left Haar measure on $\B(\AA)$. (See \cite[Theorem 2.58]{folland2016abstract} for the folding used in the above computation. Note that in loc. cit., the quotient is from the right, but an analogous result holds for the left quotient case.)

Thus we have 
\[
\Tr(\xi_0(\varphi))=\int_{\B(\AA)\bs \G(\AA)}\int_{Z_+\bs\B(\AA)}\left|\frac{x}{y}\right|^{1/2}\varphi(g^{-1}bg)\rmd_L b\rmd_{\B(\AA)\bs \G(\AA)}g.
\]

Suppose that $\mu=\bigotimes_{v\in \mf{S}}'\mu_v$ is a character on $\RR_{>0}\QQ^\times\bs \AA^\times$, we can define the twist $\xi_0\otimes \mu$ on $\G(\AA)$, which is also induced by the character
\[
\begin{pmatrix}
  x & z \\
  0 & y 
\end{pmatrix}\mapsto \mu(xy).
\]
A similar argument shows that 
\begin{proposition}\label{prop:eisensteinorigin}
We have
\[
\Tr((\xi_0\otimes \mu)(\varphi))=\int_{\B(\AA)\bs \G(\AA)}\int_{Z_+\bs\B(\AA)}\left|\frac{x}{y}\right|^{1/2}\mu(xy)\varphi(g^{-1}bg)\rmd_L b\rmd_{\B(\AA)\bs \G(\AA)}g.
\]
\end{proposition}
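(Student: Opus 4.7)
The plan is to repeat the calculation carried out for $\Tr(\xi_0(\varphi))$ in the paragraph immediately preceding the statement, inserting the twist $\mu$ at the appropriate place. The key observation is that, by construction of the twist $\xi_0\otimes\mu$ as the induction from the character $\begin{pmatrix} x & z \\ 0 & y \end{pmatrix}\mapsto \mu(xy)$, a smooth section $\psi$ of $\xi_0\otimes\mu$ satisfies the twisted transformation law
\[
\psi(bg) = \left|\frac{x}{y}\right|^{1/2}\mu(xy)\,\psi(g)
\]
for every $g\in \G(\AA)$ and every $b=\begin{pmatrix} x & z \\ 0 & y \end{pmatrix}\in \B(\AA)$.

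First, I would unfold the definition of the operator $(\xi_0\otimes\mu)(\varphi)$ and substitute $h\mapsto g^{-1}h$ using unimodularity of $\G(\AA)$, obtaining
\[
((\xi_0\otimes\mu)(\varphi)\psi)(g) = \int_{Z_+\bs \G(\AA)}\psi(h)\,\varphi(g^{-1}h)\rmd h.
\]
Next, I would fold this integral through $\B(\AA)$ via the same quotient-measure identity invoked for the $\xi_0$-computation above, and then use the twisted transformation law to extract $|x/y|^{1/2}\mu(xy)$ from $\psi(bh)$. This exhibits $(\xi_0\otimes\mu)(\varphi)$ as an integral operator on functions on $\B(\AA)\bs\G(\AA)$ with kernel
\[
K_\mu(g,h) = \int_{Z_+\bs\B(\AA)}\left|\frac{x}{y}\right|^{1/2}\mu(xy)\,\varphi(g^{-1}bh)\rmd_L b.
\]

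Finally, I would read off the trace as the integral of this kernel on the diagonal, $\Tr((\xi_0\otimes\mu)(\varphi)) = \int_{\B(\AA)\bs\G(\AA)}K_\mu(g,g)\rmd_{\B(\AA)\bs\G(\AA)}g$, which is precisely the formula in the statement. The only substantive issue is convergence of the inner $\B$-integral and the legitimacy of the kernel-based trace computation; but these were tacitly handled in the untwisted case above, and insertion of the unitary character $\mu$ makes no difference to any absolute-value bound since $|\mu(xy)|=1$. I therefore expect no genuine obstacle beyond transcribing the previous argument with the extra factor $\mu(xy)$ threaded through the bookkeeping.
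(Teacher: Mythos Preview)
Your proposal is correct and follows precisely the route the paper indicates: the paper's entire proof is the phrase ``A similar argument shows that'', referring back to the untwisted computation displayed immediately above the proposition, and you have spelled out exactly that argument with the factor $\mu(xy)$ carried along. There is nothing to add.
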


In this section, we will prove that the Eisenstein term is precisely $1/2$ times the sum of the traces of $(\xi_0\otimes\mu)(f^n)$. As in the previous section, we compute nonarchimedean analog of the local traces and combine them together to get the result using the abstract Fourier inversion formula.

\begin{proposition}\label{prop:eisensteinfolding}
We have
\[
\Tr((\xi_0\otimes \mu)(\varphi))=\int_{Z_+\bs \A(\AA)}\frac{|x-y|}{|xy|^{1/2}}\mu(xy)\int_{\A(\AA)\bs \G(\AA)}\varphi(g^{-1}tg)\rmd g \rmd t,
\]
where $\A$ is the diagonal torus in $\G$, $t=(\begin{smallmatrix} x & 0 \\  0 & y \end{smallmatrix})$, and $\rmd g$ is the measure on $\A(\AA)\bs \G(\AA)$.
\end{proposition}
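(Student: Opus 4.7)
The plan is to pass from the Borel integral in Proposition~\ref{prop:eisensteinorigin} to the torus orbital integral by decomposing $\B=\A\ltimes\N$, where $\N$ is the unipotent radical of $\B$, and then folding the resulting $\N$-integral into the $\B\bs\G$-integral via the standard fibration $\A(\AA)\bs\G(\AA)\to\B(\AA)\bs\G(\AA)$ whose fiber is $\N(\AA)$. The Weyl-type factor $|x-y|/|xy|^{1/2}$ on the right-hand side of the proposition arises as the product of the inducing character $|x/y|^{1/2}$ already present in Proposition~\ref{prop:eisensteinorigin} and the Jacobian $|(x-y)/x|$ produced when one conjugates a unipotent element past $t$.

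Concretely, I would write $b=tn$ with $t=(\begin{smallmatrix}x&0\\0&y\end{smallmatrix})\in\A(\AA)$ and $n=(\begin{smallmatrix}1&u\\0&1\end{smallmatrix})\in\N(\AA)$, and verify by a short coordinate computation that the left Haar measure decomposes as $\rmd_L b=\rmd^* t\,\rmd n$. Since the weight $|x/y|^{1/2}\mu(xy)$ depends only on $t$, Proposition~\ref{prop:eisensteinorigin} becomes
\[
\Tr((\xi_0\otimes\mu)(\varphi))=\int_{Z_+\bs\A(\AA)}|x/y|^{1/2}\mu(xy)\,I(t)\,\rmd^* t,\qquad I(t):=\int_{\B(\AA)\bs\G(\AA)}\int_{\N(\AA)}\varphi(g^{-1}tng)\,\rmd n\,\rmd g,
\]
so the main task reduces to identifying $I(t)=|(x-y)/x|\,\int_{\A(\AA)\bs\G(\AA)}\varphi(g^{-1}tg)\,\rmd g$.

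For that identification I would apply the fibration identity $\int_{\A\bs\G}\Phi(g)\,\rmd g=\int_{\B\bs\G}\int_\N\Phi(ng)\,\rmd n\,\rmd g$ to the function $\Phi(g)=\varphi(g^{-1}tg)$, which is left $\A(\AA)$-invariant because $\A$ is abelian. Combined with the matrix identity $n^{-1}tn=t\cdot(\begin{smallmatrix}1&u(x-y)/x\\0&1\end{smallmatrix})$ and the change of variables $u\mapsto u\cdot x/(x-y)$ in $\N(\AA)$ (of Jacobian $|x/(x-y)|$), this gives $\int_{\A\bs\G}\varphi(g^{-1}tg)\,\rmd g=|x/(x-y)|\,I(t)$, which rearranges to the asserted formula for $I(t)$. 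Substituting back and simplifying $|x/y|^{1/2}\cdot|(x-y)/x|=|x-y|/|xy|^{1/2}$ yields the proposition.

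The main technical obstacle will be the measure bookkeeping: verifying $\rmd_L b=\rmd^* t\,\rmd n$ in the paper's conventions, confirming that the fibration identity for $\A(\AA)\bs\G(\AA)\to\B(\AA)\bs\G(\AA)$ uses compatible quotient measures, and pinning down the precise normalization of $\rmd g$ on $\A(\AA)\bs\G(\AA)$ so that all normalizations close up. Once those conventions are fixed, the argument is a purely algebraic manipulation.
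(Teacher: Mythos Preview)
Your proposal is correct and follows essentially the same argument as the paper: decompose $\B=\A\N$, use the conjugation identity $n^{-1}tn=t\cdot(\begin{smallmatrix}1&u(x-y)/x\\0&1\end{smallmatrix})$ to produce the Jacobian $|(x-y)/x|$, and fold $\int_{\B\bs\G}\int_\N$ into $\int_{\A\bs\G}$. The only difference is cosmetic ordering---the paper performs the Jacobian change of variable first and the folding second, while you do the folding first and then the change of variable---but the ingredients and the computation are identical.
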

\begin{proof}
By \autoref{prop:eisensteinorigin}  we have
\[
\begin{split}
\Tr((\xi_0\otimes \mu)(\varphi))&=\int_{\B(\AA)\bs \G(\AA)}\int_{Z_+\bs\B(\AA)}\left|\frac{x}{y}\right|^{1/2}\mu(xy)\varphi(g^{-1}bg)\rmd_L b\rmd g  \\
     & =\int_{\B(\AA)\bs \G(\AA)}\int_{Z_+\bs\A(\AA)}\int_{\N(\AA)}\left|\frac{x}{y}\right|^{1/2}\mu(xy) \varphi(g^{-1}tng)\rmd t\rmd n\rmd g,
\end{split}
\]
where $\N$ denotes the unipotent radical of $\B$. Now we want to make the change of variable $tn\mapsto n^{-1}tn$. Since
\[
\begin{pmatrix}
  x & 0 \\
  0 & y
\end{pmatrix}\begin{pmatrix}
  1 & z \\
  0 & 1
\end{pmatrix}=\begin{pmatrix}
  x & xz \\
  0 & y
\end{pmatrix}\quad\text{and}\quad\begin{pmatrix}
  1 & -z \\
  0 & 1
\end{pmatrix}\begin{pmatrix}
  x & 0 \\
  0 & y
\end{pmatrix}\begin{pmatrix}
  1 & z \\
  0 & 1
\end{pmatrix}=\begin{pmatrix}
  x & z(x-y) \\
  0 & y
\end{pmatrix},
\]
the change of variable is $x\mapsto x$, $y\mapsto y$ and $z\mapsto (x-y)z/x$. Since $\rmd t=\rmd x\rmd y/|xy|$ and $\rmd n=\rmd z$, we get
\begin{align*}
    & \int_{Z_+\bs\A(\AA)}\int_{\N(\AA)}\left|\frac{x}{y}\right|^{1/2}\mu(xy) \varphi(g^{-1}tng)\rmd t\rmd n \\
   =  & \int_{Z_+\bs\A(\AA)}\int_{\N(\AA)}\left|\frac{x}{y}\right|^{1/2}\mu(xy) \varphi(g^{-1}n^{-1}tng)\frac{|x-y|}{|x|}\rmd t\rmd n \\
   = &\int_{Z_+\bs\A(\AA)}\int_{\N(\AA)}\frac{|x-y|}{|xy|^{1/2}}\mu(xy) \varphi(g^{-1}n^{-1}tng)\rmd t\rmd n.
\end{align*}
Hence
\[
\begin{split}
\Tr((\xi_0\otimes \mu)(\varphi))&=\int_{Z_+\bs\A(\AA)}\int_{\N(\AA)}\int_{\B(\AA)\bs \G(\AA)}\frac{|x-y|}{|xy|^{1/2}}\mu(xy) \varphi(g^{-1}n^{-1}tng)\rmd t\rmd n\rmd g \\
     & =\int_{Z_+\bs\A(\AA)}\int_{\A(\AA)\bs \G(\AA)}\frac{|x-y|}{|xy|^{1/2}}\mu(xy) \varphi(g^{-1}tg)\rmd t\rmd g,
\end{split}
\]
where in the last step we used the following fact: If $\psi$ is a compactly supported function on $\G(\AA)$ that is invariant under the left translation of $\A(\AA)$, then
\[
\int_{\N(\AA)}\int_{\B(\AA)\bs \G(\AA)}\psi(ng)\rmd n\rmd g=\int_{\A(\AA)\bs \G(\AA)}\psi(g)\rmd g.
\]
This is because we can write $\psi(g)=\int_{\A(\AA)}\Psi(tg)\rmd t$ for some suitable $\Psi$ and then do folding and unfolding using \cite[Theorem 2.58]{folland2016abstract}.
\end{proof}

From this proposition we know that
\[
\Tr((\xi_0\otimes \mu)(\varphi))=\prod_{v\in \mf{S}}\Tr((\xi_0\otimes \mu_v)(\varphi_v)),
\]
where
\[
\Tr((\xi_0\otimes \mu_\infty)(\varphi_\infty))=\int_{Z_+\bs \A(\RR)}\frac{|x-y|_\infty}{|xy|_\infty^{1/2}}\mu_\infty(xy)\orb(\varphi_\infty;t)\rmd t
\]
and
\[
\Tr((\xi_0\otimes \mu_p)(\varphi_p))=\int_{ \A(\QQ_p)}\frac{|x-y|_p}{|xy|_p^{1/2}}\mu_p(xy)\orb(\varphi_p;t)\rmd t
\]
for a prime $p$.

\subsection{Local computation}
First we consider the archimedean case.
\begin{proposition}\label{prop:archimedeantraceeisenstein}
We have
\begin{equation}\label{eq:eisensteinarchimedeaninresult}
  \int_{Z_+\bs \A(\RR)}\frac{|x-y|_\infty}{|xy|_\infty^{1/2}}\mu_\infty(xy)\orb(f_\infty;t)\rmd t=4\left(\int_{|x|>1}\frac{\theta_\infty^+(x)}{\sqrt{x^2-1}}\rmd x+\mu_\infty(-1)\int_{\RR}\frac{\theta_\infty^-(x)}{\sqrt{x^2+1}}\rmd x\right).
\end{equation}
\end{proposition}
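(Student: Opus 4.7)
The plan is to reduce the left-hand side to a $d\tau$-integral against $\theta_\infty^\pm$ using the identity \eqref{eq:orbittheta}. For $t=\mathrm{diag}(x,y)\in\A(\RR)$ we have $T_t=x+y$ and $N_t=xy$, so $T_t^2-4N_t=(x-y)^2$. Substituting into \eqref{eq:orbittheta} gives the key identity
\[
\frac{|x-y|_\infty}{|xy|_\infty^{1/2}}\orb(f_\infty;t)=\theta_\infty^{\sgn(xy)}\!\left(\frac{x+y}{2\sqrt{|xy|}}\right),
\]
so the LHS becomes $\int_{Z_+\bs\A(\RR)}\mu_\infty(xy)\,\theta_\infty^{\sgn(xy)}(\tau)\,\rmd t$ with $\tau:=(x+y)/(2\sqrt{|xy|})$.

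Next, I would perform the change of variables $(x,y)\mapsto(T,N)=(x+y,xy)$. This is a 2-to-1 cover of $\{(T,N):T^2\geq 4N\}$ (swapping $x$ and $y$), with $\rmd x\wedge\rmd y=\rmd T\wedge\rmd N/(x-y)$, so
\[
\frac{\rmd x\,\rmd y}{|xy|}=\frac{2\,\rmd T\,\rmd N}{|x-y|\cdot|N|}
\]
once the double cover is accounted for. I would then further change $(T,N)\to(\tau,|N|)$ at fixed $\epsilon=\sgn N\in\{\pm 1\}$ via $T=2\tau\sqrt{|N|}$. This gives $\rmd T=2\sqrt{|N|}\,\rmd\tau$ and $|x-y|=2\sqrt{|N|}\sqrt{|\tau^2-\epsilon|}$, so after simplification
\[
\frac{\rmd x\,\rmd y}{|xy|}=\frac{2\,\rmd\tau}{\sqrt{|\tau^2-\epsilon|}}\cdot\frac{\rmd|N|}{|N|}.
\]

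Finally, I would exploit the $Z_+$-invariance. Under $a\cdot(x,y)=(\sqrt{a}x,\sqrt{a}y)$, $\tau$ is invariant while $|N|\mapsto a|N|$, and $\mu_\infty$ is either $\triv$ or $\sgn$ (so trivial on $\RR_{>0}$), hence $\mu_\infty(xy)$ depends only on $\epsilon$, with $\mu_\infty(xy)=\mu_\infty(-1)$ if $\epsilon=-1$ and $\mu_\infty(xy)=1$ if $\epsilon=+1$. Disintegrating the measure $\rmd|N|/|N|$ against the chosen Haar measure $\rmd a/a$ on $Z_+$ leaves the quotient measure $2(\sqrt{|\tau^2-\epsilon|})^{-1}\rmd\tau$. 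Summing over $\epsilon=\pm 1$ (with range $|\tau|\geq 1$ for $\epsilon=+1$ since $T^2\geq 4N$ forces $|\tau|\geq 1$, and $\tau\in\RR$ for $\epsilon=-1$) yields the right-hand side of \eqref{eq:eisensteinarchimedeaninresult}.

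The main care point is purely combinatorial: tracking the 2-to-1 cover $(x,y)\to(T,N)$, the normalization of the quotient measure on $Z_+\bs\A(\RR)$ specified in \autoref{subsec:testfunction}, and the restriction $|\tau|\geq 1$ in the $\epsilon=+1$ case. Absolute convergence is immediate since $\theta_\infty^\pm$ are continuous and compactly supported by \autoref{lem:ctscpt}, so the only singularities of the integrand are the integrable ones of $1/\sqrt{|\tau^2\mp 1|}$.
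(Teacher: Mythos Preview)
Your proposal is correct and follows essentially the same route as the paper: both proofs use the change of variables $(x,y)\to(T,N)=(x+y,xy)$ with the $2:1$ Jacobian $\rmd x\wedge\rmd y=\rmd T\wedge\rmd N/|x-y|$, invoke \eqref{eq:orbittheta} to produce $\theta_\infty^{\sgn N}(T/(2\sqrt{|N|}))$, and then factor out the $Z_+$-action via the $\rmd|N|/|N|$ component. The only cosmetic difference is ordering --- the paper applies \eqref{eq:orbittheta} after the change of variables and disintegrates $Z_+$ by choosing the fundamental domain $a\in[1,\rme]$, whereas you substitute \eqref{eq:orbittheta} first and pass directly to the variable $\tau$ --- but the computations are identical.
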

\begin{proof}
In the proof of this proposition we write $|\cdot|$, $f$ and $\mu$ for $|\cdot|_\infty$, $f_\infty$ and $\mu_\infty$, respectively.

We have
\begin{equation}\label{eq:eisensteinarchimedean}
\int_{Z_+\bs \A(\RR)}\frac{|x-y|}{|xy|^{1/2}}\mu(xy)\orb(f;t)\rmd t=\int_{Z_+\bs (\RR^\times \times \RR^\times)}\frac{|x-y|}{|xy|^{1/2}}\mu(xy)\orb(f;t)\frac{\rmd x\rmd y}{|xy|}.
\end{equation}
Now we make the change of variable $T=x+y$ and $N=xy$. We have $x,y=(T\pm \sqrt{T^2-4N})/2$. By symmetry we may assume that $x=(T+\sqrt{T^2-4N})/2$ and $y=(T-\sqrt{T^2-4N})/2$. A direct calculation shows that
\[
\rmd x\wedge \rmd y=\frac{\rmd T\wedge \rmd N}{\sqrt{T^2-4N}}.
\]
Moreover we have $xy=N$ and $(x-y)^2=T^2-4N$. Since the map $(x,y)\mapsto (T,N)$ is $2:1$ onto the image $T^2-4N>0$, \eqref{eq:eisensteinarchimedean} equals
\[
2\int_{Z_+\bs (T^2-4N>0)}\left|\frac{T^2-4N}{N}\right|^{1/2}\mu(N)\orb(f;t)\frac{\rmd T\rmd N}{|N|\sqrt{T^2-4N}}.
\]
Applying \eqref{eq:orbittheta} gives
\[
2\int_{Z_+\bs (T^2-4N>0)}\mu(N)\theta_\infty^{\sgn N}\legendresymbol{T}{2\sqrt{|N|}}\frac{\rmd T}{\sqrt{T^2-4N}}\frac{\rmd N}{|N|}.
\]
$Z_+\bs \{(T,N)\in \RR^2\}$ can be naturally identified with $(T,N)\in \RR\times\{\pm 1\}$. Since $\int_{1}^{\rme} \rmd a/a=1$, the integral over the quotient space $Z_+\bs (T^2-4N>0)$ reduces to an integral over
\[
\{a\cdot(T,N)=(aT,a^2N)|\ a\in [1,\rme],\ N=\pm 1,\ T^2-4N>0\}.
\]
Thus \eqref{eq:eisensteinarchimedean} equals
\[
2\sum_{\pm}\int_{1}^{\rme^2}\int_{T^2\mp 4N>0}\mu(\pm 1)\theta_\infty^{\pm}\legendresymbol{T}{2\sqrt{|N|}}\frac{\rmd T}{\sqrt{T^2\mp 4N}}\frac{\rmd N}{|N|}.
\]
By making change of variable $T\mapsto 2T\sqrt{|N|}$, this equals
\[
2\sum_{\pm}\int_{1}^{\rme^2}\int_{T^2\mp 1>0}\mu(\pm 1)\theta_\infty^{\pm}(T)\frac{\rmd T}{\sqrt{T^2\mp 1}}\frac{\rmd N}{|N|},
\]
which is precisely \eqref{eq:eisensteinarchimedeaninresult}.
\end{proof}
Now we consider the case $v=p\notin S$.
\begin{proposition}\label{prop:unramifiedtraceeisenstein}
\begin{enumerate}[itemsep=0pt,parsep=0pt,topsep=0pt,leftmargin=0pt,labelsep=2.5pt,itemindent=15pt,label=\upshape{(\arabic*)}]
  \item If $\mu_p$ is ramified, then
  \[
  \int_{ \A(\QQ_p)}\frac{|x-y|_p}{|xy|_p^{1/2}}\mu_p(xy)\orb(\triv_{\cX_p^{m}};t)\rmd t=0.
  \]
  \item If $\mu_p$ is unramified, then
  \[
  \int_{ \A(\QQ_p)}\frac{|x-y|_p}{|xy|_p^{1/2}}\mu_p(xy)\orb(\triv_{\cX_p^{m}};t)\rmd t=p^{m/2}\mu_p(p^m)(m+1).
  \]
\end{enumerate} 
\end{proposition}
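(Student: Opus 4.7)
The plan is to compute the integral by direct unfolding, taking advantage of a clean cancellation specific to the split torus. The first step is to specialize \autoref{thm:maximalcompact} to $t = \mathrm{diag}(x,y) \in \A(\QQ_\ell)$: here $E_\ell = \QQ_\ell \times \QQ_\ell$ is split (so $\chi(\ell) = 1$) and $T^2 - 4N = (x-y)^2$ forces $k_t = v_\ell(x-y)$. The resulting finite geometric sum then telescopes cleanly to
\[
\orb(\triv_{\cX_\ell^{m}}; t) = 1 + (1 - 1/\ell)\sum_{j=1}^{v_\ell(x-y)} \ell^j = \ell^{v_\ell(x-y)}
\]
whenever $x, y \in \ZZ_\ell$ with $|xy|_\ell = \ell^{-m}$, and vanishes otherwise.

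The crucial observation is that $|x - y|_\ell \cdot \ell^{v_\ell(x-y)} = 1$ almost everywhere (the diagonal $x = y$ being a null set), while $|xy|_\ell = \ell^{-m}$ is constant on the support. With the measure $\rmd t = (1 - 1/\ell)^{-2} \rmd x \, \rmd y / |xy|_\ell$ normalized so that $\vol(\ZZ_\ell^\times \times \ZZ_\ell^\times) = 1$, the integral collapses to
\[
(1 - 1/\ell)^{-2} \, \ell^{3m/2} \int_{\substack{x, y \in \ZZ_\ell \\ v_\ell(x) + v_\ell(y) = m}} \mu_\ell(xy)\, \rmd x \, \rmd y,
\]
stripping away all dependence on $x - y$. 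I would then partition by $(a, b) = (v_\ell(x), v_\ell(y))$ with $a + b = m$, giving $m + 1$ slices, and on each slice substitute $x = \ell^a u$, $y = \ell^b v$ with $u, v \in \ZZ_\ell^\times$, so that the slice contributes $\ell^{-m} \mu_\ell(\ell^m) \bigl(\int_{\ZZ_\ell^\times} \mu_\ell(u)\, \rmd u\bigr)^2$ by the multiplicativity of $\mu_\ell$.

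Both cases now fall out immediately. In case (1), if $\mu_\ell$ is ramified then $\int_{\ZZ_\ell^\times} \mu_\ell(u)\, \rmd u = 0$ by character orthogonality, killing every slice. In case (2), each unit integral equals $1 - 1/\ell$, and summing the $m + 1$ slices against the prefactor $(1 - 1/\ell)^{-2} \ell^{3m/2}$ yields exactly $\mu_\ell(\ell^m)\, \ell^{m/2}(m + 1)$. I do not anticipate a genuine obstacle; the only items deserving care are the bookkeeping of the measure normalization and the routine verification that the diagonal is negligible.
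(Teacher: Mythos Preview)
Your proposal is correct and follows essentially the same approach as the paper: both exploit the identity $\orb(\triv_{\cX_\ell^{m}};t)=\ell^{k_t}=|x-y|_\ell^{-1}$ on the split torus to cancel the Weyl factor, then decompose by valuation and reduce to the character integral $\int_{\ZZ_\ell^\times}\mu_\ell(u)\,\rmd u$. The only cosmetic difference is that the paper works directly with the multiplicative measure $\rmd^\times x\,\rmd^\times a$ via the substitution $a=xy\ell^{-m}$, yielding a single unit integral, whereas you track the additive measure with the factor $(1-1/\ell)^{-2}$ and obtain the square of the unit integral; the outcomes agree.
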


\begin{proof}
By \autoref{thm:maximalcompact} we know that $\orb(\triv_{\cX_p^{m}};t)=0$ unless $x$ and $y$ are integral and $|xy|_p=p^{-m}$, and in this case we have  $\orb(\triv_{\cX_p^{m}};t)=p^{k_t}=|x-y|_p^{-1}$. Hence 
\[
\begin{split}
  \int_{ \A(\QQ_p)}\frac{|x-y|_p}{|xy|_p^{1/2}}\mu_p(xy)\orb(\triv_{\cX_p^{m}};t)\rmd t&=p^{m/2}\sum_{u=0}^{m}\int_{x\in p^u\ZZ_p^\times}\int_{a\in \ZZ_p^\times}\mu_p(p^{m}a)\rmd^\times a\rmd^\times x\\
  &=p^{m/2}(m+1)\mu_p(p^{m})\int_{a\in \ZZ_p^\times}\mu_p(a)\rmd^\times a,
\end{split}
\]
which equals $p^{m/2}(m+1)\mu_p(p^{m})$ if $\mu_p$ is unramified and equals $0$ if $\mu_p$ is ramified.
\end{proof}
Finally, we consider ramified case $v=p\in S$.
\begin{proposition}\label{prop:ramifiedtraceeisenstein}
If $p\in S$, we have
\[
\int_{\A(\QQ_{p})}\frac{|x-y|_{p}}{|xy|_{p}^{1/2}}\mu_{p}(xy)\orb(f_{p};t)\rmd t = 2\sum_{\nu\in \ZZ}p^{\nu}\left(1-\frac{1}{p}\right)^{-1}\int_{|N|_{p}= p^{-\nu}}\int_{Y_1(N)}\frac{\theta_{p}(T,N)\mu_{p}(N)}{\sqrt{|T^2-4N|'_{p}}}\rmd T\rmd N,
\]
where 
\[
Y_{1}(N)=\left\{T\in \QQ_p\,\middle|\,\omega(T):=\legendresymbol{(T^2-4N)|T^2-4N|_{p}'}{p}=1 \right\}.
\]
\end{proposition}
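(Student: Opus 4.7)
The plan is to follow closely the structure of the archimedean calculation in \autoref{prop:archimedeantraceeisenstein}, with the algebraic change of variable $T=x+y,\ N=xy$ on the split torus $\A(\QQ_{\ell})=\QQ_{\ell}^\times\times\QQ_{\ell}^\times$, and to identify the image of this 2-to-1 parametrization with $Y_{1}(N)$ via the Kronecker-symbol condition.

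First I would parametrize $\A(\QQ_{\ell})$ by $t=\mathrm{diag}(x,y)$. Under the normalization fixed in \autoref{subsec:testfunction} (volume of $\ZZ_{\ell}^\times\times\ZZ_{\ell}^\times$ equal to $1$), the Haar measure on $\A(\QQ_{\ell})$ is $\rmd t=(1-1/\ell)^{-2}\rmd x\,\rmd y/|xy|_{\ell}$, exactly as in the split-torus computation in the proof of \autoref{prop:ramifiedtrace}. Since the split torus has $\chi(\ell)=1$, unfolding the definition of $\theta_{\ell}$ gives
\[
\orb(f_{\ell};t)=|xy|_{\ell}^{1/2}\bigl(1-1/\ell\bigr)\,\ell^{k_{t}}\,\theta_{\ell}(T,N).
\]
For a split element one has $T^2-4N=(x-y)^2$, hence $v_{\ell}(T^2-4N)$ is even (and, when $\ell=2$, the unit part lies in $1+8\ZZ_{2}$), so \autoref{rem:absoluteprime} yields $|T^2-4N|'_{\ell}=|T^2-4N|_{\ell}=|x-y|_{\ell}^2$, and therefore $\ell^{k_{t}}=|x-y|_{\ell}^{-1}$.

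Second, I substitute and cancel the prefactor $|x-y|_{\ell}/|xy|_{\ell}^{1/2}$ against the matching factors from $\orb(f_{\ell};t)$, so the integrand collapses to $(1-1/\ell)\mu_{\ell}(N)\theta_{\ell}(T,N)$. Combining with $\rmd t$, the left-hand side of the identity becomes
\[
(1-1/\ell)^{-1}\int_{(\QQ_{\ell}^\times)^2}\mu_{\ell}(xy)\theta_{\ell}(T,N)\,\frac{\rmd x\,\rmd y}{|xy|_{\ell}}.
\]
Third, I perform the algebraic change of variable $(x,y)\mapsto(T,N)$ exactly as in the split case of \autoref{prop:ramifiedtrace}: the Jacobian of the polynomial map is $|x-y|_{\ell}=|T^2-4N|_{\ell}^{1/2}$, and the map is $2{:}1$ almost everywhere (the diagonal $x=y$ being of measure zero). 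The image is $\{(T,N):N\ne 0,\ T^2-4N\in(\QQ_{\ell}^\times)^2\}$, which I must identify with $\{(T,N):N\ne 0,\ T\in Y_{1}(N)\}$. This identification is the one genuinely new ingredient: a direct case analysis on the parity of $v_{\ell}(T^2-4N)$ (with the extra subcase on the residue mod $8$ when $\ell=2$) shows that $\omega(T)=\legendresymbol{(T^2-4N)|T^2-4N|'_{\ell}}{\ell}$ equals $1$ precisely when $T^2-4N$ is a nonzero square in $\QQ_{\ell}^\times$, and that on this locus $|T^2-4N|'_{\ell}=|T^2-4N|_{\ell}$.

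Fourth, assembling the pieces gives
\[
2(1-1/\ell)^{-1}\int_{N\ne 0}\frac{\mu_{\ell}(N)}{|N|_{\ell}}\int_{Y_{1}(N)}\frac{\theta_{\ell}(T,N)}{\sqrt{|T^2-4N|'_{\ell}}}\,\rmd T\,\rmd N,
\]
and finally I slice the $N$-integral by the shells $|N|_{\ell}=\ell^{-\nu}$ (so $1/|N|_{\ell}=\ell^{\nu}$), producing the sum over $\nu\in\ZZ$ in the statement. The main obstacle is the case analysis identifying $Y_{1}(N)$ with the image of the polynomial map, especially at $\ell=2$, where one has to combine information from $v_{2}(T^2-4N)$ with the residue of its unit part modulo $8$; everything else is a straightforward transcription of the archimedean argument to the $p$-adic setting.
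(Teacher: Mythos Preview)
Your proposal is correct and follows essentially the same route as the paper: both perform the change of variable $(x,y)\mapsto(T,N)$ on the split torus (with the Jacobian and 2:1 multiplicity exactly as in the split case of \autoref{prop:ramifiedtrace}), identify the image with $Y_1(N)$, substitute the definition of $\theta_\ell$ using $\chi(\ell)=1$, and slice by shells $|N|_\ell=\ell^{-\nu}$. The only cosmetic differences are that the paper substitutes $\theta_\ell$ \emph{after} the change of variable (arriving first at an expression in terms of $\orb(f;T,N)$), and that the paper simply asserts ``$t_{T,N}$ split is equivalent to $\omega(T)=1$'' whereas you spell out the $\ell=2$ case analysis; neither constitutes a genuinely different argument.
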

\begin{proof}
In the proof of this proposition we will write $|\cdot|$, $|\cdot|'$, $f$ and $\mu$ for $|\cdot|_{p}$, $|\cdot|_{p}'$, $f_{p}$ and $\mu_{p}$, respectively.

Write $T=x+y$ and $N=xy$. By the computation in \autoref{prop:ramifiedtrace} we know that 
\[
 \int_{\A(\QQ_{p})}\frac{|x-y|}{|xy|^{1/2}}\mu(xy)\orb(f;t)\rmd t = 2\left(1-\frac{1}{p}\right)^{-2}\int_{\gamma_{T,N}\in \A(\QQ_{p})}\frac{1}{|N|^{3/2}}\orb(f;T,N)\mu(N)\rmd T\rmd N.
\]
We have $\sqrt{|T^2-4N|'}=p^{-k}$ by \autoref{rem:absoluteprime}. Moreover, since $\chi(p)=1$ in this case, we have
\[
\theta_{p}(T,N)=\frac{1}{|N|^{1/2}}\left(1-\frac{1}{p}\right)^{-1}p^{-k}\orb(f;T,N).
\]
Since $t=t_{T,N}$ being split is equivalent to $\omega(T)=1$, we get
\begin{align*}
     \int_{\A(\QQ_{p})}\frac{|x-y|}{|xy|^{1/2}}\mu(xy)\orb(f;t)\rmd t=& 2\left(1-\frac{1}{p}\right)^{-1}\int_{N\in \QQ_p}\int_{Y_1(N)}\frac{1}{|N|}\frac{\theta_{p}(T,N)\mu(N)}{\sqrt{|T^2-4N|'}}\rmd T\rmd N\\
  =   & 2\sum_{\nu\in \ZZ}p^{\nu}\left(1-\frac{1}{p}\right)^{-1}\int_{|N|_{p}= p^{-\nu}}\int_{Y_1(N)}\frac{\theta_{p}(T,N)\mu(N)}{\sqrt{|T^2-4N|'}}\rmd T\rmd N.\qedhere
\end{align*}
\end{proof}

\subsection{Global computation}
We now analyze the global sum
\begin{equation}\label{eq:sumeisenstein}
\frac{1}{2}\sum_{\mu}\Tr((\xi_0\otimes\mu)(f^n)),
\end{equation}
where $\mu$ runs over all characters on $\RR_{>0}\QQ^\times\bs \AA^\times$. 

\begin{lemma}\label{lem:characterfiniteeisenstein}
For any prime $p\in S$, there exists $M>0$ such that $\Tr((\xi_0\otimes\mu_{p})(f^n_{p}))=\Tr((\xi_0\otimes\mu_{p})(f_{p}))=0$ if $\mu_p$ is not trivial on $1+p^{M}\ZZ_{p}$.
\end{lemma}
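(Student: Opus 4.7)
The plan is to mirror the proof of \autoref{lem:characterfinite}, adapting the left-translation substitution to the Borel-integral formula displayed just after \autoref{prop:eisensteinorigin}:
\[
\Tr((\xi_0\otimes \mu_\ell)(f_\ell))=\int_{\B(\QQ_\ell)\bs \G(\QQ_\ell)}\int_{\B(\QQ_\ell)}\chi(b)f_\ell(g^{-1}bg)\rmd_L b\rmd g,
\]
where $\chi(b):=\mu_\ell(xy)|x/y|_\ell^{1/2}$ for $b=\left(\begin{smallmatrix}x&z\\0&y\end{smallmatrix}\right)$. Since $f_\ell\in C_c^\infty(\G(\QQ_\ell))$, I first choose $M\in\ZZ_{>0}$ such that $f_\ell$ is bi-invariant under $K_M:=\cK_\ell(M)$ (as defined in the proof of \autoref{lem:characterfinite}). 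The crucial structural input is that $K_M$ is \emph{normal} in $\cK_\ell$, being the kernel of the reduction $\cK_\ell\twoheadrightarrow \GL_2(\ZZ_\ell/\ell^M)$.

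Next I use the Iwasawa decomposition $\G(\QQ_\ell)=\B(\QQ_\ell)\cK_\ell$ to identify the outer quotient $\B(\QQ_\ell)\bs \G(\QQ_\ell)$ with $(\B(\QQ_\ell)\cap \cK_\ell)\bs \cK_\ell$ (up to a positive measure-theoretic constant), thereby reducing the problem to pointwise vanishing of the inner integral
\[
I(k):=\int_{\B(\QQ_\ell)}\chi(b)f_\ell(k^{-1}bk)\rmd_L b
\]
for every $k\in \cK_\ell$. For any $a\in 1+\ell^M\ZZ_\ell$, I set $b_1:=\mathrm{diag}(a,1)\in \B(\QQ_\ell)\cap K_M$, so that $\chi(b_1)=\mu_\ell(a)$, and perform the substitution $b\mapsto b_1 b$ in $I(k)$. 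This preserves $d_L b$ by left invariance, multiplies $\chi(b)$ by $\mu_\ell(a)$, and leaves $f_\ell(k^{-1}bk)$ unchanged because
\[
f_\ell(k^{-1}b_1 b k)=f_\ell\bigl((k^{-1}b_1 k)(k^{-1}b k)\bigr)=f_\ell(k^{-1}b k),
\]
using $k^{-1}b_1 k\in K_M$ (by normality of $K_M$ in $\cK_\ell$) together with left $K_M$-invariance of $f_\ell$. This forces $I(k)=\mu_\ell(a)I(k)$, hence $I(k)=0$ whenever $\mu_\ell(a)\neq 1$.

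Integrating this pointwise vanishing over $k\in \cK_\ell$ then gives $\Tr((\xi_0\otimes \mu_\ell)(f_\ell))=0$ whenever $\mu_\ell$ is not trivial on $1+\ell^M\ZZ_\ell$. The argument for $f_\ell^n$ in place of $f_\ell$ is verbatim, since $f_\ell^n=f_\ell$ for $\ell\in S$ by the definition of $f^n$.

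The principal subtlety is the reduction to conjugating elements $k\in \cK_\ell$: the identity $k^{-1}b_1 k\in K_M$ requires normality of $K_M$ in $\cK_\ell$ and is false for general elements of $\B(\QQ_\ell)$ — for instance, conjugating $\mathrm{diag}(a,1)$ by a unipotent $\bigl(\begin{smallmatrix}1&c\\0&1\end{smallmatrix}\bigr)$ yields $\bigl(\begin{smallmatrix}a&(1-a)c\\0&1\end{smallmatrix}\bigr)$, which leaves $K_M$ as soon as $v_\ell(c)<0$. The Iwasawa step is therefore essential, not cosmetic; attempting the single global substitution over $\B(\QQ_\ell)\bs \G(\QQ_\ell)$ without first restricting representatives to $\cK_\ell$ would break down precisely here.
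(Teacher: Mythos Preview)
Your proof is correct, but it takes a genuinely different route from the paper's. The paper works with the torus-integral formula of \autoref{prop:eisensteinfolding}: it defines $\Theta_\ell(t)=\frac{|x-y|_\ell}{|xy|_\ell^{1/2}}\orb(f_\ell;t)$ on the diagonal torus $\A(\QQ_\ell)$, uses the Shalika germ expansion (\autoref{cor:shalika}) to check that $\Theta_\ell$ extends smoothly across the center, verifies compact support, and then concludes invariance under left translation by $\mathrm{diag}(a,1)$ for $a$ in a small enough $1+\ell^M\ZZ_\ell$. The substitution $t\mapsto\mathrm{diag}(a,1)t$ in $\int_{\A(\QQ_\ell)}\Theta_\ell(t)\mu_\ell(xy)\rmd t$ then yields the vanishing exactly as in your last step.

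Your argument bypasses both \autoref{prop:eisensteinfolding} and the germ analysis entirely: you stay with the Borel-integral form of \autoref{prop:eisensteinorigin}, reduce the outer variable to $\cK_\ell$ via Iwasawa, and exploit normality of $\cK_\ell(M)\trianglelefteq\cK_\ell$ together with bi-$\cK_\ell(M)$-invariance of $f_\ell$. This is more elementary and closer in spirit to the proof of \autoref{lem:characterfinite}; it also makes the choice of $M$ completely explicit (any level of bi-invariance for $f_\ell$), whereas the paper's $M$ is the smoothness level of the derived function $\Theta_\ell$. The paper's approach, on the other hand, packages the result as ``$\Theta_\ell\in C_c^\infty(\A(\QQ_\ell))$'', which is a clean intrinsic statement and reuses machinery already set up in \autoref{sec:preliminary}. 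Your observation about why the substitution fails without first restricting to $\cK_\ell$-representatives is exactly the point that distinguishes the two strategies.
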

\begin{proof}
Since $\chi(p)=1$ for a split torus, by \autoref{cor:shalika}, for diagonal torus $\A(\QQ_{p})$ the behavior near the center is
\[
\orb(f_p;\gamma)=\lambda_2\left(1-\frac{1}{p}\right)p^{k_\gamma}.
\]
Since $|x-y|_{p}|xy|_{p}^{-1/2}=p^{-k_{\gamma}}$, we conclude that 
\[
\Theta_{p}(t)=\frac{|x-y|_{p}}{|xy|_{p}^{1/2}}\orb(f_{p};t)
\]
is a smooth function on $\A(\QQ_{p})$. 

Since $\Tr(\supp(f_{p}))$ and $\det(\supp(f_{p}))$ are compact, we know that for $t$ with $\orb(f_{p};t)\neq 0$, $|x+y|_p$ and $\log|xy|_p$ are bounded. Thus $\log|x|_p$ and $\log|y|_p$ are bounded and hence the support of $\orb(f_{p};t)$ is compact. Thus $\Theta_p(t)$ is smooth and compactly supported. Hence there exists $M>0$ such that for any $a\in 1+p^{M}\ZZ_{p}$ and $t\in \A(\QQ_{p})$, we have
\[
\Theta_{p}(t)=\Theta_{p}\left(\begin{pmatrix}
                      a & 0 \\
                      0 & 1 
                    \end{pmatrix}t\right).
\]
Therefore for any $a\in 1+p^{M}\ZZ_{p}$ we have
\[
\Tr((\xi_0\otimes\mu_{p})(f_{p}))=\int_{\A(\QQ_{p})}\Theta_{p}(t)\mu_{p}(xy)\rmd t=\int_{\A(\QQ_{p})}\Theta_{p}(t)\mu_{p}(axy)\rmd t=\mu_{p}(a)\Tr((\xi_0\otimes\mu_{p})(f_{p})).
\]
If $\Tr((\xi_0\otimes\mu_{p})(f_{p}))\neq 0$, then $\mu_{p}(a)=1$ for all $a\in 1+p^{M}\ZZ_{p}$. 
\end{proof}
An argument analogous to that in \autoref{cor:characterfinite} shows that 
\begin{corollary}\label{cor:characterfiniteeisenstein}
\eqref{eq:sumeisenstein} is a finite sum.
\end{corollary}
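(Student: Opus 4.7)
The plan is to mimic, essentially verbatim, the proof of \autoref{cor:characterfinite}, replacing the inputs for the trace of $\mu$ by the corresponding inputs for the trace of $\xi_0 \otimes \mu$. The goal is to show that every character $\mu$ contributing a nonzero term must factor through a finite quotient of the idele class group.

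First, by \autoref{lem:characterfiniteeisenstein} applied at each prime $q_i \in S_\fin$, there exist positive integers $M_1,\dots,M_r$ such that $\Tr((\xi_0\otimes\mu_{q_i})(f^n_{q_i})) = 0$ unless $\mu_{q_i}$ is trivial on $1+q_i^{M_i}\ZZ_{q_i}$. Next, at any prime $\ell\notin S$, \autoref{prop:unramifiedtraceeisenstein}(1) tells us that $\Tr((\xi_0\otimes\mu_\ell)(\triv_{\cX_\ell^{n_\ell}})) = 0$ whenever $\mu_\ell$ is ramified; hence a surviving $\mu$ must be trivial on $\ZZ_\ell^\times$ for every $\ell\notin S$. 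Finally, since $\mu$ is a character of $\RR_{>0}\QQ^\times\bs\AA^\times$, the archimedean component $\mu_\infty$ is already trivial on $\RR_{>0}$ and therefore takes only the two values $\triv$ and $\sgn$.

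Combining these three constraints, any $\mu$ with $\Tr((\xi_0\otimes\mu)(f^n)) \neq 0$ descends to a character of the quotient group
\[
\left(\RR_{>0}\QQ^\times\prod_{\ell\notin S}\ZZ_\ell^\times\prod_{i=1}^{r}(1+q_i^{M_i}\ZZ_{q_i})\right)\bs \AA^\times,
\]
which is finite by class field theory (it is a quotient of a ray class group). Hence only finitely many characters $\mu$ contribute, and the sum \eqref{eq:sumeisenstein} is finite.

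There is no real obstacle here, since all the hard work has already been done in \autoref{lem:characterfiniteeisenstein} and \autoref{prop:unramifiedtraceeisenstein}; the only thing to verify is that the cited proof of \autoref{cor:characterfinite} goes through with the evident substitutions.
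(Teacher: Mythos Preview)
Your proposal is correct and follows exactly the approach the paper intends: the paper simply states that the argument is analogous to that of \autoref{cor:characterfinite}, and you have carried out precisely that analogy, replacing \autoref{lem:characterfinite} and \autoref{prop:unramifiedtrace} by \autoref{lem:characterfiniteeisenstein} and \autoref{prop:unramifiedtraceeisenstein} respectively.
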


Finally, we prove the main theorem in this section.
\begin{theorem}\label{thm:eisensteincompute}
The Eisenstein term \eqref{eq:eisensteinterm} equals
\[
\frac{1}{2}\sum_{\mu}\Tr((\xi_0\otimes\mu)(f^n))=\frac{1}{2} \sum_{\mu\in (\QQ^\times\RR_{>0}\prod_{\ell\notin S}\ZZ_\ell^\times\bs\AA^\times)\sphat}\Tr((\xi_0\otimes\mu)(f^n)).
\]
\end{theorem}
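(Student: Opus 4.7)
The plan is to mimic the proof of \autoref{thm:maintermcompute} step by step, substituting \autoref{prop:archimedeantraceeisenstein}, \autoref{prop:unramifiedtraceeisenstein}, and \autoref{prop:ramifiedtraceeisenstein} for their counterparts in the $1$-dimensional computation. First, I would apply \autoref{prop:eisensteinfolding} to write $\Tr((\xi_0\otimes\mu)(f^n))$ as a product of local traces. The local contributions are: $1$ at $p\notin S$ with $p\nmid n$; $(n_p+1)\mu_p(p^{n_p})$ at $p\mid n$, after cancellation of the $p^{-n_p/2}$ in $f^n_p$ against the $p^{m/2}$ of \autoref{prop:unramifiedtraceeisenstein}; $2\bigl(\int_{|x|>1}\theta_\infty^+(x)/\sqrt{x^2-1}\,\rmd x + \mu_\infty(-1)\int_\RR \theta_\infty^-(x)/\sqrt{x^2+1}\,\rmd x\bigr)$ at $\infty$; and the sum supplied by \autoref{prop:ramifiedtraceeisenstein} at each $q_i\in S$.

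Next, I split the archimedean contribution according to $\pm=\sgn(xy)$ and change variables $N_i=\pm nq^\nu M_i$ with $M_i\in\ZZ_{q_i}^\times$. This substitution is compatible with $|N_i|_{q_i}=q_i^{-\nu_i}$ and has Jacobian $q_i^{-\nu_i}$, which cancels the $q_i^{\nu_i}$ of \autoref{prop:ramifiedtraceeisenstein}. Invoking the product formula $\prod_v\mu_v(\alpha)=1$ on $\alpha\in\{-1,\,n,\,q^\nu\}\subset\QQ^\times$, together with the triviality of $\mu$ on $\RR_{>0}\prod_{\ell\notin S}\ZZ_\ell^\times$, consolidates all the character-dependent prefactors into $\mu_\infty(\pm 1)^{-1}\prod_i\mu_{q_i}(M_i)$. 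Pairing this with the archimedean factor $\mu_\infty(-1)^{\varepsilon_\pm}$ (with $\varepsilon_+=0$, $\varepsilon_-=1$), the sign characters cancel, and both the $+$ and $-$ contributions acquire the same character factor $\prod_i\mu_{q_i}(M_i)$.

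I then sum over $\mu$, which is a finite sum by \autoref{cor:characterfiniteeisenstein}, and apply the Fourier inversion formula on the compact group $\ZZ_{S_\fin}^\times=\prod_i\ZZ_{q_i}^\times$, exactly as in the proof of \autoref{thm:maintermcompute}. This evaluates the $M$-integral at $M=1$ and produces the volume $\prod_i(1-1/q_i)$, which cancels the $\prod_i(1-1/q_i)^{-1}$ of \autoref{prop:ramifiedtraceeisenstein}. Specializing at $M=1$ turns $\theta_{q_i}(T_i,\pm nq^\nu M_i)$ into $\theta_{q_i}^{\pm,\nu}(T_i)$, the denominator into $\sqrt{|T_i^2\mp 4nq^\nu|_{q_i}'}$, and the domain $Y_1(\pm nq^\nu M_i)$ onto the defining condition of $Y_{\mathbf{1}}$. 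The remaining prefactor $2^{r+1}$ (one $2$ from the archimedean trace and one $2$ per $q_i\in S$), divided by the $\tfrac{1}{2}$ in the statement, reproduces the $2^r$ appearing in \eqref{eq:eisensteinterm}.

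I expect the main obstacle will be the bookkeeping of the sign cancellation between $\mu_\infty(-1)$ and $\prod_i\mu_{q_i}(-1)$ (necessary so that the $+$ and $-$ pieces combine uniformly) and tracking the numerical prefactors through the cascade of changes of variable; both are entirely parallel to the $1$-dimensional case of \autoref{thm:maintermcompute}. The only genuinely new ingredient is the singular factor $1/\sqrt{|T_i^2-4N_i|_{q_i}'}$ originating from the $|x-y|/|xy|^{1/2}$ in \autoref{prop:eisensteinfolding}, but it transforms compatibly under $N_i\mapsto\pm nq^\nu M_i$ and specializes without difficulty at $M=1$, so it poses no genuinely new analytic obstruction.
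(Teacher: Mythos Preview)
Your proposal is correct and follows essentially the same route as the paper's proof: factor $\Tr((\xi_0\otimes\mu)(f^n))$ into local pieces via \autoref{prop:archimedeantraceeisenstein}, \autoref{prop:unramifiedtraceeisenstein}, \autoref{prop:ramifiedtraceeisenstein}, use the product-formula identities $\mu_\infty(-1)=\prod_i\mu_{q_i}(-1)$ and $\prod_{p\mid n}\mu_p(n)=\prod_i\mu_{q_i}(n^{-1})$, change variables $N_i\mapsto\pm nq^\nu N_i$, and apply Fourier inversion on $\ZZ_{S_\fin}^\times$ (legitimate by \autoref{cor:characterfiniteeisenstein}) to collapse to $M=1$. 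The only slip is verbal---``divided by the $\tfrac12$'' should read ``multiplied by the $\tfrac12$''---but your arithmetic $2\cdot 2^r\cdot\tfrac12=2^r$ is right.
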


\begin{proof}
By \autoref{prop:archimedeantraceeisenstein}, \autoref{prop:unramifiedtraceeisenstein} and \autoref{prop:ramifiedtraceeisenstein}, and noting that $\mu_p$ is unramified when $p\notin S$, we have
\begin{align*}
    \frac{1}{2}\sum_{\mu}\Tr((\xi_0\otimes\mu)(f^n))
    &= \frac{1}{2}\sum_{\mu}4\left(\int_{|x|>1}\frac{\theta_\infty^+(x)}{\sqrt{x^2-1}}\rmd x+\mu_{\infty}(-1)\int_{\RR}\frac{\theta_\infty^-(x)}{\sqrt{x^2+1}}\rmd x\right)\prod_{p\mid n}\mu_p(n)(n_p+1)\\
    &\times 2^r\sum_{\nu\in \ZZ^r}q^{\nu}\prod_{i=1}^{r}\left[\left(1-\frac{1}{q_i}\right)^{-1} \int_{|N_i|_{q_i}= q_i^{-\nu_i}}\int_{Y_{i,1}(N_i)} \frac{\theta_{q_i}(T_i,N_i)}{\sqrt{|T_i^2-4N_i|'_{q_i}}}\mu_{q_i}(N_i)\rmd T_i\rmd N_i\right],
\end{align*}
where 
\[
Y_{i,1}(N_i)=\left\{T_i\in \QQ_{q_i}\,\middle|\,\legendresymbol{(T_i^2-4N_i)|T_i^2-4N_i|_{q_i}'}{q_i}=1 \right\}
\]
so that $Y_{\mathbf{1}}=Y_{1,1}(\pm nq^\nu)\times\cdots\times Y_{r,1}(\pm nq^\nu)$.

Since 
\[
\prod_{i=1}^{r}\mu_{q_i}(q^{-\nu})=1,\quad \mu_\infty(-1)=\prod_{i=1}^{r}\mu_{q_i}(-1)\quad\text{and}\quad \prod_{p\mid n}\mu_p(n)=\prod_{i=1}^{r}\mu_{q_i}(n^{-1}),
\]
we get
\begin{align*}
 \frac{1}{2}\sum_{\mu}
  \Tr((\xi_0\otimes\mu)(f^n))
   =&2^{r+1}\prod_{p\mid n}(n_p+1)
\sum_{\mu}\sum_{\pm}\sum_{\nu\in \ZZ^r}q^{\nu}\int_{X_0}\frac{\theta^\pm_\infty(x)}{\sqrt{x^2\mp 1}}\rmd x\\
\times&\prod_{i=1}^{r}\left[\left(1-\frac{1}{q_i}\right)^{-1}\int_{|N_i|_{q_i}= q_i^{-\nu_i}}\int_{Y_{i,1}(N_i)}\frac{\theta_{q_i}(T_i,N_i)}{\sqrt{|T_i^2-4N_i|'_{q_i}}}\mu_{q_i}(\pm n^{-1}q^{-\nu}N_i)\rmd T_i\rmd N_i\right].
\end{align*}

By making the change of variable $N_i\mapsto \pm nq^\nu N_i$ for each $i$, we have
\[
\begin{split}
\frac{1}{2}\sum_{\mu}
  \Tr((\xi_0\otimes\mu)(f^n))= &2^{r+1}\prod_{p\mid n}(n_p+1)\prod_{i=1}^{r}\left(1-\frac{1}{q_i}\right)^{-1}
\sum_{\pm}\sum_{\nu\in \ZZ^r}\int_{X_0}\frac{\theta^\pm_\infty(x)}{\sqrt{x^2\mp 1}}\rmd x\\
\times &\sum_{\mu}\left[\int_{N\in\ZZ_{S_\fin}^\times}\prod_{i=1}^{r}\int_{Y_{1,i}(\pm nq^\nu N_i)} \frac{\theta_{q_i}(T_i,\pm n q^\nu N_i)}{\sqrt{|T_i^2\mp 4n q^\nu N_i|_{q_i}'}}\mu_{q_i}(N_i)\rmd T\rmd N\right].
\end{split}
\]

Using \autoref{prop:characterzq} and the Fourier inversion formula (since the sum over $\mu$ is  finite by \autoref{cor:characterfiniteeisenstein}), we have
\begin{align*}
   & \sum_{\mu}\left[\int_{N\in\ZZ_{S_\fin}^\times}\prod_{i=1}^{r}\int_{Y_{1,i}(\pm nq^\nu N_i)} \frac{\theta_{q_i}(T_i,\pm n q^\nu N_i)}{\sqrt{|T_i^2\mp 4n q^\nu N_i|_{q_i}'}}\mu_{q_i}(N_i)\rmd T\rmd N\right] \\
  = & \prod_{i=1}^{r}\left(1-\frac{1}{q_i}\right)\int_{Y_{1,i}(\pm nq^\nu)} \frac{\theta_{q_i}(T_i,\pm n q^\nu )}{\sqrt{|T_i^2\mp 4n q^\nu|_{q_i}'}}\rmd T_i =\prod_{i=1}^{r}\left(1-\frac{1}{q_i}\right)\int_{Y_{\mathbf{1}}}\frac{\theta_q^{\pm,\nu}(T)}{\sqrt{|T^2\mp 4n q^\nu|_q'}}\rmd T.
\end{align*}
Hence
\[
\frac{1}{2}\sum_{\mu}
  \Tr((\xi_0\otimes\mu)(f^n))=2^{r+1}\prod_{p\mid n}(n_p+1)
\sum_{\pm}\sum_{\nu\in \ZZ^r}\int_{X_0}\frac{\theta^\pm_\infty(x)}{\sqrt{x^2\mp 1}}\rmd x\int_{Y_{\mathbf{1}}}\frac{\theta_q^{\pm,\nu}(T)}{\sqrt{|T^2\mp 4n q^\nu|_q'}}\rmd T,
\]
which is precisely the Eisenstein term.
\end{proof}
\begin{remark}\label{rem:1dim}
The coefficient of the Eisenstein term in \cite{altug2015} was wrong.
Actually, the key point of such a strategy is to kill the nontempered representations in the trace formula, which are precisely $1$-dimensional representations. What we are interested in is $I_{\mathrm{cusp}}(f^{n,\rho})$ as shown in the introductory section, and nontempered representations give large contributions.
\end{remark}

\subsection{The main result}
Combining \autoref{thm:ellipticpoisson}, \autoref{thm:tracexi0}, \autoref{thm:maintermcompute}, and \autoref{thm:eisensteincompute} together, we get our main result:

\begin{theorem}\label{thm:finaltrace}
For any $\vartheta\in \lopen 0,1\ropen$, we have
\[
I_{\el}(f^n)=\sum_{\mu}\Tr(\mu(f^n))-\frac{1}{2}\sum_{\mu}\Tr((\xi_0\otimes\mu)(f^n))-\Sigma(\square)+\Sigma(0)+\Sigma(\xi\neq 0),
\]
where
\begin{align*}
\Sigma(\square)=&2\sum_{\pm}\sum_{\nu\in \ZZ^r}\sum_{\substack{T\in \ZZ^S\\ T^2\mp 4nq^\nu= \square}}\sum_{f^2\mid T^2\mp 4nq^\nu}  \sum_{k\in \ZZ_{(S)}^{>0}}\frac{1}{kf}\legendresymbol{(T^2\mp 4nq^\nu)/f^2}{k}\theta_\infty^\pm\legendresymbol{T}{2n^{1/2}q^{\nu/2}}\\
    \times &\prod_{i=1}^{r}\theta_{q_i}^{\pm,\nu}(T)\left[F\legendresymbol{kf^2}{|T^2\mp 4nq^\nu|_{\infty,q}'^\vartheta}+\frac{kf^2}{\sqrt{|T^2\mp 4nq^\nu|_{\infty,q}'}}V\legendresymbol{kf^2}{|T^2\mp 4nq^\nu|_{\infty,q}'^{1-\vartheta}}\right],
\end{align*}
\begin{align*}
\Sigma(0)=&4\sqrt{n}\sum_{\pm}\sum_{\nu\in \ZZ^r}q^{\nu/2}\int_{x\in \RR}\int_{y\in \QQ_{S_\fin}}\theta_\infty^\pm(x)\theta_{q}^{\pm,\nu}(y)
  \left[\frac{1}{\dpii}\int_{(-1)}\widetilde{F}(s)\frac{\zeta(2s+2)}{\zeta(s+2)} \prod_{i=1}^{r}\frac{1-q_i^{-2s-2}}{1-q_i^{-s-2}}\right.\\
  \times&\left.\prod_{p\mid n}\frac{1-p^{-(s+1)(n_p+1)}}{1-p^{-s-1}}\left(\frac{(4nq^\nu)^{-\vartheta}}{|x^2\mp 1|_\infty^\vartheta|y^2\mp 4nq^\nu|_q'^\vartheta}\right)^{-s}\rmd s\right]\rmd x\rmd y\\
  +&2\sum_{\pm}\sum_{\nu\in \ZZ^r}\int_{x\in \RR}\int_{y\in \QQ_{S_\fin}}\frac{\theta_\infty^\pm(x)\theta_q^{\pm,\nu}(y)}{\sqrt{|x^2\mp 1|_\infty|y^2\mp 4nq^\nu|_q'}}\left[\frac{\sqrt{\uppi}}{\dpii}\int_{\cC_v}\widetilde{F}(s) \frac{\Gamma(\frac{\iota+s}{2})}{\Gamma(\frac{\iota+1-s}{2})}\frac{\zeta(2s)}{\zeta(s+1)} \right.\\
  \times&\left. \prod_{i=1}^{r}\frac{1-\epsilon_i q_i^{s-1}}{1-\epsilon_i q_i^{-s}}\frac{1-q_i^{-2s}}{1-q_i^{-s-1}}\prod_{p\mid n}\frac{1-p^{-s(n_p+1)}}{1-p^{-s}} \left(\frac{\uppi (4nq^\nu)^{\vartheta-1}}{|x^2\mp 1|_\infty^{1-\vartheta}|y^2\mp 4nq^\nu|_q'^{1-\vartheta}}\right)^{-s}\rmd s\right]
  \rmd x\rmd y
\end{align*}
and finally,
\begin{align*}
  \Sigma(\xi\neq 0)=&4\sqrt{n}\sum_{\pm}\sum_{\nu\in \ZZ^r}q^{\nu/2}\sum_{k,f\in \ZZ_{(S)}^{>0}}\frac{1}{k^2f^3}\sum_{\xi\in \ZZ^S-\{0\}}\Kl_{k,f}^S(\xi,\pm nq^\nu)\\
   \times&\int_{x\in\RR}\int_{y\in\QQ_{S_\fin}}\theta_\infty^\pm(x)\theta_{q}^{\pm,\nu}(y)\left[F\legendresymbol{kf^2(4nq^\nu)^{-\vartheta}}{|x^2\mp 1|_\infty^\vartheta|y^2\mp 4nq^\nu|_q'^\vartheta}+\frac{kf^2n^{-1/2}q^{-\nu/2}}{2\sqrt{|x^2\mp 1|_\infty|y^2\mp 4nq^\nu|_q'}}\right.\\
     \times&\left.V\legendresymbol{kf^2(4nq^\nu)^{\vartheta-1}}{|x^2\mp 1|_\infty^{1-\vartheta}|y^2\mp 4nq^\nu|_q'^{1-\vartheta}}\right]\rme\legendresymbol{-2x\xi n^{1/2}q^{\nu/2}}{kf^2}\rme_{q}\legendresymbol{-y\xi}{kf^2}\rmd x\rmd y.
\end{align*}
\end{theorem}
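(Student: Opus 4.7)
The plan is to assemble the identity as a bookkeeping synthesis of the four main results proved in the preceding sections: \autoref{thm:ellipticpoisson}, \autoref{thm:tracexi0}, \autoref{thm:maintermcompute}, and \autoref{thm:eisensteincompute}. I would start from the formula \eqref{eq:ellipticpart} for $I_{\el}(f^n)$ supplied by \autoref{thm:ellipticpart}, and substitute the approximate functional equation (\autoref{cor:afe1}) into each factor $L^S(1, T^2 \mp 4nq^\nu)$ with the choice $A = |T^2 \mp 4nq^\nu|_{\infty,q}'^{\vartheta}$. Since \autoref{cor:afe1} is an exact identity, this substitution yields precisely the expression \eqref{eq:ellipticpartapproximate}, still restricted to $T$ with $T^2 \mp 4nq^\nu \neq \square$.

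Next I would add $\Sigma(\square)$ to both sides in order to complete the summation over all $T \in \ZZ^S$, and then apply \autoref{thm:ellipticpoisson}, which performs the semilocal Poisson summation on $\RR \times \QQ_{S_\fin}$, to convert this into a sum over the dual variable $\xi \in \ZZ^S$. Isolating the $\xi = 0$ contribution \eqref{eq:ellipticxi0} from the remaining $\xi \neq 0$ contribution gives
\[
I_{\el}(f^n) + \Sigma(\square) = \eqref{eq:ellipticxi0} + \Sigma(\xi \neq 0).
\]
I would then invoke \autoref{thm:tracexi0}, whose proof uses Mellin inversion, the Euler product of \autoref{cor:dirichletglobal}, and two contour shifts picking up residues at $s = -\tfrac{1}{2}$ and at $s \in \{0, \tfrac{1}{2}\}$, to decompose \eqref{eq:ellipticxi0} into the $1$-dimensional term \eqref{eq:mainterm}, minus the Eisenstein term \eqref{eq:eisensteinterm}, plus the two residual contour integrals that together make up $\Sigma(0)$. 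Finally, \autoref{thm:maintermcompute} identifies \eqref{eq:mainterm} with $\sum_\mu \Tr(\mu(f^n))$ and \autoref{thm:eisensteincompute} identifies \eqref{eq:eisensteinterm} with $\tfrac{1}{2}\sum_\mu \Tr((\xi_0 \otimes \mu)(f^n))$, so that a rearrangement produces the stated identity.

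Since every substantive ingredient is already in place, there is no genuinely new mathematical obstacle; the only work is to verify that the signs and coefficients match in the final assembly. The most delicate point is the residue at $s = 0$ in the second contour integral of \autoref{thm:tracexi0}: it contributes only in the exceptional case $\iota = 0$ and $\epsilon_i = 1$ for every $i$, which is exactly what restricts the integration domain to $X_0 \times Y_\mathbf{1}$ and supplies the coefficient $-2^r\prod_{p \mid n}(n_p+1)$ with the sign that — after moving $\Sigma(\square)$ across the equation — matches the definition \eqref{eq:eisensteinterm} of the Eisenstein term. Once that sign propagation, together with the coefficient $2\sqrt{n}\prod_{p \mid n}(1-p^{-n_p-1})/(1-p^{-1})$ coming from the residue at $s = -\tfrac{1}{2}$, is checked against the formulas produced by \autoref{thm:maintermcompute} and \autoref{thm:eisensteincompute}, the theorem follows.
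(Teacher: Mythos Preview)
Your assembly is exactly the paper's own proof: the theorem is stated as the combination of \autoref{thm:ellipticpoisson}, \autoref{thm:tracexi0}, \autoref{thm:maintermcompute}, and \autoref{thm:eisensteincompute}, and your outline traces that synthesis correctly. One small slip: the coefficient $2\sqrt{n}\prod_{p\mid n}(1-p^{-n_p-1})/(1-p^{-1})$ of the $1$-dimensional term arises from the residue of the \emph{first} contour integral at $s=0$ (the pole of $\widetilde{F}$), not at $s=-\tfrac12$; the residues at $s=-\tfrac12$ and $s=\tfrac12$ in the two integrals cancel by the oddness of $\widetilde{F}$.
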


Removing all ramified part in the above formulas (e.g. $q^\nu$, $y$, $\theta_{q}^{\pm,\nu}(y)$,\dots), we obtain precisely Altu\u{g}'s result \autoref{thm:maintheoremunramify}. Hence this theorem is a direct generalization of Altu\u{g}'s work.

\appendix
\section{Local orbital integral computations}\label{sec:prooflocalorbit}
We will prove \autoref{thm:maximalcompact} and \autoref{thm:iwahori} in this section. We assume that $\Delta\in \cO_{E_p}^\times$ if $E_p$ is a field in the following. We can always do this since we can replace $\Delta$ by $\Delta+a$ for some $a\in \ZZ_p$ for $p\neq 2$, or $p=2$ and $\beta\in p\ZZ_p$. If $p=2$, $\alpha\in p\ZZ_p$ and $\beta\in \ZZ_p^\times$, then $\beta^2+4\alpha\equiv 1\,(8)$. Hence the discriminant $\beta^2+4\alpha$ is a square and therefore $E_p$ is not a field.

\smallskip

\begin{proof}[Proof of \autoref{thm:maximalcompact}]
The proof of this theorem can be found in many references (e.g. \cite{langlands2004}, \cite{kottwitz2005}, and \cite{espinosa2022} for the number field case). We will prove it again for use in the next theorem.

Recall that the local orbital integral is
\[
\orb(\triv_{\cX_p^{m}};\gamma)=\int_{\G_\gamma(\QQ_p)\bs\G(\QQ_p)}\triv_{\cX_p^{m}}(g^{-1}\gamma g)\rmd g.
\]
Hence we only need to compute the measure of $g\in \G_\gamma(\QQ_p)\bs\G(\QQ_p)$ such that $g^{-1}\gamma g\in \cX_p^{m}$.

By the definition of $\cX_p^{m}$, it is easy to see that $g^{-1}\gamma g\in \cX_p^{m}$ if and only if $g^{-1}\gamma g$ fixes the lattice $L_0=\ZZ_p\oplus\ZZ_p$ (which means that $g^{-1}\gamma gL_0\subseteq L_0$) and $\mathopen{|}\det\gamma\mathclose{|}_p=\mathopen{|}\det(g^{-1}\gamma g)\mathclose{|}_p=p^{-m}$. Thus, if $\mathopen{|}\det\gamma\mathclose{|}_p\neq p^{-m}$, $g^{-1}\gamma g$ never lies in $\cX_p^{m}$ and the integral is $0$. 

From now on, we assume that $\mathopen{|}\det\gamma\mathclose{|}_p=p^{-m}$. In this case, $g^{-1}\gamma g\in \cX_p^{m}$ if and only if $g^{-1}\gamma g$ fixes the lattice $L_0=\ZZ_p\oplus\ZZ_p$, if and only if $\gamma$ fixes the lattice $gL_0$.

Since $\G(\QQ_p)$ acts transitively on the set of all lattices in $\QQ_p^2$ and the stabilizer of $L_0$ is precisely $\G(\ZZ_p)$, we can identify the space of all lattices in $E$ with $\G(\QQ_p)/\G(\ZZ_p)$ by identifying $E$ with $\QQ_p^2$ via a basis of $E$. Therefore, we only need to consider the elements in
\[
\G_\gamma(\QQ_p)\bs\G(\QQ_p)/\G(\ZZ_p)
\]
and find the contributing measure for each element satisfying $g^{-1}\gamma g\in \cX_p^{m}$.

Suppose that $g\in \G_\gamma(\QQ_p)\bs\G(\QQ_p)/\G(\ZZ_p)$ such that $g^{-1}\gamma g\in \cX_p^{m}$. Then the contributing measure is
\[
\begin{split}
   \vol(\G_\gamma(\QQ_p)\bs \G_\gamma(\QQ_p)g\G(\ZZ_p)) & =\vol(\G_\gamma(\QQ_p)\bs \G_\gamma(\QQ_p)g\G(\ZZ_p)g^{-1}) \\
     & =\vol((\G_\gamma(\QQ_p)\cap g\G(\ZZ_p)g^{-1})\bs g\G(\ZZ_p)g^{-1}).
\end{split}
\]
By the normalization of the measures, we have $\vol(g\G(\ZZ_p)g^{-1})=\vol(\G(\ZZ_p))=1$. Hence the contributing measure is
\[
\frac{1}{\vol_{\G_\gamma(\QQ_p)}(\G_\gamma(\QQ_p)\cap g\G(\ZZ_p)g^{-1})}.
\]
We will see in the proof that $\G_\gamma(\QQ_p)\cap g\G(\ZZ_p)g^{-1}\subseteq \G_\gamma(\ZZ_p)$. Hence the contributing measure for $g^{-1}\gamma g\in \cX_p^{m}$ is
\begin{equation}\label{eq:contributemeasure1}
[\G_\gamma(\ZZ_p):\G_\gamma(\QQ_p)\cap g\G(\ZZ_p)g^{-1}].
\end{equation}

Now we consider the split case, the inert case, and the ramified case separately.

\underline{\emph{Case 1:}}\ \  $p$ is split in $E$, that is, $E_p\cong \QQ_p^2$. Under this identification, $\G_\gamma$ is the diagonal torus. 

\begin{lemma}\label{lem:splitk}
A full representative of $\G_\gamma(\QQ_p)\bs\G(\QQ_p)/\G(\ZZ_p)$ can be
\[
\left\{\begin{pmatrix}
  1 & p^{-j} \\
  0 & 1 
\end{pmatrix}\middle|\, j\in \ZZ_{\geq 0}\right\}.
\]
\end{lemma}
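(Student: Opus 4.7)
The plan is to combine the Iwasawa decomposition with the freedom provided by the torus action on the left. Since $\G_\gamma$ is the diagonal split torus $\A$, every double coset has a representative that can first be reduced to upper-triangular form, and then further normalized by diagonal elements on both sides.

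First I would apply the Iwasawa decomposition $\G(\QQ_\ell)=\B(\QQ_\ell)\G(\ZZ_\ell)$ to pick a representative of each double coset of the form $b=\bigl(\begin{smallmatrix} a & c\\ 0 & d\end{smallmatrix}\bigr)$ with $a,d\in\QQ_\ell^\times$ and $c\in\QQ_\ell$. Left multiplication by $\mathrm{diag}(a,d)^{-1}\in\A(\QQ_\ell)=\G_\gamma(\QQ_\ell)$ transforms $b$ into the unipotent element $n_x:=\bigl(\begin{smallmatrix} 1 & x\\ 0 & 1\end{smallmatrix}\bigr)$ for some $x\in\QQ_\ell$. If $v_\ell(x)\geq 0$ then $n_x\in\G(\ZZ_\ell)$ and the coset is trivial, corresponding to $j=0$. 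Otherwise write $x=\ell^{-j}u$ with $j>0$ and $u\in\ZZ_\ell^\times$, and compute
\[
\begin{pmatrix} u^{-1} & 0\\ 0 & 1\end{pmatrix} n_x \begin{pmatrix} u & 0\\ 0 & 1\end{pmatrix}=\begin{pmatrix} 1 & \ell^{-j}\\ 0 & 1\end{pmatrix},
\]
where the leftmost factor lies in $\G_\gamma(\QQ_\ell)$ and the rightmost in $\G(\ZZ_\ell)$. This shows every double coset is represented by some element of the proposed list.

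It then remains to verify that distinct values of $j\geq 0$ give distinct double cosets, which is a direct valuation argument. Suppose $0\leq i<j$ and $\mathrm{diag}(a,d)\,n_{\ell^{-i}}=n_{\ell^{-j}}\,k$ for some $\mathrm{diag}(a,d)\in\A(\QQ_\ell)$ and $k=(k_{rs})\in\G(\ZZ_\ell)$. Comparing the $(2,1)$-entries forces $k_{21}=0$, hence $k_{11},k_{22}\in\ZZ_\ell^\times$; then $k_{11}=a$ and $k_{22}=d$, so $a,d\in\ZZ_\ell^\times$. The $(1,2)$-entry yields $k_{12}=a\ell^{-i}-d\ell^{-j}$, whose $\ell$-adic valuation is $-j<0$ because $v_\ell(d\ell^{-j})=-j<-i=v_\ell(a\ell^{-i})$. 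This contradicts $k_{12}\in\ZZ_\ell$, so the representatives are pairwise inequivalent.

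There is no real obstacle here; the lemma is essentially the classical description of $\A\bs\G/\cK$ for $\GL_2$ over a local field. The only thing to be careful about is the last step, where one must ensure that the argument for distinctness genuinely uses $i<j$ (rather than merely $i\neq j$) and that the unipotent representatives cannot be further absorbed into the maximal compact on the right.
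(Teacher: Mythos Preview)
Your proof is correct and follows essentially the same route as the paper's argument (which in the published text is deferred to \cite{langlands2004} and \cite{kottwitz2005}, but whose details, as given in those references, proceed exactly as you do): Iwasawa decomposition, normalization by the diagonal torus on the left and a diagonal unit on the right, and a valuation comparison for distinctness. Nothing is missing.
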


\begin{insertproof}
It can be shown by direct computation, the details are left to the reader (or see \cite{langlands2004} or \cite{kottwitz2005}). 
\end{insertproof}

Suppose that $g=(\begin{smallmatrix}  1 & p^{-j}  \\ 0 & 1 \end{smallmatrix})$ and $\gamma=(\begin{smallmatrix}  \gamma_1 & 0  \\ 0 & \gamma_2 \end{smallmatrix})$ (we identify $\gamma_i\in E$ with the roots of the characteristic polynomial of $\gamma$ in $\QQ_p$). We have
\[
gL=\ZZ_p\begin{pmatrix}
   1 \\
   0 
\end{pmatrix}+\ZZ_p\begin{pmatrix}
   p^{-j} \\
   1 
\end{pmatrix}.
\]
Hence $\gamma$ fixes $gL$ if and only if
\[
\begin{pmatrix}
   \gamma_1 \\
   0 
\end{pmatrix}=\gamma\begin{pmatrix}
   1 \\
   0 
\end{pmatrix}\in \ZZ_p\begin{pmatrix}
   1 \\
   0 
\end{pmatrix}+\ZZ_p\begin{pmatrix}
   p^{-j} \\
   1 
\end{pmatrix}\quad\text{and}\quad\begin{pmatrix}
   \gamma_1p^{-j} \\
   \gamma_2
\end{pmatrix}=\gamma\begin{pmatrix}
   p^{-j} \\
   1 
\end{pmatrix}\in \ZZ_p\begin{pmatrix}
   1 \\
   0 
\end{pmatrix}+\ZZ_p\begin{pmatrix}
   p^{-j} \\
   1 
\end{pmatrix}.
\]
From this we know that $\gamma_2$ is integral, and then $\gamma$,$\gamma_1$ are integral. Also, $\gamma_1p^{-j}\in \ZZ_p+\gamma_2p^{-j}$ and hence $(\gamma_1-\gamma_2)p^{-j}\in \ZZ_p$. Conversely, if $\gamma_1$ and $\gamma_2$ are integral and $(\gamma_1-\gamma_2)p^{-j}\in \ZZ_p$, then
\[
\begin{pmatrix}
   \gamma_1 \\
   0 
\end{pmatrix}= \gamma_1\begin{pmatrix}
   1 \\
   0 
\end{pmatrix}+0\begin{pmatrix}
   p^{-j} \\
   1 
\end{pmatrix}\quad\text{and}\quad\begin{pmatrix}
   \gamma_1p^{-j} \\
   \gamma_2
\end{pmatrix}=(\gamma_1-\gamma_2)p^{-j}\begin{pmatrix}
   1 \\
   0 
\end{pmatrix}+\gamma_2\begin{pmatrix}
   p^{-j} \\
   1 
\end{pmatrix}.
\]
Hence $\gamma$ fixes $gL_0$.

Write $\Delta=(\Delta_1,\Delta_2)\in E$ and we identify $\Delta$ with $\Delta_1$. Then $\Delta_2=\overline{\Delta}$. Since $\ZZ_p+\ZZ_p\Delta=\cO_{E_p}$, and
\[
\ZZ_p\begin{pmatrix}
   1 \\
   0 
\end{pmatrix}+\ZZ_p\begin{pmatrix}
   0 \\
   1 
\end{pmatrix}
\]
is also $\cO_{E_p}$, we have
\[
\begin{pmatrix}
  1 & \Delta_1 \\
  1 & \Delta_2 
\end{pmatrix}\in \G(\ZZ_p).
\]
Thus $|\Delta-\overline{\Delta}|_p=|\Delta_1-\Delta_2|_p=1$. Hence
\[
p^{-k}=|\kappa|_p=\frac{|\gamma_1-\gamma_2|_p}{|\Delta-\overline{\Delta}|_p}=|\gamma_1-\gamma_2|_p.
\]
Thus $\gamma$ fixes $gL_0$ if and only if $k\geq j$.

Now we find the contributing measure for each $j$. Suppose that
\[
\begin{pmatrix}
  a & 0 \\
  0 & d 
\end{pmatrix}\in \G_\gamma(\QQ_p)\cap g\G(\ZZ_p)g^{-1}.
\]
Then
\[
\begin{pmatrix}
  a & p^{-j}(a-d) \\
  0 & d 
\end{pmatrix}=g^{-1}\begin{pmatrix}
  a & 0 \\
  0 & d 
\end{pmatrix}g\in \G(\ZZ_p).
\]
Thus $a\equiv d\pmod {p^j}$. The subgroup
\[
\left\{\begin{pmatrix}
  a & 0 \\
  0 & d 
\end{pmatrix}\in \G(\ZZ_p)\,\middle|\,a\equiv d\pmod {p^j}\right\}
\]
has index $1$ if $j=0$ and has index $\#((\ZZ_p/p^j\ZZ_p)^\times)=p^j-p^{j-1}$ if $j\geq 1$. Thus the orbital integral is
\[
1+\sum_{j=1}^{k}(p^j-p^{j-1}).
\]
Since $\chi(p)=1$ in this case, we get the formula \eqref{eq:maximallocal} in the split case.

\underline{\emph{Case 2:}}\ \  $p$ is inert or ramified in $E_p$. In this case, $E_p$ is a field and we can identify $E_p$ with $\QQ_p^2$ via the basis $\{1,\Delta\}$. Thus we can embed $E_p$ into $\G(\QQ_p)$ by identifying $\xi\in E_p$ with the matrix of the linear transformation $\eta\mapsto \xi\eta$ with respect to $\{1,\Delta\}$. Suppose that $\Delta^2=\alpha+\beta\Delta$. We then have
\[
1=\begin{pmatrix}
  1 & 0 \\
  0 & 1 
\end{pmatrix}\qquad \text{and}\qquad \Delta=\begin{pmatrix}
  0 & \alpha \\
  1 & \beta 
\end{pmatrix}.
\]
Since $\Delta$ is integral, we have $\alpha,\beta\in \ZZ_p$. Moreover, since we assumed that $\Delta\in \cO_{E_p}^\times$, we have $\alpha\in \ZZ_p^\times$.

\begin{lemma}\label{lem:elliptick}
A full representative of $\G_\gamma(\QQ_p)\bs\G(\QQ_p)/\G(\ZZ_p)$ can be chosen to be
\[
\left\{\begin{pmatrix}
  1 & 0 \\
  0 & p^{j} 
\end{pmatrix}\middle|\, j\in \ZZ_{\geq 0}\right\}.
\]
\end{lemma}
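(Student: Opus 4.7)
The plan is to translate the double-coset question into a classification of $E_\ell^\times$-orbits of $\ZZ_\ell$-lattices in $E_\ell$, and then enumerate these orbits via the $\ZZ_\ell$-orders of $E_\ell$. First I would identify $\G(\QQ_\ell)/\G(\ZZ_\ell)$ with the set of $\ZZ_\ell$-lattices in $\QQ_\ell^2$ via $g\mapsto gL_0$, and then identify $\QQ_\ell^2$ with $E_\ell$ via the basis $\{1,\Delta\}$ so that $L_0$ corresponds to $\cO_{E_\ell}=\ZZ_\ell+\ZZ_\ell\Delta$ and the left action of $\G_\gamma(\QQ_\ell)=E_\ell^\times$ becomes multiplication in $E_\ell$. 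A direct computation yields
\[
g_j L_0 = \ZZ_\ell+\ell^j\ZZ_\ell\Delta = \ZZ_\ell+\ell^j\cO_{E_\ell} =: \cO_j,
\]
the $\ZZ_\ell$-order in $E_\ell$ of conductor $\ell^j$, so the lemma reduces to showing that every $\ZZ_\ell$-lattice $L\subseteq E_\ell$ is $E_\ell^\times$-equivalent to $\cO_j$ for exactly one $j\geq 0$.

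For existence, I would fix a uniformizer $\pi$ of $E_\ell$ with associated valuation $v$, and set $m=\min\{v(x):x\in L\setminus\{0\}\}$. Replacing $L$ by $\pi^{-m}L$, I may assume $L\subseteq\cO_{E_\ell}$ with $L\not\subseteq\pi\cO_{E_\ell}$; the latter condition forces $L$ to contain some $w\in\cO_{E_\ell}^\times$, and multiplying $L$ by $w^{-1}\in E_\ell^\times$ further reduces to $1\in L\subseteq\cO_{E_\ell}$. At this point $L\cap\QQ_\ell\subseteq\cO_{E_\ell}\cap\QQ_\ell=\ZZ_\ell$, so $L/\ZZ_\ell$ is a torsion-free $\ZZ_\ell$-module of rank $1$; viewed as a submodule of $\cO_{E_\ell}/\ZZ_\ell$, which is free of rank $1$ on the class of $\Delta$, it must have the form $\ell^j\ZZ_\ell\cdot[\Delta]$ for a unique $j\geq 0$, so $L=\ZZ_\ell+\ell^j\ZZ_\ell\Delta=\cO_j$.

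For uniqueness I would attach to $L$ its multiplier ring $\cO(L):=\{y\in E_\ell:yL\subseteq L\}$, which is $E_\ell^\times$-invariant since $\cO(xL)=\cO(L)$, and show $\cO(\cO_j)=\cO_j$: the inclusion $\cO_j\subseteq\cO(\cO_j)$ is immediate because $\cO_j$ is a ring, while any $y\in\cO(\cO_j)$ satisfies $y=y\cdot 1\in\cO_j$. Since $\cO_0\supsetneq\cO_1\supsetneq\cdots$ are pairwise distinct, so are the corresponding $E_\ell^\times$-orbits, yielding a one-to-one correspondence between double cosets and nonnegative integers.

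The only real technical difficulty is the normalization step in the existence argument, where the reduction to $1\in L\subseteq\cO_{E_\ell}$ uses valuation theory in $E_\ell$ and must be phrased uniformly in the inert and ramified cases; once $L$ sits inside $\cO_{E_\ell}$ and contains $1$, the classification follows immediately from the $\ZZ_\ell$-module structure $\cO_{E_\ell}=\ZZ_\ell\oplus\ZZ_\ell\Delta$.
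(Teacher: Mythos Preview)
Your proof is correct and takes a genuinely different route from the paper. The paper's own argument (the detailed version is actually commented out and the published text just refers to \cite{langlands2004,kottwitz2005}) proceeds by direct matrix manipulation: start from the Iwasawa decomposition, reduce an upper-triangular representative $(\begin{smallmatrix}1&b\\0&d\end{smallmatrix})$ by left-multiplying by suitable elements of $\QQ_\ell+\QQ_\ell\Delta$ and right-multiplying by $\G(\ZZ_\ell)$, and then run a case analysis on $v_\ell(b)$ versus $v_\ell(d)$ to land on $(\begin{smallmatrix}1&0\\0&\ell^j\end{smallmatrix})$; distinctness is checked by another explicit matrix computation. Your approach instead reinterprets the double coset space as the set of $E_\ell^\times$-orbits of $\ZZ_\ell$-lattices in $E_\ell$, normalizes any lattice to one containing $1$ and contained in $\cO_{E_\ell}$, and then reads off the order $\cO_j$ from the submodule structure of $\cO_{E_\ell}/\ZZ_\ell\cong\ZZ_\ell$; uniqueness comes for free from the multiplier-ring invariant $\cO(L)$. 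Your argument is more structural and handles the inert and ramified cases uniformly without any case split, whereas the paper's computation is more hands-on and makes the representatives appear by explicit reduction. Both are standard; yours has the advantage that the invariant $\cO(L)=\cO_j$ is exactly what reappears later in the paper when one checks which $g_j$ have $\gamma$ fixing $g_jL_0$, so it meshes slightly better with the subsequent orbital-integral computation.
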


\begin{insertproof}
By direct computation, or see \cite{langlands2004} or \cite{kottwitz2005}. 
\end{insertproof}
Now we come back to the proof of the inert case and the ramified case. By the above lemma we know that it suffices to consider the lattices $L=gL_0$ for $g=(\begin{smallmatrix} 1 & 0 \\ 0 & p^{j} \end{smallmatrix})$ with $j\geq 0$. In this case, we have $L=\ZZ_p+p^j\ZZ_p\Delta$. Hence $\gamma L\subseteq L$ if and only if $\gamma\in \ZZ_p+p^j\ZZ_p\Delta$ and $p^j\Delta \gamma\in \ZZ_p+p^j\ZZ_p\Delta$. The first condition implies that $\gamma$ is integral. Now we analyze the second condition under the assumption that $\gamma$ is integral. Suppose that $\gamma=a+p^j b\Delta$ with $a,b\in \ZZ_p$. Then $\gamma-\overline{\gamma}=p^j b(\Delta-\overline{\Delta})$. This means that $\kappa=p^j b$. Hence we must have $k\geq j$. Conversely, if $k\geq j$ and $\gamma$ is integral, then $\gamma=a+\kappa\Delta\in \ZZ_p+p^j\ZZ_p\Delta$ and
\[
p^j\Delta\gamma\in p^j\Delta\ZZ_p+p^{2j}\Delta^2\ZZ_p=p^j\Delta\ZZ_p+p^{2j}(\alpha+\beta\Delta)\ZZ_p\subseteq \ZZ_p+p^j\ZZ_p\Delta.
\]
Thus $\gamma$ fixes $L=gL_0$ if and only if $k\geq j$.

The last work is to compute the contributing measure for each $j$. Suppose that 
\[
a+c\Delta\in \G_\gamma(\QQ_p)\cap \begin{pmatrix}
  1 & 0 \\
  0 & p^{j} 
\end{pmatrix}\G(\ZZ_p)\begin{pmatrix}
  1 & 0 \\
  0 & p^{-j} 
\end{pmatrix}.
\]
Then
\[
\begin{pmatrix}
  a & cp^{j}\alpha \\
  cp^{-j} & a+c\beta
\end{pmatrix}=\begin{pmatrix}
  1 & 0 \\
  0 & p^{-j} 
\end{pmatrix}(a+c\Delta)\begin{pmatrix}
  1 & 0 \\
  0 & p^{j} 
\end{pmatrix}\in \G(\ZZ_p).
\]

If $j=0$, this condition is just $a+c\Delta\in \G(\ZZ_p)$, that is, $a+c\Delta\in \cO_{E_p}^\times=\G_\gamma(\ZZ_p)$. Hence the contributing measure is $1$.
If $j\geq 1$, this condition is equivalent to $c\in p^j\ZZ_p$ and $a\in \ZZ_p^\times$. Hence the index in $\G_\gamma(\ZZ_p)$ is
\[
[\cO_{E_p}^\times:\ZZ_p^\times +p^j\ZZ_p\Delta]=\frac{[\cO_{E_p}^\times:1+p^j\cO_{E_p}]}{[\ZZ_p^\times +p^j\ZZ_p\Delta:1+p^j\cO_{E_p}]}.
\]

If we choose any representative $\{h_1,\dots,h_n\}$ of $\ZZ_p^\times/(1+p^j\ZZ_p)$, it is easy to see that it is also a representative of $(\ZZ_p^\times +p^j\ZZ_p\Delta)/(1+p^j\cO_{E_p})$. Hence 
\[
[\cO_{E_p}^\times:\ZZ_p^\times +p^j\ZZ_p\Delta]=\frac{[\cO_{E_p}^\times:1+p^j\cO_{E_p}]}{[\ZZ_p^\times :1+p^j\ZZ_p]}.
\]

If $E_p/\QQ_p$ is inert, then $p$ is a prime in $E_p$ with inertia degree $2$. Thus
\[
[\cO_{E_p}^\times:\ZZ_p^\times +p^j\ZZ_p\Delta]=\frac{[\cO_{E_p}^\times:1+p^j\cO_{E_p}]}{[\ZZ_p^\times :1+p^j\ZZ_p]}=\frac{p^{2j-2}(p^2-1)}{(p-1)p^{j-1}}=p^{j-1}(p+1).
\]
Hence the orbital integral is
\[
1+\sum_{j=1}^{k}p^{j-1}(p+1).
\]
Since $\chi(p)=-1$ in this case, we get the formula \eqref{eq:maximallocal} in the inert case. 

If $E_p/\QQ_p$ is ramified, then $p=\mf{p}^2$, where $\mf{p}$ is a prime in $E_p$ with inertia degree $1$. Thus
\[
[\cO_{E_p}^\times:\ZZ_p^\times +p^j\ZZ_p\Delta]=\frac{[\cO_{E_p}^\times:1+\mf{p}^{2j}\cO_{E_p}]}{[\ZZ_p^\times :1+p^j\ZZ_p]}=\frac{p^{2j-1}(p-1)}{(p-1)p^{j-1}}=p^j.
\]
Hence the orbital integral is
\[
1+\sum_{j=1}^{k}p^j.
\]
Since $\chi(p)=0$ in this case, we get the formula \eqref{eq:maximallocal} in the ramified case. 
\end{proof}

\smallskip

\begin{proof}[Proof of \autoref{thm:iwahori}]
The idea is similar. By the definition of the Iwahori subgroup, it is easy to see that $g^{-1}\gamma g\in \cI_p$ if and only if $g^{-1}\gamma g$ fixes the lattices $L_0=\ZZ_p\oplus\ZZ_p$ and $L_1=\ZZ_p\oplus p\ZZ_p$, and satisfies $\mathopen{|}\det\gamma\mathclose{|}_p=\mathopen{|}\det(g^{-1}\gamma g)\mathclose{|}_p=1$. Hence for $\mathopen{|}\det\gamma\mathclose{|}_p\neq 1$, $g^{-1}\gamma g$ never lies in $\cI_p$ and hence the integral is $0$. 

From now on, we assume that $\mathopen{|}\det\gamma\mathclose{|}_p=1$. In this case, $g^{-1}\gamma g\in \cI_p$ if and only if $g^{-1}\gamma g$ fixes $L_0$ and $L_1$, if and only if $\gamma$ fixes the lattices $gL_0$ and $gL_1$.

Observe that if $h\in \cI_p$, then $ghL_0=gL_0$ and $ghL_1=gL_1$. Hence we only need to consider representatives in the double coset
\[
\G_\gamma(\QQ_p)\bs \G(\QQ_p)/ \cI_p.
\]
If $g\in \G_\gamma(\QQ_p)\bs \G(\QQ_p)/ \cI_p$ such that $\gamma$ fixes the lattice $gL_0$ and $gL_1$, the contributing measure is
\[
\vol(\G_\gamma(\QQ_p)\bs \G_\gamma(\QQ_p)g\cI_p)=\vol(\G_\gamma(\QQ_p)\bs \G_\gamma(\QQ_p)g\cI_p g^{-1})=\vol((\G_\gamma(\QQ_p)\cap g\cI_p g^{-1})\bs g\cI_p g^{-1}).
\]
We will show that $\G_\gamma(\QQ_p)\cap g\cI_p g^{-1}\subseteq \G_\gamma(\ZZ_p)$. Hence the contributing measure is
\[
\vol(g\cI_p g^{-1})[\G_\gamma(\ZZ_p):\G_\gamma(\QQ_p)\cap g\cI_p g^{-1}].
\]

\begin{lemma}\label{lem:iwahoriindex}
We have $[\cK_p:\cI_p]=p+1$. Moreover, a full representative of $\cK_p/\cI_p$ can be chosen to be
\[
\left\{\begin{pmatrix}
    1 & 0 \\
   0 & 1 
  \end{pmatrix}\right\}\sqcup \left\{\begin{pmatrix}
    \lambda & 1 \\
     1 & 0
  \end{pmatrix}\middle| \, \lambda \in \FF_p\right\},
\]
where we use the identification $\FF_p\cong \ZZ_p/p\ZZ_p$.
\end{lemma}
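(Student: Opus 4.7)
The plan is to identify $\cI_\ell$ as the preimage of the upper-triangular Borel subgroup $B(\FF_\ell) \subset \GL_2(\FF_\ell)$ under the reduction-mod-$\ell$ map $\mathrm{red}\colon \cK_\ell = \GL_2(\ZZ_\ell) \twoheadrightarrow \GL_2(\FF_\ell)$, which is immediate from the definition of $\cI_\ell$. The reduction is surjective because any lift to $\M_2(\ZZ_\ell)$ of an element of $\GL_2(\FF_\ell)$ has determinant whose reduction lies in $\FF_\ell^\times$, and is therefore a unit in $\ZZ_\ell$. Consequently $\mathrm{red}$ induces a bijection
\[
\cK_\ell/\cI_\ell \xrightarrow{\sim} \GL_2(\FF_\ell)/B(\FF_\ell).
\]
The standard action of $\GL_2(\FF_\ell)$ on $\PP^1(\FF_\ell)$ is transitive with stabilizer of $[1:0]$ equal to $B(\FF_\ell)$, so this quotient is in bijection with $\PP^1(\FF_\ell)$, whose cardinality is $\ell+1$. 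This establishes the index assertion.

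For the explicit representatives, I would first check that each listed matrix lies in $\cK_\ell$: the identity trivially, and each $\begin{pmatrix}\lambda & 1 \\ 1 & 0\end{pmatrix}$ because its determinant is $-1 \in \ZZ_\ell^\times$. Next, compute the action on $[1:0]$: the identity fixes $[1:0]$, while $\begin{pmatrix}\lambda & 1 \\ 1 & 0\end{pmatrix}$ sends $(1,0)^\top$ to $(\lambda,1)^\top$, i.e., to the point $[\lambda:1]$. As $\lambda$ ranges over $\FF_\ell$, the $\ell+1$ resulting points $\{[1:0]\} \sqcup \{[\lambda:1] : \lambda \in \FF_\ell\}$ exhaust $\PP^1(\FF_\ell)$, so the matrices map to distinct cosets and thus form a complete system of representatives.

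If one prefers to verify distinctness directly (without invoking $\PP^1$), the calculation
\[
\begin{pmatrix}\lambda & 1\\ 1 & 0\end{pmatrix}^{-1}\begin{pmatrix}\mu & 1\\ 1 & 0\end{pmatrix} = \begin{pmatrix}0 & 1\\ 1 & -\lambda\end{pmatrix}\begin{pmatrix}\mu & 1\\ 1 & 0\end{pmatrix} = \begin{pmatrix}1 & 0\\ \mu-\lambda & 1\end{pmatrix}
\]
shows this product lies in $\cI_\ell$ iff $\mu\equiv\lambda\pmod \ell$, and $\begin{pmatrix}\lambda & 1\\ 1 & 0\end{pmatrix}\notin\cI_\ell$ since its lower-left entry is $1\notin\ell\ZZ_\ell$. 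The proof is entirely routine; the only step requiring any thought is the surjectivity of $\mathrm{red}$, and even that is standard, so I do not anticipate any real obstacle.
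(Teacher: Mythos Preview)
Your proof is correct and follows essentially the same approach as the paper: both reduce modulo $\ell$ to identify $\cK_\ell/\cI_\ell$ with $\GL_2(\FF_\ell)/B(\FF_\ell)$, then exhibit the representatives. The only cosmetic difference is that the paper invokes the Bruhat decomposition to write down the cosets, whereas you use the transitive action on $\PP^1(\FF_\ell)$; these are of course the same thing.
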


\begin{insertproof}
We have a surjective group homomorphism $\cK_p=\GL_2(\ZZ_p)\twoheadrightarrow \GL_2(\FF_p)$ defined by modulo $p$, and $\cI_p$ is the preimage of the Borel subgroup $B=\B(\FF_p)$.
By Bruhat decomposition we have
\[
\GL_2(\FF_p)/B=B\sqcup\bigsqcup_{\lambda\in \FF_p}\begin{pmatrix}
    \lambda & 1 \\
     1 & 0
  \end{pmatrix}B.
\]
Hence the conclusion holds by lifting the decomposition back to $\cK_p$.
\end{insertproof}
From this lemma we know that $\vol(\cI_p)=1/(p+1)$. Hence the contributing volume for $g\cI_p$ is
\[
\frac{[\G_\gamma(\ZZ_p):\G_\gamma(\QQ_p)\cap g\cI_p g^{-1}]}{p+1}.
\]
Moreover, if $h_1,h_2,\dots,h_n$ form a set of representatives of the double coset $\G_\gamma(\QQ_p)\bs \G(\QQ_p)/\cK_p$, then by this lemma we know that for any $g\in \G(\QQ_p)$, there exists some $h_i$ or $h_i(\begin{smallmatrix}  \lambda & 1 \\  1 & 0 \end{smallmatrix})$ such that this element is equivalent to $g$ in $\G_\gamma(\QQ_p)\bs \G(\QQ_p)/\cK_p$.

In the previous theorem we have obtained a full set of representatives $h_1,\dots,h_n$ of $\G_\gamma(\QQ_p)\bs \G(\QQ_p)/\cK_p$ in each case. Hence to find a full representative of $\G_\gamma(\QQ_p)\bs \G(\QQ_p)/\cI_p$, we only need to consider the equivalence classes of the elements of $h_i$ and $h_i(\begin{smallmatrix}  \lambda & 1 \\  1 & 0 \end{smallmatrix})$ in $\G_\gamma(\QQ_p)\bs \G(\QQ_p)/\cI_p$.

\underline{\emph{Case 1:}}\ \  $p$ is split in $E$, that is, $E_p\cong \QQ_p^2$. Under this identification, $\G_\gamma$ is the diagonal torus. 
\begin{lemma}
A full representative of $\G_\gamma(\QQ_p)\bs\G(\QQ_p)/\cI_p$ can be
\[
\left\{\begin{pmatrix}
  1 & p^{-j} \\
  0 & 1 
\end{pmatrix}\middle|\, j\in \ZZ_{\geq 0}\right\}\sqcup \left\{\begin{pmatrix}
  1 & p^{-j} \\
  0 & 1 
\end{pmatrix}\begin{pmatrix}
  0 & 1 \\
  1 & 0 
\end{pmatrix}\middle|\, j\in \ZZ_{\geq 0}\right\}\sqcup\left\{\begin{pmatrix}
  0 & 1 \\
  1 & 0 
\end{pmatrix}\right\}.
\]
\end{lemma}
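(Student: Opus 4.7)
The plan is to refine the $\cK_\ell$-double coset decomposition of \autoref{lem:splitk} into an $\cI_\ell$-double coset decomposition, using \autoref{lem:iwahoriindex}. Write $h_j=\bigl(\begin{smallmatrix}1 & \ell^{-j}\\ 0 & 1\end{smallmatrix}\bigr)$, $w=\bigl(\begin{smallmatrix}0 & 1\\ 1 & 0\end{smallmatrix}\bigr)$, and $w_\lambda=\bigl(\begin{smallmatrix}\lambda & 1\\ 1 & 0\end{smallmatrix}\bigr)$; note the elementary identity $h_0 w_{-1}=w$. By \autoref{lem:iwahoriindex}, every coset in $\G_\gamma(\QQ_\ell)h_j\cK_\ell/\cI_\ell$ is represented by either $h_j\cI_\ell$ or some $h_j w_\lambda\cI_\ell$ with $\lambda\in\FF_\ell$, so the task reduces to determining the $\G_\gamma(\QQ_\ell)$-equivalences among these $\ell+1$ cosets.

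I would use the standard fibration argument that the $\G_\gamma(\QQ_\ell)$-orbits on $\G_\gamma(\QQ_\ell)h_j\cK_\ell/\cI_\ell$ are in bijection with the orbits of the subgroup
\[
H_j:=(h_j^{-1}\A(\QQ_\ell)h_j)\cap\cK_\ell
\]
acting on $\cK_\ell/\cI_\ell\cong\PP^1(\FF_\ell)$ by left multiplication, the bijection sending the $H_j$-orbit of $k\cI_\ell$ to the $\G_\gamma(\QQ_\ell)$-orbit of $h_j k\cI_\ell$. A direct conjugation yields
\[
h_j^{-1}\begin{pmatrix}a & 0\\ 0 & b\end{pmatrix}h_j=\begin{pmatrix}a & (a-b)\ell^{-j}\\ 0 & b\end{pmatrix},
\]
so that $H_j=\{\bigl(\begin{smallmatrix}a & (a-b)\ell^{-j}\\ 0 & b\end{smallmatrix}\bigr):a,b\in\ZZ_\ell^\times,\ a\equiv b\pmod{\ell^j}\}$.

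Next, I would carry out the orbit analysis on $\PP^1(\FF_\ell)=\FF_\ell\sqcup\{\infty\}$. For $j\geq 1$, the image of $H_j$ in $\PGL_2(\FF_\ell)$ is the full unipotent radical $\{\bigl(\begin{smallmatrix}1 & c\\ 0 & 1\end{smallmatrix}\bigr):c\in\FF_\ell\}$, which fixes $\infty$ and acts on $\FF_\ell$ by translations; the two resulting orbits $\{\infty\}$ and $\FF_\ell$ yield the representatives $h_j$ and $h_j w$. For $j=0$, the image of $H_0$ in $\PGL_2(\FF_\ell)$ is the strictly smaller one-parameter subgroup $\{\bigl(\begin{smallmatrix}1 & 1-d\\ 0 & d\end{smallmatrix}\bigr):d\in\FF_\ell^\times\}$, which acts on $[x:1]$ by $x\mapsto(x+1)/d-1$, fixing both $\infty$ and $x=-1$ and transitive on $\FF_\ell\setminus\{-1\}$. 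This yields three orbits with representatives $h_0\cI_\ell$, $h_0 w_{-1}\cI_\ell$, and $h_0 w\cI_\ell$; the identity $h_0 w_{-1}=w$ then identifies the second representative as the matrix $w$.

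Distinctness of cosets for different values of $j\geq 0$ follows immediately from \autoref{lem:splitk}, since the projection $\G_\gamma(\QQ_\ell)\bs\G(\QQ_\ell)/\cI_\ell\twoheadrightarrow\G_\gamma(\QQ_\ell)\bs\G(\QQ_\ell)/\cK_\ell$ separates them. The main (mild) obstacle is to recognise that the shrinkage of the image of $H_0$ in $\PGL_2(\FF_\ell)$ (compared to $H_j$ for $j\geq 1$) produces an extra fixed point at $x=-1$, and that its $h_0$-lifted representative $h_0 w_{-1}$ happens to equal $w$, a matrix \emph{not} of the form $h_j$ or $h_j w$ for any $j\geq 0$; this is exactly what forces the isolated third family in the statement of the lemma.
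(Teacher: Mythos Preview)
Your proof is correct and follows essentially the same approach as the paper: both refine the $\cK_\ell$-decomposition of \autoref{lem:splitk} using the coset representatives of \autoref{lem:iwahoriindex}, and both reduce the problem for each fixed $j$ to understanding how the stabilizer $H_j=(h_j^{-1}\A(\QQ_\ell)h_j)\cap\cK_\ell$ permutes the $\ell+1$ Iwahori cosets. The only difference is packaging: the paper checks the equivalences $h_j\sim h_jw_\lambda$ and $h_jw_\lambda\sim h_jw_\mu$ by direct matrix computation (writing out $w_\mu^{-1}h_j^{-1}th_jw_\lambda$ and asking when it lies in $\cI_\ell$), whereas you pass to the image of $H_j$ in $\PGL_2(\FF_\ell)$ and read off its orbits on $\PP^1(\FF_\ell)$. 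Your phrasing is a bit cleaner and makes the case distinction between $j=0$ and $j\geq 1$ more transparent (the shrinkage of the image produces the extra fixed point at $-1$), but the underlying computation is identical.
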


\begin{insertproof}
By \autoref{lem:splitk} and \autoref{lem:iwahoriindex} we only need to consider the equivalence classes of
\[
\begin{pmatrix}
  1 & p^{-j} \\
  0 & 1 
\end{pmatrix}\quad\text{and}\quad\begin{pmatrix}
  1 & p^{-j} \\
  0 & 1 
\end{pmatrix}\begin{pmatrix}
  \lambda & 1 \\
  1 & 0 
\end{pmatrix}\ (\lambda\in \FF_p)
\]
with $j\geq 0$. If two elements of such type are equivalent in $\G_\gamma(\QQ_p)\bs\G(\QQ_p)/\cI_p$, they are also equivalent in $\G_\gamma(\QQ_p)\bs\G(\QQ_p)/\cK_p$. Hence by \autoref{lem:splitk} we know that their $j$'s must equal. Hence we only need to determine whether two given elements in
\[
\left\{\begin{pmatrix}
  1 & p^{-j} \\
  0 & 1 
\end{pmatrix}\right\}\cup \left\{\begin{pmatrix}
  1 & p^{-j} \\
  0 & 1 
\end{pmatrix}\begin{pmatrix}
    \lambda & 1 \\
     1 & 0
  \end{pmatrix}\middle|\, \lambda\in \FF_p\right\}
\]
are equivalent for fixed $j$.

Observe that $g_1$ and $g_2$ are equivalent in the double coset $H\bs G/K$ if and only if there exists $h\in H$ such that $g_1^{-1}h g_2\in K$.
If $(\begin{smallmatrix} 1 & p^{-j} \\ 0  & 1  \end{smallmatrix})$ and $(\begin{smallmatrix} 1 & p^{-j} \\ 0  & 1  \end{smallmatrix})(\begin{smallmatrix} \lambda & 1 \\ 1  & 0  \end{smallmatrix})$ are equivalent, then there exist $a,d\in \QQ_p^\times$ such that
\[
\begin{pmatrix}
  ap^{-j}-dp^{-j}+a\lambda & a \\
  d & 0
\end{pmatrix}=\begin{pmatrix}
  1 & p^{-j} \\
  0 & 1 
\end{pmatrix}^{-1}
\begin{pmatrix}
  a & 0 \\
  0 & d 
\end{pmatrix}
\begin{pmatrix}
  1 & p^{-j} \\
  0 & 1 
\end{pmatrix}\begin{pmatrix}
    \lambda & 1 \\
     1 & 0
  \end{pmatrix}\in \cI_p,
\]
which is impossible. 

Now given $\lambda\neq \mu\in \FF_p$, we will determine whether
\[
\begin{pmatrix}
  1 & p^{-j} \\
  0 & 1 
\end{pmatrix}\begin{pmatrix}
    \lambda & 1 \\
     1 & 0
  \end{pmatrix}\qquad\text{and}\qquad\begin{pmatrix}
  1 & p^{-j} \\
  0 & 1 
\end{pmatrix}\begin{pmatrix}
    \mu & 1 \\
     1 & 0
  \end{pmatrix}
\]
are equivalent. It suffices to determine whether
\[
\begin{pmatrix}
  d & 0 \\
  ap^{-j}-dp^{-j}+a\lambda-d\mu & a
\end{pmatrix}=\begin{pmatrix}
    \mu & 1 \\
     1 & 0
  \end{pmatrix}^{-1}\begin{pmatrix}
  1 & p^{-j} \\
  0 & 1 
\end{pmatrix}^{-1}
\begin{pmatrix}
  a & 0 \\
  0 & d 
\end{pmatrix}
\begin{pmatrix}
  1 & p^{-j} \\
  0 & 1 
\end{pmatrix}\begin{pmatrix}
    \lambda & 1 \\
     1 & 0
  \end{pmatrix}\in \cI_p.
\]

Suppose that $j>0$. In this case, we can write $s=\mu-\lambda$ and $a=sp^j+1$, $d=1$. Then $a,d\in \ZZ_p^\times$ and
\[
ap^{-j}-dp^{-j}+a\lambda-d\mu\equiv s+\lambda-\mu\equiv 0\pmod p.
\]
Hence they are equivalent. Thus for $j>0$ there are two equivalent classes:
\[
\left\{\begin{pmatrix}
  1 & p^{-j} \\
  0 & 1 
\end{pmatrix}\right\}\quad\left\{\begin{pmatrix}
  1 & p^{-j} \\
  0 & 1 
\end{pmatrix}\begin{pmatrix}
  \lambda & 1 \\
  1 & 0 
\end{pmatrix}\middle|\,\lambda\in \FF_p\right\}.
\]

Now we suppose that $j=0$. In this case, the two elements are equivalent if and only if there exist $a,d\in \ZZ_p^\times$ such that $(\lambda+1)a-(\mu+1)d\in p\ZZ_p$. If $\lambda,\mu\neq -1$ we can always find such $a$ and $d$, while if $\lambda=-1$ or $\mu=-1$ we cannot find such $a$ and $d$. Hence for $j=0$ there are three equivalent classes:
\[
\left\{\begin{pmatrix}
  1 & p^{-j} \\
  0 & 1 
\end{pmatrix}\right\},\quad\left\{\begin{pmatrix}
  1 & p^{-j} \\
  0 & 1 
\end{pmatrix}\begin{pmatrix}
  \lambda & 1 \\
  1 & 0 
\end{pmatrix}\middle|\,\lambda\in \FF_p-\{-1\}\right\},\quad \left\{\begin{pmatrix}
  1 & p^{-j} \\
  0 & 1 
\end{pmatrix}\begin{pmatrix}
  -1 & 1 \\
  1 & 0 
\end{pmatrix}=\begin{pmatrix}
  0 & 1 \\
  1 & 0 
\end{pmatrix}\right\}.\qedhere
\]
\end{insertproof}
By the previous lemma we only need to consider the following $g$'s:

Suppose that $g=(\begin{smallmatrix}  1 & p^{-j}  \\ 0 & 1 \end{smallmatrix})$ with $j\geq 0$. Observe that
\[
\begin{pmatrix}
  1 & p^{-j} \\
  0 & 1 
\end{pmatrix}L_1=\begin{pmatrix}
  1 & p^{-j+1} \\
  0 & p 
\end{pmatrix}L_0,
\]
and 
\[
\begin{pmatrix}
  1 & p^{-j+1} \\
  0 & p 
\end{pmatrix}\sim \begin{pmatrix}
  1 & p^{-j+1} \\
  0 & 1 
\end{pmatrix}
\]
in $\G_\gamma(\QQ_p)\bs \G(\QQ_p)/\cK_p$. Hence by the computation of \autoref{thm:maximalcompact} in the split case we know that $\gamma$ fixes $gL_0$ if and only if $\gamma$ is integral and $j\leq k$, and fixes $gL_1$ if and only if $\gamma$ is integral and $j-1\leq k$. Thus $\gamma$ fixes both $gL_0$ and $gL_1$ if and only if $\gamma$ is integral and $j\leq k$.

Now we find the contributing measure. Suppose that 
\[
\begin{pmatrix}
  a & 0 \\
  0 & d 
\end{pmatrix}\in \G_\gamma(\QQ_p)\cap g\cI_p g^{-1}.
\]
Then
\[
\begin{pmatrix}
  a & p^{-j}(a-d) \\
  0 & d 
\end{pmatrix}=g^{-1}\begin{pmatrix}
  a & 0 \\
  0 & d 
\end{pmatrix}g\in \cI_p,
\]
and thus $a\equiv d\pmod {p^j}$. The subgroup
\[
\left\{\begin{pmatrix}
  a & 0 \\
  0 & d 
\end{pmatrix}\in \G(\ZZ_p)\,\middle|\,a\equiv d\pmod {p^j}\right\}
\]
has index $1$ if $j=0$ and has index $=p^j-p^{j-1}$ if $j\geq 1$. Hence the contributing measure is $1/(p+1)$ for $j=0$ and is $(p^j-p^{j-1})/(p+1)$ for $j\geq 1$.

If $g=(\begin{smallmatrix}  1 & p^{-j}  \\ 0 & 1 \end{smallmatrix})(\begin{smallmatrix}  \lambda & 1 \\ 1 & 0 \end{smallmatrix})$ with $j\geq 0$, or $\lambda\neq -1$ and $j=0$. Observe that
\[
\begin{pmatrix}
  1 & p^{-j} \\
  0 & 1 
\end{pmatrix}\begin{pmatrix}
  \lambda & 1 \\
 1 & 0 
\end{pmatrix}L_0=\begin{pmatrix}
  1 & p^{-j} \\
  0 & 1 
\end{pmatrix}L_0
\]
and
\[
\begin{pmatrix}
  1 & p^{-j} \\
  0 & 1 
\end{pmatrix}\begin{pmatrix}
  \lambda & 1 \\
 1 & 0 
\end{pmatrix}L_1=\begin{pmatrix}
  p & \lambda+p^{-j} \\
  0 & 1 
\end{pmatrix}L_0
\]
with
\[
\begin{pmatrix}
  p & \lambda+p^{-j} \\
  0 & 1 
\end{pmatrix}\sim \begin{pmatrix}
  1 & p^{-1}(\lambda+p^{-j}) \\
  0 & 1 
\end{pmatrix}
\]
in $\G_\gamma(\QQ_p)\bs \G(\QQ_p)/\cK_p$. Hence $\gamma$ fixes $gL_0$ if and only if $\gamma$ is integral and $j\leq k$, and fixes $gL_1$ if and only if $\gamma$ is integral and $j+1\leq k$ (since $\lambda\neq -1$ in the $j=0$ case). Thus $\gamma$ fixes both $gL_0$ and $gL_1$ if and only if $\gamma$ is integral and $j\leq k-1$.

Suppose that 
$
(\begin{smallmatrix}
  a & 0 \\
  0 & d 
\end{smallmatrix})\in \G_\gamma(\QQ_p)\cap g\cI_p g^{-1}
$. Then
\[
\begin{pmatrix}
  a & 0 \\
  (a-d)(p^{-j}+\lambda) & d 
\end{pmatrix}=g^{-1}\begin{pmatrix}
  a & 0 \\
  0 & d 
\end{pmatrix}g\in \cI_p,
\]
and thus $a\equiv d\pmod {p^{j+1}}$. Thus the contributing measure is $(p^{j+1}-p^j)/(p+1)$.

Finally, suppose that $g=(\begin{smallmatrix}  1 & 1  \\ 0 & 1 \end{smallmatrix})(\begin{smallmatrix}  -1 & 1 \\ 1 & 0 \end{smallmatrix})=(\begin{smallmatrix}  0 & 1  \\ 1 & 0 \end{smallmatrix})$. We have $gL_0=L_0$ and
\[
gL_1=\begin{pmatrix}
       0 & p \\
       1 & 0 
     \end{pmatrix}L_0=\begin{pmatrix}
       p & 0 \\
       0 & 1 
     \end{pmatrix}L_0
\]
with 
\[
\begin{pmatrix}
       p & 0 \\
       0 & 1 
     \end{pmatrix}\sim \begin{pmatrix}
       1 & 0 \\
       0 & 1 
     \end{pmatrix}
\]
in $\G_\gamma(\QQ_p)\bs \G(\QQ_p)/\cK_p$. Hence $\gamma$ fixes both $gL_0$ and $gL_1$ if and only if $\gamma$ is integral and $0=j\leq k$. 

Suppose that 
$
(\begin{smallmatrix}
  a & 0 \\
  0 & d 
\end{smallmatrix})\in \G_\gamma(\QQ_p)\cap g\cI_p g^{-1}
$. Then
\[
\begin{pmatrix}
 d & 0 \\
  0 & a 
\end{pmatrix}=g^{-1}\begin{pmatrix}
  a & 0 \\
  0 & d 
\end{pmatrix}g\in \cI_p,
\]
and thus $a,d\in \ZZ_p^\times$. Hence the contributing measure is $1/(p+1)$.

To sum up, we get
\[
\orb(\triv_{\cI_p};\gamma)= \frac{1}{p+1}+\sum_{j=1}^{k}\frac{p^{j}-p^{j-1}}{p+1}+
\sum_{j=0}^{k-1}\frac{p^{j+1}-p^{j}}{p+1}+\frac{1}{p+1}, 
\]
which simplifies to \eqref{eq:iwahori} since $\chi(p)=1$ in this case.

\underline{\emph{Case 2:}}\ \  $p$ is inert or ramified in $E$. In this case, $E_p$ is a field and we can identify $E$ with $\QQ_p^2$ via the basis $\{1,\Delta\}$. Recall that $\Delta$ is identified with $(\begin{smallmatrix}  0 & \alpha  \\ 1 & \beta \end{smallmatrix})$ under this identification.
\begin{lemma}
\begin{enumerate}[itemsep=0pt,parsep=0pt,topsep=0pt,leftmargin=0pt,labelsep=2.5pt,itemindent=15pt,label=\upshape{(\arabic*)}]
  \item If $p$ is inert, a full representative of $\G_\gamma(\QQ_p)\bs\G(\QQ_p)/\cI_p$ can be
\[
\left\{\begin{pmatrix}
  1 & 0 \\
  0 & p^{j} 
\end{pmatrix}\middle|\, j\in \ZZ_{\geq 0}\right\}\sqcup \left\{\begin{pmatrix}
  1 & 0 \\
  0 & p^{j}  
\end{pmatrix}\begin{pmatrix}
  0 & 1 \\
  1 & 0 
\end{pmatrix}\middle|\, j\in \ZZ_{\geq 1}\right\}.
\]
\item If $p$ is ramified, a full representative of $\G_\gamma(\QQ_p)\bs\G(\QQ_p)/\cI_p$ can be
\[
\left\{\begin{pmatrix}
  1 & 0 \\
  0 & p^{j} 
\end{pmatrix}\middle|\, j\in \ZZ_{\geq 0}\right\}\sqcup \left\{\begin{pmatrix}
  1 & 0 \\
  0 & p^{j}  
\end{pmatrix}\begin{pmatrix}
  0 & 1 \\
  1 & 0 
\end{pmatrix}\middle|\, j\in \ZZ_{\geq 1}\right\}\sqcup\left\{\begin{pmatrix}
  \lambda_0 & 1 \\
  1 & 0 
\end{pmatrix}\right\},
\]
where $\lambda_0$ is the unique solution in $\FF_p$ such that $\lambda^2+\beta\lambda-\alpha=0$ (since $p$ is ramified, the solution is unique).
\end{enumerate} 
\end{lemma}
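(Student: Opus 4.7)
The plan is to combine the two preparatory lemmas just proved: by \autoref{lem:elliptick}, every element of $\G(\QQ_\ell)$ is $\G_\gamma$-$\cK_\ell$-equivalent to some $h_j:=\bigl(\begin{smallmatrix}1&0\\0&\ell^j\end{smallmatrix}\bigr)$ with $j\geq 0$, and by \autoref{lem:iwahoriindex} the quotient $\cK_\ell/\cI_\ell$ has representatives $\{I\}\cup\{s_\lambda:\lambda\in\FF_\ell\}$ with $s_\lambda=\bigl(\begin{smallmatrix}\lambda&1\\1&0\end{smallmatrix}\bigr)$. Hence every $g\in\G(\QQ_\ell)$ is $\G_\gamma$-$\cI_\ell$-equivalent to some $h_j$ or $h_j s_\lambda$, and since elements with distinct $j$ remain inequivalent already modulo $\cK_\ell$, the entire task reduces, for each fixed $j\geq 0$, to determining the equivalence classes among the $\ell+1$ candidates $\{h_j\}\cup\{h_j s_\lambda\}_{\lambda\in\FF_\ell}$.

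Concretely, I would test when $(h_j g_1)^{-1}(a+c\Delta)(h_j g_2)\in\cI_\ell$ for some $a+c\Delta\in E_\ell^\times$, where $g_1,g_2\in\{I\}\cup\{s_\lambda\}$. Using the matrix realization $\Delta=\bigl(\begin{smallmatrix}0&\alpha\\1&\beta\end{smallmatrix}\bigr)$ coming from $\Delta^2=\alpha+\beta\Delta$, the conjugation
\[
h_j^{-1}(a+c\Delta)h_j=\begin{pmatrix}a&c\alpha\ell^j\\c\ell^{-j}&a+c\beta\end{pmatrix}
\]
and its product with $s_\lambda$ reduce membership in $\cI_\ell$ to three simultaneous conditions: integrality of entries (which pins down the $\ell$-valuation of $c$), the lower-left-entry condition modulo $\ell$ (a linear congruence in $a,c,\lambda,\beta$), and the unit condition $N(a+c\Delta)=a^2+ac\beta-c^2\alpha\in\ZZ_\ell^\times$.

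For $j\geq 1$, the off-diagonal $\ell$-powers force $c\in\ell^j\ZZ_\ell$, whence $N\equiv a^2\pmod\ell$ and $a\in\ZZ_\ell^\times$ is automatic. A short check then shows $h_j\sim h_j s_\lambda$ precisely when $\lambda\not\equiv 0\pmod\ell$, while $h_j s_0=h_j\bigl(\begin{smallmatrix}0&1\\1&0\end{smallmatrix}\bigr)$ forms a distinct class by itself. This produces the two families appearing in \emph{both} the inert and ramified statements. For $j=0$, no valuation constraint intervenes and the analysis reduces to a clean mod-$\ell$ problem: $I\sim s_\lambda$ iff $p(\lambda):=\lambda^2+\beta\lambda-\alpha\not\equiv 0\pmod\ell$. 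The key conceptual point is that $p(-x)=x^2-\beta x-\alpha$ is exactly the minimal polynomial of $\Delta$ (read off from $\Delta^2=\alpha+\beta\Delta$), so $p\bmod\ell$ is irreducible in the inert case (no roots, so all $\ell+1$ candidates collapse to the single class $\{I\}$) and has a unique double root $\lambda_0$ in the ramified case (so $s_{\lambda_0}$ forms a genuinely new class, yielding the extra representative in the statement).

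The main technical obstacle will be the bookkeeping at $j=0$, where the valuation of $c$ is unconstrained and the unit condition becomes a bona fide mod-$\ell$ quadratic condition in $\lambda$; fortunately, once one recognizes $p$ as (up to $\lambda\mapsto-\lambda$) the reduction modulo $\ell$ of the minimal polynomial of $\Delta$, the standard correspondence between the factorization of this polynomial and the splitting behavior of $\ell$ in $E$ immediately delivers the inert/ramified dichotomy. One also has to verify, by a directly parallel calculation, that no further collapses occur within each family --- e.g., that $s_0\not\sim s_\mu$ for $\mu\neq 0$ at $j\geq 1$, and that the class of $s_{\lambda_0}$ in the ramified case is really disjoint from that of $I$ --- both of which come out by the same unit-condition obstruction.
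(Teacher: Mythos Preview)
Your proposal is correct and follows essentially the same route as the paper: reduce via \autoref{lem:elliptick} and \autoref{lem:iwahoriindex} to the candidates $h_j$ and $h_js_\lambda$ for fixed $j$, then test $\G_\gamma$-$\cI_\ell$-equivalence by computing $h_j^{-1}(a+c\Delta)h_js_\lambda$ and checking the Iwahori conditions. The paper arrives at the same quadratic $\lambda^2+\beta\lambda-\alpha$ at $j=0$ and the same $\lambda\neq 0$ criterion at $j\geq 1$; your explicit identification of $p(-x)$ with the minimal polynomial of $\Delta$ is a clean way to phrase what the paper invokes in passing when it notes that the equation has no (resp.\ a unique) root mod $\ell$ in the inert (resp.\ ramified) case.
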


\begin{insertproof}
By \autoref{lem:elliptick} and \autoref{lem:iwahoriindex} we only need to consider the equivalence classes of
\[
\begin{pmatrix}
  1 & 0 \\
  0 & p^{j} 
\end{pmatrix}\quad\text{and}\quad\begin{pmatrix}
  1 & 0 \\
  0 & p^{j}  
\end{pmatrix}\begin{pmatrix}
  \lambda & 1 \\
  1 & 0 
\end{pmatrix}\ (\lambda\in \FF_p)
\]
with $j\geq 0$. As in the proof of the split case, we only need to determine whether given two elements in
\[
\left\{\begin{pmatrix}
  1 & 0 \\
  0 & p^{j}  
\end{pmatrix}\right\}\cup \left\{\begin{pmatrix}
  1 & 0 \\
  0 & p^{j}  
\end{pmatrix}\begin{pmatrix}
    \lambda & 1 \\
     1 & 0
  \end{pmatrix}\middle|\, \lambda\in \FF_p\right\}
\]
are equivalent for fixed $j$.

First we consider
\[
\begin{pmatrix}
  1 & 0 \\
  0 & p^{j}  
\end{pmatrix}\quad\text{and}\quad \begin{pmatrix}
  1 & 0 \\
  0 & p^{j}  
\end{pmatrix}\begin{pmatrix}
    \lambda & 1 \\
     1 & 0
  \end{pmatrix}.
\]
We have
\[
\begin{pmatrix}
  1 & 0 \\
  0 & p^{j}  
\end{pmatrix}^{-1}(a+c\Delta) \begin{pmatrix}
  1 & 0 \\
  0 & p^{j}  
\end{pmatrix}\begin{pmatrix}
    \lambda & 1 \\
     1 & 0
  \end{pmatrix}=\begin{pmatrix}
    cp^j\alpha+a\lambda & a \\
     a+c\beta+cp^{-j}\lambda & cp^{-j}
  \end{pmatrix}.
\]
First assume that $j\geq 1$. If $\lambda\neq 0$, we can choose $c=-p^{j}$ and $a=\lambda$. In this case, all the entries in the above matrix are integers. Moreover, $cp^j\alpha+a\lambda=-p^{2j}\alpha+\lambda^2\in \ZZ_p^\times$, $cp^{-j}=-1\in \ZZ_p^\times$, and
\[
a+c\beta+cp^{-j}\lambda\equiv \lambda-\lambda\equiv 0\pmod p.
\]

Hence the above matrix belongs to $\cI_p$ and thus $(\begin{smallmatrix}  1 & 0  \\ 0 & p^j \end{smallmatrix})$ is equivalent to $(\begin{smallmatrix}  1 & 0  \\ 0 & p^j \end{smallmatrix})(\begin{smallmatrix}  \lambda & 1  \\ 1 & 0 \end{smallmatrix})$ for $\lambda\neq 0$. 

If $\lambda=0$ and the above matrix belongs to $\cI_p$, then we must have $cp^j\alpha,cp^{-j}\in \ZZ_p^\times$, which is  impossible since $\alpha\in \ZZ_p^\times$ (recall that $\Delta\in \cO_{E_p}^\times$). Thus  $(\begin{smallmatrix}  1 & 0  \\ 0 & p^j \end{smallmatrix})$ is not equivalent to $(\begin{smallmatrix}  1 & 0  \\ 0 & p^j \end{smallmatrix})(\begin{smallmatrix}  0 & 1  \\ 1 & 0 \end{smallmatrix})$.

Now we consider the case $j=0$. In this case, the matrix belongs to $\cI_p$ if and only if $c,c\alpha+a\lambda\in \ZZ_p^\times$, $a\in \ZZ_p$, and $p\mid a+c(\beta+\lambda)$.
Choose $c=1$ and $a=-c(\beta+\lambda)$. If $c\alpha+a\lambda\in \ZZ_p^\times$, then  $(\begin{smallmatrix}  1 & 0  \\ 0 & p^j \end{smallmatrix})$ is equivalent to $(\begin{smallmatrix}  1 & 0  \\ 0 & p^j \end{smallmatrix})(\begin{smallmatrix}  \lambda & 1  \\ 1 & 0 \end{smallmatrix})$ in this case. The last condition is equivalent to
\[
-(\lambda^2+\beta\lambda-\alpha)=\alpha-(\beta+\lambda)\lambda\in \ZZ_p^\times.
\]
Thus $(\begin{smallmatrix}  1 & 0  \\ 0 & p^j \end{smallmatrix})$ is equivalent to $(\begin{smallmatrix}  1 & 0  \\ 0 & p^j \end{smallmatrix})(\begin{smallmatrix}  \lambda & 1  \\ 1 & 0 \end{smallmatrix})$ unless $p$ is ramified and $\lambda=\lambda_0$. (For the inert case, the above equation has no solutions in $\FF_p$.)

If $\lambda=\lambda_0$ such that $c,c\alpha+a\lambda\in \ZZ_p^\times$, $a\in \ZZ_p$, and $a+c(\beta+\lambda)\in p\ZZ_p$, then we have
\[
c\alpha+a\lambda\equiv c\alpha-c(\beta+a\lambda)\lambda\equiv -c(\lambda^2+\beta\lambda-\alpha)\equiv 0\pmod p,
\]
which is a contradiction. Hence $(\begin{smallmatrix}  1 & 0  \\ 0 & p^j \end{smallmatrix})$ is not equivalent to $(\begin{smallmatrix}  1 & 0  \\ 0 & p^j \end{smallmatrix})(\begin{smallmatrix}  \lambda_0 & 1  \\ 1 & 0 \end{smallmatrix})$.
\end{insertproof}
By the previous lemma we only need to consider the following $g$'s:

If $g=(\begin{smallmatrix}  1 & 0  \\ 0 & p^j \end{smallmatrix})$ for $j\geq 0$. Note that
\[
gL_1=\begin{pmatrix}
       1 & 0 \\
       0 & p^{j+1} 
     \end{pmatrix}L_0,
\]
hence by the computation of \autoref{thm:maximalcompact} in the inert and ramified cases we know that $\gamma$ fixes $gL_0$ if and only if $\gamma$ is integral and $j\leq k$, and fixes $gL_1$ if and only if $\gamma$ is integral and $j+1\leq k$. Thus $\gamma$ fixes both $gL_0$ and $gL_1$ if and only if $\gamma$ is integral and $j\leq k-1$.

Suppose that 
\[
a+c\Delta=\begin{pmatrix}
  a & c\alpha \\
  c & a+c\beta 
\end{pmatrix}\in \G_\gamma(\QQ_p)\cap g\cI_p g^{-1}.
\]
Then
\[
\begin{pmatrix}
  a & cp^j\alpha \\
  cp^{-j} & a+c\beta 
\end{pmatrix}=g^{-1}(a+c\Delta)g\in \cI_p,
\]
and thus $c\in p^{j+1}\ZZ_p$. Hence the contributing measure is $p^j(p+1)/(p+1)$ if $E_p$ is inert and is $p^{j+1}/(p+1)$ if $E_p$ is ramified.

If $g=(\begin{smallmatrix}  1 & 0  \\ 0 & p^j \end{smallmatrix})(\begin{smallmatrix}  0 & 1 \\ 1 & 0 \end{smallmatrix})$ for $j\geq 1$. Note that
\[
gL_0=\begin{pmatrix}
       1 & 0 \\
       0 & p^j 
     \end{pmatrix}L_0\quad \text{and}\quad
gL_1=\begin{pmatrix}
       p & 0 \\
       0 & p^j 
     \end{pmatrix}L_0
\]
and
\[
\begin{pmatrix}
       p & 0 \\
       0 & p^j 
     \end{pmatrix}\sim p^{-1}\begin{pmatrix}
       p & 0 \\
       0 & p^j 
     \end{pmatrix}= \begin{pmatrix}
       1 & 0 \\
       0 & p^{j-1} 
     \end{pmatrix}
\]
in $\G_\gamma(\QQ_p)\bs \G(\QQ_p)/\cK_p$. Hence $\gamma$ fixes $gL_0$ if and only if $\gamma$ is integral and $j\leq k$, and fixes $gL_1$ if and only if $\gamma$ is integral and $j-1\leq k$. Thus $\gamma$ fixes both $gL_0$ and $gL_1$ if and only if $\gamma$ is integral and $j\leq k$.

Now we find the contributing measure. Suppose that $a+c\Delta\in \G_\gamma(\QQ_p)\cap g\cI_p g^{-1}$. Then
\[
\begin{pmatrix}
  a+c\beta & cp^{-j} \\
  cp^{j}\alpha & a
\end{pmatrix}=g^{-1}(a+c\Delta)g\in \cI_p.
\]
Thus $c\in p^{j}\ZZ_p$. Thus the contributing measure is $p^{j-1}(p+1)/(p+1)$ if $E_p$ is inert and is $p^{j}/(p+1)$ if $E_p$ is ramified, since $j\geq 1$.

Finally we consider the case $g=(\begin{smallmatrix}  \lambda_0 & 1 \\ 1 & 0 \end{smallmatrix})$ for $p$ ramified. Note that $gL_0=L_0$ and
\[
gL_1=\begin{pmatrix}
       p & \lambda_0 \\
       0 & 1
     \end{pmatrix}L_0.
\]
Also, in $\G_\gamma(\QQ_p)\bs \G(\QQ_p)/\cK_p$ we have
\[
\begin{pmatrix}
       p & \lambda_0 \\
       0 & 1
     \end{pmatrix}\sim p^{-1}(\Delta-(\lambda_0+\beta))\begin{pmatrix}
       p & \lambda_0 \\
       0 & 1
     \end{pmatrix}=\begin{pmatrix}
       -\beta-\lambda_0 & -p^{-1}(\lambda_0^2+\beta\lambda_0-\alpha) \\
       1 & 0 
     \end{pmatrix}.
\]
We have $v_p(-p^{-1}(\lambda_0^2+\beta\lambda_0-\alpha))=0$. Thus the matrix on the right hand side belongs to $\cK_p$. Hence this matrix is equivalent to $(\begin{smallmatrix}  1 & 0 \\ 0 & 1 \end{smallmatrix})$ in $\G_\gamma(\QQ_p)\bs \G(\QQ_p)/\cK_p$. Hence $\gamma$ fixes both $gL_0$ and $gL_1$ if and only if $\gamma$ is integral and $k\geq 0$.

Suppose that $a+c\Delta\in \G_\gamma(\QQ_p)\cap g\cI_p g^{-1}$. Then
\[
\begin{pmatrix}
  a+c(\beta+\lambda_0) & c \\
  c(-\lambda_0^2-\beta\lambda_0+\alpha) & a-c\lambda_0
\end{pmatrix}=g^{-1}(a+c\Delta)g\in \cI_p,
\]
which is equivalent to $a+c\Delta\in \cO_{E_p}^\times$ in this case. Hence the contributing measure is $1/(p+1)$.

To summarize, we obtain
\[
\orb(\triv_{\cI_p};\gamma)= \sum_{j=0}^{k-1}\frac{p^{j}(p+1)}{p+1}+
\sum_{j=1}^{k}\frac{p^{j-1}(p+1)}{p+1}
\]
if $p$ is inert and
\[
\orb(\triv_{\cI_p};\gamma)= \sum_{j=0}^{k-1}\frac{p^{j+1}}{p+1}+
\sum_{j=1}^{k}\frac{p^{j}}{p+1}+\frac{1}{p+1}
\]
if $p$ is ramified. These expressions simplify to \eqref{eq:iwahori} since $\chi(p)=-1$ if $p$ is inert and $\chi(p)=0$ if $p$ is ramified.
\end{proof}

\section{Semilocal analysis}\label{sec:poissonql}
\subsection{The Poisson summation formula on $\QQ_S$}
In this subsection we will derive the Poisson summation formula on $\QQ_S=\RR\times\QQ_{q_1}\times  \dots\times \QQ_{q_r}$. \index{qs@$\QQ_S$}

The measure on $\QQ_S$ is defined as follows: We equip $\RR$ with the Lebesgue measure and equip $\QQ_{q_i}$ with the Haar measure such that the volume of $\ZZ_{q_i}$ equals $1$. Finally, we equip $\QQ_S$ with the product measure of these measures.
The ring $\ZZ^S$ \index{zuppers@$\ZZ^S$} of $S$-integers can be embedded into $\QQ_S$ diagonally. We will establish an analogue of the Poisson summation formula for $\QQ_S$ using $\ZZ^S$ as the discrete subgroup.

\begin{proposition}\label{prop:cocompact}
$\ZZ^S$ is discrete in $\QQ_S$, and the quotient $\QQ_S/\ZZ^S$ is compact with volume $1$.
\end{proposition}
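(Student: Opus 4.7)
The plan is to exhibit an explicit fundamental domain, namely
\[
F = [0,1) \times \ZZ_{q_1} \times \cdots \times \ZZ_{q_r} \subset \QQ_S,
\]
and then read off discreteness, volume, and compactness. This is the standard archimedean/$S$-adic generalization of the fact that $\RR/\ZZ$ has volume $1$.

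For discreteness, I would take the open neighborhood $U = (-1/2, 1/2) \times \prod_i \ZZ_{q_i}$ of $0$. If $\alpha \in \ZZ^S \cap U$, then $v_{q_i}(\alpha) \geq 0$ for each $i$ (from the $q_i$-component lying in $\ZZ_{q_i}$) and $v_\ell(\alpha) \geq 0$ for $\ell \notin S$ (by the definition of $\ZZ^S$), so $\alpha \in \ZZ$; combined with $|\alpha|_\infty < 1/2$ this forces $\alpha = 0$.

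Next I would show $F$ is a set of representatives for $\QQ_S/\ZZ^S$. Given $x = (x_\infty, x_{q_1},\dots,x_{q_r}) \in \QQ_S$, set $\alpha_i = \langle x_{q_i}\rangle_{q_i} \in \QQ$; this is a rational with denominator a power of $q_i$ only, so $\alpha := \sum_i \alpha_i \in \ZZ^S$ (note $v_{q_j}(\alpha_i) \geq 0$ for $j \neq i$). Subtracting $\alpha$ brings every nonarchimedean component into $\ZZ_{q_j}$ (since $x_{q_j} - \alpha_j \in \ZZ_{q_j}$ by construction of the fractional part, and $\alpha_i \in \ZZ_{q_j}$ for $i \neq j$). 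Finally an integer shift brings the archimedean component into $[0,1)$, proving surjectivity of $F \to \QQ_S/\ZZ^S$. Injectivity is immediate: if $x, y \in F$ differ by an element $\beta \in \ZZ^S$, then $\beta$ has nonnegative valuation at every prime, hence $\beta \in \ZZ$, and $|\beta|_\infty < 1$ forces $\beta = 0$.

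The volume is $\vol(F) = 1 \cdot \prod_i \vol(\ZZ_{q_i}) = 1$ by the chosen measure normalizations, and compactness of the quotient follows because $\QQ_S/\ZZ^S$ is the continuous image of the compact set $[0,1] \times \prod_i \ZZ_{q_i}$ (compactness of each $\ZZ_{q_i}$ is standard, and Tychonoff handles the finite product). The only subtle point, and really the only place where anything happens, is verifying that the sum of the local fractional parts $\sum_i \alpha_i$ actually lies in $\ZZ^S$ and simultaneously clears all the nonarchimedean components; this is the key independence property that makes semilocal reduction to a fundamental domain work.
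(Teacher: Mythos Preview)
Your proposal is correct and follows essentially the same approach as the paper: both exhibit $[0,1)\times\ZZ_{q_1}\times\cdots\times\ZZ_{q_r}$ as a fundamental domain by subtracting the sum of $q_i$-adic fractional parts and then an integer, and both deduce discreteness from the observation that an $S$-integer lying in $\ZZ_{q_i}$ for all $i$ must be an ordinary integer. The only cosmetic differences are that the paper phrases discreteness via a norm inequality rather than an explicit open neighborhood, and you make the compactness step (continuous image of $[0,1]\times\prod_i\ZZ_{q_i}$) slightly more explicit than the paper does.
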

\begin{proof}
We define the norm on $\QQ_S$ by $\|(x,y_1,\dots,y_r)\|=\max\{|x|_\infty,|y_1|_{q_1},\dots,|y_r|_{q_r}\}$.
Suppose that $\alpha,\beta\in \ZZ^S$ with $\alpha\neq \beta$. We claim that $\|(\alpha-\beta,\alpha-\beta,\dots,\alpha-\beta)\|\geq 1$. Indeed, if $\|(\alpha-\beta,\alpha-\beta,\dots,\alpha-\beta)\|<1$, then $|\alpha-\beta|_\infty<1$ and $|\alpha-\beta|_{q_i}<1$ for all $i$. From the second inequality, $v_{q_i}(\alpha-\beta)\geq 1$. Since $\alpha-\beta\in \ZZ^S$ we have $v_\ell(\alpha-\beta)\geq 0$ for all $\ell\notin S$. Thus $\alpha-\beta\in \ZZ$. Since $|\alpha-\beta|_\infty<1$, we must have $\alpha-\beta=0$, which is a contradiction. Hence $\|(\alpha-\beta,\alpha-\beta,\dots,\alpha-\beta)\|\geq 1$ and thus $\ZZ^S$ is discrete in $\QQ_S$.

Now we prove the second assertion. We claim that $[0,1\ropen\times \ZZ_{q_1}\times \dots\times \ZZ_{q_r}$ is a fundamental domain of $\QQ_S/\ZZ^S$.  
For any $(x,y_1,\dots,y_r)\in \QQ_S=\RR\times\QQ_{q_1}\times\dots\times\QQ_{q_r}$, we let $t_i=\langle y_i\rangle_{q_i}\in \ZZ[1/q_i]$ be the fractional part of $y_i$. Then $y_i-t_i\in \ZZ_{q_i}$.
Since $t_i\in \ZZ[1/q_i]$ we know that $t_i\in \ZZ_{q_j}$ if $i\neq j$. Thus $y_i-t\in \ZZ_{q_i}$ for all $i$, where $t=t_1+\dots+t_r\in \ZZ^S$. Thus $y_i-t-\lfloor x-t\rfloor\in \ZZ_{q_i}$ for all $i$ and $x-t-\lfloor x-t\rfloor\in [0,1\ropen$. Let $m=t+\lfloor x-t\rfloor\in \ZZ^S$. Then
\[
(x,y)-(m,m)=(x-t-\lfloor x-t\rfloor,y_1-t-\lfloor x-t\rfloor,\dots,y_r-t-\lfloor x-t\rfloor)\in [0,1\ropen\times \ZZ_{q_1}\times \dots\times\ZZ_{q_r}.
\]
Suppose that $m\in \ZZ^S$ and $(x,y_1,\dots,y_r)\in [0,1\ropen\times \ZZ_{q_1}\times \dots\times\ZZ_{q_r}$ such that $(x+m,y_1+m,\dots,y_r+m)\in [0,1\ropen\times \ZZ_{q_1}\times \dots\times\ZZ_{q_r}$. Since $y_i,y_i+m\in \ZZ_{q_i}$ for all $i$, $m\in \ZZ_{q_i}$ for all $i$. Hence $v_{q_i}(m)\geq 0$ for all $i$. Since $m\in \ZZ^S$ we have $v_p(m)\geq 0$ for all $p\notin S$. Thus $v_\ell(m)\geq 0$ for all prime $\ell$ and hence $m\in \ZZ$. In this case, $x$ and $x+m$ cannot both lie in $[0,1\ropen$ unless $m=0$. Thus $[0,1\ropen\times \ZZ_{q_1}\times \dots\times\ZZ_{q_r}$ is a fundamental domain of $\QQ_S/\ZZ^S$. 
Since $[0,1\ropen\times \ZZ_{q_1}\times \dots\times \ZZ_{q_r}$ has volume $1$, the second assertion follows.
\end{proof}
Recall that we have $\widehat{\RR}=\RR$ and $\widehat{\QQ_p}=\QQ_p$ for any prime $p$ via the identifications 
\[
(\psi_a:x\mapsto\rme(ax))\mapsfrom a \qquad\text{and}\qquad (\psi_a:x\mapsto\rme_\ell(ax))\mapsfrom a,
\]
respectively (see \cite[Chapitre 2]{bourbaki2019spectral} for example). As shown in \cite{tate1950}, we have the following Fourier inversion formulas. If $f\in L^1(\RR)$, its Fourier transform is
\[
\widehat{f}(\xi)=\int_{\RR}f(x)\rme(-x\xi)\rmd x
\]
for $\xi\in \RR$, and if $\widehat{f}\in L^1(\RR)$, the Fourier inversion formula can be written as
\[
f(x)=\int_{\RR}\widehat{f}(\xi)\rme(x\xi)\rmd \xi
\]
for a.e. $x\in \RR$. If $f\in L^1(\QQ_p)$, its Fourier transform is
\[
\widehat{f}(\eta)=\int_{\QQ_p}f(y)\rme_p(-y\eta)\rmd y
\]
for $\eta\in \QQ_p$, and if $\widehat{f}\in L^1(\QQ_p)$, the Fourier inversion formula is
\[
f(y)=\int_{\QQ_p}\widehat{f}(\eta)\rme_p(y\eta)\rmd \eta
\]
for a.e. $y\in \QQ_p$. (See the introductory section for the definition of $\rme(x)$ and $\rme_p(y)$.)\index{ex@$\rme(x)$}\index{epy@$\rme_p(y)$}

Now we consider the semilocal space \index{qs@$\QQ_S$} $\QQ_S=\RR\times \QQ_{S_\fin}$, where \index{qsfin@$\QQ_{S_\fin}$} $\QQ_{S_\fin}=\QQ_{q_1}\times\dots\times \QQ_{q_r}$. We have $\widehat{\QQ_S}=\QQ_S$ via the identification
\[
\left(\psi_{a}:x\mapsto\rme_{S}(ax)\right)\mapsfrom a,
\]
where $\rme_S(y)=\rme(y)\prod_{i=1}^{r}\rme_{q_i}(y_i)$ if $y=(y_0,y_1,\dots,y_r)$ and the product of two elements in $\QQ_{S}$ is componentwise.
Analogously, the Fourier transform of $\varphi\in L^1(\QQ_{S})$ is
\begin{equation}\label{eq:deffourier}
\widehat{\varphi}(\xi)=\int_{\QQ_{S}}\varphi(x)\rme_S(-x\xi)\rmd x
\end{equation}
for $\xi\in \QQ_{S}$, where $x=(x_0,x_1,\dots,x_r)$ and $\xi=(\xi_0,\xi_1,\dots,\xi_r)$ with $x_i,\xi_i\in \QQ_{q_i}$ for $i\geq 1$ and $x_0,\xi_0\in \RR$.  If $\widehat{\varphi}\in L^1(\QQ_S)$, the Fourier inversion formula is
\begin{equation}\label{eq:fourierinversion}
\varphi(x)=\int_{\QQ_{S}}\widehat{\varphi}(\xi)\rme_S(x\xi)\rmd \xi
\end{equation}
for a.e. $x\in \QQ_S$.

For any $\alpha\in \ZZ^S$, we can associate it with a character $\psi_{\alpha}$ on $\QQ_S$, where $\alpha$ is considered to be $(\alpha,\dots,\alpha)\in \RR\times\QQ_{q_1}\times\dots\times \QQ_{q_r}$. For any $\beta\in \ZZ^S$, we claim that $\psi_{\alpha}(\beta)=\rme(\alpha\beta) \prod_{i=1}^{r}\rme_{q_i}(\alpha\beta)=1$. Indeed, we have 
\[
\alpha\beta-\sum_{i=1}^{r}\langle\alpha\beta\rangle_{q_i}\in \ZZ_{q_i}
\]
for all $q_i$. Since $\alpha\beta\in \ZZ^S$ and $\langle\alpha\beta \rangle_{q_i}\in \ZZ[1/q_i]\subseteq \ZZ^S$, we find that $\alpha\beta-\sum_{i=1}^{r}\langle\alpha\beta\rangle_{q_i}\in \ZZ$.
Thus 
\[
\rme(\alpha\beta)\rme_{q}(\alpha\beta)=\rme(\alpha\beta)\prod_{i=1}^{r}\rme_{q_i}(\alpha\beta) =\rme\left(\alpha\beta-\sum_{i=1}^{r} \langle\alpha\beta\rangle_{q_i}\right)=1.
\]
Hence $\psi_{\alpha}$ can be considered as a character on $\QQ_S/\ZZ^S$ and we obtain a homomorphism $\Phi:\ZZ^S\to (\QQ_S/\ZZ^S)\sphat\ $ by sending $\alpha$ to $\psi_\alpha$.

\begin{proposition}
The homomorphism $\Phi$ defined above is actually an isomorphism.
\end{proposition}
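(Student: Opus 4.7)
\emph{Proof plan.} The plan is to interpret $\Phi$ via Pontryagin duality as the natural map into the annihilator of $\ZZ^S$ inside $\widehat{\QQ_S}\cong \QQ_S$, and to verify bijectivity in two steps. The setup preceding the statement has already shown that $\Phi(\alpha)$ is trivial on $\ZZ^S$ for every $\alpha\in \ZZ^S$, so $\Phi$ lands in the annihilator. Injectivity is then immediate: if $\Phi(\alpha)$ is trivial as a character of $\QQ_S/\ZZ^S$, hence as a character of $\QQ_S$, then specializing to $(x,0)\in \QQ_S$ gives $\rme(\alpha x)=1$ for every $x\in\RR$, forcing $\alpha=0$.

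For surjectivity the plan is to exploit the explicit fundamental domain $D=[0,1\ropen\times \ZZ_{q_1}\times\cdots\times \ZZ_{q_r}$ for $\QQ_S/\ZZ^S$ established in the proof of \autoref{prop:cocompact}. Given any $(a,b)\in \RR\times \QQ_{S_\fin}$ whose character is trivial on $\ZZ^S$, I would subtract off the unique $\alpha\in \ZZ^S$ (embedded diagonally) with $(a,b)-(\alpha,\alpha,\dots,\alpha)\in D$; since $\Phi(\alpha)$ itself is in the annihilator, so is this translate. It therefore suffices to prove that the only element of $D$ whose character is trivial on $\ZZ^S$ is $(0,0)$, for then the original $(a,b)$ is the diagonal image of $\alpha\in \ZZ^S$, that is, $(a,b)=\Phi(\alpha)$.

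Suppose then that $(a,b)\in D$ satisfies $\psi_{(a,b)}\equiv 1$ on $\ZZ^S$. Testing on $n\in\ZZ\subset\ZZ^S$ kills every factor $\rme_{q_i}(nb_i)$ (since $nb_i\in\ZZ_{q_i}$), leaving $\rme(na)=1$ for every $n$; combined with $a\in [0,1\ropen$ this forces $a=0$. Next, testing on $q_j^{-k}$ for each $j\in\{1,\dots,r\}$ and each $k\geq 1$: the factors with $i\neq j$ are trivial because $q_j^{-1}$ is a unit in $\ZZ_{q_i}$ and hence $q_j^{-k}b_i\in\ZZ_{q_i}$, so only $\rme_{q_j}(q_j^{-k}b_j)=1$ remains. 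Writing the $q_j$-adic expansion $b_j=\sum_{n\geq 0}c_nq_j^n$ with $c_n\in\{0,1,\dots,q_j-1\}$, the fractional part $\langle q_j^{-k}b_j\rangle_{q_j}=c_0 q_j^{-k}+\cdots+c_{k-1}q_j^{-1}$ lies in $[0,1\ropen$, so the triviality condition forces it to be $0$, whence $c_0=\cdots=c_{k-1}=0$; letting $k\to\infty$ gives $b_j=0$. There is no serious obstacle here — the argument is elementary once the fundamental-domain reduction is applied — the only bookkeeping subtlety is tracking the fractional parts $\langle q_j^{-k}b_i\rangle_{q_i}$ separately in the cases $i=j$ and $i\neq j$.
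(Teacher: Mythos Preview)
Your argument is correct and genuinely different from the paper's. Both proofs start the same way for injectivity, but the paper handles surjectivity abstractly: it shows that $((\RR\times\QQ_{S_\fin})/\ZZ^S)\sphat$ is discrete (dual of a compact group) with compact quotient by $\ZZ^S$, hence $\ZZ^S$ has finite index in it; then it endows the dual with a $\ZZ^S$-module structure, checks it is torsion-free, and invokes the structure theorem over the PID $\ZZ^S$ to force the index to be $1$. Your route is instead fully explicit: you pull an arbitrary annihilating character back into the fundamental domain $[0,1\ropen\times\ZZ_{S_\fin}$ and kill it coordinate by coordinate, testing on $n\in\ZZ$ to force $a=0$ and on $q_j^{-k}$ to peel off the $q_j$-adic digits of $b_j$. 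Your approach is more elementary and entirely self-contained once \autoref{prop:cocompact} is available; the paper's approach avoids the digit bookkeeping and would transport more readily to settings where one does not have such a concrete fundamental domain. One small point worth recording when you write it up: make explicit the easy inequality $0\le \langle q_j^{-k}b_j\rangle_{q_j}\le 1-q_j^{-k}<1$, so that the triviality condition $\rme_{q_j}(q_j^{-k}b_j)=1$ really forces the fractional part to vanish rather than be any integer.
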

\begin{proof}
We begin by proving that $\Phi$ is injective. Suppose that $\Phi(\alpha)$ is trivial for some $\alpha$. Then $\psi_{\alpha}$ is trivial on $\QQ_S$ and hence $\alpha=0$. Thus $\Phi$ is injective.

Next we prove surjectivity. Let
\[
\cH=\{\alpha\in\QQ_S:\rme_S(\alpha\beta)=1\text{ for every }\beta\in\ZZ^S\}
\]
be the annihilator of $\ZZ^S$ in $\widehat{\QQ_S}\cong\QQ_S$. The proposition is
equivalent to proving $\cH=\ZZ^S$ diagonally.

By the construction of $\alpha\mapsto\psi_\alpha$ it is clear that we have $\ZZ^S\subseteq \cH$. Conversely, let
\[
\alpha=(\alpha_0,\alpha_1,\ldots,\alpha_r)\in \cH,
\qquad \alpha_0\in \RR,\quad\alpha_i\in\QQ_{q_i}\ (i\geq 1).
\]
Taking $\beta=1$ gives
\[
\rme(\alpha_0)\prod_i \rme_{q_i}(\alpha_i)=1.
\]
This means that
\[
\alpha_0-\sum_i\langle \alpha_i\rangle_{q_i}\in\ZZ.
\]
Since each $\langle \alpha_i\rangle_{q_i}$ lies in $\ZZ[1/q_i]\subset\ZZ^S$,
we get $\alpha_0\in\ZZ^S$. Put $\alpha=\alpha_0$ and subtract the diagonal
element $\alpha\in H$. It remains to prove that, if
\[
\eta=(0,\eta_1,\ldots,\eta_r)\in \cH,
\]
then every $\eta_i=0$.

Fix $j$. For every $i\neq j$, choose $M_i\geq 0$ such that
$q_i^{M_i}\eta_i\in\ZZ_{q_i}$, and set
\[
C_j=\prod_{i\neq j}q_i^{M_i}.
\]
For every $n\geq 0$, the element $\beta=C_jq_j^{-n}$ lies in $\ZZ^S$. Since $\eta\in \cH$,
\[
1=\prod_i \rme_{q_i}(\eta_i\beta).
\]
For $i\ne j$, the factor $\eta_i\beta$ lies in $\ZZ_{q_i}$, so $\rme_{q_i}(\eta_i\beta)=1$. Hence
\[
\mathrm e_{q_j}(\eta_j C_jq_j^{-n})=1
\quad\text{for all }n\geq 0.
\]
The kernel of $\rme_{q_j}$ is $\mathbb Z_{q_j}$, so
\[
\eta_j C_jq_j^{-n}\in\ZZ_{q_j}
\quad\text{for all }n\geq 0.
\]
Because $C_j$ is a $q_j$-adic unit, this is possible only if $\eta_j=0$.
Since $j$ was arbitrary, all $\eta_j=0$, and therefore $\alpha=\alpha$ is diagonal
with $\alpha\in\ZZ^S$. Thus $\cH=\ZZ^S$, so $\Phi$ is surjective and the proposition is proved. 
\end{proof}

This establishes the Pontryagin duality for $\ZZ^S$ and $\QQ_S/\ZZ^S$ \cite[Chapitre 2, \SSec 1, N°5]{bourbaki2019spectral}. Suppose that $\varphi$ is a continuous function on $\QQ_S/\ZZ^S$, its Fourier transform is
\[
\widehat{\varphi}(\alpha)=\int_{\QQ_S/\ZZ^S}\varphi(\xi)\rme_S(-\alpha\xi)\rmd \xi.
\]
If $\widehat{\varphi}\in L^1(\ZZ^S)$, by Fourier inversion formula \cite[Chapitre 2, \SSec 1, N°4]{bourbaki2019spectral}, there exists an absolute constant $c$ such that 
\[
\varphi(x)=c\sum_{\alpha\in \ZZ^S}\widehat{\varphi}(\alpha) \rme_S(\alpha x)
\]
for any $x\in \QQ_S$. If we choose $\varphi$ to be the characteristic function of $\QQ_S/\ZZ^S$, then $\widehat{\varphi}(\alpha)=0$ if $\alpha\neq 0$ and is the volume of $(\QQ_S)/\ZZ^S$ if $\alpha=0$. Since the volume of $\QQ_S/\ZZ^S$ is $1$ by \autoref{prop:cocompact}, the constant $c=1$. We thus obtain
\begin{equation}\label{eq:fourierexpansion}
\varphi(x)=\sum_{\alpha\in \ZZ^S}\widehat{\varphi}(\alpha) \rme_S(\alpha x).
\end{equation}

\begin{theorem}[Semilocal Poisson summation formula]\label{thm:poisson}
Suppose that $\varphi$ is a function on $\QQ_S$ such that
\begin{enumerate}[itemsep=0pt,parsep=0pt,topsep=0pt,leftmargin=0pt,labelsep=2.5pt,itemindent=15pt,label=\upshape{(\roman*)}]
  \item For any $x\in \QQ_S$,
  \[
  \sum_{\alpha\in \ZZ^S}\varphi(x+\alpha)
  \]
  converges absolutely and it defines a continuous function on $\QQ_S$.
  \item The sum
  \[
  \sum_{\xi\in \ZZ^S}\widehat{\varphi}(\xi)
  \]
  converges absolutely.
\end{enumerate} 
Then
\begin{equation}\label{eq:poissonrql}
\sum_{\alpha\in \ZZ^S}\varphi(\alpha)=\sum_{\xi\in \ZZ^S}\widehat{\varphi}(\xi).
\end{equation}
\end{theorem}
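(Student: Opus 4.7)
The plan is to reduce \eqref{eq:poissonrql} to the Fourier expansion formula \eqref{eq:fourierexpansion} that has already been derived for continuous functions on $(\RR\times\QQ_{S_\fin})/\ZZ^S$. Concretely, I will form the ``diagonal periodization''
\[
\Phi(x,y)=\sum_{\alpha\in \ZZ^S}\varphi(x+\alpha,y+\alpha),
\]
observe that hypothesis (i) guarantees this series converges absolutely and descends to a continuous function on the quotient $(\RR\times\QQ_{S_\fin})/\ZZ^S$ (since the diagonal copy of $\ZZ^S$ is invariant under translation by $(\beta,\beta)$ for any $\beta\in \ZZ^S$), and then compute its Fourier coefficients against the characters $\psi_{(\alpha,\alpha)}$.

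The key computation is the unfolding
\[
\widehat{\Phi}(\alpha)=\int_{(\RR\times\QQ_{S_\fin})/\ZZ^S}\Phi(\xi,\eta)\rme(-\alpha\xi)\rme_q(-\alpha\eta)\rmd\xi\rmd\eta=\int_{\RR\times\QQ_{S_\fin}}\varphi(\xi,\eta)\rme(-\alpha\xi)\rme_q(-\alpha\eta)\rmd\xi\rmd\eta=\widehat{\varphi}(\alpha,\alpha),
\]
where the middle equality uses Fubini/Tonelli together with the absolute convergence of $\sum_{\alpha\in \ZZ^S}\varphi(\cdot+\alpha,\cdot+\alpha)$ from (i) and the $\ZZ^S$-invariance of the characters $\rme(-\alpha\xi)\rme_q(-\alpha\eta)$ established just before the theorem (so that the character passes through each translate $\alpha$). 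Hypothesis (ii) then says exactly that $\widehat{\Phi}\in L^1(\ZZ^S)$, so \eqref{eq:fourierexpansion} applies to $\Phi$ and yields
\[
\Phi(x,y)=\sum_{\alpha\in \ZZ^S}\widehat{\varphi}(\alpha,\alpha)\rme(\alpha x)\rme_q(\alpha y).
\]

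Finally I specialize at $(x,y)=(0,0)$: the left side becomes $\sum_{\alpha\in \ZZ^S}\varphi(\alpha,\alpha)$ and the right side becomes $\sum_{\xi\in \ZZ^S}\widehat{\varphi}(\xi,\xi)$, giving \eqref{eq:poissonrql}. Evaluation at a point is legitimate because $\Phi$ is continuous on the quotient by (i), and the right-hand series converges absolutely by (ii), so both sides of \eqref{eq:fourierexpansion} are continuous and agree pointwise.

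The only step that requires real care is the unfolding computation for $\widehat{\Phi}$: one must check that the character $\rme(-\alpha\xi)\rme_q(-\alpha\eta)$ is genuinely well-defined on $(\RR\times\QQ_{S_\fin})/\ZZ^S$ (which is the content of $\psi_{(\alpha,\alpha)}(\beta,\beta)=1$ for $\beta\in \ZZ^S$, already verified in the discussion preceding the theorem), and that interchanging sum and integral is justified; everything else is a direct application of results that have been set up in Section B.1. I expect no further obstacles.
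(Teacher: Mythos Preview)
Your proposal is correct and follows essentially the same approach as the paper: define the periodization $\Phi(x,y)=\sum_{\alpha\in\ZZ^S}\varphi(x+\alpha,y+\alpha)$, unfold to show $\widehat{\Phi}(\alpha)=\widehat{\varphi}(\alpha,\alpha)$, apply the Fourier expansion \eqref{eq:fourierexpansion}, and evaluate at the origin. The paper's proof is the same argument with the periodized function called $f$ instead of $\Phi$.
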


\begin{proof}
Let 
\[
f(x)=\sum_{\alpha\in \ZZ^S}\varphi(x+\alpha).
\]
then $f$ is a continuous function on $\QQ_S/\ZZ^S$. We have
\begin{align*}
   \widehat{f}(\alpha) & =\int_{\QQ_S/\ZZ^S}f(x)\rme_S(-\alpha x)\rmd x=\int_{\QQ_S/\ZZ^S}\sum_{\beta\in \ZZ^S}\varphi(x+\beta)\rme_S(-\alpha x)\rmd x\\
      & =\int_{\QQ_S/\ZZ^S}\sum_{\beta\in \ZZ^S}\varphi(x+\beta)\rme_S(-\alpha (x+\beta))\rmd x=\int_{\QQ_S}\varphi(x)\rme_S(-\alpha x)\rmd x=\widehat{\varphi}(\alpha).
\end{align*}
Since 
\[
\sum_{\alpha\in \ZZ^S}\widehat{\varphi}(\alpha)=\sum_{\alpha\in \ZZ^S}\widehat{f}(\alpha),
\]
which converges absolutely, by Fourier inversion formula we have
\[
\sum_{\alpha\in \ZZ^S}\varphi(x+\alpha)=\sum_{\alpha\in \ZZ^S}\widehat{\varphi}(\alpha)\rme_S(\alpha x).
\]
By letting $x=0$ we get the desired formula.
\end{proof}

\begin{corollary}\label{cor:poissonrql2}
Suppose that $\varphi$ is a function on $\QQ_{S}$, $a\in \ZZ$, and $k\in \ZZ_{>0}$ such that $\gcd(k,S)=1$, and
\begin{enumerate}[itemsep=0pt,parsep=0pt,topsep=0pt,leftmargin=0pt,labelsep=2.5pt,itemindent=15pt,label=\upshape{(\roman*)}]
  \item For any $x\in \QQ_{S}$,
  \[
  \sum_{\alpha\in \ZZ^S}\varphi(x+k\alpha)
  \]
  converges absolutely and it defines a continuous function on $\QQ_S$.
  \item The sum
  \[
  \sum_{\xi\in \ZZ^S}\widehat{\varphi}\left(\frac{\xi}{k}\right)
  \]
  converges absolutely.
\end{enumerate} 
Then
\begin{equation}\label{eq:poissonrql2}
\sum_{\substack{\alpha\in \ZZ^S\\ \alpha-a\in k\ZZ^S}}\varphi(\alpha)=\frac{1}{k}\sum_{\xi\in \ZZ^S}\rme_S\legendresymbol{a\xi}{k} \widehat{\varphi}\left(\frac{\xi}{k}\right).
\end{equation}
\end{corollary}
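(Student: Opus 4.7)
The plan is to derive this corollary as a straightforward consequence of the semilocal Poisson summation formula (\autoref{thm:poisson}) applied to the shifted and dilated function
\[
\psi(x,y) := \varphi(a+kx,\,a+ky), \qquad (x,y)\in \RR\times\QQ_{S_\fin},
\]
where, as usual, $a\in \ZZ$ is embedded diagonally into $\QQ_{S_\fin}$.

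First I would verify that $\psi$ satisfies the two hypotheses of \autoref{thm:poisson}. Hypothesis (i) for $\psi$ amounts to absolute convergence of $\sum_{\beta\in\ZZ^S}\varphi(a+kx+k\beta,\,a+ky+k\beta)$ to a continuous function, which follows immediately from hypothesis (i) imposed on $\varphi$ in the corollary (applied with shifted arguments $(a+kx,a+ky)$). Hypothesis (ii) for $\psi$ will be a consequence of the explicit formula for $\widehat\psi$ derived below.

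Next, I compute $\widehat\psi$ in terms of $\widehat\varphi$ by a semilocal change of variables $u=a+kx$ on $\RR$ and $v=a+ky$ on $\QQ_{S_\fin}$. Since $\gcd(k,S)=1$, we have $|k|_{q_i}=1$ for every $i$, so the Jacobian contribution from the nonarchimedean factors is trivial and only the archimedean factor contributes $1/k$; moreover
\[
\rme(-x\xi)\,\rme_{q}(-y\xi) = \rme\!\left(\tfrac{a\xi}{k}\right)\rme_{q}\!\left(\tfrac{a\xi}{k}\right)\rme\!\left(-\tfrac{u\xi}{k}\right)\rme_{q}\!\left(-\tfrac{v\xi}{k}\right).
\]
Substituting into the defining integral \eqref{eq:deffourier}, this yields
\[
\widehat\psi(\xi,\xi) \;=\; \frac{1}{k}\,\rme\!\left(\tfrac{a\xi}{k}\right)\rme_{q}\!\left(\tfrac{a\xi}{k}\right)\widehat{\varphi}\!\left(\tfrac{\xi}{k},\tfrac{\xi}{k}\right).
\]
Hypothesis (ii) of the corollary then gives absolute convergence of $\sum_{\xi\in\ZZ^S}\widehat\psi(\xi,\xi)$, confirming that \autoref{thm:poisson} applies to $\psi$.

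Finally, applying \autoref{thm:poisson} to $\psi$ produces
\[
\sum_{\beta\in\ZZ^S}\varphi(a+k\beta,\,a+k\beta) \;=\; \sum_{\xi\in\ZZ^S}\widehat\psi(\xi,\xi),
\]
and the substitution $\alpha=a+k\beta$ identifies the left side with $\sum_{\alpha\in\ZZ^S,\,\alpha-a\in k\ZZ^S}\varphi(\alpha,\alpha)$ while the right side becomes precisely the right side of \eqref{eq:poissonrql2}. There is no real obstacle here — everything reduces to checking that the diagonal Fourier pairing and the change of variables respect the constraint $\gcd(k,S)=1$; this constraint is exactly what guarantees that the $k$-dilation on $\QQ_{S_\fin}$ preserves the measure, which is the only delicate point.
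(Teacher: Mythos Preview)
Your proposal is correct and follows essentially the same approach as the paper: define $\psi(x,y)=\varphi(a+kx,a+ky)$, compute $\widehat\psi$ via the change of variables (using $|k|_{q_i}=1$ from $\gcd(k,S)=1$), and apply \autoref{thm:poisson} to $\psi$. The paper's argument is virtually identical, including the same observation about the nonarchimedean Jacobian being trivial.
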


\begin{proof}
We have
\[
\sum_{\substack{\alpha\in \ZZ^S\\ \alpha-a\in k\ZZ^S}}\varphi(\alpha)=\sum_{\beta\in \ZZ^S}\varphi(a+k\beta).
\]
Let $\psi(x)=\varphi(a+kx)$. We first compute the Fourier transform of $\psi$. By definition
\[
\widehat{\psi}(\xi)=\int_{\QQ_S}\varphi(a+kx)\rme_S(-x\xi)\rmd x.
\]
Making change of variable $x\mapsto x/k-a$, we have $\rmd(x/k-a)=\rmd x/(|k|_\infty\prod_{i=1}^{r}|k|_{q_i})=\rmd x/k$. Hence we obtain
\[
   \widehat{\psi}(\xi)=\frac{1}{k}\int_{\QQ_S}\varphi(x) \expcharacter[S]{-\frac{x\xi}{k}+\frac{a\xi}{k}} \rmd x=\frac{1}{k}\rme_S\legendresymbol{a\xi}{k}\widehat{\varphi}\left(\frac{\xi}{k}\right).
\]
The corollary now follows from the Poisson summation formula \eqref{eq:poissonrql} for $\psi$.
\end{proof}

\subsection{Application to the function $\Psi$ in \autoref{thm:ellipticpoisson}}
Let $N=\pm nq^\nu$, $\Psi(x,y)$ can be simplified to
\begin{equation}\label{eq:defpsi}
\begin{split}
   \Psi(x,y) &=\theta_\infty^\pm\legendresymbol{x}{2\sqrt{|N|}}\prod_{i=1}^{r}\theta_{q_i}(y_i,N)\left[F\legendresymbol{kf^2}{|x^2-4N|_\infty^\vartheta|y^2-4N|_q'^\vartheta} \right.\\
    &+\left.\frac{kf^2}{\sqrt{|x^2-4N|_\infty|y^2-4N|_q'}}V\legendresymbol{kf^2}{|x^2-4N|_\infty ^{1-\vartheta}|y^2-4N|_q'^{1-\vartheta}}\right].
\end{split}
\end{equation}
The main problem is that this function is not smooth for $y$ near $2\sqrt{N}$ in nonarchimedean places. However, since $F$ and $V$ are of rapid decay we could expect that the Poisson summation formula still holds. 
\begin{proposition}\label{prop:estimatefv}
For any $a\in \ZZ_{\geq 0}$, $F^{(a)}(x)$ and $V_{\iota,\epsilon}^{(a)}(x)$ have rapid decay as $x\to +\infty$.
\end{proposition}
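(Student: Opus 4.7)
The plan is to treat $F$ and $V_{\iota,\epsilon}$ separately, the former being elementary and the latter requiring contour shifting in its Mellin--Barnes representation.

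For $F$, the case $a = 0$ is already handled by \eqref{eq:rapidf}, which gives $F(x) \leq \rme^{-x}/(2K_0(2))$, in fact exponential decay. For $a \geq 1$ I would differentiate \eqref{eq:deff} directly: one immediately gets $F'(x) = -\rme^{-x-1/x}/(2K_0(2)x)$, and an easy induction shows $F^{(a)}(x) = \rme^{-x-1/x} p_a(1/x)$ for some polynomial $p_a$, which is again exponentially decaying.

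The core of the proof is the bound on $V_{\iota,\epsilon}^{(a)}$. The plan is to push the contour in the representation from \autoref{cor:afe1} arbitrarily far to the right. For any $\sigma_0 > 1$ I would verify that the integrand
\[
I(s;x) := \widetilde{F}(s)\prod_{i=1}^{r}\frac{1-\epsilon_i q_i^{s-1}}{1-\epsilon_i q_i^{-s}}\frac{\Gamma((\iota+s)/2)}{\Gamma((\iota+1-s)/2)}(\uppi x)^{-s}
\]
is holomorphic in the strip $1 < \Re s < \sigma_0$: by \autoref{prop:estimatemf}, $\widetilde F$ has its unique pole at $s = 0$; the poles of $\Gamma((\iota+s)/2)$ lie at nonpositive integers; and each denominator $1 - \epsilon_i q_i^{-s}$ vanishes only on the imaginary axis. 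The horizontal integrals at heights $\pm T$ vanish as $T \to +\infty$ by combining the estimate $\widetilde F(s) \ll_{\sigma_0} |s|^{\sigma_0-1}\rme^{-\uppi|t|/2}$ of \autoref{prop:estimatemf} with Stirling (\autoref{prop:stirling}), which gives $\Gamma((\iota+s)/2)/\Gamma((\iota+1-s)/2) \ll |t|^{\sigma_0 - 1/2}$. This justifies
\[
V_{\iota,\epsilon}(x) = \frac{\sqrt{\uppi}}{2\uppi\rmi}\int_{(\sigma_0)} I(s;x)\,\rmd s.
\]
On $(\sigma_0)$ one has $|(\uppi x)^{-s}| = (\uppi x)^{-\sigma_0}$, and the remaining factors of the integrand are absolutely integrable uniformly in $x$, so $V_{\iota,\epsilon}(x) \ll_{\sigma_0} x^{-\sigma_0}$. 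Since $\sigma_0$ is arbitrary, this is rapid decay. For $a \geq 1$ I would differentiate under the integral sign -- legal by the same exponential decay estimates -- producing an extra factor $(-s)(-s-1)\cdots(-s-a+1) x^{-a}$; the polynomial factor $|s|^a$ is absorbed by the exponential decay of $\widetilde F$, yielding $V_{\iota,\epsilon}^{(a)}(x) \ll_{\sigma_0,a} x^{-\sigma_0 - a}$, again rapid.

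The main technical nuisance is confirming that no poles of the integrand actually lie in the strip $1 < \Re s < \sigma_0$ and that the local factors $(1-\epsilon_i q_i^{s-1})/(1-\epsilon_i q_i^{-s})$ are bounded uniformly on $(\sigma_0)$. The latter is immediate: for $\sigma_0 > 0$ we have $|1-\epsilon_i q_i^{-s}| \geq 1 - q_i^{-\sigma_0} > 0$, so the ratio is uniformly bounded. Once these verifications are in place, the rest is routine.
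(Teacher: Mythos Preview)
Your proposal is correct and follows essentially the same route as the paper: for $F$, both you and the paper differentiate \eqref{eq:deff} directly and use induction to get a rational-function-times-$\rme^{-x-1/x}$ form; for $V_{\iota,\epsilon}$, both representations are placed on a line $\Re s = A$ (arbitrary, $>1$), differentiated under the integral sign, and then bounded using \autoref{prop:estimatemf} and Stirling, yielding $V_{\iota,\epsilon}^{(a)}(x) \ll_{A,a} x^{-A-a}$. The only difference is cosmetic: you justify the contour shift explicitly by checking pole locations and vanishing of horizontal segments, whereas the paper simply cites the representation in \autoref{cor:afe1} as valid for any $\sigma>1$ and proceeds directly.
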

\begin{proof}
We first prove that $F$ has rapid decay. If $a=0$, the conclusion holds by \eqref{eq:rapidf}. Now assume that $a\geq 1$. Since
\[
F'(x)=-\frac{1}{2K_0(2)}\frac{1}{x}\rme^{-x-1/x},
\]
by induction we know that $F^{(a)}(x)$ is a rational function of $x$ times $\rme^{-x-1/x}$. Hence $F^{(a)}(x)$ has rapid decay when $x\to +\infty$. 

Now we consider the functions $V_{\iota,\epsilon}$. We have
\[
V_{\iota,\epsilon}(x)=\frac{\uppi^{1/2}}{\dpii}\int_{(\sigma)}\widetilde{F}(s)\prod_{i=1}^{r} \frac{1-\epsilon_iq_i^{s-1}} {1-\epsilon_iq_i^{-s}}\frac{\Gamma(\frac{\iota+s}{2})}{\Gamma(\frac{\iota+1-s}{2})}(\uppi x)^{-s}\rmd s
\]
for any $\sigma\geq 1$. Moreover, for any $a\in \ZZ_{\geq 0}$ we can take the derivative in the integrand and obtain
\[
V_{\iota,\epsilon}^{(a)}(x)=\frac{\uppi^{1/2}}{\dpii}\int_{(\sigma)}\widetilde{F}(s)\prod_{i=1}^{r} \frac{1-\epsilon_iq_i^{s-1}} {1-\epsilon_iq_i^{-s}}\frac{\Gamma(\frac{\iota+s}{2})}{\Gamma(\frac{\iota+1-s}{2})}(-1)^as(s+1)\cdots(s+a-1)\uppi^{-s} x^{-s-a}\rmd s.
\]

For any $A>1$, consider the region $1\leq \sigma\leq A$ and $|t|\gg 1$ (where $s=\sigma+\rmi t$). Since the region $1\leq \sigma\leq A$ and $|t|\ll 1$ is bounded, we have
\[
V_{\iota,\epsilon}^{(a)}(x)\ll_{A,a} \int_{(A),|t|\gg 1}|\widetilde{F}(s)|\prod_{i=1}^{r} \frac{|1-\epsilon_iq_i^{s-1}|} {|1-\epsilon_iq_i^{-s}|}\frac{|\Gamma(\frac{\iota+s}{2})|}{|\Gamma(\frac{\iota+1-s}{2})|}|s||s+1|\cdots|s+a-1|x^{-\sigma-a}\rmd s.
\]

By \autoref{prop:estimatemf} we have
\[
\widetilde{F}(s)\ll_A |s|^{|\sigma|-1}\rme^{-\uppi|t|/2},
\]
and by \autoref{prop:stirling} (2) we have
\[
\frac{|\Gamma(\frac{\iota+s}{2})|}{|\Gamma(\frac{\iota+1-s}{2})|}\asymp_A \frac{|t/2|^{(\iota+\sigma-1)/2}\rme^{-\uppi|t|/4}}{|t/2|^{(\iota-\sigma)/2}\rme^{-\uppi|t|/4}}=\left|\frac{t}{2}\right|^{\sigma-1/2}.
\]
Also, we have
\[
\frac{|1-\epsilon_i q_i^{s-1}|}{|1-\epsilon_i q_i^{-s}|}\leq \frac{1+q_i^{A-1}}{1-q_i^{-A}}\ll_A q_i^A\ll_{A,q_i} 1
\]
for all $\Re s=A$ and $i=1,\dots,r$. Therefore
\begin{align*}
   V_{\iota,\epsilon}^{(a)}(x) & \ll_{A,a,q}\int_{(A),|t|\gg 1}\left|\frac{t}{2}\right|^{A-1/2}|t|^ax^{-A-a}|t|^{A-1}\rme^{-\uppi|t|/2}\rmd s \\
     & =x^{-A-a}\int_{(A),|t|\gg 1}\left|\frac{t}{2}\right|^{A-1/2}|t|^a\rme^{-\uppi|t|/2}\rmd s\ll_{A,a,q} x^{-A-a}. \qedhere
\end{align*}
\end{proof}
Now we consider the following two functions for $t\in \lopen 0,+\infty\ropen$:
\[
F(kf^2t)\qquad\text{and}\qquad kft^{1/(2(1-\vartheta))} V(kf^2t).
\]
Since $F^{(a)}$ and $V^{(a)}$ have  rapid decay as $x\to +\infty$ for each $a\in \ZZ_{\geq 0}$ by \autoref{prop:estimatefv}, these two functions also have rapid decay. Clearly, the conditions in \autoref{cor:poissonrql2} hold for linear combinations of $\varphi_1,\varphi_2$ if they hold for $\varphi_1$ and $\varphi_2$. Hence we only need to consider a more general case
\[
\Omega(x,y)=\theta_\infty^\pm\legendresymbol{x}{2\sqrt{|N|}}\prod_{i=1}^{r}\theta_{q_i}(y_i,N)\Phi\legendresymbol{1}{|x^2-4N|_\infty^\vartheta|y^2-4N|_q'^\vartheta},
\]
where $\vartheta>0$ and $\Phi$ is a smooth function on $\lopen 0,+\infty\ropen$ such that $\Phi^{(a)}(x)$ has rapid decay as $x\to +\infty$ for each $a\in \ZZ_{\geq 0}$.

\begin{proposition}\label{prop:poissonwork}
$\Omega$ satisfies the conditions in \autoref{cor:poissonrql2}.
\end{proposition}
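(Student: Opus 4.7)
The plan is to verify the two hypotheses of \autoref{cor:poissonrql2} for $\Omega$. First I would show that $\Omega$ is continuous and compactly supported on $\QQ_S$: the factors $\theta_\infty^\pm$ and $\theta_{q_i}$ are continuous with compact support by \autoref{lem:ctscpt}; away from the closed bad set $\{(x,y):(x^2-4N)\prod_i(y_i^2-4N)=0\}$, the factor $\Phi(|x^2-4N|_\infty^{-\vartheta}|y^2-4N|_q'^{-\vartheta})$ is continuous with bounded positive argument, while on the bad set the argument diverges to $+\infty$ and rapid decay of $\Phi$ (from \autoref{prop:estimatefv}) forces $\Omega$ to extend by $0$. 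Since $\ZZ^S$ is discrete in $\QQ_S$ by \autoref{prop:cocompact}, only finitely many translates $\Omega(x+k\alpha,y+k\alpha)$ are nonzero in any compact neighborhood of a point, so the periodization of hypothesis~(i) is a locally finite sum of continuous functions.

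For hypothesis (ii), I would establish the quantitative bound
\[
\bigl|\widehat\Omega(\xi,\eta_1,\dots,\eta_r)\bigr|\ll_A(1+|\xi|_\infty)^{-A}\prod_{i=1}^r(1+|\eta_i|_{q_i})^{-A},\qquad \forall\,A>0,
\]
from which absolute summability of $\widehat\Omega(\xi/k,\dots,\xi/k)$ over $\xi\in\ZZ^S$ follows by the height count $\sum_{b=\prod q_i^{b_i}}b\prod_i(1+q_i^{b_i})^{-A}<\infty$ (valid for $A>1$), after parametrizing $\xi=a/b$ with $\gcd(a,b)=1$ and $b$ supported on $S_\fin$. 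Decay in $\xi$ comes from repeated integration by parts in $x$: although $\theta_\infty^\pm$ carries a $|x^2-4N|_\infty^{1/2}$-type singularity at $\pm 2\sqrt{|N|}$ by \eqref{eq:deftheta}, the composite $\Phi(\cdots|x^2-4N|_\infty^{-\vartheta}\cdots)$ together with all its $x$-derivatives vanishes faster than any inverse power of $|x^2-4N|_\infty$ at the singular points, so $\Omega(\cdot,y)$ is $C^\infty$ with $L^\infty$ derivative bounds uniform in $y$. Decay in each $\eta_i$ is obtained by dyadic decomposition: set $M=\lfloor\log_{q_i}|\eta_i|_{q_i}\rfloor$ and split $\QQ_{q_i}$ into a ball $B_M$ of radius $q_i^{-M}$ around $\{\pm 2\sqrt{N}\}\cap\QQ_{q_i}$ and its complement. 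On the complement, smoothness of $f_{q_i}$ makes $\theta_{q_i}(\cdot,N)$ locally constant on balls of some fixed radius $q_i^{-N_1}$, while the level sets of $|y_i^2-4N|_{q_i}'$ have radius $\geq q_i^{-M}$, so the integrand is locally constant on balls of radius $q_i^{-\max(N_1,M)}$ and the Fourier integral over the complement vanishes once $|\eta_i|_{q_i}>q_i^{\max(N_1,M)}$. On $B_M$, volume $\ll q_i^{-M}$ combines with $\Phi\ll_A q_i^{-MA\vartheta}$ to yield $\ll_A q_i^{-M(1+A\vartheta)}$.

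The two one-variable estimates are combined by first performing $A$ integrations by parts in $x$, then applying the nonarchimedean argument to each $\eta_i$ successively; the extra factors of $|y^2-4N|_q'^{-\vartheta k}$ introduced by $x$-differentiation are controlled by rapid decay of $\Phi^{(k)}$ at the bad set. The main obstacle will be maintaining uniformity of constants through this iteration: after differentiating in $x$, the rapid decay of $\Phi^{(k)}$ near $y_i=\pm 2\sqrt{N}$ must dominate both the $|x^2-4N|^{1/2}$ singularity in $x$ and the polynomial blow-up of $|y_i^2-4N|_{q_i}'^{-1}$ in $y_i$, requiring careful tracking of how $\Phi^{(k)}$ composed with the product argument behaves across the dyadic scales around $\pm 2\sqrt{N}$ in each $\QQ_{q_i}$.
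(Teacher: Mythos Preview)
Your plan for hypothesis~(i) is correct and matches the paper: compact support of the $\theta$-factors (\autoref{lem:ctscpt}) plus discreteness of $\ZZ^S$ (\autoref{prop:cocompact}) makes the periodization a locally finite sum of continuous functions. Your archimedean strategy for~(ii)---repeated integration by parts in $x$, with smoothness of the composite coming from \autoref{lem:smooth}---is also exactly what the paper does.

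The gap is in your nonarchimedean estimate. You assert that on the complement of $B_M$, ``smoothness of $f_{q_i}$ makes $\theta_{q_i}(\cdot,N)$ locally constant on balls of some fixed radius $q_i^{-N_1}$.'' This is false: the radius of local constancy of $\theta_{q_i}(\cdot,N)$ shrinks to zero as $y_i\to\pm 2\sqrt N$. Concretely, the Shalika germ expansion \eqref{eq:shalikalocal} gives, for $y_i$ near $2\sqrt N$,
\[
\theta_{q_i}(y_i,N)=\lambda_1\Bigl(1-\tfrac{\chi(q_i)}{q_i}\Bigr)^{-1}|y_i^2-4N|_{q_i}'^{1/2}\,\tfrac{1-\chi(q_i)}{1-q_i}+\lambda_2,
\]
where both $|y_i^2-4N|_{q_i}'$ and $\chi(q_i)$ depend on the valuation and residue class of $y_i^2-4N$. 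On the annulus $|y_i-2\sqrt N|_{q_i}=q_i^{-u}$ this is constant only on balls of radius $q_i^{-u-2}$, not on balls of any fixed radius. Since your complement of $B_M$ contains such annuli for all $u\leq M-1$, the integrand there is locally constant only at scale $q_i^{-(M+1)}$, and the Fourier integral against $\rme_{q_i}(-y_i\eta_i)$ does \emph{not} vanish for $|\eta_i|_{q_i}\approx q_i^{M}$; your dichotomy collapses.

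The paper repairs this by first writing $\theta_{q_i}=\theta_{q_i}^0+\theta_{q_i}^{1}+\theta_{q_i}^{-1}$ (a genuinely smooth piece plus pieces supported in fixed small balls around $\pm 2\sqrt N$) and then, for the singular pieces, decomposing into the \emph{full} sum over annuli $|y_i-2\sqrt N|_{q_i}=q_i^{-u}$, $u\geq m_i$. The germ formula yields the multiplicative invariance $\theta_{q_i}^1(ay_i+2\sqrt N,N)=\theta_{q_i}^1(y_i+2\sqrt N,N)$ for $a\in 1+q_i^2\ZZ_{q_i}$, hence additive invariance on balls of radius $q_i^{-u-2}$ on the $u$-th annulus; the annular Fourier integral $I_u(\eta_i)$ therefore vanishes once $u<-v_{q_i}(\eta_i)-2$, and the remaining annuli are summed using $|\theta_{q_i}|\ll 1$ together with the rapid decay of $\Phi$. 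Your single ball/complement split is too coarse to see this; to salvage it you must either carry out the annular decomposition, or take $B_M$ of radius $q_i^{v_{q_i}(\eta_i)+c}$ for a suitable constant $c$ and invoke the Shalika germ to justify that the complement integral actually vanishes. Either way, the structural input you are missing is precisely \eqref{eq:shalikalocal}, not merely the qualitative smoothness of $f_{q_i}$.
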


\begin{proof}
(i) follows from \autoref{lem:nufinite}. Now we prove (ii).

We have the following lemma proved in \cite[Proposition 4.1]{altug2015}:
\begin{lemma}\label{lem:smooth}
Let $\vartheta>0$ and $\Phi(x)\in C^\infty(\lopen 0,+\infty\ropen)$ such that for any $a\in \ZZ_{\geq 0}$, $\Phi^{(a)}(x)$ has rapid decay as $x\to +\infty$. Then $\theta_\infty^\pm(x)\Phi(|1-x^2|^{-\vartheta})$ is smooth.
\end{lemma}

Now we consider the nonarchimedean places. For any $p\in S$, we can express $\theta_{p}(y,N)$ as
\[
\theta_{p}(y,N)=\theta_{p}^{0}(y,N)+\theta_{p}^{1}(y,N)+\theta_{p}^{-1}(y,N),
\]
where $\theta_{p}^{0}(y,N)$ is zero near $y=\pm 2\sqrt{N}$ (thus is smooth), $\theta_{p}^1(y,N)$ is only supported near $2\sqrt{N}$ and $\theta_{p}^{-1}(y,N)$ is only supported near $-2\sqrt{N}$. If $N$ has no square roots in $\QQ_{p}$, we define $\theta_{p}^{\pm 1}(y,N)$ to be $0$. Correspondingly we write
\[
\Omega(x,y)=\sum_{v\in \{0,\pm 1\}^r}\Omega^v(x,y),
\]
where
\[
\Omega^{(v_1,\dots,v_r)}(x,y)=\theta_\infty^\pm\legendresymbol{x}{2\sqrt{|N|}} \prod_{i=1}^{r}\theta_{q_i}^{v_i}(y_i,N)\Phi\legendresymbol{1}{|x^2-4N|_\infty^\vartheta|y^2-4N|_q'^\vartheta}
\]
for $i=1,\dots,r$ and $v_i\in \{0,\pm 1\}$. Clearly it suffices to verify (ii) for $\Omega^{(v_1,\dots,v_r)}$. 
By symmetry it suffices to consider $\Omega^{v}(x,y)$ with $v=(v_1,\dots,v_r)$ such that $v_i=0$ or $1$. We first make the support of $\theta_{q_i}^1(y_i,N)$ more precise. We choose $\theta_{q_i}^1(y_i,N)$ that is supported in $2\sqrt{N}+q_i^{m_i}\ZZ_{q_i}$, where $m_i$ is chosen so that $m_i>v_{q_i}(4\sqrt{N})$ and \autoref{cor:shalika} holds in the neighborhood $2\sqrt{N}+q_i^{m_i}\ZZ_{q_i}$ of $2\sqrt{N}$. 
\begin{lemma}\label{lem:rapiddecay}
We have
\[
\widehat{\Omega^v}(\xi,\eta)\ll (1+|\xi|_\infty)^{-2}\prod_{i=1}^{r}(1+|\eta_i|_{q_i})^{-3}, 
\]
where the implied constant depends only on $\Omega^v(x,y)$.
\end{lemma}
\begin{insertproof}
For simplicity, we assume that $v_i=1$ for $i=1,\dots,k$ and $v_i=0$ for $i=k+1,\dots,r$.
The Fourier transform of $\Omega^v$ is
\[
   \widehat{\Omega^v}(\xi,\eta)=\int_{\RR}\int_{ \QQ_{S_\fin}}\theta_\infty^\pm\legendresymbol{x}{2\sqrt{|N|}}\prod_{i=1}^{r}\theta_{q_i}^{v_i}(y_i,N) \Phi\legendresymbol{1}{|x^2-4N|_\infty^\vartheta|y^2-4N|_q'^\vartheta}\rme(-x\xi) \rme_{q}(-y\eta)\rmd x\rmd y,
\]
where $x\in\RR$, $y\in\QQ_{S_\fin}$.

\underline{\emph{Case 1:}}\ \ $|\xi|_\infty\gg 1$.

We first consider the integral over $x$, that is
\[
\int_{\RR}\theta_\infty^\pm\legendresymbol{x}{2\sqrt{|N|}}\Phi\legendresymbol{1}{|x^2-4N|_\infty^\vartheta|y^2-4N|_q'^\vartheta} \rme^{-\dpii x\xi}\rmd x.
\]
Since the integrand is smooth (by \autoref{lem:smooth}) and compactly supported, we can use integration by parts twice. Thus the above equals
\[
-\frac{1}{4\uppi^2\xi^2}\int_{\RR}\frac{\rmd^2}{\rmd x^2}\left(\theta_\infty^\pm\legendresymbol{x}{2\sqrt{|N|}}\Phi\legendresymbol{1}{|x^2-4N|_\infty^\vartheta|y^2-4N|_q'^\vartheta}\right)\rme^{-\dpii x\xi}\rmd x.
\]

We have
\begin{equation}\label{eq:estimatederivativey}
\frac{\rmd^2}{\rmd x^2}\left(\theta_\infty^\pm\legendresymbol{x}{2\sqrt{|N|}}\Phi\legendresymbol{1}{|x^2-4N|_\infty^\vartheta|y^2-4N|_q'^\vartheta}\right)\ll_M |y^2-4N|_q'^{-M}
\end{equation}
for any $M>0$. Indeed, for $x\neq \pm 2\sqrt{|N|}$ we have
\[
\begin{split}
    & \frac{\rmd^2}{\rmd x^2}\left(\theta_\infty^\pm\legendresymbol{x}{2\sqrt{|N|}}\Phi\legendresymbol{1}{|x^2-4N|_\infty^\vartheta|y^2-4N|_q'^\vartheta}\right) =\varphi_1(x)\Phi\legendresymbol{1}{|x^2-4N|_\infty^\vartheta|y^2-4N|_q'^\vartheta} \\
     +&\frac{\varphi_2(x)}{|y^2-4N|_q'^\vartheta}\Phi'\legendresymbol{1}{|x^2-4N|_\infty^\vartheta|y^2-4N|_q'^\vartheta}+ \frac{\varphi_3(x)}{|y^2-4N|_q'^{2\vartheta}}\Phi''\legendresymbol{1}{|x^2-4N|_\infty^\vartheta|y^2-4N|_q'^\vartheta},
\end{split}
\]
where $\varphi_i(x)$ are compactly supported functions such that $\varphi_i(x)$ has polynomial growth as $x\to \pm 2\sqrt{|N|}$. Since $\Phi,\Phi'$ and $\Phi''$ all have rapid decay and the range of $x$ is compact, we conclude that \eqref{eq:estimatederivativey} holds.

Next we consider the nonarchimedean integral
\begin{equation}\label{eq:nonarchimedeansingular}
\int_{\QQ_{S_\fin}}\prod_{i=1}^{r}\theta_{q_i}^{v_i}(y_i,N)\frac{\rmd^2}{\rmd x^2}\left(\theta_\infty^\pm\legendresymbol{x}{2\sqrt{|N|}} \Phi\legendresymbol{1}{|x^2-4N|_\infty^\vartheta|y^2-4N|_q'^\vartheta}\right) \rme_{q}(-y\eta)\rmd y.
\end{equation} 
By making the change of variable $y_i\mapsto y_i+2\sqrt{N}$ for $i=1,\dots,k$, \eqref{eq:nonarchimedeansingular} becomes
\begin{align*}
    & \prod_{i=1}^{k}\rme_{q_i}(-2\sqrt{N}\eta_i)\int_{\QQ_{S_\fin}}\prod_{i=1}^{k} \theta_{q_i}^{1}(y_i,+2\sqrt{N},N) \prod_{i=k+1}^{r}\theta_{q_i}^{0}(y_i,N) \\
  \times   & \frac{\rmd^2}{\rmd x^2}\left[\theta_\infty^\pm\legendresymbol{x}{2\sqrt{|N|}} \Phi\legendresymbol{1}{|x^2-4N|_\infty^\vartheta\prod_{i=1}^{k}|y_i^2+4\sqrt{N}y_i|_{q_i}'^\vartheta \prod_{i=k+1}^{r}|y_i^2-4N|_{q_i}'^\vartheta}\right] \rme_{q}(-y\eta)\rmd y.
\end{align*}
$\prod_{i=1}^{k}\rme_{q_i}(-2\sqrt{N}\eta)$ times the above equation can be written as
\begin{align*}
I(\eta_1,\dots,\eta_r)=&\int_{\QQ_{q_{k+1}}}\dots\int_{\QQ_{q_r}} \sum_{u_1=m_1}^{+\infty}\cdots\sum_{u_k=m_k}^{+\infty}\int_{q_1^{u_1}\ZZ_{q_1}^\times}\dots \int_{q_k^{u_k}\ZZ_{q_k}^\times}\prod_{i=1}^{k}\theta_{q_i}^{1}(y_i+2\sqrt{N},N)\prod_{i=k+1}^{r} \theta_{q_i}^{0}(y_i,N) \\
\times&\frac{\rmd^2}{\rmd x^2}\left[\theta_\infty^\pm\legendresymbol{x}{2\sqrt{|N|}} \Phi\legendresymbol{1}{|x^2-4N|_\infty^\vartheta\prod_{i=1}^{k}|y_i^2+4\sqrt{N}y_i|_{q_i}'^\vartheta \prod_{i=k+1}^{r}|y_i^2-4N|_{q_i}'^\vartheta}\right] \\
\times&\prod_{i=1}^{r}\rme_{q_i}(-y_i\eta_i)\rmd y_1\cdots \rmd y_r.
\end{align*}
Now we define
\begin{align*}
I_{u_1,\dots,u_k}(\eta_1,\dots,\eta_r)=&\int_{\QQ_{q_{k+1}}}\dots\int_{\QQ_{q_r}} \int_{q_1^{u_1}\ZZ_{q_1}^\times}\dots \int_{q_k^{u_k}\ZZ_{q_k}^\times}\prod_{i=1}^{k}\theta_{q_i}^{1}(y_i+2\sqrt{N},N)\prod_{i=k+1}^{r} \theta_{q_i}^{0}(y_i,N) \\
\times&\frac{\rmd^2}{\rmd x^2}\left[\theta_\infty^\pm\legendresymbol{x}{2\sqrt{|N|}} \Phi\legendresymbol{1}{|x^2-4N|_\infty^\vartheta\prod_{i=1}^{k}|y_i^2+4\sqrt{N}y_i|_{q_i}'^\vartheta \prod_{i=k+1}^{r}|y_i^2-4N|_{q_i}'^\vartheta}\right] \\
\times&\prod_{i=1}^{r}\rme_{q_i}(-y_i\eta_i)\rmd y_1\cdots \rmd y_r
\end{align*}
so that 
\[
I(\eta_1,\dots,\eta_r)=\sum_{u_1=m_1}^{+\infty}\cdots\sum_{u_k=m_k}^{+\infty}
I_{u_1,\dots,u_k}(\eta_1,\dots,\eta_r).
\]

Consider the integrals
\[
I_{u_i}(\eta_i)=\int_{q_i^{u_i}\ZZ_{q_i}^\times}F_i(y_i)\rme_{q_i}(-y_i\eta_i)\rmd y_i
\]
for $i=1,\dots,k$ and
\[
I(\eta_i)=\int_{\QQ_{q_i}}F_i(y_i)\rme_{q_i}(-y_i\eta_i)\rmd y_i
\]
for $i=k+1,\dots,r$, where
\[
F_i(y_i)=\theta_{q_i}^{1}(y_i+2\sqrt{N},N)\frac{\rmd^2}{\rmd x^2}\left[\theta_\infty^\pm\legendresymbol{x}{2\sqrt{|N|}}\Phi\legendresymbol{1}{|x^2-4N|_\infty^\vartheta\prod_{i=1}^{k}|y_i^2+4\sqrt{N}y_i|_{q_i}'^\vartheta \prod_{i=k+1}^{r}|y_i^2-4N|_{q_i}'^\vartheta}\right]
\]
for $i=1,\dots,k$ and
\[
F_i(y_i)=\theta_{q_i}^{0}(y_i,N)\frac{\rmd^2}{\rmd x^2}\left[\theta_\infty^\pm\legendresymbol{x}{2\sqrt{|N|}}\Phi\legendresymbol{1}{|x^2-4N|_\infty^\vartheta\prod_{i=1}^{k}|y_i^2+4\sqrt{N}y_i|_{q_i}'^\vartheta \prod_{i=k+1}^{r}|y_i^2-4N|_{q_i}'^\vartheta}\right]
\]
for $i=k+1,\dots,r$.
We will prove that this integral is $0$ if $|\eta_i|_{q_i}$ is sufficiently large, and this quantity can be computed explicitly. 

We first consider the case that $i\in\{1,\dots,k\}$. For $v_{q_i}(y_i)=u_i\geq m_i$, by the assumption of $\theta_{q_i}^{v_i}(y_i)$ and \eqref{eq:shalikalocal} we have
\begin{equation}\label{eq:shalikalocal2}
\theta_{q_i}^1(y_i+2\sqrt{N},N)=\lambda_1\left(1-\frac{\chi(q_i)}{q_i}\right)^{-1}\sqrt{|y_i^2+4\sqrt{N}y_i|_{q_i}'}\frac{1-\chi(q_i)}{1-q_i}+\lambda_2,
\end{equation}
and
\[
\chi(q_i)=\legendresymbol{(y_i^2+4\sqrt{N}y_i)|y_i^2+4\sqrt{N}y_i|_{q_i}'}{q_i}.
\]

Since $v_{q_i}(y_i)>v_{q_i}(4\sqrt{N})$, we have
\[
v_{q_i}(y_i^2+4\sqrt{N}y_i)=v_{q_i}(y_i)+v_{q_i}(y_i+4\sqrt{N})=v_{q_i}(y_i)+v_{q_i}(4\sqrt{N}).
\]
Suppose that $a\in 1+q_i^2\ZZ_{q_i}$. Since $v_{q_i}((a+1)y_i+4\sqrt{N})=v_{q_i}(4\sqrt{N})=v_{q_i}(y_i+4\sqrt{N})$ and $a-1\in q_i^2\ZZ_{q_i}$, we have
\[
\frac{(ay_i)^2+4\sqrt{N}(ay_i)}{y_i^2+4\sqrt{N}y_i}-1=\frac{(a-1)((a+1)y_i+4\sqrt{N})}{y_i+4\sqrt{N}}\in q_i^2\ZZ_{q_i}.
\]
Therefore $|y_i^2+4\sqrt{N}y_i|_{q_i}'=|(ay_i)^2+4\sqrt{N}(ay_i)|_{q_i}'$ (see \autoref{rem:absoluteprime}). Moreover we get
\[
\legendresymbol{(y_i^2+4\sqrt{N}y_i)|y_i^2+4\sqrt{N}y_i|_{q_i}'}{q_i}= \legendresymbol{((ay_i)^2+4\sqrt{N}(ay_i))|(ay_i)^2+4\sqrt{N}(ay_i)|_{q_i}'}{q_i}.
\]
Hence $\theta_{q_i}(y_i+2\sqrt{N},N)=\theta_{q_i}(ay_i+2\sqrt{N},N)$.
Therefore $F_i(y_i)$ is invariant under the map $y_i\mapsto ay_i$, where $a\in 1+q_i^2\ZZ_{q_i}$.

Suppose that $b\in q_i^{u_i+2}\ZZ_{q_i}$. Then $y_i+b=(1+by_i^{-1})y_i$ with $1+by_i^{-1}\in 1+q_i^2\ZZ_{q_i}$ (since $y_i\in q_i^{u_i}\ZZ_{q_i}^\times$). Hence $F_i(y_i)$ invariant under the map $y_i\mapsto y_i+b$. Making such change of variable in $I_{u_i}(\eta_i)$, we obtain
\[
   I_{u_i}(\eta_i)=\int_{q_i^{u_i}\ZZ_{q_i}^\times}F_i(y_i)\rme_{q_i}(-(y_i+b)\eta_i)\rmd y_i =I_{u_i}(\eta_i)\rme_{q_i}(-b\eta_i).
\]
From this we know that $I_{u_i}(\eta_i)=0$ unless $\rme_{q_i}(-b\eta_i)=1$ for all $b\in q_i^{u_i+2}\ZZ_{q_i}$. If $v_{q_i}(\eta_i)<-u_i-2$, then $b\eta_i\notin \ZZ_{q_i}$ for $b=q_i^{u_i+2}$. Thus $\rme_{q_i}(-b\eta_i)\neq 1$. Hence $I_{u_i}(\eta_i)=0$. 

For $i=k+1,\dots,n$, $\theta_{q_i}^0(y_i,N)$ is smooth and compactly supported. Hence there exists $L_i\geq 2$ (independent of $u_i$) such that $F_i(y_i)$ is invariant under $y_i\mapsto y_i+b$ for any $b\in q_i^{L_i}\ZZ_{q_i}$. Hence $I_{u_i}(\eta_i)=0$ for $v_{q_i}(\eta_i)<-L_i$.

We have thus proved that $I_{u_1,\dots,u_k}(\eta_1,\dots,\eta_r)=0$ unless $v_{q_i}(\eta_i)\geq -u_i-L_i$ for $i=1,\dots,k$ and $v_{q_i}(\eta_i)\geq -L_i$ for $i=k+1,\dots,r$. In this case, we have the trivial estimate $\theta_{q_i}^{v_i}(y_i,N)\ll 1$ by \autoref{lem:ctscpt} (uniform in $u_i$) and hence 
\begin{align*}
I_{u_1,\dots,u_k}(\eta_1,\dots,\eta_r)\ll_M & \int_{K_{q_{k+1}}}\dots\int_{K_{q_r}}\int_{q_1^{u_1}\ZZ_{q_1}^\times}\dots \int_{q_k^{u_k}\ZZ_{q_k}^\times}\prod_{i=1}^{k}|y_i^2+4\sqrt{N}y_i|_{q_i}'^{M\vartheta} \prod_{i=k+1}^{r}|y_i^2-4N|_{q_i}'^{M\vartheta}\rmd y\\
\ll_M &\prod_{i=1}^{k}q_i^{u_i} q_i^{-u_i M\vartheta}\ll \prod_{i=1}^{k} q_i^{-3u_i}
\end{align*}
for appropriate $M$ (since $|\cdot|_\ell'\asymp_\ell |\cdot|_\ell$), where $K_i$ denotes the support of $\theta_i^0(y_i,N)$ for $i=k+1,\dots,r$. 

Suppose that $v_{q_i}(\eta_i)=-w_i$. Then $|\eta_i|_{q_i}=q_i^{w_i}$. Hence $I_{u_1,\dots,u_r}(\eta_1,\dots,\eta_r)=0$ if $u_i<-w_i-2$ for some $i\in \{1,\dots,k\}$. Hence
\begin{align*}
    & \sum_{u_1=m_1}^{+\infty}\cdots\sum_{u_k=m_k}^{+\infty}I_{u_1,\dots,u_k}(\eta_1,\dots,\eta_r)\ll  \left(\sum_{u_1=\max\{-w_1-2,m_1\}}^{+\infty}q_1^{-3u_1}\right)\cdots \left(\sum_{u_k=\max\{-w_k-2,m_k\}}^{+\infty}q_k^{-3u_k}\right) \\
      &\ll q_1^{-3\max\{-w_1-2,m_1\}}\cdots q_r^{-3\max\{-w_k-2,m_k\}}\ll (1+|\eta_1|_{q_1})^{-3}\cdots (1+|\eta_k|_{q_k})^{-3}.
\end{align*}
Therefore
\[
\eqref{eq:nonarchimedeansingular}
   \ll  (1+|\eta_1|_{q_1})^{-3}\cdots (1+|\eta_k|_{q_k})^{-3}
\]
if $|\eta_i|_{q_i}\leq q_i^{L_i}$ for all $i=k+1,\dots,r$.
Since the integral is nonzero only if $|\eta_i|_{q_i}\leq q_i^{L_i}$ for all $i=k+1,\dots,n$, we conclude that for all $\eta\in \QQ_{S_\fin}$,
\[
 \eqref{eq:nonarchimedeansingular}
   \ll   (1+|\eta_1|_{q_1})^{-3}\cdots (1+|\eta_r|_{q_r})^{-3}.
\]

Finally,
\begin{align*}
   \widehat{\Omega^v}(\xi,\eta) &=-\frac{1}{4\uppi^2\xi^2}\int_{\RR}\int_{\QQ_{S_\fin}}\prod_{i=1}^{r}\theta_{q_i}^{v_i}(y_i,N)\\
   &\times\frac{\rmd^2}{\rmd x^2}\left(\theta_\infty^\pm\legendresymbol{x}{2\sqrt{|N|}}\Phi\legendresymbol{1}{|x^2-4N|_\infty^\vartheta|y^2-4N|_q'^\vartheta}\right) \rme_{q}(-y\eta)\rmd y\rme(-x\xi)\rmd x \\
    &\ll |\xi|_\infty^{-2}\int_{K}(1+|\eta_1|_{q_1})^{-3}\cdots (1+|\eta_r|_{q_r})^{-3}\rmd x\ll |\xi|_\infty^{-2}(1+|\eta_1|_{q_1})^{-3}\cdots (1+|\eta_r|_{q_r})^{-3},
\end{align*}
where $K$ denotes the support of $\theta_\infty^\pm(x/2\sqrt{|N|})$.

\underline{\emph{Case 2:}}\ \ $|\xi|_\infty\ll 1$.

In this case, we consider directly the nonarchimedean integral
\[
\int_{\QQ_{S_\fin}}\prod_{i=1}^{r}\theta_{q_i}^{v_i}(y_i,N)\theta_\infty^\pm \legendresymbol{x}{2\sqrt{|N|}}\Phi\legendresymbol{1}{|x^2-4N|_\infty^\vartheta|y^2-4N|_q'^\vartheta} \rme_{q}(-y\eta)\rmd y.
\]
An analogous argument shows that the integral is $\ll (1+|\eta_1|_{q_1})^{-3}\cdots (1+|\eta_r|_{q_r})^{-3}$.
Finally 
\begin{align*}
   \widehat{\Omega^v}(\xi,\eta) =&\int_{\RR}\int_{\QQ_{S_\fin}}\prod_{i=1}^{r}\theta_{q_i}^{v_i}(y_i,N)\theta_\infty^\pm\legendresymbol{x}{2\sqrt{|N|}} \Phi\legendresymbol{1}{|x^2-4N|_\infty^\vartheta|y^2-4N|_q'^\vartheta}\rme_{q}(-y\eta)\rmd y\rme(-x\xi)\rmd x \\
    \ll & \int_{K}(1+|\eta_1|_{q_1})^{-3}\cdots (1+|\eta_r|_{q_r})^{-3}\rmd x\ll(1+|\eta_1|_{q_1})^{-3}\cdots (1+|\eta_r|_{q_r})^{-3}.
\end{align*}
The lemma is now proved by combining these cases together.
\end{insertproof}
\begin{remark}
By putting in more effort, one can prove that
\[
\widehat{\Omega^v}(\xi,\eta)\ll_{M_0,M_1,\dots,M_r} (1+|\xi|_\infty)^{-M_0}(1+|\eta_1|_{q_1})^{-M_1}\cdots (1+|\eta_r|_{q_r})^{-M_r}
\]
for any $M_0,M_1,\dots,M_r>0$, but we do not need this in this paper.
\end{remark}
Finally we prove that
\[
\sum_{\xi\in \ZZ^S}\widehat{\Omega^v}\left(\frac{\xi}{k},\frac{\xi}{k}\right)
\]
converges absolutely. We have
\begin{align*}
   &\sum_{\xi\in \ZZ^S}\left|\widehat{\Omega^v}\left(\frac{\xi}{k},\frac{\xi}{k}\right)\right| \ll  \sum_{\xi\in \ZZ^S}\left(1+\left|\frac{\xi}{k}\right|_\infty\right)^{-2}\prod_{i=1}^{r} (1+|\xi|_{q_i})^{-3} \\
   \ll&
   \sum_{I\subseteq \{1,\dots,r\}}\sum_{\substack{c_i\in \ZZ_{<0}\ i\in I\\ c_j\in \ZZ_{\geq 0}\ j\notin I}}\sum_{\substack{\xi\in \ZZ^S\\ v_{q_i}(\xi)=c_i \ i\in I}}\left(1+\left|\frac{\xi}{k}\right|_\infty\right)^{-2}\prod_{i=1}^{r} (1+|\xi|_{q_i})^{-3}\\
     \ll &1+\sum_{I\subseteq \{1,\dots,r\}}\prod_{i\in I}\sum_{\substack{c_i=-\infty\\ i\in I}}^{-1}\sum_{\xi\in \ZZ-\{0\}}\left(|\xi| \prod_{i\in I}q_i^{c_i}\right)^{-2}\prod_{i\in I}(1+q_i^{-c_i})^{-3}
     \\
     \ll &1+\sum_{\xi\in \ZZ-\{0\}}|\xi|^{-2}\sum_{I\subseteq \{1,\dots,r\}}\prod_{i\in I}\sum_{\substack{c_i=-\infty\\ i\in I}}^{-1}\frac{q_i^{2c_i}}{(1+q_i^{c_i})^{3}},
\end{align*}
which converges absolutely. Thus $\Omega$ satisfies (ii) and finally the theorem is proved.
\end{proof}

\section*{Acknowledgements}
I would like to thank \emph{Bin Xu} for giving me this project, the idea of Shalika germ, the idea of Poisson summation on the semilocal space and for telling me that the isolation in the ramified case is expected to be the sum of all $1$-dimensional representations rather than only the trivial representation as in the unramified case. I would also like to thank \emph{Taiwang Deng} for the discussion of orbital integrals and \emph{Yueke Hu} for discussions on Kloosterman sums and the approximate functional equation.

\bibliography{ref.bib}
\bibliographystyle{amsalpha}

\printindex

\end{document}